\DeclareFontFamily{OT1}{pzc}{}
\DeclareFontShape{OT1}{pzc}{m}{it}{<-> [1.15] rpzcmi}{}
\DeclareMathAlphabet{\mathzc}{OT1}{pzc}{m}{it}
\def\End{\operatorname{End}\kern-.5pt}
\def\KK{{\mathzc K\kern0pt}}
\def\vv{{\mathzc v\kern.5pt}}
\def\aq{/\kern-2pt/}
\def\Hom{\operatorname{Hom}}
\def\La{{\Lambda}}
\def\fT{{\mathfrak T}}
\def\fS{{\mathfrak S}}
\def\End{{\text{\rm End}}}
\def\Hom{{\text{\rm Hom}}}
\def\modh{{\text{{\sf Mod}-}}}
\def\hmod{{\text{-{\sf Mod}}}}
\def\bsi{{\boldsymbol i}}
\def\bsj{{\boldsymbol j}}
\def\bsu{{\boldsymbol u}}
\def\up{{\boldsymbol{\upsilon}}}
\def\sH{{\mathcal H}}
\def\sS{{\mathcal S}}
\def\sZ{{\mathcal Z}}
\def\sD{{\mathcal D}}
\def\sT{{\mathcal T}}
\def\sA{{\mathcal A}}
\def\sI{{\mathcal I}}
\def\sJ{{\mathcal J}}
\def\la{{\lambda}}
\def\whd{{\widehat{d}}}
\def\vecr{{\vec r}}
\def\sP{{\mathcal P}}
\def\sR{{\mathcal R}}
\def\sB{{\mathcal B}}
\def\sT{{\mathcal T}}
\def\rmL{{\mathrm L}}
\def\vep{{\varepsilon}}
\def\ttM{{\mathtt M}}
\def\reg{{\text{-reg}}}
\newtheorem{theorem}{Theorem}[section]
\newtheorem{lemma}[theorem]{Lemma}
\newtheorem{proposition}[theorem]{Proposition}
\newtheorem{corollary}[theorem]{Corollary}
\theoremstyle{definition}
\newtheorem{definition}[theorem]{Definition}
\newtheorem{remark}[theorem]{Remark}
\newtheorem{remarks}[theorem]{Remarks}
\newtheorem{notation}[theorem]{Notation}
\numberwithin{equation}{theorem}
\begin{document}

\baselineskip16pt
\def\hei{\relax}

 \title[Irreducible representations of $q$-Schur superalgebras]{Irreducible representations of $q$-Schur superalgebras
at a root of unity}
\author{Jie Du, Haixia Gu and Jianpan Wang}
\address{J.D., School of Mathematics and Statistics,
University of New South Wales, Sydney NSW 2052, Australia}
\email{j.du@unsw.edu.au}
\address{H.G. \& J.W., Department of Mathematics,
East China Normal University, Shanghai, China}
\email{alla0824@126.com,jpwang@admin.ecnu.edu.cn}
\date{\today}
\thanks{The first author gratefully acknowledge support from ARC under grant DP120101436. The work was completed while the second author was visiting UNSW.
% as a practicum student.
She would like to thank the China Scholarship Council and ARC for financial support and UNSW for its hospitality during her visit.}

\subjclass[2000]{Primary: 20C08, 20G42, 20G43; Secondary: 17A70, 17B37}

\begin{abstract}
Under the assumption that the quantum parameter $q$ is an $l$th primitive root of unity with $l$ odd in a field $F$ of characteristic 0 and $m+n\geq r$,
 we obtained a complete classification of  irreducible
modules of the $q$-Schur superalgebra $\sS_F(m|n,r)$ introduced
in \cite{DR}.
\end{abstract}

 \maketitle

\section{Introduction}

The investigation on Schur superalgebras and their quantum analogue has achieved significant progress in the last decade; see, e.g., \cite{BKv, BK, D, M, DR, TK}. This includes an establishment of a super analogue of the Schur--Weyl reciprocity  by Mitsuhashi,  a Kazhdan--Lusztig type cell approach to the representation theory of $q$-Schur superalgebras by Rui and the first author, and, recently, a presentation of $q$-Schur superalgebras by Turkey and Kujawa.
As part of a super version of the quantum Schur--Weyl theory, these developments are all in the generic case where the quantum parameter is generic or not a root of unity.

This paper attempts to investigate the structure and representations of $q$-Schur superalgebra at a root of unity. As is seen from \cite{DR} and \cite{TK}, $q$-Schur superalgebras share many of the properties of $q$-Schur algebras. For example, both have several  definitions in terms of Hecke algebras via (super) $q$-permutation modules, quantum enveloping algebras via tensor (super)spaces, and quantum coordinate (super)algebras via the dual of homogenous components; both have integral versions and the base change property; both have Drinfeld-Jimbo type presentations; etc. However, the structure of $q$-Schur superalgebras is fundamentally different. For example, no quasihereditary structure or cellular structure is seen naturally. Thus, classifying irreducible modules requires an approach that is different from the usual quasihereditary or cellular approach.

In this paper, we will generalise the approach used in \cite{DU,DU3} for $q$-Schur algebras to $q$-Schur superalgebras. This approach was developed as  a $q$-deformation of the modular permutation representation theory established by L. Scott in 1973 \cite{S}. One important feature of this approach is that the resulting classification of irreducible representations is analogous to  Alperin's weight conjecture \cite{A} for modular representations of a finite group. More precisely, we consider a $q$-analogue of an endomorphism algebra of a signed permutation module instead of a group algebra, consider $l$-parabolic subgroups instead of $p$-subgroups, and consider representations of a quotient algebra defined by an $l$-parabolic subgroup as ``local'' representations. Thus, the isomorphism types of irreducible modules are determined by an $l$-parabolic subgroup $P$ together with a projective indecomposable module of the quotient algebra associated with $P$.

The main ingredients in this approach are the notion of relative norms for representations of Hecke algebras of type $A$, which was introduced by P. Hoefsmit and L. Scott in 1977, and its corresponding properties which include transitivity, Mackey decomposition, Frobenius reciprocity, Nakayama relation, Higman criterion; see \cite{J}.

By describing a basis for a $q$-Schur superalgebra in relative norms,
we will attach a defect group, which is an $l$-parabolic subgroup, to a basis element  and introduce a filtration of ideals in terms of defect groups. Thus, every primitive idempotent $e$ lies in a minimal ideal in the sequence. We then attach to $e$ a defect group $D(e)$. This defect group is the vertex of the corresponding indecomposable module for the Hecke algebra. If $D(e)$ is trivial, then the indecomposable module is projective. Hence, it is determined by an $l$-regular partition. If $D(e)$ is nontrivial, then the image of $e$ in the quotient algebra by the ideal right below $e$ has the trivial defect group. Thus, once we show that the quotient algebra
is again an endomorphism algebra part of  which is again a $q$-Schur superalgebra, we may determine all primitive idempotents with the trivial defect group in the quotient algebra. In this way, we completely determine the isomorphism types of irreducible modules for a $q$-Schur superalgebras $\sS_F(m|n,r)$ with $m+n\geq r$ by the index set $\mathcal P_r$ (Theorem \ref{ML4}).

We point out that classification of irreducible supermodules for Schur superalgebras was first completed by Donkin \cite{D} under the condition $m,n\geq r$. Brundan and Kujawa extended the result to arbitrary $m,n,r$ (\cite[Th.~6.5]{BK}) in their beautiful paper for a proof of the Mullineux conjecture. Their approach is quite different from ours, involving a certain category equivalence, Mullineux conjugation function,  a process of removing nodes from $p$-rims of a partition, and a couple of other relevant functions. See Appendix II for more details, where we will also make a comparison between the two classification index sets $\La^{++}(m|n,r)$ and $\mathcal P_r$ when $m+n\geq r$ and $l=p$. 

%when this approach was used to classify irreducible modules for the $q$-Schur algebra,  the classification was already known by other methods. However, the classification for $q$-Schur superalgebra by this approach is the first known classification. This demonstrates the importance of the approach.  We believe that there will be further applications, e.g., to the representations of quantum superalgebras $U_q(\mathfrak{gl}_{m|n})$ at a root of unity.

Like Alperin's weight conjecture, our approach is not  constructive for irreducible modules. However, it indicates a certain tensor product structure. We plan to address this issue in a different paper; see Remark \ref{Alperin}. One possible approach is to quantise the work of Brundan--Kujawa. However, it would be interesting to find an explicit construction compatible  with the index set $\mathcal P_r$. We also point out that we only consider the $q$-Schur superalgebra as an algebra and representations as algebra representation in this paper. 

The contents of the paper are organised as follows:

 %\begin{center}Contents\end{center}
 \begin{enumerate}
 \item Introduction
 \item Quantum Schur superalgebras
 \item Relative norms: the first properties
 \item Some vanishing properties of relative norms on $V_R(m|n)^{\otimes r}$
 \item Bases for $\sS_R(m|n,r)$ in relative norms
 \item A filtration of ideals of $\sS_R(m|n,r)$
 \item Alternative characterisation of the ideals $I_F(P,r)$
 \item Quantum matrix superalgebras
 \item Frobenius morphism and Brauer homomorphisms
 \item Shrinking defect groups via Brauer homomorphisms
 \item Classification of irreducible $\sS_F(m|n,r)$-modules
 \item Appendix I: Brauer homomorphisms without Frobenius
 \item Appendix II: A comparison with the classification of Brundan-Kujawa
   \end{enumerate}

Throughout the paper,
let $\sZ=\mathbb{Z}[\up,\up^{-1}]$ be the Laurent polynomial ring in indeterminate $\up$ and let  $R$ be a domain which is a $\sZ$-module via a ring homomorphism $\sZ\to R$. We further assume that the image $q$ of $\up$ in $R$ is a primitive $l$th root of 1 with $l$ odd. We will always assume the characteristic char$(R)\neq2$. From \S 7 onwards, $R=F$ is a field of characteristic 0. For $a,b\in\mathbb N$ with $a<b$, we often write
$$[a,b]=\{a,a+1,\ldots,b\}.$$

\section{Quantum Schur superalgebras}

We first recall the definitions of $q$-Schur superalgebras in terms of signed $q$-permutation modules and tensor superspaces.

 Let $(W,S)$ be the symmetric group on $r$ letters where $W=\fS_r=\fS_{\{1,2,\ldots,r\}}$ and $S=\{s_k\mid 1\leq k<r\}$ is the set of basic transpositions $s_k=(k,k+1),$ and let
$\ell:W\to\mathbb{N}$ be the length function with respect to $S$.

An $n$-tuple $\lambda=(\lambda_1,\lambda_2,\cdots,\lambda_n)\in{\mathbb N}^n$ is called a composition of $r$ into $n$ parts if $|\la|:=\sum_i\la_i=r$. A composition $\la$ of $r$ is called a partition of $r$ if $\la_1\geq\la_2\geq\ldots$. Let $\La(n,r)\subset \mathbb{N}^n$ be the set of all compositions of $r$ into $n$ parts and let $\La^+(n,r)$ be the subset of partitions in $\La(n,r)$. In particular, let $\La^+(r)=\La^+(r,r)$ be the set of all partitions of $r$. We sometimes use the notation $\la\models r$ or $\la\vdash r$ for a composition or partition of $r$.

The parabolic (or the standard Young) subgroup
$W_{\lambda}$ of $W$ associated with a composition $\la$ consists of the permutations of
$\{1,2,\cdots,r\}$ which leave invariant the following sets of
integers
$$\{1,2,\cdots,\lambda_1\},\{\lambda_1+1,\lambda_1+2,\cdots,\lambda_1+\lambda_2\},\{\lambda_1+\lambda_2+1,\lambda_1+\lambda_2+2,\cdots\},\cdots.$$

We will frequently use the following notation:
if $W'$ is a subgroup of $W$ and $W_\mu$ is another parabolic subgroup, the notation $W_\mu=_WW_\la$ means $W_\mu^x:=x^{-1}W_\mu x=W_\la$ for some $x\in W$, while the notation
$W_\mu\leq_WW'$ means that a parabolic conjugate of $W_\mu$ is a subgroup of $W'$, i.e.,$W_\mu^x\leq W'$ and $W_\mu^x$ is also parabolic for some $x\in W$.

We will also denote by $\sD_\la:=\mathcal{D}_{W_\la}$
the set of all distinguished (or shortest) coset representatives of the right
cosets of $W_\la$ in $W$. Let
$\mathcal{D}_{\lambda\mu}=\mathcal{D}_\lambda\cap\mathcal{D}^{-1}_{\mu}$.
Then $\mathcal{D}_{\lambda\mu}$ is the set of distinguished
$W_\lambda$-$W_\mu$ coset representatives.
For $d\in\mathcal{D}_{\la\mu}$, the subgroup $W_\la^d\cap W_\mu=d^{-1}W_\la d\cap W_\mu$ is a parabolic subgroup associated with a composition which is denoted by $\la d\cap\mu$. In other words, we define
\begin{equation}\label{ladmu}
W_{\la d\cap\mu}=W_\la^d\cap W_\mu.
\end{equation}

We will often regard a pair $(\lambda^{(0)},\lambda^{(1)})\in \Lambda(m,r_1)\times
\Lambda(n,r_2)$ of compositions  of $m$ parts and $n$ parts as a composition $\la$ of $m+n$ parts and write
$$\la=(\lambda^{(0)}|\lambda^{(1)})=(\lambda^{(0)}_1,\lambda^{(0)}_2,\cdots,\lambda^{(0)}_m|\lambda^{(1)}_1,
\lambda^{(1)}_2,\cdots,\lambda^{(1)}_n)$$ to indicate the``even'' and ``odd'' parts of $\la$. Let
\begin{equation*}
\begin{aligned}
\Lambda(m|n,r)&=\{\lambda=(\lambda^{(0)}|\lambda^{(1)}) \mid\la\in\La(m+n,r)\}\\
&=\bigcup_{r_1+r_2=r}(\La(m,r_1)\times\La(n,r_2))\\
\end{aligned}
\end{equation*}
By identifying $\Lambda(m|n,r)$ with $\Lambda(m+n,r)$, the notations $W_\la$, $\sD_\la$, $\sD_{\la\mu}$, etc., are all well defined for all $\la,\mu\in\La(m|n,r)$.

\begin{notation} \label{xxx} For $\rho=(\rho_1,\rho_2,\ldots,\rho_t)\models r$, we will sometimes write the parabolic subgroups
$$W_\rho=W_{\rho_1}\times W_{\rho_2}\times\cdots \times W_{\rho_t}$$
where $W_{\rho_1}=\fS_{\{1,\ldots,\rho_1\}}$, $W_{\rho_2}=\fS_{\{\rho_1+1,\ldots,\rho_1+\rho_2\}}$, and so on. For $\la_{(i)},\mu_{(i)}\models\rho_i$, the notation $\sD_{\la_{(i)}\mu_{(i)}}$, etc., are defined relative to $W_{\rho_i}$. In particular, for
$\lambda=(\lambda^{(0)}|\lambda^{(1)})\in\Lambda(m|n,r)$, we make the following notational convention:
\begin{equation}\label{notation}
W_\la=
W_{\la^{(0)}}W_{\la^{(1)}}=W_{\la^{(0)}}\times W_{\la^{(1)}},
\end{equation}
 where
$W_{\lambda^{(0)}}\leq\fS_{\{1,2,\ldots,m\}}$
and
$W_{\lambda^{(1)}}\leq\fS_{\{m+1,\ldots,m+n\}}$ are the even and odd parts of $W_\la$, respectively.
\end{notation}

\begin{lemma}\label{00-11}
For $\lambda=(\lambda^{(0)}\mid\lambda^{(1)}),\mu=(\mu^{(0)}\mid\mu^{(1)})\in\Lambda(m|n,r)$,  $d\in\mathcal{D}_{\la}$, and $d'\in\mathcal{D}_{\mu}$
 let $W_{\la d\cap\mu d'}:=W_\la^d\cap W_\mu^{d'}$ and $W_{\la d\cap\mu d'}^{ij}=W_{\la^{(i)}}^d\cap W_{\mu^{(j)}}^{d'}$ for all $i,j\in\{0,1\}$.  Then
 \begin{equation*}
W_{\la d\cap\mu d'}=W_{\la d\cap\mu d'}^{00} \times W_{\la d\cap\mu d'}^{11}\times W_{\la d\cap\mu d'}^{01}\times
W_{\la d\cap\mu d'}^{10}.
\end{equation*}
Moreover, if $s\in S\cap W_{\la d\cap\mu d'}$, then $s\in W_{\la d\cap\mu d'}^{ij}$ for some $i,j\in\{0,1\}$.
In particular, any parabolic subgroup $W_\eta$ of $W_{\la d\cap\mu d'}$ is a product of parabolic subgroups $W_{\eta^{ij}}=W_\eta\cap W_{\la d\cap\mu d'}^{ij}$ with $i,j\in\{0,1\}$.
\end{lemma}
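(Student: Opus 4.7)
The plan is to reinterpret everything in terms of set partitions of $[1,r]$: any Young (parabolic) subgroup of $\fS_r$ is the stabiliser of a set partition of $[1,r]$, and the intersection of two Young subgroups is the Young subgroup of the common refinement of their set partitions.

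By Notation \ref{xxx}, $W_\lambda=W_{\lambda^{(0)}}\times W_{\lambda^{(1)}}$ is the stabiliser of a set partition of $[1,m+n]$ whose blocks carry a label $i\in\{0,1\}$ according as they are contained in $[1,m]$ or in $[m+1,m+n]$. Using the elementary identity $x^{-1}\fS_A x=\fS_{x^{-1}(A)}$, conjugation by $d$ pushes this to a labelled set partition $\sP^{\la,d}$ of $[1,r]$ with blocks $d^{-1}(B)$ of the same labels, and one has
$$W_{\lambda^{(i)}}^d=\prod_{\substack{B\in\sP^{\la,d}\\ \mathrm{label}(B)=i}}\fS_{B},\qquad W_\lambda^d=W_{\lambda^{(0)}}^d\times W_{\lambda^{(1)}}^d.$$
Define a labelled set partition $\sP^{\mu,d'}$ of $[1,r]$ analogously from $(\mu,d')$.

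Applying the common-refinement identity then gives
$$W_\lambda^d\cap W_\mu^{d'}=\prod_{C}\fS_C,$$
where $C$ runs over the blocks of the common refinement of $\sP^{\la,d}$ and $\sP^{\mu,d'}$. Each such $C$ lies in a unique block of $\sP^{\la,d}$ and in a unique block of $\sP^{\mu,d'}$, so carries a canonical label pair $(i(C),j(C))\in\{0,1\}^2$. Applying the same identity to $W_{\lambda^{(i)}}^d\cap W_{\mu^{(j)}}^{d'}$ identifies $W_{\la d\cap\mu d'}^{ij}$ with the product of those $\fS_C$ with $(i(C),j(C))=(i,j)$. Grouping the factors of $W_\lambda^d\cap W_\mu^{d'}$ according to the four possible label pairs then yields the claimed direct product decomposition.

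For the supplementary statements, if $s=s_k=(k,k+1)\in S\cap W_{\la d\cap\mu d'}$, then $\{k,k+1\}$ lies in a single common-refinement block $C$, so both indices share the label of $C$ and $s\in W_{\la d\cap\mu d'}^{i(C)j(C)}$. Finally, any parabolic subgroup $W_\eta\le W_{\la d\cap\mu d'}$ is itself a Young subgroup $\prod_A\fS_A$ whose set partition refines that of $W_{\la d\cap\mu d'}$; each block $A$ sits inside a unique $C$ and therefore inherits a unique $(i,j)$-label, and regrouping according to labels produces $W_\eta=\prod_{i,j}W_{\eta^{ij}}$. There is no genuine obstacle in the argument: once the labelled set-partition dictionary is in place, the whole lemma reduces to routine bookkeeping, and the one point worth stating explicitly is that conjugation respects the even/odd direct-factor structure of $W_\la$, which is immediate from $x^{-1}\fS_A x=\fS_{x^{-1}(A)}$.
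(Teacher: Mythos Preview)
Your proof is correct and takes a genuinely different route from the paper's. The paper declares the direct-product decomposition ``clear'' and concentrates on the simple-reflection claim, which it proves via a Coxeter length argument: if $s\in S\cap W_\la^d$ then $s':=dsd^{-1}\in W_\la$, and the distinguished property $d\in\mathcal{D}_\la$ gives $\ell(s')+\ell(d)=\ell(s'd)=\ell(ds)\le\ell(d)+1$, forcing $s'\in S$ and hence $s'\in W_{\la^{(0)}}$ or $s'\in W_{\la^{(1)}}$. Your set-partition/common-refinement argument sidesteps the length machinery altogether and in fact works for arbitrary $d,d'\in W$, not only distinguished representatives; it also supplies an explicit justification of the direct-product statement rather than merely asserting it. What the paper's route buys is that it makes visible exactly where the hypothesis $d\in\mathcal{D}_\la$ is used, which is relevant later when one needs $W_{\la d\cap\mu}$ to be a \emph{standard} parabolic. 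One small slip in your write-up: you call the initial set partition one of $[1,m+n]$ when it is a partition of $[1,r]$, and the $0/1$ label of a block is determined by whether its \emph{part index} lies in $[1,m]$ or $[m+1,m+n]$, not by where the block itself sits; this does not affect the argument.
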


\begin{proof} The assertion about direct product is clear. For $s\in S$, if $s\in W_\la^d$, then $s=d^{-1}s'd$ or $ds=s'd$ for some $s'\in W_\la$. Thus, $1+\ell(d)\leq\ell(s')+\ell(d)=\ell(ds)\leq 1+\ell(d)$ forces $\ell(s')=1$. Hence, $s'\in S$ and
$s'\in W_{\la^{(0)}}$ or $s'\in W_{\la^{(1)}}$ and so, $s\in W_{\la^{(0)}}^d$ or $s\in W_{\la^{(1)}}^d$.
Similarly, one proves that $s\in W_{\mu^{(0)}}^{d'}$ or $s\in W_{\mu^{(1)}}^{d'}$. Hence, $s\in W_{\la d\cap\mu d'}^{ij}$ for some $i,j\in\{0,1\}$.
The rest of the proof is clear.
\end{proof}
For $d\in\mathcal{D}_{\la\mu}$,  $W_{\la d\cap \mu}$ itself is a parabolic subgroup which
is decomposed into parabolic subgroups
 \begin{equation}\label{ladmu00-11}
W_{\la d\cap\mu}=W_{\la d\cap\mu}^{00} \times W_{\la d\cap\mu}^{11}\times W_{\la d\cap\mu}^{01}\times
W_{\la d\cap\mu}^{10}.
\end{equation}
In this case, the composition $\nu=\la d\cap \mu$ has the form $\nu=(\nu^1,\ldots,\nu^{m+n})$ with $\nu^i\in\La(m+n,\mu_i)$.

We say that $d$ satisfies the {\it even-odd trivial intersection property} if $W_{\la d\cap\mu}^{01}=1=W_{\la d\cap\mu}^{10}$. For $\lambda,\mu\in\Lambda(m|n,r)$, let
\begin{equation}\label{Dcirc}
\mathcal{D}^\circ_{\lambda\mu}=\{d\in\mathcal{D}_{\lambda\mu}\mid
W^d_{\lambda^{(0)}}\cap
W_{\mu^{(1)}}=1,W^d_{\lambda^{(1)}}\cap W_{\mu^{(0)}}=1\}.
\end{equation}
This set is the super version of the usual $\sD_{\la\mu}$.
For $d\in\mathcal{D}^\circ_{\lambda\mu}$, if we put
$W_{\nu^{(0)}}=W^d_{\lambda^{(0)}}\cap
W_{\mu^{(0)}},W_{\nu^{(1)}}=W^d_{\lambda^{(1)}}\cap
W_{\mu^{(1)}},\nu=(\nu^{(0)}|\nu^{(1)})$, then
$W_\nu=W_{\nu^{(0)}}\times W_{\nu^{(1)}}$. We have, in general, $\nu\in\La(m'|n',r)$ where
$m'=m(m+n)$ and $n'=n(m+n)$.

The Hecke algebra $\mathcal{H}_R=\sH_R(W)$ corresponding to $W=\fS_r$ is a free
$R$-module with basis $\{\mathcal{T}_w; w\in W\}$ and the
multiplication is defined by the rules: for $s\in S$,
\begin{equation}
\mathcal{T}_w\mathcal{T}_s=\left\{\begin{aligned} &\mathcal{T}_{ws},
&\mbox{if } \ell(ws)>\ell(w);\\
&(q-q^{-1})\mathcal{T}_w+\mathcal{T}_{ws}, &\mbox{otherwise}.
\end{aligned}
\right.
\end{equation}
Note that $\{T_w=q^{\ell(w)}\sT_w\}_{w\in W}$ is the usual defining basis satisfying relations
\begin{equation}
{T}_w{T}_s=\left\{\begin{aligned} &{T}_{ws},
&\mbox{if } \ell(ws)>\ell(w);\\
&(q^2-1)T_w+q^2{T}_{ws}, &\mbox{otherwise}.
\end{aligned}
\right.
\end{equation}

If $W'$ is a parabolic subgroup of $W$, then the $R$-module $\sum_{w\in
W'}R\mathcal{T}_w$ is a subalgebra of $\mathcal{H}_R$, which is
called a {\it parabolic subalgebra} of $\mathcal{H}_R$, denoted by
$\mathcal{H}_{W'}$. We will use the abbreviation
$\mathcal{H}_\lambda$ instead of $\mathcal{H}_{W_\lambda}$.

Let $W'$ be a parabolic subgroup of $W$ and let $d_{W'}$ denote the {\it Poincar\'e
polynomial} of $W'$, i.e.,
$$d_{W'}=d_{W'}(\bsu)=\sum_{w\in W'} \bsu^{\ell(w)}.$$ In particular, the Pioncar\'e polynomial of $W$ has the form $d_W=[\![r]\!]^{!}:=[\![1]\!][\![2]\!]\cdots[\![r]\!]$, where
$[\![i+1]\!]=1+\bsu+\bsu^2+\cdots+\bsu^i.$ For later use in \S9 define, for $0\leq t\leq s$, the Gaussian polynomials by
$$\left[\!\!\left[t\atop s\right]\!\!\right]=\frac{[\![s]\!]^!}{[\![t]\!]^![\![s-t]\!]^!}.$$
Thus, $d_W(q^2)=0=[\![l]\!]_{q^2}$ if $r\geq l$, where $q$ is a primitive $l$th root of 1 with $l$ odd.

For $\lambda=(\lambda^{(0)}\mid\lambda^{(1)})\in\Lambda(m|n,r)$,
define (see \eqref{notation})
$$x_{\lambda^{(0)}}=\sum_{w\in W_{\lambda^{(0)}}}T_w,\qquad y_{\lambda^{(1)}}=\sum_{w\in
W_{\lambda^{(1)}}}(-q^2)^{-\ell(w)}T_w,$$ where
$T_w=q^{\ell(w)}\mathcal{T}_w$.  We call the module $x_{\lambda^{(0)}}y_{\lambda^{(1)}}\mathcal{H}_{R}$ a {\it signed $q$-permutation module} (cf. \cite{D}).

\begin{definition}\label{S(m|n,r)} Let $\fT_R(m|n,r)=\bigoplus_{\lambda\in\Lambda(m|
n,r)}x_{\lambda^{(0)}}y_{\lambda^{(1)}}\mathcal{H}_{R}.$
The algebra
$$\sS_R(m|n,r):=
\End_{\mathcal{H}_R}(\fT_R(m|n,r))$$ is called a $q$-{\it Schur superalgebra} over
$R$ on which the $\mathbb Z_2$-graded structure is induced from the $\mathbb Z_2$-graded structure on $\fT_R(m|n,r)$ with
$$\fT_R(m|n,r)_i=\bigoplus_{{\lambda\in\Lambda(m|
n,r)}\atop {|\la^{(1)}|\equiv i(\text{mod}2)}}x_{\lambda^{(0)}}y_{\lambda^{(1)}}\mathcal{H}_{R} \;\;(i=0,1).$$

If $R=\sZ:={\mathbb Z}[\up,\up^{-1}]$, we simply write $\sS(m|n,r)$, $\fT(m|n,r)$, etc. for $\sS_\sZ(m|n,r)$, $\fT_\sZ(m|n,r)$, etc.
\end{definition}
Note that it is proved in \cite{DR} that $\sS_R(m|n,r)\cong\sS(m|n,r)\otimes_\sZ R$.

Following \cite{DR}, define, for $\lambda,\mu\in\Lambda(m|n,r)$ and
$d\in\mathcal{D}^\circ_{\lambda\mu}$,
$$T_{W_\lambda d W_\mu}:=\sum_{\substack{w_0w_1
\in W_\mu\cap \mathcal{D}_\nu,\\
w_0\in W_{\mu^{(0)}},w_1\in W_{\mu^{(1)}}}}(-q^2)^{-\ell(w_1)}x_{\lambda^{(0)}}y_{\lambda^{(1)}}T_dT_{w_0}T_{w_1}.$$
There exists $\mathcal{H}_{R}$-homomorphism
$\psi^d_{\lambda\mu}$ such that
$$(x_{\alpha^{(0)}}y_{\alpha^{(1)}}h)\psi_{\mu,\la}^d=\delta_{\mu,\alpha}T_{W_\lambda d
W_\mu}h, \forall \alpha\in\Lambda(m|
n,r),h\in\mathcal{H}_R.$$
Note that we changed the left hand notation $\phi^d_{\lambda\mu}$ used in \cite[(5.7.1)]{DR}
to the right hand notation $\psi^d_{\mu,\la}$ here for notational simplicity later on.\footnote{In this paper, for the Hom spaces $\Hom_{\sH_\la}(M,N)$ of {\it right} $\sH_\la$-modules $M,N$, the right hand function notation will be used to easily read the induced bimodule structure on the space: $(m)(xfy)=((m.x)f).y$ with $x,y\in\sH_\la,m\in M$, and $f\in\Hom_{\sH_\la}(M,N)$.}

The following result is given in {\cite[5.8]{DR}}. We will provide a different proof below in \S5 by using relative norms.
\begin{lemma}\label{DR5.8} The set $\{\psi^d_{\mu\lambda}\mid
\lambda,\mu\in\Lambda(m|n, r), d\in\mathcal{D}^\circ_{\lambda\mu}\}$
forms a $R$-basis for $\sS_R(m|n,r )$.
\end{lemma}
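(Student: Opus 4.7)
The plan is to exploit the direct sum decomposition
\begin{equation*}
\sS_R(m|n,r) = \bigoplus_{\la,\mu\in\La(m|n,r)} \Hom_{\sH_R}(M^\mu, M^\la),\qquad M^\nu:=x_{\nu^{(0)}}y_{\nu^{(1)}}\sH_R,
\end{equation*}
in which $\psi^d_{\mu\la}$ sits in the $(\la,\mu)$-summand. The task therefore reduces to proving, for each fixed $(\la,\mu)$, that $\{\psi^d_{\mu\la} : d\in\sD^\circ_{\la\mu}\}$ is an $R$-basis of $\Hom_{\sH_R}(M^\mu,M^\la)$.

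First I would identify this Hom space with twisted invariants in $M^\la$. Since $M^\mu$ is cyclic with generator $e_\mu:=x_{\mu^{(0)}}y_{\mu^{(1)}}$, evaluation at $e_\mu$ identifies $\Hom_{\sH_R}(M^\mu,M^\la)$ with
\begin{equation*}
V_{\la,\mu} := \bigl\{\xi\in M^\la : \xi T_s = q^2\xi\ (s\in W_{\mu^{(0)}}\cap S),\ \xi T_s = -\xi\ (s\in W_{\mu^{(1)}}\cap S)\bigr\},
\end{equation*}
using the standard Hecke identities $x_\mu T_s = q^2 x_\mu$, $y_\mu T_s = -y_\mu$ and the commutation of $\sH_{W_{\mu^{(0)}}}$ with $\sH_{W_{\mu^{(1)}}}$ (disjoint supports in $\fS_r$). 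For $d\in\sD^\circ_{\la\mu}$ one checks by direct Hecke calculation that $T_{W_\la d W_\mu}\in V_{\la,\mu}$, using $d\in\sD_{\la\mu}$ and the factorisation $W_\la=W_{\la^{(0)}}\times W_{\la^{(1)}}$; this is the well-definedness of $\psi^d_{\mu\la}$. Linear independence of the $\psi^d_{\mu\la}$ is then automatic, since $T_{W_\la d W_\mu}$ is $R$-supported on the single double coset $W_\la dW_\mu$ in the $T_w$-basis, and distinct such cosets are disjoint in $W$.

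Spanning is the hard step. Given $\xi\in V_{\la,\mu}$, decompose $\xi=\sum_{d\in\sD_{\la\mu}}\xi^{(d)}$ along the double coset partition $W=\bigsqcup_d W_\la dW_\mu$; right multiplication by $T_s$ ($s\in W_\mu$) preserves each coset, so every $\xi^{(d)}$ inherits the twisted-invariance relations. For $d\notin\sD^\circ_{\la\mu}$, pick a basic reflection $s\in W^d_{\la^{(0)}}\cap W_{\mu^{(1)}}$: from $s\in W_{\mu^{(1)}}$, one has $\xi^{(d)} T_s = -\xi^{(d)}$. On the other hand, writing $s':=dsd^{-1}\in W_{\la^{(0)}}\cap S$ and invoking $T_dT_s = T_{s'}T_d$ (valid for $d\in\sD_{\la\mu}$) together with $e_\la T_{s'} = q^2 e_\la$ and the commutation of $T_{s'}$ with $y_{\la^{(1)}}$, one finds $\xi^{(d)} T_s = q^2\xi^{(d)}$. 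Hence $(q^2+1)\xi^{(d)}=0$, and since $l$ odd with $\operatorname{char}R\ne 2$ gives $q^2\ne -1$, $\xi^{(d)}=0$; the mirror argument disposes of $d$ with $W^d_{\la^{(1)}}\cap W_{\mu^{(0)}}\ne 1$. For $d\in\sD^\circ_{\la\mu}$, the invariance relations leave at most one scalar degree of freedom in $\xi^{(d)}$, which is exhibited by $T_{W_\la d W_\mu}$.

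The main obstacle, and the place where the odd-$l$ and $\operatorname{char}R\ne 2$ hypotheses are genuinely needed, is precisely the sign/scalar clash that forces $(q^2+1)\xi^{(d)}=0$ at non-$\sD^\circ$ double cosets. Tracking this clash correctly requires care in passing $T_s$ through $T_d$ via $T_dT_s=T_{s'}T_d$ and in using the commutation of the even and odd parts of $e_\la$ and $e_\mu$; once one sees that the mixed intersections $W^{01}_{\la d\cap\mu}$ and $W^{10}_{\la d\cap\mu}$ of Lemma~\ref{00-11} are precisely what trigger the clash, everything else is routine Hecke bookkeeping.
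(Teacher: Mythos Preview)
Your overall strategy---decomposing into $\Hom_{\sH_R}(M^\mu,M^\la)$, identifying this with twisted invariants, and splitting along double cosets---is exactly the right one, and the key ``sign clash'' idea for killing the $d\notin\sD^\circ_{\la\mu}$ pieces is the same mechanism the paper exploits. But there is a genuine gap in your execution of that step.

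You assert that for $s\in W^d_{\la^{(0)}}\cap W_{\mu^{(1)}}$ one has $\xi^{(d)}T_s=q^2\xi^{(d)}$, and justify this via $T_dT_s=T_{s'}T_d$ and $e_\la T_{s'}=q^2e_\la$. That calculation only shows $e_\la T_dT_s=q^2e_\la T_d$; it says nothing about a general $\xi^{(d)}=e_\la T_d h$ with $h\in\sH_\mu$, since $T_s$ need not commute with $h$. The double-coset piece $e_\la T_d\sH_\mu$ is typically multi-dimensional, and right multiplication by $T_s$ does not act as the scalar $q^2$ on all of it. Similarly, your final sentence (``at most one scalar degree of freedom'' for $d\in\sD^\circ_{\la\mu}$) is asserted, not proved; this too is a real step.

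Both gaps are repaired by one reduction: recognise $e_\la T_d\sH_\mu$ as the induced module $(Re_\la T_d)\otimes_{\sH_\nu}\sH_\mu$ (with $W_\nu=W_\la^d\cap W_\mu$), so that by Frobenius reciprocity the space of twisted-$\sH_\mu$-invariants inside it identifies with $\Hom_{\sH_\nu}(R_\chi|_{\sH_\nu},\,Re_\la T_d)$, a Hom between two rank-one $\sH_\nu$-modules. Now the sign clash is legitimate: comparing the two one-dimensional characters on a simple reflection $s\in W^d_{\la^{(0)}}\cap W_{\mu^{(1)}}$ forces $-1=q^2$, hence the Hom vanishes; and for $d\in\sD^\circ_{\la\mu}$ the characters agree, giving rank exactly one. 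This is precisely the route taken in the paper's proof of Theorem~\ref{RNB}, phrased there on the tensor-space side via the Nakayama relation and Mackey decomposition (and then translated back to the $\psi$-basis by Theorem~\ref{RTSB}).
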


\begin{remark}\label{m'|n' case} For $m\leq m'$ and $n\leq n'$, we may embed $\La(m|n,r)$ into $\La(m'|n',r)$ by adding zeros at the end of each of the two sequences $\la^{(0)}$ and $\la^{(1)}$ for every $\la\in\La(m|n,r)$. Let $\vep=\sum_{\la\in\La(m|n,r)}\psi_{\la\la}^1$. Then $\sS(m|n,r)\cong \vep\sS(m'|n',r)\vep$. Thus, we simply regard $\sS(m|n,r)$ as a centralizer subalgebra of $\sS(m'|n',r)$.
\end{remark}

\begin{proposition}[{\cite[7.5,8.1]{DR}}]\label{DR8.1} Let $R=\mathbb{Q}(\up)$.

(1) The non-isomorphic irreducible $\sS_R(m|n,r)$-modules are indexed by the set
$$\La^+(r)_{m|n}=\{(\la_1,\la_2,\ldots)\in\La^+(r)\mid \la_{m+1}\leq n\}.$$

(2) Assume $m+n\geq r$.
 The $\sS_R(m|n,r )$-$\mathcal{H}_R$ bimodule
structure $\fT_R(m|n,r )$ satisfies the following double centralizer
property
$$\sS_R(m|n,r )=\End_{\mathcal{H}_R}(\fT_R(m|
n,r ))\mbox{ and } \mathcal{H}_R\cong \End_{\sS_R(m|
n,r )}(\fT_R(m|n,r )).$$ Moreover,  there is a
category equivalence
$$\Hom_{\mathcal{H}_R}(-,\fT_R(m|
n,r)):\modh\mathcal{H}_R\longrightarrow \sS_R(m|
n,r )\hmod.$$
\end{proposition}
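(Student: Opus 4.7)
The two parts are handled separately.

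For part (1), my approach exploits the fact that $R=\mathbb{Q}(\up)$ makes the Hecke algebra $\mathcal{H}_R$ split semisimple, with simples $\{S^\mu\mid\mu\in\La^+(r)\}$ (the $q$-Specht modules). Since $\sS_R(m|n,r)=\End_{\mathcal{H}_R}(\fT_R(m|n,r))$ is the endomorphism algebra of a semisimple module, it is itself semisimple, and its simple modules are in bijection with those $S^\mu$ occurring as summands of $\fT_R(m|n,r)$; the bijection sends such a summand to $L(\mu):=\Hom_{\mathcal{H}_R}(S^\mu,\fT_R(m|n,r))$. The heart of part (1) is therefore to pin down exactly which $S^\mu$ occur.

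To identify this set, I would realise $\fT_R(m|n,r)\cong V_R(m|n)^{\otimes r}$ as a right $\mathcal{H}_R$-module via the standard weight-space isomorphism matching $x_{\la^{(0)}}y_{\la^{(1)}}\mathcal{H}_R$ with the $\la$-weight summand of the tensor superspace, and then invoke the quantum super Schur--Weyl decomposition
$$V_R(m|n)^{\otimes r}\;\cong\;\bigoplus_{\mu\in\La^+(r)_{m|n}}L(\mu)\otimes S^\mu$$
as $(\sS_R(m|n,r),\mathcal{H}_R)$-bimodule. The hook condition $\mu_{m+1}\leq n$ appears because it is precisely the condition that $\mu$ fits inside the $(m|n)$-hook, equivalently that there exist $(m|n)$-semistandard supertableaux of shape $\mu$. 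Alternatively, one can proceed combinatorially: check that $S^\mu$ appears in the signed $q$-permutation module $x_{\la^{(0)}}y_{\la^{(1)}}\mathcal{H}_R$ iff $\mu$ satisfies a Littlewood--Richardson-type constraint relative to $(\la^{(0)}|\la^{(1)})$, and verify that the union over $\la\in\La(m|n,r)$ of the resulting $\mu$'s equals $\La^+(r)_{m|n}$.

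For part (2), assume $m+n\geq r$. I would first verify that every $\mu\vdash r$ lies in $\La^+(r)_{m|n}$: since $\mu_1\geq\mu_2\geq\cdots\geq\mu_{m+1}$, one has $(m+1)\mu_{m+1}\leq|\mu|=r\leq m+n$, so $\mu_{m+1}\leq(m+n)/(m+1)\leq n$ (the degenerate cases $m=0$ or $n=0$ are trivial). Hence $\fT_R(m|n,r)$ is a faithful semisimple right $\mathcal{H}_R$-module containing every simple. The identity $\sS_R(m|n,r)=\End_{\mathcal{H}_R}(\fT_R(m|n,r))$ is by definition, and the classical double centralizer theorem, applied to a faithful semisimple bimodule, then gives $\End_{\sS_R(m|n,r)}(\fT_R(m|n,r))\cong\mathcal{H}_R$. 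Finally, because $\fT_R(m|n,r)$ is faithful over a semisimple algebra it is automatically a progenerator, so standard Morita theory delivers the category equivalence $\Hom_{\mathcal{H}_R}(-,\fT_R(m|n,r))\colon\modh\mathcal{H}_R\longrightarrow\sS_R(m|n,r)\hmod$.

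The main obstacle is part (1): establishing the Schur--Weyl-type decomposition with exactly the hook-partition index set $\La^+(r)_{m|n}$. This demands either the full apparatus of quantum super Schur--Weyl duality for $\UU_q(\mathfrak{gl}(m|n))$ acting on $V_R(m|n)^{\otimes r}$, or a careful combinatorial analysis of the Specht content of the signed $q$-permutation modules $x_{\la^{(0)}}y_{\la^{(1)}}\mathcal{H}_R$. Once that representation-theoretic input is in hand, part (2) reduces to a routine application of the double centralizer theorem and Morita equivalence in the semisimple setting.
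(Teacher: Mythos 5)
Your overall strategy is sound and, where the present paper simply cites this result from \cite{DR}, you have supplied an independent route.  Part~(2) is essentially complete as written: once one knows every $\mu\vdash r$ lies in $\La^+(r)_{m|n}$ when $m+n\geq r$ (your inequality $(m+1)\mu_{m+1}\leq|\mu|=r\leq m+n$, with the $m=0$, $n=0$ edge cases handled separately, does this), the tensor superspace is a faithful semisimple module over a split semisimple algebra containing every simple once, and the double centraliser and Morita statements follow from standard generalities; no further input is needed.  For Part~(1) the reduction is also correct: simple $\sS_R(m|n,r)$-modules $\leftrightarrow$ isotypic components of $\fT_R(m|n,r)$, and the issue is exactly which Specht modules $S^\mu$ occur.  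But be aware that the step you flag as ``invoke the quantum super Schur--Weyl decomposition'' is not a routine lemma --- it is the entire content of Part~(1).  The bimodule decomposition $V_R(m|n)^{\otimes r}\cong\bigoplus_{\mu\in\La^+(r)_{m|n}}L(\mu)\otimes S^\mu$, equivalently the identification of the $(m|n)$-hook condition $\mu_{m+1}\leq n$ with the Specht content of the signed $q$-permutation modules, must itself be proved (this is Mitsuhashi's and Moon's quantum super Schur--Weyl theorem, or the equivalent combinatorial Littlewood--Richardson analysis you sketch as the alternative).  As a blind proof this is acceptable provided you treat that decomposition as a cited theorem; what you have really shown is that given the super Schur--Weyl theorem, Proposition~\ref{DR8.1} is formal.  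That is a genuinely different route from the one in \cite{DR}, which the introduction of the present paper describes as a Kazhdan--Lusztig cell approach; your route is more classical representation-theoretic and arguably more transparent once the Schur--Weyl input is granted, while the cell approach builds the standard basis combinatorics needed elsewhere in that paper.
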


We now describe $\sS_R(m|n,r )$ in terms of tensor superspaces.

Let $V_R(m|n)$ be a free $R$-module of rank $m+n$
with basis $v_1,v_2,\cdots,v_{m+n}$. The {\it parity map}, by setting
$\hat{i}={0}$ if $1\leq i\leq m$, and $\hat{i}={1}$
otherwise, gives $V_R(m|n)$ a $\mathbb{Z}_2$-graded structure:  $V_R(m|n)=V_0\oplus V_1$, where
$V_0$ is spanned by $v_1,\cdots,v_m$, and $V_1$ is spanned by
$v_{m+1},\cdots,v_{m+n}$. Thus, $V_R(m|n) $ becomes a ``superspace''.

Let
$$I(m|n,r)=\{\bsi=(i_1,i_2,\cdots,i_r)\in \mathbb{N}^r\mid 1\leq
i_j\leq m+n,\forall j\}.$$

For each $\lambda=(\lambda^{(0)}\mid\lambda^{(1)} )\in \Lambda(m|
n,r)$, define $\bsi_\lambda\in I(m|n,r)$ by
$$
\bsi_\lambda=(\underbrace{1,\ldots,1}_{\la^{(0)}_1},\ldots,\underbrace{m,\ldots,m}_{\la^{(0)}_m},\underbrace{m+1,\ldots,m+1}_{\la^{(1)}_1}\ldots,\underbrace{m+n,\ldots,m+n}_{\la^{(1)}_n}).$$ For convenience, denote
$v_\lambda:=v_{\bsi_\lambda}$.

The symmetric group $W=\fS_r$ acts on $I(m|n, r)$ by place
permutation. For $w\in W, \bsi\in I(m|n,r)$
$$\bsi w=(i_{w(1)},i_{w(2)},\cdots,i_{w(r)}).$$

For each $\bsi\in I(m|n,r)$, define $\lambda\in \Lambda (m|n,r)$
to be the weight $wt(\bsi)$ of $v_\bsi$ be setting $\lambda_k=\#\{k\mid
i_j=k,1\leq j\leq
r\},\lambda^{(0)}=(\lambda_1,\lambda_2,\cdots,\lambda_m),\lambda^{(1)}=(\lambda_{m+1},\lambda_{m+2},\cdots,\lambda_{m+n}).$

For $\bsi=(i_1,i_2,\cdots,i_r)\in I(m|
n,r)$, let
$$v_\bsi=v_{i_1}\otimes v_{i_2}\otimes\cdots \otimes v_{i_r}=v_{i_1}v_{i_2}\cdots v_{i_r}.$$
Clearly, the set $\{v_\bsi\}_{\bsi\in I(m|n,r)}$ form a basis of $V_R(m|
n)^{\otimes r}$.

Following \cite{M}, $V_R(m|n,r)^{\otimes r}$ is a right $\sH_R$-module with the following action:
\begin{equation}\label{action2}
v_\bsi\mathcal{T}_{s_k}=
\begin{cases}
(-1)^{\widehat{i_k}\widehat{i_{k+1}}} v_{\bsi s_k},
&\mbox{ if }
i_k<i_{k+1};\\
q v_\bsi, &\mbox{ if } i_k=i_{k+1}\leq m;\\
(-q^{-1}) v_\bsi, &\mbox{ if } i_k=i_{k+1}\geq m+1;\\
(-1)^{\widehat{i_k}\widehat{i_{k+1}}} v_{\bsi
s_k}+(q-q^{-1}) v_\bsi,&\mbox{ if } i_k>i_{k+1}.
\end{cases}
\end{equation}
where $s_k=(k,k+1)$.

Note that when $n=0$, this action coincides with the action on the usual tensor space as given in \cite[(14.6.4)]{DDPW}, commuting with the action of quantum $\mathfrak{gl}_n$.

\begin{definition}\label{dhat}
For $\lambda\in\Lambda(m|n,r), d\in \mathcal{D}_\lambda$, if
$\bsi_\lambda d=(i_1,i_2,\cdots,i_r)$, define
$$(\la,d)^\wedge =\sum_{k=1}^{r-1}\sum_{k<l,i_k>i_l}\widehat{i_k}\widehat{i_l}.$$
When $\la$ is clear from the context, we write $\whd=(\la,d)^\wedge$.
\end{definition}
The following relation will be repeatedly used in the sequel.
\begin{equation}\label{diaoyudao}
(-1)^{\whd}(-1)^{\widehat{i_k}\widehat{i_{k+1}}}=(-1)^{\widehat{ds_k}}\text{ for all }
d\in\mathcal{D}_\lambda, s_k\in S\text{ with }ds_k\in\mathcal{D}_\lambda.
\end{equation}

%For any $\bsi\in I(m|n,r)$ with $wt(\bsi)=\lambda$, we have
%$\bsi=\bsi_\lambda d$ where $d$ is the unique element in
%$\mathcal{D}_\lambda$. Write $\bsi=(i_1,i_2,\cdots,i_r)$ and
%$\bsi_\lambda=(j_1,j_2,\cdots,j_r)$. Then $i_k=j_{d{k}}$ for all $k$.
%By definition, we have $\widehat{j_k}=0$ if $k\leq |\lambda^{(0)}|$ and
%$\widehat{j_k}=1$ if $k>|\lambda^{(0)}|$. Aslo, $j_k\leq j_l$ where $k\leq l$.

\begin{lemma}[{\cite[8.3, 8.4]{DR}}]\label{iso f}
The right
$\mathcal{H}_R$-module $V_R(m|n)^{\otimes r}$  is isomorphic to the
$\mathcal{H}_R$-module $\fT_R(m|n,r)$ defined in Definition \ref{S(m|n,r)} under the map:
\begin{equation*}
 f:V_R(m| n)^{\otimes r}\longrightarrow
\bigoplus_{\lambda\in\Lambda(m|
n,r)}x_{\lambda^{(0)}}y_{\lambda^{(1)}}\mathcal{H}_R,\quad(-1)^{\hat{d}}v_{\bsi_\lambda d}\longmapsto
x_{\lambda^{(0)}}y_{\lambda^{(1)}}\mathcal{T}_d.
\end{equation*}
This isomorphism induces a superalgebra isomorphism
$$\sS_R(m| n,r)\cong \End_{\mathcal{H}_R}(V_R(m| n)^{\otimes
r}),$$
sending $g\in\sS_R(m|n,r)$ to $fgf^{-1}$ for all $g\in\sS_R(m|n,r)$.
\end{lemma}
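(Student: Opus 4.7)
The plan is to verify that $f$ is a well-defined $R$-module isomorphism by matching bases on the two sides, then check $\mathcal{H}_R$-linearity on the generators $\mathcal{T}_{s_k}$, and finally pass to endomorphism algebras. For the $R$-module isomorphism: every $\bsi\in I(m|n,r)$ has a unique weight $\lambda\in\Lambda(m|n,r)$ and a unique $d\in\mathcal{D}_\lambda$ with $\bsi=\bsi_\lambda d$, so $\{v_{\bsi_\lambda d}\}_{\lambda,\,d\in\mathcal{D}_\lambda}$ is an $R$-basis of $V_R(m|n)^{\otimes r}$. On the other side, the Hecke absorption relations $x_{\lambda^{(0)}}\mathcal{T}_s=q\,x_{\lambda^{(0)}}$ for $s\in S\cap W_{\lambda^{(0)}}$ and $y_{\lambda^{(1)}}\mathcal{T}_{s'}=-q^{-1}y_{\lambda^{(1)}}$ for $s'\in S\cap W_{\lambda^{(1)}}$, together with the fact that $W_{\lambda^{(0)}}$ and $W_{\lambda^{(1)}}$ centralise each other in $W$, show that $\{x_{\lambda^{(0)}}y_{\lambda^{(1)}}\mathcal{T}_d\}_{d\in\mathcal{D}_\lambda}$ is an $R$-basis of $x_{\lambda^{(0)}}y_{\lambda^{(1)}}\mathcal{H}_R$. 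Since $(-1)^{\widehat d}$ is a unit, $f$ is a bijection of $R$-bases, hence an $R$-linear isomorphism; it respects the $\mathbb{Z}_2$-grading because $v_{\bsi_\lambda d}$ has parity $|\lambda^{(1)}|\bmod 2$, in agreement with the grading on $\fT_R(m|n,r)$.

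To prove $\mathcal{H}_R$-linearity it suffices to verify $f(v_{\bsi_\lambda d}\mathcal{T}_{s_k})=f(v_{\bsi_\lambda d})\mathcal{T}_{s_k}$ for all $\lambda$, $d\in\mathcal{D}_\lambda$ and all simple generators $s_k\in S$. Writing $\bsi_\lambda d=(i_1,\ldots,i_r)$, I split into four subcases matching \eqref{action2}. If $i_k<i_{k+1}$, then $ds_k\in\mathcal{D}_\lambda$ with $\ell(ds_k)=\ell(d)+1$, so $\mathcal{T}_d\mathcal{T}_{s_k}=\mathcal{T}_{ds_k}$ and the identity reduces directly to the sign relation \eqref{diaoyudao}. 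If $i_k=i_{k+1}$, then $s':=ds_kd^{-1}\in W_\lambda$ satisfies $\ell(s')+\ell(d)=\ell(s'd)=\ell(ds_k)=\ell(d)+1$ by length additivity for $d\in\mathcal{D}_\lambda$, forcing $\ell(s')=1$; when $i_k\le m$ we have $s'\in W_{\lambda^{(0)}}$ and absorption delivers the scalar $q$, while when $i_k\ge m+1$ we have $s'\in W_{\lambda^{(1)}}$ and absorption delivers $-q^{-1}$, using also that $W_{\lambda^{(0)}}$ commutes with $W_{\lambda^{(1)}}$ so the irrelevant factor passes through. If $i_k>i_{k+1}$, then $ds_k\in\mathcal{D}_\lambda$ with $\ell(ds_k)=\ell(d)-1$, so $\mathcal{T}_d\mathcal{T}_{s_k}=(q-q^{-1})\mathcal{T}_d+\mathcal{T}_{ds_k}$; the diagonal term matches the $(q-q^{-1})v_{\bsi_\lambda d}$ contribution directly, and the off-diagonal term matches $(-1)^{\widehat{i_k}\widehat{i_{k+1}}}v_{\bsi_\lambda ds_k}$ via \eqref{diaoyudao}, exactly as in the first subcase.

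Finally, applying $\End_{\mathcal{H}_R}(-)$ to the $\mathcal{H}_R$-module isomorphism $f$ yields the algebra isomorphism $\sS_R(m|n,r)\cong\End_{\mathcal{H}_R}(V_R(m|n)^{\otimes r})$ via $g\mapsto fgf^{-1}$; since $f$ is homogeneous of degree zero, this is an isomorphism of $\mathbb{Z}_2$-graded algebras. The main obstacle I anticipate is the sign bookkeeping in the $i_k\neq i_{k+1}$ cases, which is cleanly packaged in the identity \eqref{diaoyudao}; a secondary subtlety in the equal-entry cases is recognising that $ds_kd^{-1}$ is a simple reflection rather than a general transposition in $W_\lambda$, a fact supplied by the distinguished coset length additivity.
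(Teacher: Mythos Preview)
Your proof is correct and follows essentially the same approach as the paper: a four-case verification of $\mathcal{H}_R$-linearity on the generators $\mathcal{T}_{s_k}$, with the sign bookkeeping in the $i_k\neq i_{k+1}$ cases handled by the identity \eqref{diaoyudao}. You supply more detail than the paper (the basis-matching argument, the reason $ds_kd^{-1}$ is simple, and the grading check), but the structure of the argument is the same.
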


\begin{proof}
The action on a signed $q$-permutation module is given by
\begin{equation}
x_{\lambda^{(0)}}y_{\lambda^{(1)}}\mathcal{T}_d\mathcal{T}_{s_k}=
\begin{cases}
x_{\lambda^{(0)}}y_{\lambda^{(1)}}\mathcal{T}_{ds_k},&\mbox{ if
}ds_k\in\mathcal{D}_\lambda;\\
qx_{\lambda^{(0)}}y_{\lambda^{(1)}}\mathcal{T}_d, &\mbox{ if }
ds_k=s_ld, s_l\in W_{\lambda^{(0)}};\\
-q^{-1}x_{\lambda^{(0)}}y_{\lambda^{(1)}}\mathcal{T}_d, &\mbox{ if
} ds_k=s_ld, s_l\in W_{\lambda^{(1)}};\\
x_{\lambda^{(0)}}y_{\lambda^{(1)}}\mathcal{T}_{ds_k}+(q-q^{-1})x_{\lambda^{(0)}}y_{\lambda^{(1)}}\mathcal{T}_d,&\mbox{
if } ds_k<d.
\end{cases}
\end{equation}
Now, the relation
$(-1)^{\hat{d}}(-1)^{\widehat{i_k}\widehat{i_{k+1}}}=(-1)^{\widehat{ds_k}}$ implies that
$f$
is a right $\mathcal{H}_R$-module homomorphism.\end{proof}

From now on, we will identify
$\sS_R(m| n,r)$ with $\End_{\mathcal{H}_R}(V_R(m| n)^{\otimes r})$.

\section{Relative norms: the first properties}

In 1977, P. Hoefsmit and L. Scott introduced the notion of relative norms, which is the $q$-analogue of relative traces in group representations (see, e.g., \cite{Lan}) and use it to investigate the representation theory of Hecke algebras.
The following material is taken from their unpublished manuscript. A proof can now be found in \cite{J}.

\begin{definition}\label{rel norm}
(1) Let $\lambda,\mu$ be the compositions of $r$ such that
$W_\lambda\leq W_\mu$. Let $M$ be an
$\mathcal{H}_\mu$-$\mathcal{H}_\mu$-bimodule and $b\in M$. Define the {\it
relative norm}
$$N_{W_\mu,W_\lambda}(b)=\sum_{w\in\mathcal{D}_\lambda\cap W_\mu}
\mathcal{T}_{w^{-1}}b\mathcal{T}_w.$$

(2) For an $\mathcal{H}_\lambda$-$\mathcal{H}_\lambda$-bimodule $M$, we
define
$$Z_M(\mathcal{H}_\lambda)=\{m\in M\mid hm=mh,\forall h\in \mathcal{H}_\lambda\}.$$\end{definition}

It is easy to see that
$Z_{\mathcal{H}_\lambda}(\mathcal{H}_\lambda)$ is the center of
$\mathcal{H}_\lambda$ and, for $M=\Hom_R(N,N)$ where $N$ is
a right $\mathcal{H}_\lambda$-module,
$Z_M(\mathcal{H}_\lambda)=\Hom_{\mathcal{H}_\lambda}(N,N)$.
In particular, if $M=\End_R(V_R(m|n)^{\otimes r})$, then $Z_M(\mathcal{H}_R)=\sS_R(m|n,r)$.

We first list some properties of relative norms.

\begin{lemma}\label{RNT}
Let $M$ be an $\mathcal{H}_R$-$\mathcal{H}_R$-bimodule and
let $W_\lambda$ and $W_\mu$ be parabolic subgroups of $W$.
\begin{itemize}
\item[(a)] (Transitivity) If $W_\mu\leq W_\lambda$ and $b\in M$, then
$$N_{W,W_\mu}(N_{W_\mu,W_\lambda}(b))=N_{W,W_\lambda}(b).$$

\item[(b)] $N_{W,W_\lambda}(Z_M(\mathcal{H}_\lambda))\subseteq
Z_M(\mathcal{H}_{R}).$

\end{itemize}
\end{lemma}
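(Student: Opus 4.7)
I read the hypothesis in (a) as $W_\la\le W_\mu$ (the ordering required for $\sD_\la\cap W_\mu$ to enumerate the distinguished representatives of $W_\la\backslash W_\mu$; under the opposite inclusion the intersection collapses to $\{1\}$ and the iterated norm would be degenerate). My plan is then to invoke the standard factorization of distinguished coset representatives: whenever $W_\la\le W_\mu\le W$, every $w\in\sD_\la$ factors uniquely as $w=xy$ with $x\in\sD_\la\cap W_\mu$, $y\in\sD_\mu$, and $\ell(w)=\ell(x)+\ell(y)$, and conversely every such product lies in $\sD_\la$. Length additivity gives $\mathcal{T}_{y^{-1}}\mathcal{T}_{x^{-1}}=\mathcal{T}_{(xy)^{-1}}$ and $\mathcal{T}_x\mathcal{T}_y=\mathcal{T}_{xy}$, so expanding
\[
N_{W,W_\mu}\bigl(N_{W_\mu,W_\la}(b)\bigr)
=\sum_{y\in\sD_\mu}\sum_{x\in\sD_\la\cap W_\mu}\mathcal{T}_{y^{-1}}\mathcal{T}_{x^{-1}}\,b\,\mathcal{T}_x\mathcal{T}_y
\]
and reindexing by $w=xy\in\sD_\la$ delivers $N_{W,W_\la}(b)$.

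For (b), fix $b\in Z_M(\sH_\la)$ and set $N=N_{W,W_\la}(b)$. Since the $\mathcal{T}_s$ with $s\in S$ generate $\sH_R$, it suffices to verify $N\mathcal{T}_s=\mathcal{T}_sN$ for each $s\in S$. I would expand $N\mathcal{T}_s$ and, for each $w\in\sD_\la$, distinguish three mutually exclusive cases for how $s$ acts on the coset $W_\la w$: (i) $ws\in\sD_\la$ with $\ell(ws)>\ell(w)$; (ii) $ws\in\sD_\la$ with $\ell(ws)<\ell(w)$; (iii) $ws\notin\sD_\la$, equivalently $ws=s'w$ for a unique $s'\in S\cap W_\la$ with $\ell(ws)=\ell(w)+1$. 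The pivotal step is case (iii): from length additivity one has $\mathcal{T}_w\mathcal{T}_s=\mathcal{T}_{s'}\mathcal{T}_w$ and $\mathcal{T}_{w^{-1}}\mathcal{T}_{s'}=\mathcal{T}_s\mathcal{T}_{w^{-1}}$, and combining with the centralizing hypothesis $b\mathcal{T}_{s'}=\mathcal{T}_{s'}b$ yields
\[
\mathcal{T}_{w^{-1}}b\,\mathcal{T}_w\mathcal{T}_s
=\mathcal{T}_{w^{-1}}\mathcal{T}_{s'}\,b\,\mathcal{T}_w
=\mathcal{T}_s\mathcal{T}_{w^{-1}}b\,\mathcal{T}_w,
\]
matching the $w$-term of $\mathcal{T}_sN$. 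In cases (i) and (ii), one pairs $w$ with $ws\in\sD_\la$; the basis rule produces a $(q-q^{-1})\mathcal{T}_w$ correction in the descent direction, but the same correction appears in the paired term on each side, so the two summations agree after pairing.

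The transitivity in (a) is essentially formal once the factorization of $\sD_\la$ is in hand, so the bulk of the work is (b). The main obstacle I anticipate is the bookkeeping in (b): one must verify the classical fact that in case (iii) the conjugate $s'=wsw^{-1}$ really is a simple reflection lying in $W_\la$ (so the centralizing hypothesis applies), and then track the $(q-q^{-1})$ correction terms in (i)/(ii) carefully to see that they pair correctly. Once $N$ commutes with every $\mathcal{T}_s$, centrality propagates to all of $\sH_R$, which places $N_{W,W_\la}(Z_M(\sH_\la))$ inside $Z_M(\sH_R)$ and completes (b).
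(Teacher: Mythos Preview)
Your proposal is correct. The paper does not supply its own proof of this lemma: it merely records the statement and refers the reader to \cite{J} (``A proof can now be found in \cite{J}''), so there is nothing to compare against beyond noting that your argument is exactly the standard one. Your reading of the hypothesis in (a) as $W_\la\le W_\mu$ is right; this is consistent with Definition~\ref{rel norm} (where $N_{W_\mu,W_\la}$ is only defined for $W_\la\le W_\mu$) and with every application of transitivity later in the paper (e.g.\ in the proof of Lemma~\ref{ML}), so the printed ``$W_\mu\le W_\la$'' is a typo. Both the coset-factorisation argument for (a) and the Deodhar-type trichotomy for (b), including the pairing of the $(q-q^{-1})$ terms in cases (i)/(ii) and the use of $ws=s'w$ with $s'\in S\cap W_\la$ in case (iii), go through as you describe.
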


Next, we list the $q$-analogues of four useful results known as Mackey decompsition,  Frobenius reciprocity,  Nakayama relation, and Higman criterion for relative projectivity. See \cite{Lan} or \cite{Feit} for their classical version for groups.

\begin{lemma}[Mackey decomposition]\label{Mackey}
 If $N$ is an
$\mathcal{H}_{R}$-$\mathcal{H}_\lambda$-bimodule, then
$$(N\otimes_{\mathcal{H}_\lambda}\mathcal{H}_R)|_{\mathcal{H}_\mu}\cong
\bigoplus_{d\in
\mathcal{D}_{\lambda\mu}}(N\otimes_{\mathcal{H}_\lambda}\mathcal{T}_d)\otimes_{\mathcal{H}_{\nu}}\mathcal{H}_\mu,$$
where $\nu$ is defined by $W_{\nu}=W^d_\lambda\cap W_\mu$ for
all $d\in \mathcal{D}_{\lambda\mu}$.
In particular,  if $M$ is an $\mathcal{H}_R$-$\mathcal{H}_R$-bimodule and $b\in Z_M(\mathcal{H}_\mathcal{\lambda})$, then
$$N_{W,
W_\lambda}(b)=\sum_{d\in \mathcal{D}_{\lambda\mu}}N_{W_\mu,
W_{\la d\cap\mu}}(\mathcal{T}_{d^{-1}}b\mathcal{T}_d).$$
\end{lemma}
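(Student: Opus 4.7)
The plan is to derive both parts from a single source: the Deodhar-type double-coset decomposition of $W=\fS_r$, lifted to a length-additive decomposition of the Hecke basis. Concretely, every $w\in W$ admits a unique factorisation $w=udv$ with $u\in W_\lambda$, $d\in\mathcal{D}_{\lambda\mu}$ and $v\in\mathcal{D}_\nu\cap W_\mu$, where $W_\nu=W_\lambda^d\cap W_\mu$, and moreover $\ell(w)=\ell(u)+\ell(d)+\ell(v)$. Translating this into the Hecke algebra yields the bimodule decomposition
\[
\mathcal{H}_R=\bigoplus_{d\in\mathcal{D}_{\lambda\mu}}\mathcal{H}_\lambda\mathcal{T}_d\mathcal{H}_\mu,
\]
which is my starting point for both assertions.

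For part (1), I would identify each double-coset summand with $(\mathcal{H}_\lambda\mathcal{T}_d)\otimes_{\mathcal{H}_\nu}\mathcal{H}_\mu$ via the multiplication map $h\mathcal{T}_d\otimes g\mapsto h\mathcal{T}_dg$. Surjectivity is immediate from the factorisation above; injectivity comes by comparing $R$-ranks, again via Deodhar. The $(\mathcal{H}_\lambda,\mathcal{H}_\nu)$-bimodule structure on $\mathcal{H}_\lambda\mathcal{T}_d$ is inherited from $\mathcal{H}_R$, and balancing of the tensor rests on the identity $\mathcal{T}_d\mathcal{T}_v=\mathcal{T}_u\mathcal{T}_d$ whenever $v\in W_\nu$ and $u:=dvd^{-1}\in W_\lambda$; this is a short length-additivity check using $d\in\mathcal{D}_\lambda\cap\mathcal{D}_\mu^{-1}$. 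Applying $N\otimes_{\mathcal{H}_\lambda}(-)$ to the bimodule decomposition and moving it across the middle tensor factor then yields the claimed Mackey isomorphism.

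For the norm identity in part (2), I would use the same combinatorics directly on the definition. The refinement $\mathcal{D}_\lambda=\bigsqcup_{d\in\mathcal{D}_{\lambda\mu}}d\,(\mathcal{D}_\nu\cap W_\mu)$, with length-additivity of the product, lets me split
\[
N_{W,W_\lambda}(b)=\sum_{w\in\mathcal{D}_\lambda}\mathcal{T}_{w^{-1}}b\mathcal{T}_w
=\sum_{d\in\mathcal{D}_{\lambda\mu}}\sum_{v\in\mathcal{D}_\nu\cap W_\mu}\mathcal{T}_{v^{-1}}(\mathcal{T}_{d^{-1}}b\mathcal{T}_d)\mathcal{T}_v,
\]
which is precisely $\sum_d N_{W_\mu,W_\nu}(\mathcal{T}_{d^{-1}}b\mathcal{T}_d)$. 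What remains is to check that each $\mathcal{T}_{d^{-1}}b\mathcal{T}_d$ centralises $\mathcal{H}_\nu$, so that the inner norm is well-posed: for $v\in W_\nu$ with $u=dvd^{-1}\in W_\lambda$, the two identities $\mathcal{T}_v\mathcal{T}_{d^{-1}}=\mathcal{T}_{d^{-1}}\mathcal{T}_u$ and $\mathcal{T}_u\mathcal{T}_d=\mathcal{T}_d\mathcal{T}_v$ reduce the commutation of $\mathcal{T}_v$ with $\mathcal{T}_{d^{-1}}b\mathcal{T}_d$ to the commutation of $\mathcal{T}_u\in\mathcal{H}_\lambda$ with $b$, which is the hypothesis $b\in Z_M(\mathcal{H}_\lambda)$.

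The hard part is bookkeeping rather than new algebra: I need to keep careful track of which conjugation bijection ($v\mapsto dvd^{-1}$ versus its inverse) produces the bimodule structure on $\mathcal{H}_\lambda\mathcal{T}_d$, and to verify length-additivity of the triple product $udv$ using the defining properties of $\mathcal{D}_\lambda$ and $\mathcal{D}_\mu^{-1}$. Once those identities are nailed down, both parts fall out as short formal computations, and part (2) can also be read off from part (1) by applying $N_{W,W_\lambda}$ to $b$ inside $M$, although writing it out directly is shorter.
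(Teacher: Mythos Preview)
Your argument is correct and is the standard route: lift the Deodhar double-coset decomposition $w=udv$ (with $u\in W_\lambda$, $d\in\mathcal D_{\lambda\mu}$, $v\in\mathcal D_\nu\cap W_\mu$, length-additive) to the Hecke basis, obtain the $(\mathcal H_\lambda,\mathcal H_\mu)$-bimodule splitting $\mathcal H_R=\bigoplus_d\mathcal H_\lambda\mathcal T_d\mathcal H_\mu$, and then identify each summand with $(\mathcal H_\lambda\mathcal T_d)\otimes_{\mathcal H_\nu}\mathcal H_\mu$ via multiplication. Your derivation of the norm identity from the refinement $\mathcal D_\lambda=\bigsqcup_d d(\mathcal D_\nu\cap W_\mu)$ is clean; the check that $\mathcal T_{d^{-1}}b\mathcal T_d$ commutes with $\mathcal H_\nu$ is not strictly needed for the displayed identity to hold (the relative norm is defined for arbitrary $b$), but it is the point that makes the identity useful, so it is good that you record it.

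Note that the paper does not supply its own proof of this lemma: it is quoted from the unpublished Hoefsmit--Scott manuscript, with a reference to Jones \cite{J} for a published argument. The proof there follows exactly the same pattern you outline, so there is nothing to contrast.
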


The Frobenius reciprocity simply follows from the fact that induction is a left adjoint functor to restriction.

\begin{lemma}[Frobenius reciprocity]\label{Frobenius}
Let $M$ by an $\sH_R$-module and $N$ be an $\sH_\mu$-module. Then there is an $R$-module isomorphism
$$\varphi:\Hom_{\sH_\mu}(N, M|_{\sH_\mu})\overset\sim\longrightarrow \Hom_{\sH_R}(N\otimes_{\sH_\mu}\sH_R,M).$$
In particular, for $\la,\mu\in\La(m|n,r)$ and tensors $v_{\nu}=v_{\bsi_\nu}\in V_R(m|n)^{\otimes r}$ ($\nu=\la$ or $\mu$), the $R$-module isomorphism
$$\varphi:\Hom_{\mathcal{H}_\mu}(Rv_\mu,v_\lambda\mathcal{H}_R)\longrightarrow
\Hom_{\mathcal{H}_R}(v_\mu\mathcal{H}_R,v_\lambda\mathcal{H}_R)$$
induced from the Frobenius reciprocity is the restriction of the relative norm map $N_{W,W_\mu}(-)$, and so
$$N_{W,W_\mu}(\Hom_{\mathcal{H}_\mu}(Rv_\mu,v_\lambda\mathcal{H}_R))=
\Hom_{\mathcal{H}_R}(v_\mu\mathcal{H}_R,v_\lambda\mathcal{H}_R).$$\end{lemma}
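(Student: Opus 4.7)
The first assertion is the standard induction--restriction adjunction: $\varphi$ sends $g\in\Hom_{\sH_\mu}(N,M|_{\sH_\mu})$ to the $\sH_R$-map $n\otimes h\mapsto (n)g\cdot h$, with inverse $f\mapsto(n\mapsto (n\otimes 1)f)$. Well-definedness of $\varphi(g)$ on the balanced tensor product uses only that $g$ is $\sH_\mu$-linear, and mutual inversion is routine.

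For the particular case take $N=Rv_\mu$ and $M=v_\la\sH_R$. The action formula \eqref{action2} shows that $(v_\mu)\sT_w$ is a scalar multiple of $v_\mu$ for every $w\in W_\mu$, so $Rv_\mu$ is indeed a right $\sH_\mu$-submodule of $V_R(m|n)^{\otimes r}$. There is a natural $\sH_R$-isomorphism $Rv_\mu\otimes_{\sH_\mu}\sH_R\cong v_\mu\sH_R$, $v_\mu\otimes h\mapsto v_\mu h$, with compatible $R$-bases indexed by $\sD_\mu$ (via the isomorphism $f$ of Lemma \ref{iso f} applied to $x_{\mu^{(0)}}y_{\mu^{(1)}}\sH_R$). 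Under this identification, $\varphi$ sends $g$, determined by $b:=(v_\mu)g\in v_\la\sH_R$, to the $\sH_R$-map $v_\mu h\mapsto bh$.

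To recognise $\varphi$ as a relative norm I would work inside $M':=\End_R(V_R(m|n)^{\otimes r})$, using the direct-sum decomposition $V_R(m|n)^{\otimes r}=\bigoplus_{\nu\in\La(m|n,r)}v_\nu\sH_R$ transported from $\fT_R(m|n,r)$ via $f^{-1}$. Extend $g$ by zero to $\tilde g\in M'$ by setting $(v_\mu)\tilde g=b$ and $(v)\tilde g=0$ whenever $v$ is a basis tensor $v_{\bsi_\nu d}$ with $(\nu,d)\neq(\mu,1)$. Checking $\tilde g\in Z_{M'}(\sH_\mu)$ reduces to verifying $(v\sT_w)\tilde g=((v)\tilde g)\sT_w$ for basis tensors $v$ and $w\in W_\mu$: for $v=v_\mu$ this is the $\sH_\mu$-linearity of $g$ on $Rv_\mu$, while for $v=v_\bsi$ with $\bsi\neq\bsi_\mu$ it follows from the fact that $v_\bsi\sT_w$ is an $R$-linear combination of tensors $v_{\bsi\sigma}$ with $\sigma\in W_\mu$, none of which equals $v_{\bsi_\mu}$ (because $\bsi_\mu$ is $W_\mu$-stable). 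Hence by Lemma \ref{RNT}(b), $N_{W,W_\mu}(\tilde g)\in Z_{M'}(\sH_R)=\End_{\sH_R}(V_R(m|n)^{\otimes r})$.

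It remains to compare $N_{W,W_\mu}(\tilde g)$ with the extension-by-zero of $\varphi(g)$; both sides are $\sH_R$-linear and vanish on $v_\nu\sH_R$ for $\nu\neq\mu$, so it suffices to evaluate at $v_\mu$. From
\[
(v_\mu)N_{W,W_\mu}(\tilde g)=\sum_{w\in\sD_\mu}(v_\mu\sT_{w^{-1}})\tilde g\cdot\sT_w,
\]
only those $w\in\sD_\mu$ contribute for which $v_\mu\sT_{w^{-1}}$ has a nonzero $v_\mu$-coefficient. Writing $w^{-1}=u_0d_0$ with $u_0\in W_\mu$, $d_0\in\sD_\mu$ and $\ell(w^{-1})=\ell(u_0)+\ell(d_0)$, the scalar eigenvalue property gives $v_\mu\sT_{w^{-1}}=\alpha_{u_0}v_\mu\sT_{d_0}$ with $\alpha_{u_0}\in R^\times$; since $\{v_\mu\sT_d\}_{d\in\sD_\mu}$ is an $R$-basis of $v_\mu\sH_R$, the $v_\mu$-coefficient is nonzero exactly when $d_0=1$, forcing $w\in W_\mu\cap\sD_\mu=\{1\}$. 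Therefore $(v_\mu)N_{W,W_\mu}(\tilde g)=b=(v_\mu)\varphi(g)$, and the final displayed equality follows from the bijectivity of $\varphi$. The main bookkeeping obstacle is threading the scalars and signs coming from \eqref{action2} through the identification with $x_{\mu^{(0)}}y_{\mu^{(1)}}\sH_R$ so that the $v_\mu$-coefficient computation is unambiguous.
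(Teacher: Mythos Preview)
Your argument is correct and follows the same route the paper has in mind: the paper records the first assertion as the standard adjunction and, for the identification of $\varphi$ with $N_{W,W_\mu}(-)$, simply cites \cite[Lem.~2.5]{DU} and omits the details. Your extension-by-zero of $g$ to $\tilde g\in\End_R(V_R(m|n)^{\otimes r})$, the verification that $\tilde g\in Z_{M'}(\sH_\mu)$ via the $W_\mu$-stability of $\bsi_\mu$, and the reduction of the sum over $\sD_\mu$ to the single term $w=1$ using the basis $\{v_{\bsi_\mu d}\}_{d\in\sD_\mu}$ of $v_\mu\sH_R$ are exactly the steps that argument requires; your closing hedge about ``threading the scalars and signs'' is unnecessary, since the only place signs enter is the identification $v_\mu\sT_d=(-1)^{\hat d}v_{\bsi_\mu d}$, and this does not affect the vanishing-of-coefficient argument you gave.
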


The proof of the last assertion is almost identical to that in \cite[Lem.~2.5]{DU}  and is omitted.

\begin{lemma}[Nakayama relation]\label{Nakayama}Let $M$ be an $\mathcal{H}_R$-$\mathcal{H}_R$-bimodule.
If $N$ is an $\mathcal{H}_{R}$-$\mathcal{H}_\lambda$-submodule of $M$ such that $M\cong
N\otimes_{\mathcal{H}_\lambda}\mathcal{H}_{R}$. Then
$$Z_M(\mathcal{H}_{R})=N_{W,W_\lambda}(Z_N(\mathcal{H}_\lambda)).$$
Moreover, if $W_\mu=W^d_\lambda$ for some $d\in
\mathcal{D}_{\lambda\mu}$, then there exists an
$\mathcal{H}_{R}$-$\mathcal{H}_\mu$-submodule $N'\cong
N\otimes_{\mathcal{H}_\lambda}\mathcal{T}_d$ of $M$ such that
$$N_{W, W_\lambda}(Z_N(\mathcal{H}_\lambda))=N_{W, W_\mu}(Z_{N'}(\mathcal{H}_\mu)).$$
\end{lemma}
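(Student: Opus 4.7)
The first equality splits into two inclusions. The inclusion $N_{W,W_\la}(Z_N(\sH_\la))\subseteq Z_M(\sH_R)$ is immediate from Lemma~\ref{RNT}(b): since $N\subseteq M$ is an $\sH_\la$-subbimodule we have $Z_N(\sH_\la)\subseteq Z_M(\sH_\la)$, and the centraliser property of relative norms gives the claim.

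For the reverse inclusion my plan is to use the direct sum decomposition
$$M=\bigoplus_{d\in\sD_\la}N\otimes\mathcal{T}_d$$
inherited from $\sH_R=\bigoplus_{d\in\sD_\la}\sH_\la\mathcal{T}_d$, together with the $R$-linear projection $\pi:M\to N$ sending $\sum_d n_d\otimes\mathcal{T}_d$ to $n_1$. A direct check, using that $dw\notin W_\la$ whenever $d\in\sD_\la\setminus\{1\}$ and $w\in W_\la$, shows that $\pi$ is an $\sH_\la$-bimodule map on $Z_M(\sH_\la)$, so in particular $\pi(m)\in Z_N(\sH_\la)$ for $m\in Z_M(\sH_R)$. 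A short computation inside $M$ identifies $N_{W,W_\la}(n)=\sum_d(\mathcal{T}_{d^{-1}}n)\otimes\mathcal{T}_d$ for $n\in N$, so $\pi\circ N_{W,W_\la}=\mathrm{id}_{Z_N(\sH_\la)}$. The remaining step, which I expect to be the main obstacle, is to show that $\pi|_{Z_M(\sH_R)}$ is injective, equivalently that the identities $\mathcal{T}_wm=m\mathcal{T}_w$ recursively force the coefficients $n_d$ of a central $m$ to equal $\mathcal{T}_{d^{-1}}n_1$. I would carry this out by comparing the $d'$-components of $\mathcal{T}_sm$ and $m\mathcal{T}_s$ for each simple reflection $s$ and inducting on $\ell(d)$; the Hecke correction $\mathcal{T}_s^2=(q-q^{-1})\mathcal{T}_s+1$ produces error terms supported on strictly shorter cosets, which are absorbed by the inductive hypothesis. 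The small case $W=\fS_2$ with $W_\la=1$ already illustrates the mechanism and confirms that the relations are consistent and indeed pin down $n_s=\mathcal{T}_sn_1$.

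For the ``moreover'' clause my plan is to re-present $M$ as an induced bimodule from $\sH_\mu$ and then apply the first equality twice. Set $N':=\{n\otimes\mathcal{T}_d:n\in N\}\subseteq M$; this is a left $\sH_R$-submodule, and the key verification is that it is stable under the right $\sH_\mu$-action. For $u\in W_\mu$, write $u'':=dud^{-1}$, which lies in $W_\la$ since $W_\la=dW_\mu d^{-1}$; the assumption $d\in\sD_{\la\mu}$ yields both $\ell(du)=\ell(d)+\ell(u)$ and $\ell(u''d)=\ell(u'')+\ell(d)$, hence $\mathcal{T}_d\mathcal{T}_u=\mathcal{T}_{du}=\mathcal{T}_{u''d}=\mathcal{T}_{u''}\mathcal{T}_d$, and therefore $(n\otimes\mathcal{T}_d)\mathcal{T}_u=(n\mathcal{T}_{u''})\otimes\mathcal{T}_d\in N'$. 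Thus $N'$ is an $\sH_R$-$\sH_\mu$-subbimodule, naturally isomorphic to $N\otimes_{\sH_\la}\mathcal{T}_d$.

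Finally, since $\mathcal{T}_d$ is invertible in $\sH_R$, the bimodule map $\Phi:N'\otimes_{\sH_\mu}\sH_R\to M$, $(n\otimes\mathcal{T}_d)\otimes h\mapsto n\otimes\mathcal{T}_dh$, is well defined (the $\sH_\mu$-balancing uses precisely the computation above) and admits the inverse $n\otimes h\mapsto(n\otimes\mathcal{T}_d)\otimes\mathcal{T}_d^{-1}h$. Hence $M\cong N'\otimes_{\sH_\mu}\sH_R$ as bimodules, and applying the first equality to both presentations of $M$ gives
$$N_{W,W_\la}(Z_N(\sH_\la))=Z_M(\sH_R)=N_{W,W_\mu}(Z_{N'}(\sH_\mu)),$$
which is the desired identity.
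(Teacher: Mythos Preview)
The paper does not supply its own proof of this lemma; it is stated as background with a reference to \cite{J}. So there is nothing to compare against, and I can only assess your argument on its own merits.

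Your outline is correct. Two remarks:

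\emph{(i) The projection $\pi$.} Your justification ``$dw\notin W_\lambda$ whenever $d\in\sD_\lambda\setminus\{1\}$, $w\in W_\lambda$'' is what one needs in the group-algebra case, but in the Hecke algebra $\sT_d\sT_w$ is not simply $\sT_{dw}$. What you actually need is that $\sT_d\sT_w$ has no component in $\sH_\lambda=\operatorname{span}\{\sT_u:u\in W_\lambda\}$ for $d\neq 1$; this follows by an easy induction on $\ell(w)$ using that $xs\in W_\lambda$ with $s\in W_\lambda$ forces $x\in W_\lambda$. With this, $\pi$ is a genuine $\sH_R$-$\sH_\lambda$-bimodule map on all of $M$, not just on $Z_M(\sH_\lambda)$.

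\emph{(ii) Injectivity of $\pi|_{Z_M(\sH_R)}$.} The step you flag as the main obstacle does go through cleanly. Suppose $m=\sum_{d}n_d\otimes\sT_d\in Z_M(\sH_R)$ with $n_1=0$ but $m\neq 0$, and let $d_0$ be of minimal length with $n_{d_0}\neq 0$. Choose a simple $s$ with $d_0s<d_0$; then $d_1:=d_0s\in\sD_\lambda$ (by the Deodhar trichotomy, the case $ds=s'd$ with $s'\in W_\lambda$ forces $\ell(ds)>\ell(d)$). Comparing the $d_1$-components of $\sT_s m$ and $m\sT_s$ and using the case analysis (a)/(b)/(c) for each $d$ gives exactly $\sT_s n_{d_1}=n_{d_0}$; since $n_{d_1}=0$ by minimality, $n_{d_0}=0$, a contradiction. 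Thus $\pi|_{Z_M(\sH_R)}$ is injective, and together with $\pi\circ N_{W,W_\lambda}=\mathrm{id}$ this gives $Z_M(\sH_R)=N_{W,W_\lambda}(Z_N(\sH_\lambda))$.

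Your ``moreover'' argument is correct as written; the length identities $\ell(du)=\ell(d)+\ell(u)$ and $\ell(u''d)=\ell(u'')+\ell(d)$ follow precisely from $d\in\sD_{\lambda\mu}=\sD_\lambda\cap\sD_\mu^{-1}$, and the invertibility of $\sT_d$ makes the inverse bimodule map well defined.
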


If $X$ is an $\sH_R$-module, $Y$ is an $\sH_\la$-module,
$$M=\Hom_R(X,Y\otimes_{\sH_\la}\sH_R),\;\text{ and }\;
N=\Hom(X_{\sH_\la},Y),$$
then the usual Nakayama relation (see, e.g., \cite[2.6]{DJ}) now becomes
$$N_{W, W_\lambda}(\Hom_{\sH_\la}(X|_{\sH_\la},Y))=\Hom_{\sH_R}(X,Y\otimes_{\sH_\la}\sH_R).$$
%\begin{proof} Let $f\in.
%\Hom_{\mathcal{H}_\mu}(Rv_\mu,v_\lambda\mathcal{H}_R)$.
%We view $f$ as an element of $\End_R(V(m|n)^{\otimes r})$.
%Then $\varphi(f)$ is determined by the image
%$(v_\mu)\varphi(f)=(v_\mu)f$.
%By Lemma \ref{SRNP}, $N_{W,W_\mu}(f)$ is also determined by the image
%$(v_\mu)N_{W,W_\mu}(f)$. Since
%$$(v_\mu)N_{W,W_\mu}(f)=(v_\mu)\sum_{x\in\mathcal{D}_\mu}\mathcal{T}_{x^{-1}}f\mathcal{T}_x=(v_\mu)(f)=(v_\mu)\varphi(f),$$
%we have $N_{W,W_\mu}(f)=\varphi(f)$. \end{proof}

% By the transitivity of relative norms, we have
%$N_{W,W_{\nu(d)}}(f)\in
%\Hom_{\mathcal{H}_R}(v_\mu\mathcal{H}_R,v_\lambda\mathcal{H}_R)$ if $f\in
%\Hom_{\mathcal{H}_{\nu(d)}}(v_\mu\mathcal{H}_R,v_\lambda\mathcal{H}_R)$.

%\begin{corollary}Let
%$\lambda,\mu\in\Lambda(m|n,r),d\in\mathcal{D}^\circ_{\lambda\mu}.$
%Denote $W_{\nu}=W^d_\lambda\cap W_\mu$. For $f\in
%\Hom_{\mathcal{H}_{\nu}}(v_\mu\mathcal{H}_R,v_\lambda\mathcal{H}_R)$, we
%have $N_{W,W_{\nu}}(f)\in
%\Hom_{\mathcal{H}_R}(v_\mu\mathcal{H}_R,v_\lambda\mathcal{H}_R)$.
%\end{corollary}

\begin{definition}
A right $\mathcal{H}_R$-module $M$ is {\it projective relative}
to $\mathcal{H}_\lambda$ or simply $\mathcal{H}_\lambda$-projective
if for every pair of right $\mathcal{H}_R$-modules
$M',M{''}$ the exact sequence
$$0\rightarrow M'\rightarrow M{''}\rightarrow M\rightarrow 0$$
split provided it is a split exact sequence as
$\mathcal{H}_\lambda$-modules.
\end{definition}

In the following result, the
notation $X\mid Y$ means that $X$ is isomorphic to a direct summand
of $Y$.

\begin{lemma}[Higman criterion]\label{Higman}
Let $M$ be a right $\mathcal{H}_R$-module. Then the
following are equivalent:
\begin{itemize}
\item[(a)] $M$ is $\mathcal{H}_\lambda$-projective;

\item[(b)]$M\mid M\otimes_{\mathcal{H}_\lambda} \mathcal{H}_R$;

\item[(c)]$M\mid U\otimes_{\mathcal{H}_\lambda} \mathcal{H}_R$
for some right $\mathcal{H}_\lambda$-module $U$;

\item[(d)]$N_{W,W_\lambda}(\Hom_{\mathcal{H}_\lambda}(M,M))=\Hom_{\mathcal{H}_R}(M,M)$.
\end{itemize}
\end{lemma}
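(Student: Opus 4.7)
My plan is to prove the chain (a) $\Leftrightarrow$ (b) $\Leftrightarrow$ (c) by standard relative homological arguments, then derive (b) $\Leftrightarrow$ (d) from the Nakayama relation (Lemma \ref{Nakayama}) as the crucial bridge.

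For (a) $\Rightarrow$ (b), I would observe that the counit $\epsilon:M\otimes_{\sH_\la}\sH_R\to M$, $m\otimes h\mapsto mh$, is an $\sH_R$-surjection with an obvious $\sH_\la$-section $m\mapsto m\otimes 1$ (since $mh\otimes 1=m\otimes h$ whenever $h\in\sH_\la$); the relative projectivity in (a) then promotes this to an $\sH_R$-splitting, so that $M\mid M\otimes_{\sH_\la}\sH_R$. The step (b) $\Rightarrow$ (c) is trivial with $U=M$. For (c) $\Rightarrow$ (a), I would first verify that every induced module $U\otimes_{\sH_\la}\sH_R$ is itself $\sH_\la$-projective: given a short exact sequence of $\sH_R$-modules $0\to A\to B\to C\to 0$ split over $\sH_\la$ and an $\sH_R$-map $U\otimes_{\sH_\la}\sH_R\to C$, Frobenius reciprocity (Lemma \ref{Frobenius}) converts the problem to lifting an $\sH_\la$-map $U\to C|_{\sH_\la}$ along the $\sH_\la$-splitting, and a second application of Frobenius reciprocity produces the required $\sH_R$-lift; since relative projectivity passes to direct summands, (a) follows.

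The substantive implication (b) $\Leftrightarrow$ (d) I would extract from the Nakayama relation at the end of Lemma \ref{Nakayama} applied with $X=Y=M$, which gives the identity
\[
N_{W,W_\la}\bigl(\End_{\sH_\la}(M)\bigr)=\Hom_{\sH_R}\bigl(M,\,M\otimes_{\sH_\la}\sH_R\bigr),
\]
where an element $\theta\in\End_{\sH_\la}(M)$ is regarded as landing in $M\otimes 1\subseteq M\otimes_{\sH_\la}\sH_R$. For (b) $\Rightarrow$ (d), if $\sigma$ is an $\sH_R$-section of $\epsilon$, then by the displayed identity $\sigma=N_{W,W_\la}(\widetilde{\theta})$ for some $\theta\in\End_{\sH_\la}(M)$. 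Unwinding the right-hand-function convention gives the explicit formula $(m)\sigma=\sum_{w\in\sD_\la}((m\mathcal{T}_{w^{-1}})\theta)\otimes \mathcal{T}_w$, and composing with $\epsilon$ produces
\[
\id_M=(m)(\epsilon\sigma)=\sum_{w\in\sD_\la}\bigl((m\mathcal{T}_{w^{-1}})\theta\bigr)\mathcal{T}_w=(m)N_{W,W_\la}(\theta).
\]
Thus $\id_M\in N_{W,W_\la}(\End_{\sH_\la}(M))$; an arbitrary $f\in\End_{\sH_R}(M)$ is then recovered from $f=f\cdot\id_M=f\cdot N_{W,W_\la}(\theta)=N_{W,W_\la}(f\theta)$, using that $f\mathcal{T}_w=\mathcal{T}_wf$ (since $f$ is $\sH_R$-linear) and that $f\theta\in\End_{\sH_\la}(M)$. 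Combined with Lemma \ref{RNT}(b) this proves (d). The converse (d) $\Rightarrow$ (b) just reverses the construction: writing $\id_M=N_{W,W_\la}(\theta)$, I would define $\sigma$ by the explicit formula above; $\sigma$ is $\sH_R$-linear by the Nakayama identity and splits $\epsilon$ by the same computation.

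The principal obstacle is notational bookkeeping rather than ideas: one must keep the right-hand-function convention $(m)(xfy)=((mx)f)y$ consistent when forming norms of endomorphisms, and recognise that the form of the Nakayama relation spelled out after Lemma \ref{Nakayama} (with $X=Y=M$) is precisely the translation device between $\sH_\la$-endomorphisms of $M$ and $\sH_R$-maps into $M\otimes_{\sH_\la}\sH_R$. Once that identification is made, everything reduces to the central commutation $f\mathcal{T}_w=\mathcal{T}_wf$ valid for $f\in\End_{\sH_R}(M)$.
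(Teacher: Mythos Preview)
Your proof is correct. The paper does not actually prove this lemma; it belongs to the block of relative-norm results (Lemmas \ref{RNT}--\ref{Higman}) attributed to unpublished work of Hoefsmit and Scott, with the reader referred to Jones \cite{J} for proofs. What you have written is precisely the standard argument one would expect to find there: the equivalence of (a), (b), (c) via the counit $\epsilon:M\otimes_{\sH_\la}\sH_R\to M$ and Frobenius reciprocity is classical relative homological algebra, and your bridge to (d) via the Nakayama relation with $X=Y=M$, yielding
\[
N_{W,W_\la}\bigl(\End_{\sH_\la}(M)\bigr)=\Hom_{\sH_R}\bigl(M,\,M\otimes_{\sH_\la}\sH_R\bigr),
\]
then composing with $\epsilon$ to produce $\id_M=N_{W,W_\la}(\theta)$, is exactly the intended mechanism.

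One cosmetic point: in the paper's right-hand convention the composite ``apply $\sigma$, then $\epsilon$'' is written $\sigma\epsilon$, not $\epsilon\sigma$; your displayed computation shows you have the order right, so this is only a typo in the label.
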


Let $M$ be a finitely generated indecomposable right
$\mathcal{H}_R$-module. Then by \cite[3.35]{J}, there
exists a parabolic subgroup $W_\lambda$ of $W$ unique up to
conjugation such that $M$ is $\mathcal{H}_\lambda$-projective and
such that $W_\lambda$ is $W$-conjugate to a parabolic subgroup of
any parabolic subgroup $W_\mu$ of $W$ for which $M$ is
$\mathcal{H}_\mu$-projective. We call $W_\lambda$ a {\it vertex} of $M$ which is unique up to conjugation.

This notion is a generalisation of the vertex theory in the representation theory of finite groups.
Motivated from the fact that a vertex must be a $p$-subgroup, we need the notion of  $l$-parabolic subgroups. Let $l$ be a positive odd number and $l\leq r$. A
parabolic subgroup $W_\lambda$ is called $l$-{\it parabolic} if all
parts of $\lambda$ are 0, $1$, or $l$.

Write $r=sl+t$ with $0\leq t<l$ and let $P_{(r)}$ be the parabolic subgroup of $W=\fS_r$ associated with the composition $(l^s,1^t)$.  This is called in \cite{DU} `the' maximal $l$-parabolic subgroup of $W$. For any composition $\la=(\la_1,\ldots,\la_a)$ of $r$, let the maximal $l$-parabolic subgroup of $W_\la$ be the parabolic subgroup\footnote{The $l$-parabolic subgroups play a role similar to $p$-subgroups in
group representation theory. So the notation $P$ for an $l$-parabolic subgroup indicates this similarity.}
\begin{equation}\label{max l-parabolic}
P_\la=P_{(\la_1)}\times\cdots\times P_{(\la_a)}.
\end{equation}
A maximal $l$-parabolic subgroup of
$W_\lambda$ is a parabolic subgroup $P$ of $W_\la$ such that $P=_{W_\la}P_\la$ (meaning $P^x:=x^{-1}Px=P_\la$ for some $x\in W_\la$).

 Let
$\Phi_l=\Phi_l(\bsu)$ denote the $l$th cyclotomic polynomial.  The Pioncar\'e polynomial of an $l$-parabolic subgroup $W_\la$ has the form $d_{W_\la}=(d_l)^s$, where $s$ is the number of parts $l$  in
$\la$ and
$$d_l=\prod^{l-1}_{i=1}(1+\bsu+\bsu^2+\cdots+\bsu^i).$$
Here are a few simple facts.
\begin{lemma}[{\cite[1.1,1.2]{DU}}]\label{DU1.1}
Let $\lambda$ be a composition of $r$ and $x\in W$. Then

(a) If $W^x_\lambda$ is parabolic, then
$d_{W^x_\lambda}=d_{W_\lambda}$,

(b) $\Phi_l\nmid (d_{W_\lambda}/d_{P_\lambda} )$.

(c)
Let $W_\lambda,W_\mu,W_\theta$ be parabolic subgroups of $W$ such
that
$$W_\theta\leq W^x_\mu, W_\mu\leq W^y_\lambda$$
where $W^x_\mu,W^y_\lambda$ are parabolic and $x,y\in W$. Assume
$d_{P_\theta}\neq d_{P_\lambda}$. Then $\Phi_l\mid
(d_{W_\lambda}/d_{W_\theta})$.
\end{lemma}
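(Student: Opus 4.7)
The plan is to reduce everything to the $\Phi_l$-adic valuation of Poincar\'e polynomials, after first observing that in the symmetric group $W=\mathfrak{S}_r$ two parabolic subgroups which are conjugate have the same multiset of block sizes.

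For part (a), I would argue that the standard parabolic subgroup $W_\lambda$ has orbits on $\{1,\dots,r\}$ whose sizes are exactly the parts of $\lambda$. If $W_\lambda^x=W_\mu$ is again parabolic, conjugation by $x$ carries orbits to orbits of the same size, so the compositions $\lambda$ and $\mu$ have the same multiset of parts. Since $d_{W_\lambda}=\prod_i[\![\lambda_i]\!]^!$ depends only on that multiset, we get $d_{W_\lambda^x}=d_{W_\lambda}$. Part (a) then immediately upgrades the containments in (c) to parabolic containments in which the \emph{larger} group can be replaced by a standard one with the same Poincar\'e polynomial.

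For part (b), I would compute $v_{\Phi_l}$ (the multiplicity of $\Phi_l$) on both sides. From $\bsu^j-1=\prod_{d\mid j}\Phi_d$ and $\Phi_l\nmid(\bsu-1)$ (as $l>1$ is odd), we read off $v_{\Phi_l}([\![j]\!])=1$ if $l\mid j$ and $0$ otherwise; hence $v_{\Phi_l}([\![\lambda_i]\!]^!)=\lfloor\lambda_i/l\rfloor$. Summing, $v_{\Phi_l}(d_{W_\lambda})=\sum_i\lfloor\lambda_i/l\rfloor$. On the other side $P_\lambda=\prod_iP_{(\lambda_i)}$ where each $P_{(\lambda_i)}$ contains exactly $\lfloor\lambda_i/l\rfloor$ factors of size $l$, so $d_{P_\lambda}=d_l^{\sum_i\lfloor\lambda_i/l\rfloor}$. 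Using the factorisation $d_l=\prod_{j=2}^l[\![j]\!]$ one has $v_{\Phi_l}(d_l)=1$, giving the same valuation. Thus $d_{P_\lambda}$ carries the entire $\Phi_l$-content of $d_{W_\lambda}$, which is (b).

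For part (c), I would first use (a) to replace $W_\mu^x$ and $W_\lambda^y$ by their conjugates with the same Poincar\'e polynomial, reducing to a chain $W_\theta\le W_{\mu'}\le W_{\lambda'}$ of \emph{standard} parabolics. For any nested standard parabolics $W_\theta\le W_{\mu'}$ one has a refinement of compositions, so each part $\mu'_k$ is split into a sub-composition $(\theta^{(k)}_1,\theta^{(k)}_2,\dots)$; since the floor function is subadditive, $\sum_j\lfloor\theta^{(k)}_j/l\rfloor\le\lfloor\mu'_k/l\rfloor$, and summing over $k$ gives $v_{\Phi_l}(d_{W_\theta})\le v_{\Phi_l}(d_{W_{\mu'}})$. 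The same argument for $W_{\mu'}\le W_{\lambda'}$ then yields $v_{\Phi_l}(d_{W_\theta})\le v_{\Phi_l}(d_{W_\lambda})$. By (b) these valuations coincide with the exponents of $d_l$ in $d_{P_\theta}$ and $d_{P_\lambda}$ respectively, so the hypothesis $d_{P_\theta}\neq d_{P_\lambda}$ forces a \emph{strict} inequality. Hence $v_{\Phi_l}(d_{W_\lambda}/d_{W_\theta})\ge 1$, which is the claim.

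The delicate step is (c): one must make sure that the conjugating elements $x,y$ do not interfere, and the trick is to apply (a) to replace each non-standard parabolic overgroup by a standard one before invoking the subadditivity of $\lfloor\cdot/l\rfloor$. Everything else is bookkeeping with cyclotomic valuations.
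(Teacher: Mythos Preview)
The paper does not supply its own proof of this lemma; it is simply quoted from \cite[1.1,1.2]{DU}. Your argument is correct and is essentially the standard one: reduce to the $\Phi_l$-valuation via $v_{\Phi_l}([\![j]\!]^!)=\lfloor j/l\rfloor$ and use subadditivity of the floor function under refinement of compositions.

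One small point of phrasing in part (c): you cannot literally produce a single chain $W_\theta\le W_{\mu'}\le W_{\lambda'}$ of standard parabolics. From the hypotheses you only know $W_\theta\le W_\mu^x=:W_{\mu'}$ and $W_\mu\le W_\lambda^y=:W_{\lambda'}$; there is no reason for $W_{\mu'}$ itself to sit inside $W_{\lambda'}$. But your valuation argument does not need this: applying the refinement inequality separately to the two genuine containments and invoking (a) gives
\[
v_{\Phi_l}(d_{W_\theta})\le v_{\Phi_l}(d_{W_{\mu'}})=v_{\Phi_l}(d_{W_\mu})\le v_{\Phi_l}(d_{W_{\lambda'}})=v_{\Phi_l}(d_{W_\lambda}),
\]
and then (b) together with $d_{P_\theta}\neq d_{P_\lambda}$ forces strictness. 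So the mathematics is fine; just tighten the sentence so it does not claim a containment you do not have.
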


The following result is a $q$-analogue of the fact for group representations that a vertex must be a $p$-group and
will be repeatedly used later on.

\begin{lemma}[{\cite[Th.~3.1]{DU3}}] Let $F$ be a field of characteristic 0 in which $q$ is a primitive $l$th root of 1 (with $l$ odd). If $M$ is an indecomposable $\sH_F$-module, then the vertex of $M$ is an $l$-parabolic subgroup.,
\end {lemma}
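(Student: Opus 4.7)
The plan is to combine the Higman criterion (Lemma \ref{Higman}) with the cyclotomic non-divisibility in Lemma \ref{DU1.1}(b). Let $W_\la$ be a vertex of $M$, so $M$ is $\sH_\la$-projective and $W_\la$ is minimal (up to $W$-conjugation) with this property. Let $P_\la\leq W_\la$ be a maximal $l$-parabolic subgroup of $W_\la$. I claim that $M$ is in fact $\sH_{P_\la}$-projective; the minimality property of the vertex will then force $W_\la$ (up to conjugation) to lie inside $P_\la$, and since $P_\la\leq W_\la$ a comparison of orders will force $W_\la=P_\la$, so $W_\la$ is $l$-parabolic.

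To establish $\sH_{P_\la}$-projectivity of $M$, by transitivity of relative projectivity it suffices to verify that the restriction $M|_{\sH_{W_\la}}$ is $\sH_{P_\la}$-projective. For this I would prove a Hecke-algebra Maschke lemma: if the rational function $d_{W_\la}/d_{P_\la}$ is invertible when the indeterminate $\bsu$ is specialised to $q$, then every right $\sH_{W_\la}$-module is $\sH_{P_\la}$-projective. By Lemma \ref{DU1.1}(b), $\Phi_l\nmid (d_{W_\la}/d_{P_\la})$, and since $F$ has characteristic $0$ and $q$ is a primitive $l$th root of $1$, this specialisation is nonzero and hence invertible in $F$. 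The lemma itself is verified through Higman's criterion: one produces an element $e\in\End_{\sH_{P_\la}}(M)$, built as a suitably normalised scalar multiple of $\id_M$ (with the normalising constant being the $q$-index $d_{W_\la}/d_{P_\la}|_{\bsu\mapsto q}$), so that $N_{W_\la,P_\la}(e)=\id_M$.

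Putting this together, $M|_{\sH_{W_\la}}$ is $\sH_{P_\la}$-projective, and since $M$ is $\sH_{W_\la}$-projective by hypothesis, transitivity (Lemma \ref{RNT}(a)) gives that $M$ is $\sH_{P_\la}$-projective. The uniqueness-up-to-conjugation of the vertex of the indecomposable module $M$ (stated just before the theorem, from \cite[3.35]{J}) then implies that $W_\la$ is $W$-conjugate to a parabolic subgroup of $P_\la$; combined with $P_\la\leq W_\la$ this yields $W_\la=P_\la$, completing the proof.

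The main technical obstacle is the Hecke-algebra Maschke lemma. In the classical group setting the analogous statement is easy because $\operatorname{tr}_H^G(\tfrac{1}{[G:H]}\id)=\id$. In the Hecke setting $N_{W_\la,P_\la}(\id)$ is not a scalar multiple of $\id_M$ in general, because $T_{w^{-1}}T_w\neq 1$; the correct argument must track the bimodule structure on $\End_R(M)$ carefully and invoke the identity in $\sH_{W_\la}$ that expresses $\sum_{w\in\sD_{P_\la}\cap W_\la}T_{w^{-1}}T_w$ (acting on an appropriate centre) as the scalar $d_{W_\la}/d_{P_\la}$ times a unit. Once this computation is carried out, the rest of the argument is formal bookkeeping with relative norms.
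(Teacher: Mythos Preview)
The paper does not give a proof of this lemma; it is quoted from \cite[Th.~3.1]{DU3}. Your overall strategy---take a vertex $W_\la$, pass to its maximal $l$-parabolic $P_\la$, and use Lemma~\ref{DU1.1}(b) together with a Hecke-algebra Maschke lemma to force $\sH_{P_\la}$-projectivity---is exactly the argument in \cite{DU3}, so at the level of outline there is nothing to correct.

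The place where your sketch is incomplete is precisely the one you flag. Your first suggestion, taking $e$ to be the scalar $(d_{W_\la}/d_{P_\la})^{-1}\cdot\id_M$, does not work: as you note, $N_{W_\la,P_\la}(\id_M)$ equals the action on $M$ of the element $z:=\sum_{w\in\sD_{P_\la}\cap W_\la}\sT_{w^{-1}}\sT_w\in\sH_\la$, and $z$ is not a scalar (already for $W_\la=\fS_2$, $P_\la=1$, one has $z=2+(q-q^{-1})\sT_s$). The correct fix is cleaner than your last paragraph suggests: by Lemma~\ref{RNT}(b) the element $z$ is \emph{central} in $\sH_\la$, so if $z$ is a unit one takes $e$ to be the action of $z^{-1}$ on $M$ (an $\sH_\la$-endomorphism, hence an $\sH_{P_\la}$-endomorphism), and then
\[
N_{W_\la,P_\la}(e)=\sum_{w}\sT_{w^{-1}}\,e\,\sT_w=e\cdot\Bigl(\text{action of }z\Bigr)=\id_M.
\]
So the entire content of the Hecke--Maschke step is that $z$ is a unit in $\sH_\la$ whenever $\Phi_l\nmid d_{W_\la}/d_{P_\la}$ and $\mathrm{char}\,F=0$. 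Your phrase ``$d_{W_\la}/d_{P_\la}$ times a unit'' is suggestive but not literally what happens; the scalar $d_{W_\la}/d_{P_\la}|_{\bsu=q^2}$ is only the eigenvalue of $z$ on the index representation, and one must also control the other eigenvalues. The invertibility of $z$ is established in \cite{J} (see also \cite{DU3}) by computing its action on each generic simple $\sH_\la$-module as a product of Poincar\'e-type factors, none of which acquires a $\Phi_l$-factor under the hypothesis. Once you cite or reprove that computation, your argument is complete.
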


We will describe the vertices of indecomposable  direct summand of the $\sH_F$-module $V_F(m|n)^{\otimes r}$ in \S 10 in terms of the defect group of the corresponding primitive idempotent and apply this to classify all simple $\sS_F(m|n,r)$-modules when $m+n\geq r$.

\section{Some vanishing properties of relative norms on $V_R(m|n)^{\otimes r}$}

As seen from the remarks right after Definition \ref{rel norm}, the $q$-Schur superalgebra $\sS_R(m|n,r)= Z_M(\sH_R)$ where $M=\End_R(V_R(m|n)^{\otimes r})$.
Thus, by Lemma \ref{RNT}(b), we can construct elements in $\sS_R(m|n,r)$ by applying relative norms to the matrix units in $M$. We first show in this section that some of these elements are simply the 0 transformation. We will construct a basis from this type of elements in the next section.

For $\bsi,\bsj\in I(m| n,r)$, we define $e_{\bsi,\bsj}\in \End_R(V_R(m|
n)^{\otimes r})$ to be the linear map
\begin{equation}
(v_{\bsi'})e_{\bsi,\bsj}=\left\{
\begin{aligned}
&v_\bsj,\mbox{ if }\bsi'=\bsi,\\
&0,\mbox{otherwise}.
\end{aligned}
\right.
\end{equation}
If $(\bsi,\bsj)=(\bsi_\mu,\bsi_\lambda d)$ with $d\in
\mathcal{D}_{\lambda\mu}$, we use the abbreviation
$e_{\mu,\lambda d}$ instead of $e_{\bsi_\mu, \bsi_\lambda d}$.
The following result is obvious from the definition, but will be useful later on.

\begin{lemma}\label{SRNP} For $b\in V_R(m|n)^{\otimes r}=\bigoplus_{\la\in\La(m|n,r)}v_\la\sH_R$, if
the projection of $b$ on $v_\mu\mathcal{H}_R$ is $0$ for some $\mu\in\La(m|n,r)$,
 then $(b)N_{W,W_{\la d\cap \mu}}(f)=0$
 for all $\lambda\in\Lambda(m|n,r)$, $d\in\mathcal{D}^\circ_{\lambda\mu},$ and $f\in
\Hom_{\mathcal{H}_{\la d\cap\mu}}(v_\mu\mathcal{H}_R,v_\lambda\mathcal{H}_R)$,
extended  (by sending other $v_\nu\sH_R$ to 0 for all $\nu\neq \mu$) to an element in $\End_{\sH_{\la d\cap\mu}}(V_R(m|n)^{\otimes r})$. Moreover,
$N_{W,W_\mu}(e_{\mu,\mu})$ is the identity map on $v_\mu\mathcal{H}_R$ and
$0$ elsewhere.
\end{lemma}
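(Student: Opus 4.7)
The plan is to prove the two statements separately by unwinding the relative norm definition. The first statement follows directly from the fact that $V_R(m|n)^{\otimes r}=\bigoplus_\nu v_\nu\sH_R$ is a decomposition into right $\sH_R$-submodules, so that each $\sT_{w^{-1}}$ preserves it; the second statement then combines this vanishing with Frobenius reciprocity (Lemma \ref{Frobenius}).

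For the first assertion, I expand
\[(b)N_{W,W_{\la d\cap\mu}}(f)=\sum_{w\in\sD_{\la d\cap\mu}}\bigl((b\sT_{w^{-1}})f\bigr)\sT_w\]
and decompose $b=\sum_\nu b_\nu$ with $b_\nu\in v_\nu\sH_R$ and $b_\mu=0$ by hypothesis. For each $\nu\ne\mu$, the element $b_\nu\sT_{w^{-1}}$ stays in $v_\nu\sH_R$ since this summand is $\sH_R$-stable, while the extended $f$ vanishes there by construction; the $\nu=\mu$ contribution is zero since $b_\mu=0$. Hence every term in the sum vanishes.

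For the moreover clause, I first verify that $e_{\mu,\mu}$ is $\sH_\mu$-linear on $V_R(m|n)^{\otimes r}$, which follows from \eqref{action2} by noting that for $s_k\in W_\mu$ one has $(\bsi_\mu)_k=(\bsi_\mu)_{k+1}$, and so for any $\bsj\ne\bsi_\mu$ the vector $v_\bsj\sT_{s_k}$ is a combination of $v_\bsj$ and $v_{\bsj s_k}$ with neither equal to $v_{\bsi_\mu}$ (the latter because $\bsj s_k=\bsi_\mu$ would force $\bsj=\bsi_\mu s_k=\bsi_\mu$). The vanishing of $N_{W,W_\mu}(e_{\mu,\mu})$ on $v_\nu\sH_R$ for $\nu\ne\mu$ is then an application of the first part with $\la=\mu$, $d=1$, and $f=e_{\mu,\mu}$. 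For the claim that $N_{W,W_\mu}(e_{\mu,\mu})$ acts as the identity on $v_\mu\sH_R$, I note that the restriction of $e_{\mu,\mu}$ to $Rv_\mu$ is the inclusion $i\colon Rv_\mu\hookrightarrow v_\mu\sH_R$, and Frobenius reciprocity (Lemma \ref{Frobenius}) gives that the image of $i$ under the map $\varphi=N_{W,W_\mu}$ is $\mathrm{id}_{v_\mu\sH_R}$; the natural extension of $i$ to $V_R(m|n)^{\otimes r}$ by zero (outside $Rv_\mu$) recovers $e_{\mu,\mu}$, so $N_{W,W_\mu}(e_{\mu,\mu})|_{v_\mu\sH_R}$ equals the identity. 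Alternatively, since the norm is $\sH_R$-linear, a direct computation at $v_\mu$ suffices: factoring each $w^{-1}$ for $w\in\sD_\mu\setminus\{1\}$ as $u\cdot d'$ with $u\in W_\mu$ and $d'\in\sD_\mu\setminus\{1\}$, one shows $v_\mu\sT_{w^{-1}}$ is a scalar multiple of $v_{\bsi_\mu d'}\ne v_{\bsi_\mu}$, on which $e_{\mu,\mu}$ vanishes, so only the $w=1$ term survives and contributes $v_\mu$.

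The one step that might require care is identifying $e_{\mu,\mu}$ with the extension of the inclusion $i$: this reduces to showing that $v_\mu\sT_{d'}=\pm v_{\bsi_\mu d'}$ is a single signed basis vector for every $d'\in\sD_\mu$, so that $e_{\mu,\mu}$ sends $v_\mu\sT_{d'}$ to $0$ for $d'\ne 1$. This is a direct consequence of Lemma \ref{iso f}, which identifies $v_\mu\sH_R$ with the signed $q$-permutation module $x_{\mu^{(0)}}y_{\mu^{(1)}}\sH_R$ whose basis consists precisely of the elements $x_{\mu^{(0)}}y_{\mu^{(1)}}\sT_{d'}$, $d'\in\sD_\mu$. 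After this identification the remaining calculations are formal.
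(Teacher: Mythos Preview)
Your proof is correct and, at its core, follows the paper's argument: the first assertion is exactly the paper's observation that each $v_\nu\sH_R$ is $\sH_R$-stable so the extended $f$ sees nothing outside $v_\mu\sH_R$, and your ``alternative'' direct computation for the identity claim (only $w=1$ survives in the sum because $w^{-1}=ud'$ with $d'\in\sD_\mu\setminus\{1\}$ forces $v_\mu\sT_{w^{-1}}$ to be a multiple of $v_{\bsi_\mu d'}\neq v_\mu$) is precisely the paper's one-line argument, unpacked. Your Frobenius-reciprocity route is a valid extra perspective, but the identification of $e_{\mu,\mu}$ with the zero-extension of the inclusion $i$ already requires knowing that $e_{\mu,\mu}$ kills every $v_{\bsi_\mu d'}$ with $d'\neq 1$, which is the same content as the direct computation---so it doesn't really shortcut anything.
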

\begin{proof} By \eqref{action2},
$v_\bsi\mathcal{H}_R=\mathrm{span}\{v_{\bsi w}\mid w\in W\}$ for any
$\bsi\in I(m|n,r)$. Hence, if the
projection of $b$ on $v_\mu\mathcal{H}_R$ is $0$, then $(b)N_{W,W_{\la d\cap\mu}}(f)=0$.

Note that if $x\in\mathcal{D}_\mu$ and $x\neq 1$, then $x^{-1}\notin
W_\mu$. Hence
\begin{equation*}
\begin{aligned}
(v_\mu)N_{W,W_\mu}(e_{\mu\mu})
&=(v_\mu)\sum_{w\in\mathcal{D}_\mu}\mathcal{T}_{w^{-1}}e_{\mu\mu}\mathcal{T}_w\\
&=(v_\mu)\sum_{w\in\mathcal{D}_\mu\cap W_{\mu}}\mathcal{T}_{w^{-1}}e_{\mu\mu}\mathcal{T}_w=v_\mu.
\end{aligned}
\end{equation*}
Hence, $N_{W,W_\mu}(e_{\mu\mu})$ is the identity on
$v_\mu\mathcal{H}_R$. By the proof above, it is $0$ elsewhere.
\end{proof}

The place permutation action of $W$ on $I(m|n,r)$ induces an action on $I(m|n,r)^2$: $(\bsi,\bsj)w=(\bsi w,\bsj w)$ for all $\bsi,\bsj\in I(m|n,r)$ and $w\in W$. Clearly, if $\bsi=\bsi_\la d$ and $\bsj=\bsi_\mu d'$ for some $d\in\sD_\la$ and $d'\in \sD_\mu$, then
$$\text{Stab}_W(\bsi,\bsj):=\{w\in W\mid (\bsi,\bsj)w=(\bsi,\bsj)\} =W_\la^d\cap W_\mu^{d'}.$$

The following result is a super version of  \cite[Lem.~2.2]{DU}.

\begin{lemma}\label{super2.2}
Let $\bsi=\bsi_\la d$ and $\bsj=\bsi_\mu d'$, where $d\in\sD_\la$ and $d'\in \sD_\mu$, and let $s\in
W_{\la d\cap\mu d'}^{ij}\cap S$, where $i,j\in\{0,1\}$.  Then $\sT_se_{\bsi,\bsj}=e_{\bsi,\bsj}\sT_s$ if and only if $(i,j)=(0,0)$ or $(1,1)$.
\end{lemma}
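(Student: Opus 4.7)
The plan is to show that both $\sT_s e_{\bsi,\bsj}$ and $e_{\bsi,\bsj}\sT_s$ act as scalar multiples of $e_{\bsi,\bsj}$ on $V_R(m|n)^{\otimes r}$, identify the two scalars, and then compare them.

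First I would unpack the condition $s=s_k\in W_{\la^{(i)}}^d$. By the length argument in the proof of Lemma~\ref{00-11}, there is a simple reflection $s'\in W_{\la^{(i)}}\cap S$ with $ds_k=s'd$. Since $\bsi_\la$ is stabilised by $W_\la$ under place permutation and $s'$ transposes two adjacent positions lying entirely inside the $i$-block of $\bsi_\la$ (positions whose $\bsi_\la$-entries are $\le m$ if $i=0$, $\ge m+1$ if $i=1$), it follows that $\bsi=\bsi_\la d$ satisfies $i_k=i_{k+1}$ with common value $\le m$ when $i=0$ and $\ge m+1$ when $i=1$. The same reasoning applied to $s\in W_{\mu^{(j)}}^{d'}$ gives $j_k=j_{k+1}$ with the parity of the common value matching $j$.

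Next I would compute the two products. By the footnote's bimodule convention, $(v_{\bsi'})(\sT_s e_{\bsi,\bsj})=((v_{\bsi'})\sT_s)e_{\bsi,\bsj}$ while $(v_{\bsi'})(e_{\bsi,\bsj}\sT_s)=((v_{\bsi'})e_{\bsi,\bsj})\sT_s$. From \eqref{action2}, $(v_{\bsi'})\sT_s$ is supported on $\{v_{\bsi'},v_{\bsi's_k}\}$; since $i_k=i_{k+1}$ forces $\bsi s_k=\bsi$, only $\bsi'=\bsi$ can contribute a $v_\bsi$-component, and then $(v_\bsi)\sT_s=\epsilon_i v_\bsi$ with $\epsilon_0=q$ and $\epsilon_1=-q^{-1}$ (the two middle cases of \eqref{action2}). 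Symmetrically, $(v_\bsj)\sT_s=\epsilon_j v_\bsj$. Hence $\sT_s e_{\bsi,\bsj}=\epsilon_i e_{\bsi,\bsj}$ and $e_{\bsi,\bsj}\sT_s=\epsilon_j e_{\bsi,\bsj}$, so the two products agree iff $\epsilon_i=\epsilon_j$. Since $q$ is a primitive $l$th root of unity with $l$ odd and $\operatorname{char}(R)\ne 2$, one has $q^2\ne -1$ (else $l\mid 4$ would force $l=1$ and $q=1$, in which case $\epsilon_0=1\ne -1=\epsilon_1$ anyway), so $\epsilon_0\ne\epsilon_1$ and equality is equivalent to $i=j$, as claimed.

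The only delicate step is the first: the super-version refinement of the standard length argument must deliver both the equality $i_k=i_{k+1}$ \emph{and} the correct parity of the common value, which is exactly what distinguishes the four cases $(i,j)$ and yields the stated equivalence.
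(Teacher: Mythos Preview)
Your proof is correct and follows essentially the same approach as the paper's: both reduce to evaluating on $v_\bsi$ and comparing the scalar multiples of $v_\bsj$ produced by the two sides. Your uniform packaging via $\epsilon_i,\epsilon_j\in\{q,-q^{-1}\}$ replaces the paper's four separate case checks, and you add the explicit verification that $q\neq -q^{-1}$ (which the paper uses elsewhere but does not justify in this proof).
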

\begin{proof} Suppose $s=(a,a+1)$. Since $\bsi s=\bsi$ and $\bsj s=\bsj$, by the definition of the action \eqref{action2}, $(v_{\bsi'})\sT_se_{\bsi,\bsj}\neq 0$ if and only if $\bsi'=\bsi $ or $\bsi'=\bsi s$, or equivalently, $\bsi'=\bsi$. Now, write $s=d^{-1}s'd$ or $d'^{-1}s''d'$ for some $s'\in W_\la\cap S$ or $s''\in W_\mu\cap S$. If $(i,j)=(0,0)$, then $s'\in W_{\la^{(0)}}$, $s''\in W_{\mu^{(0)}}$, and
$$(v_\bsi)\sT_se_{\bsi,\bsj}=(-1)^{\widehat d}v_{\bsi_\la}\sT_d\sT_se_{\bsi,\bsj}=(-1)^{\widehat d}v_{\bsi_\la}\sT_{s'}\sT_de_{\bsi,\bsj}=qv_\bsj=v_\bsj\sT_{s}=(v_\bsi) e_{\bsi,\bsj}\sT_s.$$
Similarly, if  $(i,j)=(1,1)$, then $s'\in W_{\la^{(1)}}$, $s''\in W_{\mu^{(1)}}$, and
$$(v_\bsi)\sT_se_{\bsi,\bsj}=(-q^{-1})v_\bsj=v_\bsj\sT_{s}=(v_\bsi) e_{\bsi,\bsj}\sT_s.$$

However, if $(i,j)=(0,1)$, then $s'\in W_{\la^{(0)}}$, $s''\in W_{\mu^{(1)}}$ and
$$(v_\bsi)\sT_se_{\bsi,\bsj}=qv_\bsj\text{ and }(v_\bsi) e_{\bsi,\bsj}\sT_s=-q^{-1}v_\bsj.$$
Hence, $\sT_se_{\bsi,\bsj}\neq e_{\bsi,\bsj}\sT_s$. The proof for the (1,0) case is similar.\end{proof}

\begin{corollary}\label{SPC}
Let $\lambda,\mu\in \Lambda(m|n,r),d\in\mathcal{D}_{\lambda\mu}$, and $W_{\nu}=W^d_\lambda\cap W_\mu$.
\begin{itemize}
\item[(1)] If $d\in\mathcal{D}_{\lambda\mu}^\circ$, then $e_{\mu,\lambda d}\in
\End_{\mathcal{H}_{\nu}}(V_R(m|n)^{\otimes r})$. In particular,  we have
$$N_{W,W_{\nu}}(e_{\mu,\lambda d})\in
\End_{\mathcal{H}_R}(V_R(m|n,r)^{\otimes r}).$$

\item[(2)] If $d\in\mathcal{D}_{\lambda\mu}\setminus\mathcal{D}^\circ_{\lambda\mu}$,
then $e_{\mu,\lambda d}\notin
\End_{\mathcal{H}_{\nu}}(V_R(m|n)^{\otimes r})$.
\end{itemize}
\end{corollary}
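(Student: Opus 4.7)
The whole statement is a direct packaging of Lemma~\ref{super2.2} and Lemma~\ref{00-11}. I would begin by specialising Lemma~\ref{super2.2} with $\bsi=\bsi_\mu$ (taking ``$\la$'' in the lemma to be $\mu$ and ``$d$'' in the lemma to be $1$) and $\bsj=\bsi_\la d$ (taking ``$\mu$'' in the lemma to be $\la$ and ``$d'$'' in the lemma to be $d$). Under this identification the four pieces of Lemma~\ref{00-11} read
\[
W_\nu^{ij}=W_{\mu^{(i)}}\cap W_{\la^{(j)}}^d,\qquad i,j\in\{0,1\},
\]
and the definition \eqref{Dcirc} of $\mathcal{D}^\circ_{\la\mu}$ translates into the single condition $W_\nu^{01}=W_\nu^{10}=1$.

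For part~(1) I assume $d\in\mathcal{D}^\circ_{\la\mu}$, so that $W_\nu=W_\nu^{00}\times W_\nu^{11}$. By Lemma~\ref{00-11}, every simple reflection $s\in W_\nu\cap S$ lies in either $W_\nu^{00}$ or $W_\nu^{11}$, i.e.\ in a $(0,0)$- or $(1,1)$-block. The corresponding case of Lemma~\ref{super2.2} then yields $\sT_s e_{\mu,\la d}=e_{\mu,\la d}\sT_s$. Because $\mathcal{H}_\nu$ is generated as an $R$-algebra by the $\sT_s$ with $s\in W_\nu\cap S$, this gives $e_{\mu,\la d}\in \End_{\mathcal{H}_\nu}(V_R(m|n)^{\otimes r})$. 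The ``in particular'' assertion then follows immediately by applying Lemma~\ref{RNT}(b) to the bimodule $M=\End_R(V_R(m|n)^{\otimes r})$, since $Z_M(\mathcal{H}_R)=\sS_R(m|n,r)=\End_{\mathcal{H}_R}(V_R(m|n)^{\otimes r})$.

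For part~(2), if $d\in\mathcal{D}_{\la\mu}\setminus\mathcal{D}^\circ_{\la\mu}$, then at least one of $W_\nu^{01}$ and $W_\nu^{10}$ is nontrivial. By Lemma~\ref{00-11} this factor is a parabolic subgroup of $W$, so it is generated by the simple reflections it contains and must therefore contain some $s\in S$ with $(i,j)\in\{(0,1),(1,0)\}$. The ``only if'' direction of Lemma~\ref{super2.2} then forces $\sT_s e_{\mu,\la d}\neq e_{\mu,\la d}\sT_s$. Since $\sT_s\in\mathcal{H}_\nu$, this exhibits an element of $\mathcal{H}_\nu$ that fails to commute with $e_{\mu,\la d}$, so $e_{\mu,\la d}\notin\End_{\mathcal{H}_\nu}(V_R(m|n)^{\otimes r})$.

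There is no real obstacle: the corollary is a bookkeeping statement packaging the block decomposition of $W_\nu$ in Lemma~\ref{00-11} together with the sign/eigenvalue dichotomy in Lemma~\ref{super2.2}, via the defining property \eqref{Dcirc} of $\mathcal{D}^\circ_{\la\mu}$. The only minor care point is verifying that the relevant parabolic factors are genuinely generated by their intersection with $S$, which is automatic since $W_\nu=W_{\la d\cap\mu}$ is a standard Young subgroup and each $W_\nu^{ij}$ is a direct factor of it.
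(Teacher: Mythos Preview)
Your proposal is correct and follows essentially the same approach as the paper: both arguments reduce to Lemma~\ref{super2.2} via the block decomposition of $W_\nu$ from Lemma~\ref{00-11}, and both invoke Lemma~\ref{RNT}(b) for the final ``in particular'' clause. The only cosmetic difference is that the paper verifies $\sT_w e_{\mu,\la d}=e_{\mu,\la d}\sT_w$ by an explicit computation on $v_\mu$ for a general $w=w_0w_1\in W_\nu$, whereas you check commutativity only on the simple reflections generating $\mathcal{H}_\nu$; this is a harmless and slightly cleaner shortcut.
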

\begin{proof} If $d\in
\mathcal{D}^\circ_{\lambda\mu}$, we have $W^d_\lambda\cap
W_\mu=W_{\nu}=W_{\nu^{(0)}}\times W_{\nu^{(1)}} $
where $W_{\nu^{(0)}}=W^d_{\lambda^{(0)}}\cap
W_{\mu^{(0)}},W_{\nu^{(1)}}=W^d_{\lambda^{(1)}}\cap
W_{\mu^{(1)}}$.
By the lemma above, for
 $w=w_0w_1\in W_{\nu}$ with $w_i\in W_{\nu^{(i)}},$
\begin{equation*}
\begin{aligned}
(v_\mu\mathcal{T}_w)e_{\mu,\lambda
d}&=q^{\ell(w_0)}(-q^{-1})^{\ell(w_1)} (v_\mu)e_{\mu,\lambda d}\\
&= v_{\bsi_\lambda d}\sT_{w_0}\sT_{w_1}\\
&=(v_\mu)e_{\mu,\la d}\sT_{w_0}\sT_{w_1}.
\end{aligned}
\end{equation*}
Hence, $\mathcal{T}_we_{\mu,\lambda d}= e_{\mu,\lambda
d}\mathcal{T}_w,$
proving $e_{\mu,\lambda d}\in
\End_{\mathcal{H}_{\nu}}(V_R(m|n)^{\otimes r})$.
The last assertion in (1) follows from Lemma \ref{RNT}(b).

If $d\in\mathcal{D}_{\lambda\mu}\backslash\mathcal{D}^\circ_{\lambda\mu}$, by
definition of $\mathcal{D}^\circ_{\lambda\mu}$, we have
$$W^d_{\lambda^{(0)}}\cap W_{\mu^{(1)}}\neq 1\mbox{ or }
W^d_{\lambda^{(1)}}\cap W_{\mu^{(0)}}\neq 1.$$
Hence, by the lemma above, there exists $s\in W_\nu$ such that $\sT_se_{\bsi,\bsj}\neq e_{\bsi,\bsj}\sT_s$.\end{proof}

The following vanishing property is somewhat surprising. Recall the notation introduced in \eqref{ladmu}.

\begin{theorem}\label{SRTC}  For $\lambda,\mu \in \Lambda(m|n,r)$ and
$d\in\mathcal{D}_{\lambda\mu}\setminus
\mathcal{D}^\circ_{\lambda\mu}$, if a parabolic subgroup $W_\eta\leq W_{\la d\cap\mu}^{00}\times
W_{\la d\cap\mu}^{11}$, then $N_{W,W_\eta}(e_{\mu,\lambda
d})=0$.
\end{theorem}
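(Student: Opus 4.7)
The plan is to exploit the hypothesis $d\notin\sD^\circ_{\la\mu}$ to extract a single simple reflection $s\in S$ sitting in the off-diagonal part $W^{01}_{\la d\cap\mu}\cup W^{10}_{\la d\cap\mu}$, and then to pair the summands in $N_{W,W_\eta}(e_{\mu,\la d})$ through $s$ so that they cancel two at a time.

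By the definition \eqref{Dcirc} of $\sD^\circ_{\la\mu}$, at least one of $W^{01}_{\la d\cap\mu}$ or $W^{10}_{\la d\cap\mu}$ is nontrivial. Since each $W^{ij}_{\la d\cap\mu}$ is a standard parabolic subgroup of $W$, the nontrivial one contains a simple reflection, so by Lemma \ref{00-11} we may pick $s\in S$ lying in $W^{01}_{\la d\cap\mu}$ (the $W^{10}$ case is symmetric). Because the four factors in \eqref{ladmu00-11} commute element-wise and $W_\eta\leq W^{00}_{\la d\cap\mu}\times W^{11}_{\la d\cap\mu}$, the reflection $s$ commutes with every element of $W_\eta$ and, being a simple reflection not among the Coxeter generators of the standard parabolic $W_\eta$, also satisfies $s\notin W_\eta$. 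Thus $W_{\eta'}:=W_\eta\times\langle s\rangle$ is again a standard parabolic subgroup of $W$, with $W_\eta$ of index $2$ and $\{1,s\}$ as the set of distinguished coset representatives of $W_\eta$ in $W_{\eta'}$.

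Applying transitivity (Lemma \ref{RNT}(a)),
$$N_{W,W_\eta}(e_{\mu,\la d})=N_{W,W_{\eta'}}\bigl(N_{W_{\eta'},W_\eta}(e_{\mu,\la d})\bigr)=N_{W,W_{\eta'}}\bigl(e_{\mu,\la d}+\sT_s e_{\mu,\la d}\sT_s\bigr),$$
so the theorem reduces to proving $\sT_s e_{\mu,\la d}\sT_s=-e_{\mu,\la d}$. Since $e_{\mu,\la d}$ annihilates every basis tensor except $v_{\bsi_\mu}$, I only need to evaluate on $v_{\bsi_\mu}$. Using \eqref{action2}: since $s\in W_{\mu^{(1)}}$ sits in the odd block of $\mu$, $v_{\bsi_\mu}\sT_s=-q^{-1}v_{\bsi_\mu}$; and since $s=d^{-1}s'd$ with $s'\in W_{\la^{(0)}}\cap S$ in the even block of $\la$, the two positions of $\bsi_\la d$ interchanged by $s$ carry even labels, so $v_{\bsi_\la d}\sT_s=q\,v_{\bsi_\la d}$. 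Composition yields $\sT_s e_{\mu,\la d}\sT_s=(-q^{-1})\cdot q\cdot e_{\mu,\la d}=-e_{\mu,\la d}$, and the $W^{10}$ case produces the same sign by the mirror computation.

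The only conceptual hurdle is recognising that the eigenvalue mismatch ($q$ against $-q^{-1}$), which in Lemma \ref{super2.2} obstructs $\sH_\eta$-bimodule linearity, here becomes exactly the sign needed to make the two-term relative norm cancel. Once this is in place, transitivity collapses the entire sum over $\sD_\eta$ to that vanishing two-term expression, independently of the size of $W_\eta$.
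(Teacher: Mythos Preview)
Your proof is correct and takes a genuinely different route from the paper's.

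The paper argues by direct expansion: it computes $(v_\mu)N_{W,W_\eta}(e_{\mu,\la d})$ as a sum over $w_0w_1\in\sD_\eta\cap W_\mu$, then decomposes each $w_0w_1$ through the four-factor splitting $W_\nu=W^{00}_\nu\times W^{11}_\nu\times W^{01}_\nu\times W^{10}_\nu$ (writing $w_0=x_0x_1d_0$, $w_1=y_0y_1d_1$), and observes that the resulting expression contains the factor $\sum_{(x_1,y_0)\in W^{10}_\nu\times W^{01}_\nu}(-1)^{\ell(x_1)+\ell(y_0)}$, a Poincar\'e polynomial evaluated at $-1$, which vanishes precisely because $d\notin\sD^\circ_{\la\mu}$ forces $W^{10}_\nu\times W^{01}_\nu\neq1$.

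Your argument instead isolates a single simple reflection $s$ in the off-diagonal part, enlarges $W_\eta$ to $W_{\eta'}=W_\eta\times\langle s\rangle$, and uses transitivity to reduce the whole relative norm to the two-term inner norm $e_{\mu,\la d}+\sT_s e_{\mu,\la d}\sT_s$, which you then show is zero by the eigenvalue mismatch $(-q^{-1})\cdot q=-1$. This is shorter and more conceptual: it pinpoints the mechanism (the sign clash of Lemma~\ref{super2.2} for a \emph{single} off-diagonal reflection) rather than summing over all of $W^{01}_\nu\times W^{10}_\nu$. The paper's computation, on the other hand, makes the full action on $v_\mu$ explicit, which feeds into later arguments such as Corollary~\ref{MC}; your approach gives the vanishing more cheaply but does not produce that explicit formula as a by-product.

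One small point worth stating explicitly in your write-up: to conclude $\sT_s e_{\mu,\la d}\sT_s=-e_{\mu,\la d}$ as operators (not just on $v_{\bsi_\mu}$), you are implicitly using that $\bsi_\mu s=\bsi_\mu$, so no $v_\bsi$ with $\bsi\neq\bsi_\mu$ can acquire a $v_{\bsi_\mu}$-component under $\sT_s$. You have this in mind, but saying it in one line would make the equality of operators airtight.
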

\begin{proof} %By the definition of relative norm, we have
%$$N_{W,W_\eta}(e_{\mu,\lambda d})=\sum_{w\in \sD_\eta}\mathcal{T}_{w^{-1}} e_{\mu,\lambda
%d}\mathcal{T}_w$$ and,
By the hypothesis and Lemma \ref{super2.2}, $e_{\mu,\la d}\in\End_{\sH_\eta}(V_R(m|n)^{\otimes r})$. Hence, by Lemma \ref{RNT}(b),
$N_{W,W_{\eta}}(e_{\mu,\lambda d})\in \End_{\mathcal{H}_R}(V_R(m|n)^{\otimes r})$.
From the definition of $e_{\mu,\lambda d}$, it is enough by Lemma \ref{SRNP}  to
consider the action of $N_{W,W_\eta}(e_{\mu,\lambda d})$ on $v_\mu$. Thus, we have
\begin{equation*}
\begin{aligned}
(v_\mu)N_{W,W_\eta}(e_{\mu,\lambda d})&=(v_\mu)\sum_{w\in
\sD_\eta}\mathcal{T}_{w^{-1}} e_{\mu,\lambda d}\mathcal{T}_w\\
&=\sum_{w_0w_1\in \sD_\eta\cap(W_{\mu^{(0)}}\times W_{\mu^{(1)}})}(v_\mu)
\mathcal{T}_{(w_0w_1)^{-1}} e_{\mu,\lambda d}\mathcal{T}_{w_0w_1}\\
&=\sum_{w_0w_1\in \sD_\eta\cap(W_{\mu^{(0)}}\times
W_{\mu^{(1)}})}q^{\ell(w_0)}(-q^{-1})^{\ell(w_1)} v_{\bsi_\lambda
d}\mathcal{T}_{w_0w_1}
\end{aligned}
\end{equation*}
Since $d\in\mathcal{D}_{\lambda\mu}\setminus
\mathcal{D}^\circ_{\lambda\mu}$,  we have
$$W_{\nu}:=W^d_\lambda\cap W_\mu=W_\nu^{00}\times W_\nu^{01}\times W_\nu^{10}\times W_\nu^{11},$$
where $W_\nu^{ij}=W^d_{\lambda^{(i)}}\cap
W_{\mu^{(j)}},$ for all $i,j=0,1$, and
$W_\mu=W_{\nu}(\mathcal{D}_{\nu}\cap W_\mu)$.  For
$w_0\in \sD_\eta\cap W_{\mu^{(0)}}$, there exist $x_0x_1\in
(\sD_\eta\cap W^{00}_\nu)\times W^{10}_\nu$ and
$d_0\in \mathcal{D}_{\nu}\cap W_{\mu^{(0)}}$ such that
$w_0=x_0x_1d_0$. For $w_1\in W_{\mu^{(1)}} $, there are $y_0y_1\in
W^{01}_\nu \times(\sD_\eta\cap W^{11}_\nu)$ and
$d_1\in \mathcal{D}_{\nu}\cap W_{\mu^{(1)}}$ such that
$w_1=y_0y_1 d_1$. Therefore, $w_0w_1=x_0x_1y_0y_1d_0d_1$ and
$\ell(w_0w_1)=\ell(x_0)+\ell(x_1)+\ell(y_0)+\ell(y_1)+\ell(d_0)+\ell(d_1)$.
Consequently, we have
\begin{equation*}
\begin{aligned}
&(v_\mu)N_{W,W_\eta}(e_{\mu,\lambda d})\\
&=\sum_{\substack{x_0x_1\in(\sD_\eta\cap W^{00}_\nu) \times W^{10}_\nu\\
y_0y_1\in W^{01}_\nu\times(\sD_\eta\cap W^{11}_\nu)\\
d_0\in\mathcal{D}_{\nu(d)}\cap W_{\mu^{(0)}},d_1\in
\mathcal{D}_{\nu(d)}\cap W_{\mu^{(1)}}} }
q^{\ell(x_0x_1d_0)}(-q^{-1})^{\ell(y_0y_1d_1)}v_{\bsi_\lambda
d}\mathcal{T}_{x_0x_1}\mathcal{T}_{y_0y_1}\mathcal{T}_{d_0}\mathcal{T}_{d_1}\\
&=\sum_{\substack{x_0x_1\in(\sD_\eta\cap W^{00}_\nu) \times W^{10}_\nu\\
y_0y_1\in W^{01}_\nu\times(\sD_\eta\cap W^{11}_\nu)\\
d_0\in\mathcal{D}_{\nu(d)}\cap W_{\mu^{(0)}},d_1\in
\mathcal{D}_{\nu(d)}\cap W_{\mu^{(1)}} } }
(-1)^{\widehat{dd_0d_1}-\widehat{d}}(-1)^{\ell(x_1)+\ell(y_0)}q^{2\ell(x_0)-2\ell(y_1)}v_{\bsi_\lambda
dd_0d_1}\\
&=\sum_{\substack{d_0\in\mathcal{D}_{\nu(d)}\cap
W_{\mu^{(0)}},\\
d_1\in \mathcal{D}_{\nu(d)}\cap W_{\mu^{(1)}}
}}(-1)^{\widehat{dd_0d_1}-\widehat{d}}\sum_{\substack{x_0\in\sD_\eta\cap W_\nu^{00}\\
y_1\in\sD_\eta\cap W^{11}_\nu }
}q^{2\ell(x_0)-2\ell(y_1)}\sum_{\substack{x_1\in W^{10}_\nu\\
y_0\in W^{01}_\nu}}(-1)^{\ell(x_1)+\ell(y_0)}v_{\bsi_\lambda dd_0d_1}.
\end{aligned}
\end{equation*}
Now, the sum $\sum_{(x_1,y_0)\in W^{10}_\nu \times W^{01}_\nu
}(-1)^{\ell(x_1)+\ell(y_0)}$ is the value at $-1$ of a Poincar\'e polynomial of a product of symmetric groups.
Obviously, it is zero if and only if  $ W^{10}_\nu \times W^{01}_\nu \neq
1$.
However, the hypothesis $d\in \mathcal{D}_{\lambda\mu}\setminus
\mathcal{D}^\circ_{\lambda\mu}$ implies  $ W^{10}_\nu \times W^{01}_\nu \neq
1$. Hence,  we have $N_{W,W_\eta}(e_{\mu,\lambda d})=0$.
\end{proof}

\begin{corollary}\label{MC}
For $\lambda,\mu\in\Lambda(m|n,r),y\in \mathcal{D}_\lambda$, if
$W^y_{\lambda^{(0)}}\cap W_{\mu^{(1)}}\neq 1$ or $
W^y_{\lambda^{(1)}}\cap W_{\mu^{(0)}}\neq1$, then
$N_{W,1}(e_{\mu,\lambda y})=0$ in $\sS_R(m|n,r)$.
If, in addition, a parabolic subgroup $W_\eta$ is a subgroup
of $(W_{\la^{(0)}}^y\cap W_{\mu^{(0)}})\times(W_{\la^{(1)}}^y\cap W_{\mu^{(1)}})
$, then we have $N_{W,W_\eta}(e_{\mu,\lambda
y})=0$.
\end{corollary}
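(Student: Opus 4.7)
The first assertion is the $W_\eta=\{1\}$ special case of the second, since the trivial subgroup is parabolic and vacuously contained in $(W^y_{\la^{(0)}}\cap W_{\mu^{(0)}})\times(W^y_{\la^{(1)}}\cap W_{\mu^{(1)}})$. Thus I will focus on proving the second assertion, by reducing it to Theorem \ref{SRTC}.

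Since $y\in\mathcal{D}_\la$, I would write $y=dw$ uniquely with $d\in\mathcal{D}_{\la\mu}$ and $w\in W_\mu$. Because $d^{-1}\in\mathcal{D}_\mu$, the length is additive: $\ell(y)=\ell(d)+\ell(w)$, and hence $\mathcal{T}_y=\mathcal{T}_d\mathcal{T}_w$. Now $W_{\mu^{(0)}}$ and $W_{\mu^{(1)}}$ are both normal in $W_\mu=W_{\mu^{(0)}}\times W_{\mu^{(1)}}$, so conjugation by $w\in W_\mu$ preserves each factor, yielding
$$W^y_{\la^{(i)}}\cap W_{\mu^{(j)}}=w^{-1}\bigl(W^d_{\la^{(i)}}\cap W_{\mu^{(j)}}\bigr)w,\quad i,j\in\{0,1\}.$$
Consequently, the hypothesis that $W^y_{\la^{(0)}}\cap W_{\mu^{(1)}}\neq1$ or $W^y_{\la^{(1)}}\cap W_{\mu^{(0)}}\neq1$ is equivalent to $d\in\mathcal{D}_{\la\mu}\setminus\mathcal{D}^\circ_{\la\mu}$, which is exactly the hypothesis in Theorem \ref{SRTC}. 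In addition, the containment $W_\eta\leq(W^y_{\la^{(0)}}\cap W_{\mu^{(0)}})\times(W^y_{\la^{(1)}}\cap W_{\mu^{(1)}})$ combined with Lemma \ref{super2.2} applied to $(\bsi_\mu,\bsi_\la y)$ ensures $e_{\mu,\la y}\in\End_{\sH_\eta}(V_R(m|n)^{\otimes r})$, so that $N_{W,W_\eta}(e_{\mu,\la y})$ indeed lies in $\sS_R(m|n,r)$.

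I would next mimic the computation in the proof of Theorem \ref{SRTC}, applied to $e_{\mu,\la y}$ in place of $e_{\mu,\la d}$. By Lemma \ref{SRNP}, $N_{W,W_\eta}(e_{\mu,\la y})$ vanishes outside $v_\mu\mathcal{H}_R$, so it suffices to evaluate $(v_\mu)N_{W,W_\eta}(e_{\mu,\la y})$. Only $g\in\mathcal{D}_\eta\cap W_\mu$ contribute, and writing $g=g_0g_1$ with $g_i\in W_{\mu^{(i)}}$ gives the left scalar $q^{\ell(g_0)}(-q^{-1})^{\ell(g_1)}$. On the right side, using the double coset decomposition $W_\mu=W_{\nu_y}(\mathcal{D}_{\nu_y}\cap W_\mu)$ with $W_{\nu_y}:=W^y_\la\cap W_\mu=W_{\la y\cap\mu}$, which by Lemma \ref{00-11} splits as a product of the four $W_{\nu_y}^{ij}$, the key point is that $W^y_\la$ stabilises the multi-index $\bsi_\la y$, so $v_{\bsi_\la y}\mathcal{T}_v$ is a scalar multiple of $v_{\bsi_\la y}$ for every $v\in W_{\nu_y}$: each simple transposition contributes $q$ if it lies in $W^y_{\la^{(0)}}$ (the corresponding positions of $\bsi_\la y$ have value $\leq m$) and $-q^{-1}$ if it lies in $W^y_{\la^{(1)}}$. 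Reorganising the sum exactly as in the proof of Theorem \ref{SRTC} produces the factor
$$\sum_{(x_1,y_0)\in W_{\nu_y}^{10}\times W_{\nu_y}^{01}}(-1)^{\ell(x_1)+\ell(y_0)},$$
which is the value at $\bsu=-1$ of the Poincar\'e polynomial of the product $W_{\nu_y}^{10}\times W_{\nu_y}^{01}$. Since this product of symmetric groups is nontrivial by hypothesis, the evaluation is zero, giving $N_{W,W_\eta}(e_{\mu,\la y})=0$.

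The main obstacle I expect is the bookkeeping around the extra right factor $\mathcal{T}_w$: passing from $d$ to $y=dw$ introduces a sign $(-1)^{(\la,y)^\wedge-(\la,d)^\wedge}$ via Lemma \ref{iso f} (so that $v_{\bsi_\la y}=\pm v_{\bsi_\la d}\mathcal{T}_w$), and one must verify that this extra data does not interfere with the vanishing mechanism. Since $w\in W_\mu$ lies inside the inner double coset summation over $W_\mu$, it is absorbed into the reindexing, and the Poincar\'e-polynomial vanishing of Theorem \ref{SRTC} goes through once one confirms that the action of $W^y_\la\cap W_\mu$ on $v_{\bsi_\la y}$ follows the same scalar recipe as the action of $W^d_\la\cap W_\mu$ on $v_{\bsi_\la d}$, which in turn is guaranteed by the property $y\in\mathcal{D}_\la$ and the fact that conjugation by $w$ is an isomorphism preserving lengths of the even/odd cross intersection subgroups.
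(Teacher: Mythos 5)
Your observation that the first assertion is the $W_\eta=\{1\}$ special case of the second is correct, but your attempt to prove the second assertion by directly mimicking the computation in Theorem \ref{SRTC} has a genuine gap, and the paper itself remarks after proving the first assertion that ``the argument does not carry over.'' The reorganisation in the proof of Theorem \ref{SRTC} writes each $w_0\in\sD_\eta\cap W_{\mu^{(0)}}$ as $w_0=x_0x_1d_0$ with $x_0\in\sD_\eta\cap W_\nu^{00}$, $x_1\in W_\nu^{10}$, $d_0\in\sD_\nu\cap W_{\mu^{(0)}}$ and \emph{additive lengths} $\ell(w_0)=\ell(x_0)+\ell(x_1)+\ell(d_0)$; this is what allows the $q$-powers and $\sT$-factors to split so that the factor $\sum_{(x_1,y_0)\in W_\nu^{10}\times W_\nu^{01}}(-1)^{\ell(x_1)+\ell(y_0)}$ pops out. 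That requires $W_\nu=W^d_\la\cap W_\mu$ and each $W_\nu^{ij}$ to be parabolic, which holds for $d\in\sD_{\la\mu}$. For your $y=dw\in\sD_\la$ with $w\in W_\mu\setminus\{1\}$, the subgroup $W^y_\la\cap W_\mu=(W^d_\la\cap W_\mu)^w$ is a $W_\mu$-conjugate of a parabolic subgroup but in general not itself parabolic, so the additive-length factorisation is unavailable and the set $\sD_{\nu_y}$ does not have the meaning your argument assigns to it. Your appeal to ``conjugation by $w$ preserving lengths of the even/odd cross intersection subgroups'' is false: conjugation preserves the sign character $(-1)^{\ell(\cdot)}$ but not lengths, and the reorganisation needs the actual $q^{\ell(\cdot)}$-powers, not just signs.

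The paper's second-assertion argument is entirely different and much shorter: working over $\sZ=\mathbb Z[\up,\up^{-1}]$, it uses $W_\eta\leq W_{\la y\cap\mu}^{00}\times W_{\la y\cap\mu}^{11}$ and Lemma \ref{super2.2} to see $e_{\mu,\la y}$ commutes with $\sT_z$ for $z\in W_\eta$, whence $N_{W_\eta,1}(e_{\mu,\la y})$ is a Poincar\'e polynomial in $\up$ (a nonzero element of the domain $\sZ$) times $e_{\mu,\la y}$; then transitivity gives $0=N_{W,1}(e_{\mu,\la y})=d_{W_{\eta^{(0)}}}(\bsu^2)d_{W_{\eta^{(1)}}}(\bsu^{-2})\,N_{W,W_\eta}(e_{\mu,\la y})$, forcing $N_{W,W_\eta}(e_{\mu,\la y})=0$ over $\sZ$, and one concludes by base change. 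Note that this makes the first assertion a prerequisite for the second rather than a special case: the paper proves the first assertion directly (via $y=dx$ and Theorem \ref{SRTC} at $W_\eta=\{1\}$) and then leverages it. If you want to salvage a direct SRTC-style argument you would first have to prove additional technical lemmas about additive length factorisations with respect to the non-parabolic intersections $W^y_\la\cap W_\mu$, which is precisely what the paper circumvents.
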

\begin{proof} For the double coset $W_\lambda y W_\mu$ of $W$,
there is a unique distinguish element $d\in
\mathcal{D}_{\lambda\mu}$ such that $W_\lambda y W_\mu=W_\lambda d
W_\mu$.

Since $W_\lambda y
W_\mu=W_\lambda\times\{d\}\times(\mathcal{D}_{\nu}\cap W_\mu)$ (as sets),
there is $x\in\mathcal{D}_{\nu}\cap W_\mu$ such that $y=dx$ where
$W_{\nu}=W^d_{\lambda}\cap W_{\mu}$.

Without loss of generality, we assume $W^y_{\lambda^{(0)}}\cap
W_{\mu^{(1)}}\neq 1$. Since $W^y_{\lambda^{(0)}}\cap
W_{\mu^{(1)}}=W^{dx}_{\lambda^{(0)}}\cap
W_{\mu^{(1)}}=(W^d_{\lambda^{(0)}}\cap W_{\mu^{(1)}})^x\neq 1$,
$W^{d}_{\lambda^{(0)}}\cap W_{\mu^{(1)}}\neq 1$. Thus,
$d\in\mathcal{D}_{\lambda\mu}\setminus\mathcal{D}^\circ_{\lambda\mu}$
in this case.

In order to get the claim, it is enough as above to consider the action of
$N_{W,1}(e_{\mu,\lambda y})$ over $v_\mu$. That is,
\begin{equation*}
\begin{aligned}
&(v_\mu)N_{W,1}(e_{\mu,\lambda y})\\
&=\sum_{w\in W}(v_\mu\mathcal{T}_{w^{-1}}) e_{\mu,\lambda
y}\mathcal{T}_w\\
&=\sum_{w\in W_\mu}(v_\mu\mathcal{T}_{w^{-1}}) e_{\mu,\lambda
y}\mathcal{T}_w\\
&=\sum_{w_0w_1\in W_{\mu^{(0)}}\times
W_{\mu^{(1)}}}(v_\mu
\mathcal{T}_{(w_0w_1)^{-1}}) e_{\mu,\lambda
y}\mathcal{T}_{w_0w_1}\\
&=\sum_{w_0w_1\in W_{\mu^{(0)}}\times
W_{\mu^{(1)}}}q^{\ell(w_0)}(-q^{-1})^{\ell(w_1)}v_{\bsi_\lambda
y}\mathcal{T}_{w_0}\mathcal{T}_{w_1}\\
&=\sum_{w_0w_1\in W_{\mu^{(0)}}\times
W_{\mu^{(1)}}}(-1)^{\hat{y}-\hat{d}}q^{\ell(w_0)}(-q^{-1})^{\ell(w_1)}v_{\bsi_\lambda
d}\mathcal{T}_x\mathcal{T}_{w_0}\mathcal{T}_{w_1}\\
&=(-1)^{\hat{y}-\hat{d}}v_{\bsi_\lambda
d}\mathcal{T}_x\sum_{w_0w_1\in W_{\mu^{(0)}}\times
W_{\mu^{(1)}}}q^{\ell(w_0)}(-q^{-1})^{\ell(w_1)}\mathcal{T}_{w_0}\mathcal{T}_{w_1}.
\end{aligned}
\end{equation*}

Since $$\sum_{w_0w_1\in W_{\mu^{(0)}}\times
W_{\mu^{(1)}}}q^{\ell(w_0)}(-q^{-1})^{\ell(w_1)}\mathcal{T}_{w_0}\mathcal{T}_{w_1}=x_{\mu^{(0)}}y_{\mu^{(1)}},$$
it follows that
$$\mathcal{T}_xx_{\mu^{(0)}}y_{\mu^{(1)}}=
q^{-\ell(x)}q^{2\ell(x_0)}(-1)^{\ell(x_1)}x_{\mu^{(0)}}y_{\mu^{(1)}}=q^{\ell(x_0)}(-q^{-1})^{\ell(x_1)}x_{\mu^{(0)}}y_{\mu^{(1)}},$$
where $x=x_0x_1$ with $x_0\in
W_{\mu^{(0)}}$ and $x_1\in W_{\mu^{(1)}}$ (so
$\ell(x)=\ell(x_0)+\ell(x_1)$).
Hence, by Theorem \ref{SRTC},
$$(v_\mu)N_{W,1}(e_{\mu,\lambda y})=(-1)^{\hat{y}-\hat{d}}q^{\ell(x_0)}(-q^{-1})^{\ell(x_1)}(v_\mu)N_{W,1}(e_{\mu,\lambda d})=0,$$
proving the first assertion.

For last assertion, the argument does not carry over. However, we first give a proof for the $\up$-Schur super algebra $\sS(m|n,r)$ (i.e., the $R=\sZ$ case).

By Lemmas \ref{00-11} and \ref{super2.2}, for any $s\in S$, $\mathcal{T}_se_{\mu,\la y}=e_{\mu,\la y}\mathcal{T}_s$ if and only if
$s\in (W^y_{\lambda^{(0)}}\cap W_{\mu^{(0)}})\times
(W^y_{\lambda^{(1)}}\cap W_{\mu^{(1)}})$.
Now, if $W_\eta=W_{\eta^{(0)}}\times W_{\eta^{(1)}}$ with $W_{\eta^{(i)}}\leq W_{\la y\cap\mu}^{ii}$ ($i=0,1$), then, by Lemma
\ref{super2.2},
$$\sT_ze_{\mu,\la y}=e_{\mu,\la y}\sT_z=\up^{\ell(z_0)}(-\up^{-1})^{\ell(z_1)} e_{\mu,\la y}$$
where $z=z_0z_1$ with $z_i\in W_{\eta^{(i)}}$ ($i=0,1$). Thus,
$$\aligned
0=N_{W,1}(e_{\mu,\la y})&=N_{W,W_\eta}(N_{W_\eta,1}(e_{\mu,\la y}))\\
&=d_{W_{\eta^{(0)}}}(\bsu)d_{W_{\eta^{(1)}}}(\bsu^{-1})N_{W,W_\eta}(e_{\mu,\la y}).
\endaligned$$
Hence, $N_{W,W_\eta}(e_{\mu,\la y})=0$ in $\sS(m|n,r)$. The general case follows from base change,
noting $\sS_R(m|n,r)\cong\sS(m|n,r)\otimes_\sZ R$.\end{proof}

\section{Bases for $\sS_R(m|n,r)$ in relative norms}

By Lemma \ref{iso f}, we will always identify $\sS_R(m|n,r)$ with $\End_{\sH_R}(V_R(m|n)^{\otimes r})$. For $\lambda,\mu\in\Lambda(m|n,r)$ and $d\in\mathcal{D}^\circ_{\lambda\mu}$, let
$$N_{\mu\la}^d:=N_{W,W_{\la d\cap\mu}}(e_{\mu,\lambda d}).$$
Then, by Corollary \ref{SPC}(1),
$N_{\mu\la}^d\in\sS_R(m|n,r )$.

\begin{theorem}\label{RNB} The set
$$\mathcal{B}=\{N_{\mu\la}^d\mid
\lambda,\mu\in\Lambda(m|n,r),d\in\mathcal{D}^\circ_{\lambda\mu}\}$$ forms a basis of the $q$-Schur superalgebra
$\sS_R(m|n,r )$.
\end{theorem}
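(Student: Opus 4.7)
The plan is to combine Frobenius reciprocity with a Mackey-style character analysis, and to close out by cardinality using Lemma~\ref{DR5.8}. By that lemma, $\sS_R(m|n,r)$ is $R$-free of rank $N:=\sum_{\la,\mu\in\La(m|n,r)}|\sD^\circ_{\la\mu}|=|\mathcal B|$. Since any $N$-element spanning set in a free $R$-module of rank $N$ is automatically a basis, it suffices to prove that $\mathcal B$ spans $\sS_R(m|n,r)$. The weight decomposition $V_R(m|n)^{\otimes r}=\bigoplus_\la v_\la\sH_R$, an identity of $\sH_R$-modules by inspection of Lemma~\ref{iso f}, gives
$$\sS_R(m|n,r)=\bigoplus_{\la,\mu\in\La(m|n,r)}\Hom_{\sH_R}(v_\mu\sH_R,v_\la\sH_R),$$
and Lemma~\ref{Frobenius} identifies each summand with $N_{W,W_\mu}(\Hom_{\sH_\mu}(Rv_\mu,v_\la\sH_R))$. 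For $d\in\sD^\circ_{\la\mu}$ set $g_d:=N_{W_\mu,W_{\la d\cap\mu}}(e_{\mu,\la d})$; Corollary~\ref{SPC}(1) and Lemma~\ref{RNT}(b) place $g_d$ in $\End_{\sH_\mu}(V_R(m|n)^{\otimes r})$, while transitivity (Lemma~\ref{RNT}(a)) yields $N_{W,W_\mu}(g_d)=N_{W,W_{\la d\cap\mu}}(e_{\mu,\la d})=N^d_{\mu\la}$. The task is thereby reduced to showing that the restrictions $\{g_d|_{Rv_\mu}:d\in\sD^\circ_{\la\mu}\}$ span $\Hom_{\sH_\mu}(Rv_\mu,v_\la\sH_R)$.

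To do so, I would realise $\Hom_{\sH_\mu}(Rv_\mu,v_\la\sH_R)$ as the $\chi_\mu$-isotypic component of $v_\la\sH_R$ viewed as an $\sH_\mu$-module, where $\chi_\mu$ is the one-dimensional character with $\sT_s\mapsto q$ on $W_{\mu^{(0)}}\cap S$ and $\sT_s\mapsto -q^{-1}$ on $W_{\mu^{(1)}}\cap S$. The Mackey decomposition (Lemma~\ref{Mackey}) gives
$$v_\la\sH_R|_{\sH_\mu}\cong\bigoplus_{d\in\sD_{\la\mu}}(Rv_\la\otimes\sT_d)\otimes_{\sH_{W_{\la d\cap\mu}}}\sH_\mu,$$
so the $\chi_\mu$-isotypic splits according to $d$, and on the $d$-th summand reduces to comparing $\chi_\mu|_{W_{\la d\cap\mu}}$ with the character of the $\sH_{W_{\la d\cap\mu}}$-module $Rv_\la\otimes\sT_d$ (which is obtained from $\la$ by conjugating through $d$, using $\sT_d\sT_s=\sT_{dsd^{-1}}\sT_d$ with $dsd^{-1}\in W_\la$). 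Invoking the block decomposition of Lemma~\ref{00-11}, these two characters agree on the diagonal blocks $W^{00}_{\la d\cap\mu}$ and $W^{11}_{\la d\cap\mu}$ but differ by the nontrivial ratio $-q^2\neq 1$ (here $l$ odd forces $q^2\neq -1$) on each off-diagonal block. Hence the isotypic in the $d$-th summand vanishes unless $W^{01}_{\la d\cap\mu}=1=W^{10}_{\la d\cap\mu}$, i.e.\ unless $d\in\sD^\circ_{\la\mu}$; and for such $d$ it is one-dimensional and spanned by $g_d|_{Rv_\mu}$.

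The hard part will be executing this character comparison cleanly inside the induced-module formalism. A sturdier backup for the vanishing on $d\notin\sD^\circ_{\la\mu}$ is Corollary~\ref{MC}: any $\tilde f\in\End_{\sH_\mu}(V_R(m|n)^{\otimes r})$ supported on $v_\mu$ expands as $\sum_{y\in\sD_\la}c_y e_{\mu,\la y}$, and the transitivity relation $N_{W,1}=N_{W,W_\mu}\circ N_{W_\mu,1}$ together with $N_{W,1}(e_{\mu,\la y})=0$ for $y$ in a bad double coset forces the corresponding contributions to drop out of $N_{W,W_\mu}(\tilde f)$. Combined with the rank count from Lemma~\ref{DR5.8}, this confirms that $\mathcal B$ is an $R$-basis of $\sS_R(m|n,r)$.
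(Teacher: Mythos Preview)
Your main argument is essentially the paper's own proof: Frobenius reciprocity reduces $\Hom_{\sH_R}(v_\mu\sH_R,v_\la\sH_R)$ to $\Hom_{\sH_\mu}(Rv_\mu,v_\la\sH_R)$, Mackey decomposes the latter over $d\in\sD_{\la\mu}$, and the character comparison on $W_{\la d\cap\mu}$ (the paper writes it as $aq=a(-q^{-1})\Rightarrow a=0$) kills the summand whenever $d\notin\sD^\circ_{\la\mu}$, leaving a one-dimensional piece for each $d\in\sD^\circ_{\la\mu}$. The only cosmetic difference is that the paper identifies the surviving summand with $N_{W_\mu,W_{\la d\cap\mu}}(\Hom_{\sH_{\la d\cap\mu}}(Rv_\mu,Rv_{\bsi_\la d}))$ via the Nakayama relation (Lemma~\ref{Nakayama}) rather than speaking of isotypic components.

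Two remarks. First, your appeal to Lemma~\ref{DR5.8} for the rank count is unnecessary: your own Mackey analysis already shows each $\Hom_{\sH_R}(v_\mu\sH_R,v_\la\sH_R)$ is free with basis $\{N^d_{\mu\la}:d\in\sD^\circ_{\la\mu}\}$, so linear independence comes for free. In fact the paper runs the implication the other way, deducing Lemma~\ref{DR5.8} \emph{from} Theorem~\ref{RNB} via Theorem~\ref{RTSB}; relying on it here is against the intended logical flow (though not formally circular, since Lemma~\ref{DR5.8} is cited from \cite{DR}). Second, your ``backup'' via Corollary~\ref{MC} does not work as written: the individual $e_{\mu,\la y}$ are not $\sH_\mu$-equivariant, so $N_{W,W_\mu}$ cannot be applied to them term by term, and vanishing of $N_{W,1}(e_{\mu,\la y})$ says nothing about the image of $\tilde f$ under $N_{W,W_\mu}$. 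Since the character argument already does the job, this backup is not needed.
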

\begin{proof} Consider the $\mathcal{H}_\mu$-$\mathcal{H}_\mu$-bimodule
$$M=\Hom_R(Rv_\mu,Rv_{\bsi_\lambda
d}\otimes_{\mathcal{H}_{\nu}}\mathcal{H}_\mu),$$ where
$d\in\mathcal{D}_{\lambda\mu}$ and $\nu=\la d\cap \mu$. Now,
$M$ contains an $\mathcal{H}_\mu$-$\mathcal{H}_\nu$-submodule
$N=\Hom_R(Rv_\mu,Rv_{\bsi_\lambda d})$ and
 $M\cong N\otimes_{\mathcal{H}_{\nu}}\mathcal{H}_\mu$.
Thus, by Nakayama relation in Lemma \ref{Nakayama},
$$N_{W_\mu,W_{\nu}}(\Hom_{\mathcal{H}_{\nu}}(Rv_\mu,Rv_{\bsi_\lambda
d}))=\Hom_{\mathcal{H}_\mu}(Rv_\mu,Rv_{\bsi_\lambda
d}\otimes_{\mathcal{H}_{\nu}}\mathcal{H}_\mu).$$
This together with an application of
Mackey decomposition yields
\begin{equation*}
\begin{aligned}
\Hom_{\mathcal{H}_\mu}(Rv_\mu,v_\lambda\mathcal{H}_R)&=\bigoplus_{d\in\mathcal{D}_{\lambda\mu}}\Hom_{\mathcal{H}_\mu}(Rv_\mu,Rv_\lambda\otimes
\mathcal{T}_d\otimes_{\mathcal{H}_{\nu}}\mathcal{H}_\mu)\\
&=\bigoplus_{d\in\mathcal{D}_{\lambda\mu}}\Hom_{\mathcal{H}_\mu}(Rv_\mu,Rv_{\bsi_\lambda
d}\otimes_{\mathcal{H}_{\nu}}\mathcal{H}_\mu)\\
&=\bigoplus_{d\in\mathcal{D}_{\lambda\mu}}N_{W_\mu,W_{\nu}}(\Hom_{\mathcal{H}_{\nu}}(Rv_\mu,Rv_{\bsi_\lambda
d})).
\end{aligned}
\end{equation*}

We claim that, for
$d\in\mathcal{D}_{\lambda\mu}\setminus\mathcal{D}^\circ_{\lambda\mu}$,
$\Hom_{\mathcal{H}_{\nu}}(Rv_\mu,Rv_{\bsi_\lambda d})=0$. Indeed, in this case,
there exists $w\in(W^d_{\lambda^{(0)}}\cap W_{\mu^{(1)}})\times
(W^d_{\lambda^{(1)}}\cap W_{\mu^{(0)}})$ such that $\ell(w)=1$. Assume
$w\in W^d_{\lambda^{(0)}}\cap W_{\mu^{(1)}}$. For $f\in
\Hom_{\mathcal{H}_{\nu}}(Rv_\mu,Rv_{\bsi_\lambda d})$,
$(v_\mu)f=av_{\bsi_\lambda d}$ for some $a\in R$. Since
$(v_\mu\mathcal{T}_s)f=(v_\mu)f\mathcal{T}_s$ for all $s\in W_\nu\cap S$, applying this to $s=w$ yields
$aqv_{\bsi_\lambda d}=a(-q^{-1})v_{\bsi_\lambda d}$. Because $R$ is
a domain and $q\neq (-q^{-1})$, $a=0$ and $f=0$.

Thus, by the claim,
$$\Hom_{\mathcal{H}_\mu}(Rv_\mu,v_\lambda\mathcal{H}_R)
=\bigoplus_{d\in\mathcal{D}^\circ_{\lambda\mu}}N_{W_\mu,W_{\nu(d)}}(Rv_\mu,Rv_{\bsi_\lambda
d}).$$
By applying $N_{W,W_\mu}(-)$ to both sides, Lemma \ref{RNT}(a) and Frobenius reciprocity (Lemma \ref{Frobenius}) imply
\begin{equation*}
\begin{aligned}
\Hom_{\mathcal{H}_R}(v_\mu\mathcal{H}_R,v_\lambda\mathcal{H}_R)
&=N_{W,W_\mu}(\Hom_{\mathcal{H}_\mu}(Rv_\mu,v_\lambda\mathcal{H}_R))\\
&=\bigoplus_{d\in
\mathcal{D}^\circ_{\lambda\mu}}N_{W,W_{\nu}}(\Hom_{\mathcal{H}_{\nu}}(Rv_\mu,Rv_{\bsi_\lambda
d})).
\end{aligned}
\end{equation*}
Therefore, $\{N_{W,W_{\nu}}(e_{\mu,\lambda d})\mid
d\in\mathcal{D}^\circ_{\mu\lambda}\}$ forms a basis of
$\Hom_{\mathcal{H}_R}(v_\mu\mathcal{H}_R,v_\lambda\mathcal{H}_R)$.
Hence, $\mathcal{B}$ is a basis of $\sS_R(m|n,r )$. \end{proof}

Now we describe a basis of
$\End_{\mathcal{H}_\rho}(V_R(m|n)^{\otimes r})$ for
$\rho=(\rho_1,\rho_2,\cdots,\rho_t)\models r$, which will be used in \S 7 and \S10.

For $\lambda,\mu\in\Lambda(m|n,r)$, let
$d\in\mathcal{D}_{\la\rho},d'\in\mathcal{D}_{\mu\rho}$, and
 $$W_\alpha=W^d_\la\cap W_\rho\;\;\text{ and }\;\; W_\beta=W^{d'}_{\mu}\cap
W_\rho.$$ Then we have
$\alpha=(\alpha_{(0)},\alpha_{(1)},\cdots,\alpha_{(t)})$ and
$\beta=(\beta_{(0)},\beta_{(1)},\cdots,\beta_{(t)})$ where
$\alpha_{(i)},\beta_{(i)}\in\Lambda{(m|n,\rho_i)}$ for
$i=1,2,\cdots,t$.
Let
$$\mathcal{D}^\circ_{\alpha\beta}\cap W_\rho=\{d=d_1d_2\cdots
d_t\in \mathcal{D}_{\alpha\beta}\cap W_\rho\mid d_i\in
W_{\rho_i}\cap\mathcal{D}^\circ_{\alpha_{(i)}\beta_{(i)}},0\leq i\leq t\}.$$
See Notation \ref{xxx}.
Since $V_R(m|n)^{\otimes r}=V_R(m|n)^{\otimes \rho_1}\otimes
V_R(m|n)^{\otimes \rho_2}\otimes \cdots \otimes V_R(m|n)^{\otimes \rho_t}$,
then
%$$\End_{\mathcal{H}_\rho}(V(m|n)^{\otimes r})\cong
%\End_{\mathcal{H}_{\rho_1}}(V_R(m|n)^{\otimes \rho_1})\otimes
%\End_{\mathcal{H}_{\rho_2}}(V_R(m|n)^{\otimes \rho_2})\otimes\cdots
%\End_{\mathcal{H}_{\rho_t}}(V_R(m|n)^{\otimes \rho_t}),$$
\begin{equation}\label{Schur algebra product}
\sS_R(m|n,\rho):=\End_{\mathcal{H}_\rho}(V_R(m|n)^{\otimes r})\cong
\sS_R(m|n,\rho_1 )\otimes
\sS_R(m|n,\rho_2 )\otimes\cdots\otimes
\sS_R(m|n,\rho_t ).
\end{equation}

Set
$$\mathcal{B}(\lambda,\mu,d,d')=\{N_{W_\rho,W^y_\alpha\cap W_\beta}(e_{\mu d',\lambda dy})\mid y\in \mathcal{D}_{\alpha\beta}^\circ\cap
W_\rho\}.$$

\begin{lemma} The set $\mathcal{B}(\lambda,\mu,d,d')$ forms a basis for the $R$-module
$$\Hom_{\mathcal{H}_\rho}(v_{\bsi_\mu
d'}\otimes_{\mathcal{H}_\beta} \mathcal{H}_\rho, v_{\bsi_\lambda
d}\otimes_{\mathcal{H}_\alpha}\mathcal{H}_\rho).$$
\end{lemma}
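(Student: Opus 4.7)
The plan is to adapt the proof of Theorem \ref{RNB} almost verbatim to the parabolic setting inside $W_\rho$, with the ambient Hecke algebra $\sH_R$ replaced by $\sH_\rho$ and the parabolic subgroups $W_\lambda,W_\mu$ replaced by $W_\alpha=W^d_\lambda\cap W_\rho$ and $W_\beta=W^{d'}_\mu\cap W_\rho$. The base vectors $v_\lambda,v_\mu$ of that proof are replaced by $v_{\bsi_\lambda d}$ and $v_{\bsi_\mu d'}$, which are stabilised up to scalar by $\sH_\alpha$ and $\sH_\beta$ respectively (because of the very way $\alpha$ and $\beta$ are defined, via the action \eqref{action2}). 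All of the general machinery---Frobenius reciprocity (Lemma \ref{Frobenius}), Mackey decomposition (Lemma \ref{Mackey}), Nakayama relation (Lemma \ref{Nakayama}) and transitivity of relative norms (Lemma \ref{RNT}(a))---is insensitive to this relabelling and applies just as well to $\sH_\rho$ and its parabolic subalgebras.

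The steps are as follows. First I apply Mackey inside $W_\rho$ to split
\[
(v_{\bsi_\lambda d}\otimes_{\sH_\alpha}\sH_\rho)\big|_{\sH_\beta}\;\cong\;
\bigoplus_{y\in\sD_{\alpha\beta}\cap W_\rho}(Rv_{\bsi_\lambda d}\otimes_{\sH_\alpha}\sT_y)\otimes_{\sH_{\gamma(y)}}\sH_\beta,
\]
where $W_{\gamma(y)}=W^y_\alpha\cap W_\beta$, and I identify $Rv_{\bsi_\lambda d}\otimes_{\sH_\alpha}\sT_y$ with $Rv_{\bsi_\lambda dy}$ inside the tensor space (up to a harmless scalar from \eqref{action2}). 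Nakayama then rewrites each summand of $\Hom_{\sH_\beta}(Rv_{\bsi_\mu d'},-)$ as $N_{W_\beta,W_{\gamma(y)}}\bigl(\Hom_{\sH_{\gamma(y)}}(Rv_{\bsi_\mu d'},Rv_{\bsi_\lambda dy})\bigr)$. Applying $N_{W_\rho,W_\beta}$ via Frobenius reciprocity and using transitivity, $N_{W_\rho,W_\beta}\circ N_{W_\beta,W_{\gamma(y)}}=N_{W_\rho,W_{\gamma(y)}}$, assembles everything into
\[
\Hom_{\sH_\rho}(v_{\bsi_\mu d'}\otimes_{\sH_\beta}\sH_\rho,\,v_{\bsi_\lambda d}\otimes_{\sH_\alpha}\sH_\rho)=\bigoplus_{y}N_{W_\rho,W_{\gamma(y)}}\bigl(\Hom_{\sH_{\gamma(y)}}(Rv_{\bsi_\mu d'},Rv_{\bsi_\lambda dy})\bigr).
\]

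The hard part---and the only point where the super-condition $\sD^\circ$ is used---is to show the vanishing of the inner Hom when $y=y_1\cdots y_t\in\sD_{\alpha\beta}\cap W_\rho$ has some factor $y_i\notin\sD^\circ_{\alpha_{(i)}\beta_{(i)}}$. By Notation \ref{xxx} and the definition of $\sD^\circ$, there is then a basic transposition $s$ lying in one of the even-odd crossings $W^{y_i}_{\alpha_{(i)}^{(0)}}\cap W_{\beta_{(i)}^{(1)}}$ or $W^{y_i}_{\alpha_{(i)}^{(1)}}\cap W_{\beta_{(i)}^{(0)}}$. For $f\in\Hom_{\sH_{\gamma(y)}}(Rv_{\bsi_\mu d'},Rv_{\bsi_\lambda dy})$ with $(v_{\bsi_\mu d'})f=av_{\bsi_\lambda dy}$, comparing $(v_{\bsi_\mu d'}\sT_s)f$ with $((v_{\bsi_\mu d'})f)\sT_s$ via \eqref{action2} gives $a(-q^{-1})v_{\bsi_\lambda dy}=aqv_{\bsi_\lambda dy}$; since $R$ is a domain of characteristic $\ne 2$ and $q\ne -q^{-1}$, this forces $a=0$. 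This is literally the same super-parity argument that appears at the end of the proof of Theorem \ref{RNB}, the only extra bookkeeping being that the condition must be checked componentwise across the $t$ factors of $W_\rho$. The surviving summands (those with $y\in\sD^\circ_{\alpha\beta}\cap W_\rho$) are each one-dimensional, spanned over $R$ by $e_{\mu d',\lambda dy}$ (compare Corollary \ref{SPC}(1) applied inside $W_\rho$), so the relative norms $N_{W_\rho,W_{\gamma(y)}}(e_{\mu d',\lambda dy})$ form the claimed basis of the Hom space.
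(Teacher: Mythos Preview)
Your argument is correct and follows essentially the same route as the paper's proof: Frobenius reciprocity, Mackey decomposition inside $W_\rho$, Nakayama plus transitivity to reach the direct sum over $y\in\mathcal{D}_{\alpha\beta}\cap W_\rho$ of $N_{W_\rho,W_{\gamma(y)}}\bigl(\Hom_{\mathcal{H}_{\gamma(y)}}(Rv_{\bsi_\mu d'},Rv_{\bsi_\lambda dy})\bigr)$, then the $q\neq -q^{-1}$ parity argument to kill the summands with $y\notin\mathcal{D}^\circ_{\alpha\beta}\cap W_\rho$. The paper simply asserts this last vanishing without spelling it out (leaving the reader to repeat the argument from the proof of Theorem~\ref{RNB}), whereas you have written it out componentwise; but the logic is identical.
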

\begin{proof} By Lemmas \ref{Frobenius}, \ref{Mackey}, and \ref{Nakayama},
$$\aligned
\Hom_{\mathcal{H}_\rho}(v_{\bsi_\mu
d'}\otimes_{\mathcal{H}_\beta} \mathcal{H}_\rho,& v_{\bsi_\lambda
d}\otimes_{\mathcal{H}_\alpha}\mathcal{H}_\rho)=N_{W_\rho,W_\beta}(\Hom_{\sH_\beta}(v_{\bsi_\mu d'},v_{\bsi_\la d}\otimes_{\sH_\alpha}\sH_\rho|_{\sH_\beta})\\
&=N_{W_\rho,W_\beta}(\Hom_{\sH_\beta}(v_{\bsi_\mu d'},\bigoplus_{y\in
\mathcal{D}_{\alpha\beta}\cap W_\rho}v_{\bsi_\la d}\otimes\sT_y\otimes_{\sH_{\alpha y\cap \beta}}{\sH_\beta})\\
&=\bigoplus_{y\in
\mathcal{D}_{\alpha\beta}\cap W_\rho}N_{W_\rho,W_{\alpha y\cap\beta}}(\Hom_{\mathcal{H}_{\alpha y\cap\beta}}(Rv_{\bsi_\mu d'},Rv_{\bsi_\lambda dy})).\endaligned
$$
Since, for $y\in
(\mathcal{D}_{\alpha\beta}\cap
W_\rho)\setminus(\mathcal{D}^\circ_{\alpha\beta}\cap W_\rho)$,
$\Hom_{\mathcal{H}_{\alpha y\cap \beta}}(Rv_{\bsi_\mu
d'},Rv_{\bsi_\lambda dy})=0$, it follows that
$$\Hom_{\mathcal{H}_\rho}(v_{\bsi_\mu d'}\otimes_{\mathcal{H}_\beta}
\mathcal{H}_\rho, v_{\bsi_\lambda
d}\otimes_{\mathcal{H}_\alpha}\mathcal{H}_\rho)=\bigoplus_{y\in
\mathcal{D}^\circ_{\alpha\beta}\cap
W_\rho}N_{W_\rho,W_{\alpha y\cap\beta}}(\Hom_{\mathcal{H}_{\alpha y\cap\beta}}(Rv_{\bsi_\mu d'},Rv_{\bsi_\lambda dy})),$$
Hence, $\mathcal{B}(\lambda,\mu,d,d')$ forms a basis. \end{proof}

Let $$\mathcal{B}(\rho)=\bigcup_{\substack{\lambda,\mu\in
\Lambda{(m|n,r)}\\
d\in\mathcal{D}_{\la\rho},d'\in\mathcal{D}_{\mu\rho}}}\mathcal{B}(\lambda,\mu,d,d').$$

\begin{theorem}\label{SRNB} The set
$\mathcal{B}(\rho)$ is a basis of
$\sS_R(m|n,\rho)=\End_{\mathcal{H}_\rho}(V_R(m|n)^{\otimes r})$.
\end{theorem}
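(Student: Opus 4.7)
The plan is to combine the previous lemma with a Mackey-type decomposition of $V_R(m|n)^{\otimes r}$ as a right $\sH_\rho$-module. Once we exhibit $V_R(m|n)^{\otimes r}|_{\sH_\rho}$ as a direct sum of modules of the shape $v_{\bsi_\la d}\otimes_{\sH_\alpha}\sH_\rho$, the endomorphism algebra splits into a direct sum of Hom spaces, each of which is handled by the lemma immediately preceding the theorem.

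First I would use Lemma \ref{iso f} to identify
$$V_R(m|n)^{\otimes r}\;\cong\;\bigoplus_{\la\in\La(m|n,r)} x_{\la^{(0)}}y_{\la^{(1)}}\sH_R \;\cong\;\bigoplus_{\la\in\La(m|n,r)} v_\la\otimes_{\sH_\la}\sH_R,$$
where the second isomorphism uses that $Rv_\la\cong x_{\la^{(0)}}y_{\la^{(1)}}\sH_\la$ is the 1-dimensional $\sH_\la$-module on which $\sT_{w_0w_1}$ (with $w_i\in W_{\la^{(i)}}$) acts by $q^{\ell(w_0)}(-q^{-1})^{\ell(w_1)}$; this is exactly the action computed from \eqref{action2} on $v_\la=v_{\bsi_\la}$. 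Restricting each summand to $\sH_\rho$ and applying the Mackey decomposition (Lemma \ref{Mackey}) yields
$$v_\la\otimes_{\sH_\la}\sH_R\big|_{\sH_\rho}\;\cong\;\bigoplus_{d\in\sD_{\la\rho}} (v_\la\otimes_{\sH_\la}\sT_d)\otimes_{\sH_{\alpha}}\sH_\rho,$$
with $W_\alpha=W_\la^d\cap W_\rho$. Under the identification $v_\la\otimes\sT_d\leftrightarrow (-1)^{\widehat d}v_{\bsi_\la d}$ coming from Lemma \ref{iso f}, the tensor factor on the right is the cyclic $\sH_\alpha$-module $Rv_{\bsi_\la d}$ inside $V_R(m|n)^{\otimes r}$.

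Putting these together, I obtain an isomorphism of right $\sH_\rho$-modules
$$V_R(m|n)^{\otimes r}\;\cong\;\bigoplus_{\substack{\la\in\La(m|n,r)\\ d\in\sD_{\la\rho}}} v_{\bsi_\la d}\otimes_{\sH_\alpha}\sH_\rho,$$
and hence a direct sum decomposition
$$\End_{\sH_\rho}(V_R(m|n)^{\otimes r})\;=\;\bigoplus_{\substack{\la,\mu\in\La(m|n,r)\\ d\in\sD_{\la\rho},\,d'\in\sD_{\mu\rho}}}\Hom_{\sH_\rho}\!\bigl(v_{\bsi_\mu d'}\otimes_{\sH_\beta}\sH_\rho,\, v_{\bsi_\la d}\otimes_{\sH_\alpha}\sH_\rho\bigr).$$
The preceding lemma then supplies the basis $\mathcal{B}(\la,\mu,d,d')$ of each direct summand, and their disjoint union is exactly $\mathcal{B}(\rho)$.

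The only delicate point I foresee is bookkeeping around the identification $v_\la\otimes_{\sH_\la}\sT_d\leftrightarrow (-1)^{\widehat d}v_{\bsi_\la d}$: one must check that this sign twist is genuinely an isomorphism of right $\sH_\alpha$-modules (so the tensor-over-$\sH_\alpha$ still makes sense as written) and that the linear independence and span statements from the previous lemma transfer cleanly through it. This is essentially the content of Lemma \ref{iso f} together with relation \eqref{diaoyudao}, so no new computation should be required; it is just a matter of carefully tracking factors. Once this is in place, the theorem follows formally from Mackey, Frobenius reciprocity, and the preceding lemma.
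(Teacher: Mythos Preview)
Your proposal is correct and follows essentially the same route as the paper: decompose $V_R(m|n)^{\otimes r}|_{\sH_\rho}$ via Lemma \ref{iso f} and the Mackey decomposition into pieces $v_{\bsi_\la d}\otimes_{\sH_\alpha}\sH_\rho$, then apply the preceding lemma to each resulting Hom summand. The paper is terser (it cites \cite[2.22]{J} for the decomposition step and omits the sign bookkeeping you flag), but the argument is the same.
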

\begin{proof} By Lemma \ref{iso f} and {\cite[2.22]{J}}, we have
\begin{equation*}
\begin{aligned}
\End_{\mathcal{H}_\rho}(V_R(m|n)^{\otimes
r})&=\bigoplus_{\lambda,\mu\in\Lambda{(m|n,r)}}\Hom_{\mathcal{H}_\rho}(v_\mu\mathcal{H}_R,v_\lambda\mathcal{H}_R)\\
&=\bigoplus_{\substack{\lambda,\mu\in\Lambda{(m|n,r)}\\
d\in\mathcal{D}_{\la\rho},d'\in\mathcal{D}_{\mu\rho}}}\Hom_{\mathcal{H}_\rho}(v_{\bsi_\mu
d'}\otimes_{\mathcal{H}_{\beta}}\mathcal{H}_\rho,v_{\bsi_\lambda
d}\otimes_{\mathcal{H}_{\alpha}}\mathcal{H}_\rho)
\end{aligned}
\end{equation*}
The theorem follows from the above lemma. \end{proof}

However, by \eqref{Schur algebra product}, we may use the bases for $\sS_R(m|n,\rho_i)$ described in Theorem \ref{RNB} to describe a basis for $\sS_R(m|n,\rho)$. We now show that this basis coincides with the basis above.

Let
\begin{equation}\label{vecLa}
\vec{\La}(m|n,\rho)=\Lambda{(m|n,\rho_1)}\times\cdots\times
\Lambda{(m|n,\rho_t)}.\end{equation}
 Then, for $\vec{\lambda}\in
\vec{\La}(m|n,\rho)$, there exist $\lambda_{(i)}\in
\Lambda{(m|n,\rho_i)} $
such that $\vec{\lambda}=(\lambda_{(1)},\ldots,\lambda_{(t)})$.
Let $W_{\vec{\lambda}}$ be the corresponding parabolic
subgroup of $W_\rho$ and, for $\vec{\lambda},\vec{\mu}\in
\vec{\La}(m|n,\rho)$, define the set $\mathcal{D}_{\vec{\lambda}\vec{\mu};\rho}$
of distinguished
double coset representatives by
\begin{equation}\label{scrD}
\mathcal{D}^\circ_{\vec{\lambda}\vec{\mu};\rho}=
\{d\in\mathcal{D}_{\vec{\lambda}\vec{\mu}}\mid
d=d_1\cdots d_t, d_i\in W_{\rho_i}\cap
\mathcal{D}^\circ_{\lambda_{(i)}\mu_{(i)}},1\leq i\leq t\}\subseteq W_\rho.
\end{equation}
Here, again, see Notation \ref{xxx} for the notational convention.
Putting
$$N_{\vec\mu\vec\la}^d=\bigotimes_{i=1}^tN_{W_{\rho_i},W_{\la_{(i)}}^{d_i}\cap W_{\mu_{(i)}}}(e_{\mu_{(i)},\la_{(i)}d_i}),$$ then the set
$\{N^d_{\vec\mu\vec{\lambda}}\mid
\vec{\lambda},\vec{\mu}\in
\vec{\La}(m|n,\rho), d\in
\mathcal{D}^\circ_{\vec{\lambda}\vec\mu;\rho}\}$
 forms a basis of $\sS_R(m|n,\rho)$.

 If we define the multi-index $\bsi_{\vec{\mu}}\in I(m|n,r)$
in an obvious way so that
$$v_{\vec{\mu}}=v_{\bsi_{\vec{\mu}}}=v_{\mu_{(1)}}\otimes v_{\mu_{(2)}}\otimes\cdots\otimes v_{\mu_{(t)}}\in
V_R(m|n)^{\otimes r}$$ and set $e_{\vec{\mu},\vec{\lambda}d}$ to be the
$R$-linear map by sending $v_{\vec{\mu}}$ to
$v_{\bsi_{\vec{\lambda}}d}$ and the other basis vectors of
$V_R(m|n)^{\otimes r}$ to $0$, then we may simply write
$N^d_{\vec\mu\vec{\lambda}}=N_{W_\rho,W_{\vec\la}^d\cap W_{\vec\mu}}(e_{\vec\mu,\vec\la d})$.
Here we regard $\vec\nu$ as a composition of $r$ by concatenation so that $W_{\vec\nu}$ is well defined.

 The coincidence of this basis and the one in Theorem \ref {SRNB} can be seen as follows.

For
$\vec{\lambda}=(\lambda_{(1)},\ldots,\lambda_{(t)})\in\vec{\La}(m|n,\rho)$,
let $\lambda=\lambda_{(1)}+\cdots+\lambda_{(t)}
\in \Lambda{(m|n,r)}$. Then there is a unique
$x\in\mathcal{D}_{\lambda\rho}$ such that
$W_{\vec{\lambda}}=W^x_\lambda\cap W_\rho$. Similarly, for $\vec\mu\in\vec{\La}(m|n,\rho)$,
there is a unique
$y\in\mathcal{D}_{\mu\rho}$ such that
$W_{\vec{\mu}}=W^y_\mu\cap W_\rho$. Thus, for any $z\in\mathcal{D}_{\vec\la\vec\mu;\rho}^\circ$,
putting $W_{xyz}=(W_\la^x\cap W_\rho)^z\cap(W_\mu^y\cap W_\rho)$, we have
\begin{equation}\label{same}N_{W_\rho,W_{\vec\la}^d\cap W_{\vec\mu}}(e_{\vec\mu,\vec\la z})=
N^z_{\vec\mu\vec{\lambda}}=N_{W_\rho,W_{xyz}}(e_{\mu y,\la xz}).\end{equation}

We end this section with a comparison of the basis given in Theorem \ref{RNB} and the one in Lemma \ref{DR5.8}.

\begin{theorem}\label{RTSB}  If we identify
$\fT_R(m|n,r)=\bigoplus_{\lambda\in\Lambda{(m|n,r)}}x_{\lambda^{(0)}}y_{\lambda^{(1)}}\mathcal{H}_R$
with $V_R(m|n)^{\otimes r}$ by the isomorphism $f$ given in Lemma \ref{iso f}, then, for any $\lambda,\mu\in
\Lambda{(m|n,r)},d\in\mathcal{D}^\circ_{\lambda\mu}$, and $\nu=\la d\cap\mu$,
\begin{equation}\label{psi=N}
N_{W,W_{\nu}}(e_{\mu,\lambda d})
=(q^{-1})^{\ell(d)}(-1)^{\hat{d}} \psi^d_{\mu\lambda}.
\end{equation}
\end{theorem}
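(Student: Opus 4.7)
Both sides of \eqref{psi=N} are right $\sH_R$-module endomorphisms of $V_R(m|n)^{\otimes r}$, viewed as $\fT_R(m|n,r)$ through the isomorphism $f$ of Lemma~\ref{iso f}. Since $V_R(m|n)^{\otimes r}=\bigoplus_{\alpha\in\La(m|n,r)}v_\alpha\sH_R$ and each $v_\alpha\sH_R$ is cyclic as an $\sH_R$-module (generated by $v_\alpha$), it suffices to verify that both endomorphisms agree on the generator $v_\mu$ and vanish on $v_\alpha$ for every $\alpha\neq\mu$.

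The vanishing on $v_\alpha$ ($\alpha\neq\mu$) is immediate: for the left-hand side it follows from Lemma~\ref{SRNP}, while for the right-hand side it follows by definition of $\psi^d_{\mu,\la}$, since $x_{\al^{(0)}}y_{\al^{(1)}}\psi^d_{\mu,\la}=0$ when $\al\neq\mu$. So the problem reduces to comparing the two images of $v_\mu$.

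The core of the proof is therefore a direct calculation. For the left-hand side, I would expand
$$(v_\mu)N_{W,W_\nu}(e_{\mu,\la d})=\sum_{w\in\sD_\nu}(v_\mu\sT_{w^{-1}})e_{\mu,\la d}\sT_w,$$
observe that only $w\in\sD_\nu\cap W_\mu$ can contribute (otherwise $v_\mu\sT_{w^{-1}}$ lies outside $Rv_\mu$ and is killed by $e_{\mu,\la d}$), and write any such $w$ uniquely as $w=w_0w_1$ with $w_0\in W_{\mu^{(0)}}$, $w_1\in W_{\mu^{(1)}}$. Using the defining action \eqref{action2}, $v_\mu\sT_{w^{-1}}=q^{\ell(w_0)}(-q^{-1})^{\ell(w_1)}v_\mu$, so the sum collapses to
$$(v_\mu)N_{W,W_\nu}(e_{\mu,\la d})=\sum_{w_0w_1\in\sD_\nu\cap W_\mu}q^{\ell(w_0)}(-q^{-1})^{\ell(w_1)}v_{\bsi_\la d}\sT_{w_0}\sT_{w_1}.$$
Applying $f$ and using $f(v_{\bsi_\la d})=(-1)^{\hat d}x_{\la^{(0)}}y_{\la^{(1)}}\sT_d$, together with $\ell(dw)=\ell(d)+\ell(w)$ for $w\in W_\mu$ (since $d\in\sD_{\la\mu}$), we turn the product $\sT_d\sT_{w_0}\sT_{w_1}$ into $\sT_{dw_0w_1}$.

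For the right-hand side, $(x_{\mu^{(0)}}y_{\mu^{(1)}})\psi^d_{\mu,\la}=T_{W_\la dW_\mu}$. Converting from the $T_w=q^{\ell(w)}\sT_w$ basis back to $\sT_w$, one finds
$$(-q^2)^{-\ell(w_1)}T_dT_{w_0}T_{w_1}=q^{\ell(d)}q^{\ell(w_0)}(-q^{-1})^{\ell(w_1)}\sT_{dw_0w_1},$$
so
$$(x_{\mu^{(0)}}y_{\mu^{(1)}})\psi^d_{\mu,\la}=q^{\ell(d)}\sum_{w_0w_1\in\sD_\nu\cap W_\mu}q^{\ell(w_0)}(-q^{-1})^{\ell(w_1)}x_{\la^{(0)}}y_{\la^{(1)}}\sT_{dw_0w_1}.$$
Comparing the two summations, they agree up to the scalar $q^{\ell(d)}(-1)^{-\hat d}$, which is precisely what \eqref{psi=N} claims.

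The only real point requiring care is the bookkeeping of the three different scalings, namely the $(-1)^{\hat d}$ twist coming from $f$, the passage between $\sT_w$ and $T_w$ (which produces the $q^{\ell(d)}$), and the conversion $(-q^2)^{-\ell(w_1)}q^{\ell(w_1)}=(-q^{-1})^{\ell(w_1)}$ in the odd part. No subtle combinatorics are needed beyond the standard length-additivity $\ell(dw_0w_1)=\ell(d)+\ell(w_0)+\ell(w_1)$ for $w_0w_1\in W_\mu$.
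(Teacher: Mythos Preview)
Your proof is correct and follows essentially the same route as the paper: reduce to evaluating both maps on $v_\mu$, expand the relative norm over $\sD_\nu\cap W_\mu$ using the action \eqref{action2}, expand $T_{W_\la dW_\mu}$ and convert from the $T_w$ to the $\sT_w$ basis, then match the two sums. The only cosmetic difference is that the paper pulls $\psi^d_{\mu\lambda}$ over to $V_R(m|n)^{\otimes r}$ via $f^{-1}$ (and simplifies $v_{\bsi_\la d}\sT_{w_0w_1}$ to $(-1)^{\widehat{dw_0w_1}-\widehat d}v_{\bsi_\la dw_0w_1}$ using \eqref{diaoyudao}), whereas you push the relative norm side to $\fT_R(m|n,r)$ via $f$; the bookkeeping of the scalars $q^{\ell(d)}$ and $(-1)^{\hat d}$ is identical.
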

\begin{proof} We need to prove that $f\psi_{\mu\la}^df^{-1}$ equals the right hand side.
Since $f$ is an $\mathcal{H}_R$-module
isomorphism, it is enough to consider the actions on $v_\mu$. We have on the one hand,
\begin{equation*}
\begin{aligned}
(v_\mu)f\psi^d_{\mu\lambda}f^{-1}&=(x_{\mu^{(0)}}y_{\mu^{(1)}})\psi^d_{\mu\lambda}f^{-1}\\
&=(T_{W_\lambda d W_\mu})f^{-1}\\
&=(\sum_{w_0w_1\in (W_{\mu^{(0)}}\times W_{\mu^{(1)}})\cap
\mathcal{D}_{\nu}}(-q^2)^{-\ell(w_1)}x_{\mu^{(0)}}y_{\mu^{(1)}}T_dT_{w_0}T_{w_1})f^{-1}\\
&=(q^{\ell(d)}\sum_{w_0w_1\in (W_{\mu^{(0)}}\times
W_{\mu^{(1)}})\cap
\mathcal{D}_{\nu}}q^{\ell(w_0)}(-q^{-1})^{\ell(w_1)}x_{\mu^{(0)}}y_{\mu^{(1)}}\mathcal{T}_{dw_0w_1})f^{-1}\\
&=q^{\ell(d)}\sum_{w_0w_1\in (W_{\mu^{(0)}}\times W_{\mu^{(1)}})\cap
\mathcal{D}_{\nu}}(-1)^{\widehat{dw_0w_1}}q^{\ell(w_0)}(-q^{-1})^{\ell(w_1)}v_{\bsi_\lambda
dw_0w_1}.
\end{aligned}
\end{equation*}
On the other hand,
\begin{equation*}
\begin{aligned}
(v_\mu)N_{W,W_{\nu}}(e_{\mu,\lambda
d})&=(v_\mu)\sum_{w\in\mathcal{D}_{\nu}}\mathcal{T}_{w^{-1}}e_{\mu,\lambda
d}\mathcal{T}_w\\
&=\sum_{w_0w_1\in( W_{\mu^{(0)}}\times W_{\mu^{(1)}})\cap
\mathcal{D}_{\nu}}(v_\mu\mathcal{T}_{w_0^{-1}}\mathcal{T}_{w_1^{-1}})e_{\mu,\lambda
d}\mathcal{T}_{w_0}\mathcal{T}_{w_1}\\
&=\sum_{w_0w_1\in (W_{\mu^{(0)}}\times W_{\mu^{(1)}})\cap
\mathcal{D}_{\nu}}q^{\ell(w_0)}(-q^{-1})^{\ell(w_1)}v_{\bsi_\lambda
d}\mathcal{T}_{w_0}\mathcal{T}_{w_1}\\
&=(-1)^{\widehat{d}}\sum_{w_0w_1\in (W_{\mu^{(0)}}\times
W_{\mu^{(1)}})\cap
\mathcal{D}_{\nu}}(-1)^{\widehat{dw_0w_1}}q^{\ell(w_0)}(-q^{-1})^{\ell(w_1)}v_{\bsi_\lambda
dw_0w_1}.
\end{aligned}
\end{equation*}
Therefore, \eqref{psi=N} follows.\end{proof}

In particular, by Theorem \ref{RNB}, the set
%$$N^d_{\mu\lambda}:=N_{W,W_{\nu}}(e_{\mu,\lambda d})=q^{-\ell(d)}(-1)^{\hat{d}}f\psi^d_{\mu\lambda}f^{-1},$$then
%the basis of
$$\{\psi^d_{\mu\lambda}\mid\lambda,\mu\in\Lambda{(m|n,r)},d\in\mathcal{D}^\circ_{\lambda\mu}\}$$
forms a basis for $\sS_R(m|n,r)$, proving Lemma \ref{DR5.8}.%\End_{\mathcal{H}_R}(V_R(m|n)^{\otimes r}).$

\section{A filtration of ideals of $\sS_R(m|n,r)$}

The purpose of this section is to construct a filtration of ideals of $\sS_R(m|n,r)$ labelled by $l$-parabolic subgroups. The existence of the filtration is based on a nice property of the structure constants associated with the defect groups of the relative norm basis elements in $\mathcal{B}$ (see Theorem \ref{TI}). We need certain technical results Lemmas \ref{DD}--\ref{ML} for the proof of the theorem.

\begin{lemma}\label{RTRN}
Let $\lambda,\mu\in\Lambda{(m|n,r)}$ and
$d\in\mathcal{D}^\circ_{\lambda\mu}$. Then
$$(-1)^{\hat{d}}N_{W,W^d_\lambda\cap W_\mu}(e_{\mu,\lambda
d})=(-1)^{\widehat{d^{-1}}}N_{W,W_\lambda\cap
W^{d^{-1}}_\mu}(e_{\mu d^{-1},\lambda}).$$
\end{lemma}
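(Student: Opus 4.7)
The plan is to evaluate both sides on $v_\mu$ and identify the resulting sums term by term. Since both sides lie in the summand $\Hom_{\mathcal{H}_R}(v_\mu\mathcal{H}_R,v_\lambda\mathcal{H}_R)$ of $\sS_R(m|n,r)$, and any such $\mathcal{H}_R$-homomorphism is determined by its value on $v_\mu$, this reduces the claim to showing $(v_\mu)[\text{LHS}]=(v_\mu)[\text{RHS}]$. The LHS on $v_\mu$ was already computed in the proof of Theorem \ref{RTSB}, yielding
$$(v_\mu)(-1)^{\hat d}N_{W,W_\nu}(e_{\mu,\lambda d})=\sum_{w_0w_1\in(W_{\mu^{(0)}}\times W_{\mu^{(1)}})\cap\mathcal{D}_\nu}(-1)^{\widehat{dw_0w_1}}q^{\ell(w_0)}(-q^{-1})^{\ell(w_1)}v_{\bsi_\lambda dw_0w_1},$$
where $\nu=\lambda d\cap\mu$.

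For the RHS, set $\nu'=\lambda\cap\mu d^{-1}$, so that $W_{\nu'}=dW_\nu d^{-1}$. The first task is to identify the $w\in\mathcal{D}_{\nu'}$ that contribute nontrivially to $(v_\mu)\mathcal{T}_{w^{-1}}e_{\mu d^{-1},\lambda}\mathcal{T}_w$. Using the unique length-additive decomposition $w=xv$ with $x\in\mathcal{D}_\mu^{-1}$ and $v=v_0v_1\in W_{\mu^{(0)}}\times W_{\mu^{(1)}}$, the scalar action $v_\mu\mathcal{T}_{v^{-1}}=q^{\ell(v_0)}(-q^{-1})^{\ell(v_1)}v_\mu$ combined with $v_\mu\mathcal{T}_{x^{-1}}=(-1)^{(\mu,x^{-1})^\wedge}v_{\bsi_\mu x^{-1}}$ (an instance of Lemma \ref{iso f} applied to $x^{-1}\in\mathcal{D}_\mu$) shows that $v_\mu\mathcal{T}_{w^{-1}}$ is a scalar multiple of the \emph{single} basis vector $v_{\bsi_\mu x^{-1}}$. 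For $e_{\mu d^{-1},\lambda}$ to produce a nonzero image we need $\bsi_\mu x^{-1}=\bsi_\mu d^{-1}$, and since $dW_\mu\cap\mathcal{D}_\mu^{-1}=\{d\}$, this forces $x=d$, i.e., $w=du$ for some $u\in W_\mu$. Length additivity $\ell(du)=\ell(d)+\ell(u)$ (valid because $d^{-1}\in\mathcal{D}_\mu$) together with $W_{\nu'}=dW_\nu d^{-1}$ then yields $du\in\mathcal{D}_{\nu'}$ iff $u\in\mathcal{D}_\nu\cap W_\mu$, parametrizing the surviving summands exactly.

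The evaluation of each surviving summand uses the factorizations $\mathcal{T}_{(du)^{-1}}=\mathcal{T}_{u^{-1}}\mathcal{T}_{d^{-1}}$ and $\mathcal{T}_{du}=\mathcal{T}_d\mathcal{T}_u$, the two instances $v_\mu\mathcal{T}_{d^{-1}}=(-1)^{\widehat{d^{-1}}}v_{\bsi_\mu d^{-1}}$ (with $\widehat{d^{-1}}=(\mu,d^{-1})^\wedge$) and $v_\lambda\mathcal{T}_d=(-1)^{\hat d}v_{\bsi_\lambda d}$ of Lemma \ref{iso f}, and the derived identity $v_{\bsi_\lambda d}\mathcal{T}_u=(-1)^{\hat d+\widehat{du}}v_{\bsi_\lambda du}$ for $du\in\mathcal{D}_\lambda$. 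The outer $(-1)^{\widehat{d^{-1}}}$ prefactor cancels the $(-1)^{\widehat{d^{-1}}}$ coming from $v_\mu\mathcal{T}_{d^{-1}}$, and the two $(-1)^{\hat d}$ factors cancel, leaving
$$(v_\mu)(-1)^{\widehat{d^{-1}}}N_{W,W_{\nu'}}(e_{\mu d^{-1},\lambda})=\sum_{u=u_0u_1\in\mathcal{D}_\nu\cap W_\mu}(-1)^{\widehat{du}}q^{\ell(u_0)}(-q^{-1})^{\ell(u_1)}v_{\bsi_\lambda du},$$
which coincides termwise with the LHS formula after the relabelling $u\leftrightarrow w_0w_1$.

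The main obstacle is sign bookkeeping: the three $\wedge$-quantities $\hat d$, $\widehat{d^{-1}}$, and $\widehat{du}$ each refer to a different (weight, permutation) pair and must be invoked from the correct instance of Definition \ref{dhat}. Once the signs are tracked through the chain of cancellations, the rest is the combinatorial parametrization of distinguished coset reps and routine length additivity already implicit in the proof of Theorem \ref{RTSB}.
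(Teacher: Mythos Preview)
Your proof is correct and follows essentially the same approach as the paper's: both reduce to evaluating the two sides on $v_\mu$ (the paper via Lemma~\ref{SRNP}), compute the LHS as a sum over $\mathcal{D}_\nu\cap W_\mu$, and for the RHS identify the contributing $w\in\mathcal{D}_{\nu'}$ as precisely $d(\mathcal{D}_\nu\cap W_\mu)$, after which the signs match termwise. The only difference is that the paper invokes \cite[3.1]{DU} directly for the identity $\mathcal{D}_{\nu'}\cap dW_\mu=d(\mathcal{D}_\nu\cap W_\mu)$, whereas you sketch an argument for it via length additivity and the conjugacy $W_{\nu'}=dW_\nu d^{-1}$; your sign bookkeeping (using $v_\lambda\mathcal{T}_d=(-1)^{\hat d}v_{\bsi_\lambda d}$ twice so that the $(-1)^{\hat d}$'s cancel) is the same mechanism the paper uses, just made more explicit.
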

Note by Definition \ref{dhat} that $\widehat d=(\la,d)^\wedge$ while $\widehat{d^{-1}}=(\mu,d^{-1})^\wedge$.
\begin{proof} Let $W_\nu=W^d_\lambda\cap
W_\mu$ and $W_{\nu'}=W_\lambda\cap W^{d^{-1}}_\mu$.  By Lemma \ref{SRNP}, it is
sufficient to check that
$$(-1)^{\hat{d}}(v_\mu)N_{W,W^d_\lambda\cap W_\mu}(e_{\mu,\lambda
d})=(-1)^{\widehat{d^{-1}}}(v_\mu)N_{W,W_\lambda\cap
W^{d^{-1}}_\mu}(e_{\mu d^{-1},\lambda}).$$
Now,
\begin{equation*}
\begin{aligned}
\text{LHS}&=%(-1)^{\hat{d}}\sum_{w\in\mathcal{D}_\nu}(v_\mu\mathcal{T}_{x^{-1}})e_{\mu,\lambda d}\mathcal{T}_x=
(-1)^{\hat{d}}\sum_{w\in\mathcal{D}_\nu\cap
W_\mu}(v_\mu\mathcal{T}_{x^{-1}})e_{\mu,\lambda
d}\mathcal{T}_x\\
&=(-1)^{\hat{d}}\sum_{x_0x_1\in\mathcal{D}_\nu\cap
(W_{\mu^{(0)}}\times W_{\mu^{(1)}})}
(v_\mu\mathcal{T}_{x_0^{-1}}\mathcal{T}_{x_1^{-1}})e_{\mu,\lambda
d}\mathcal{T}_{x_0}\mathcal{T}_{x_1}\\
&=(-1)^{\hat{d}}\sum_{x_0x_1\in\mathcal{D}_ \nu\cap
(W_{\mu^{(0)}}\times
W_{\mu^{(1)}})}q^{\ell(x_0)}(-q^{-1})^{\ell(x_1)}v_{\bsi_\lambda
d}\mathcal{T}_{x_0}\mathcal{T}_{x_1}\\
&=(-1)^{\hat{d}}\sum_{x_0x_1\in\mathcal{D}_ \nu\cap
(W_{\mu^{(0)}}\times
W_{\mu^{(1)}})}(-1)^{\widehat{dx_0x_1}-\hat{d}}q^{\ell(x_0)}(-q^{-1})^{\ell(x_1)}v_{\bsi_\lambda
dx_0x_1}\\
&=\sum_{x_0x_1\in\mathcal{D}_ \nu\cap (W_{\mu^{(0)}}\times
W_{\mu^{(1)}})}(-1)^{\widehat{dx_0x_1}}q^{\ell(x_0)}(-q^{-1})^{\ell(x_1)}v_{\bsi_\lambda
dx_0x_1}.
\end{aligned}
\end{equation*}
By
{\cite[3.1]{DU}}, $\mathcal{D}_{\nu'}\cap d
W_\mu=d(\mathcal{D}_\nu\cap W_\mu)$. Thus,
\begin{equation*}
\begin{aligned}
\text{RHS}&=%(-1)^{\widehat{d^{-1}}}\sum_{y\in\mathcal{D}_{\nu'}\cap
%dW_\mu}(v_\mu\mathcal{T}_{y^{-1}})e_{\mu d^{-1},\lambda}\mathcal{T}_y=
(-1)^{\widehat{d^{-1}}}\sum_{y\in d(\mathcal{D}_{\nu}\cap
W_\mu)}(v_\mu\mathcal{T}_{y^{-1}})e_{\mu d^{-1},\lambda}\mathcal{T}_y\\
&=(-1)^{\widehat{d^{-1}}}\sum_{dy_0y_1\in
d(\mathcal{D}_\nu\cap(W_{\mu^{(0)}}\times
W_{\mu^{(1)}}))}(v_\mu\mathcal{T}_{y_0^{-1}}\mathcal{T}_{y_1^{-1}}\mathcal{T}_{d^{-1}})e_{\mu
d^{-1},\lambda}\mathcal{T}_{d}\mathcal{T}_{y_0}\mathcal{T}_{y_1}\\
&=(-1)^{\widehat{d^{-1}}}\sum_{dy_0y_1\in
d(\mathcal{D}_\nu\cap(W_{\mu^{(0)}}\times
W_{\mu^{(1)}}))}q^{\ell(y_0)}(-q^{-1})^{\ell(y_1)}(-1)^{\widehat{d^{-1}}}v_\lambda\mathcal{T}_{dy_0y_1}\\
&=(-1)^{\widehat{d^{-1}}}\sum_{y_0y_1\in
\mathcal{D}_\nu\cap(W_{\mu^{(0)}}\times
W_{\mu^{(1)}})}q^{\ell(y_0)}(-q^{-1})^{\ell(y_1)}(-1)^{\widehat{d^{-1}}}(-1)^{\widehat{dy_0y_1}}v_{\bsi_\lambda
dy_0y_1}\\
&=\sum_{y_0y_1\in \mathcal{D}_\nu\cap(W_{\mu^{(0)}}\times
W_{\mu^{(1)}})}q^{\ell(y_0)}(-q^{-1})^{\ell(y_1)}(-1)^{\widehat{dy_0y_1}}v_{\bsi_\lambda
dy_0y_1}.
\end{aligned}
\end{equation*}
So LHS=RHS, proving the lemma. \end{proof}

{\it For the next three lemmas, we fix the following notations:}
\begin{equation}\label{DD0a}
\aligned
\rho,\lambda,\mu\in&\Lambda{(m|n,r)},d'\in\mathcal{D}_{\rho\lambda}, d\in\mathcal{D}_{\lambda\mu},\text{ and } y\in\mathcal{D}_{\rho\mu},\text{ and define }\nu,\nu',\tau\text{ by }\\
W_{\nu}&=W^d_\lambda\cap W_\mu,\quad W_{\nu'}=W_\rho\cap
W^{d'^{-1}}_\lambda,\;\text{ and }\;W_\tau=W^y_\rho\cap W_\mu.
\endaligned
\end{equation}
Then, $\nu,\nu'\in\La(m'|n',r)$ for some $m'\geq m,n'\geq n$; see remarks right after \eqref{Dcirc}.
\begin{lemma}\label{DD} If $d\in\mathcal{D}^\circ_{\lambda\mu},d'
\in\mathcal{D}^\circ_{\rho\lambda}$, and  $y\in\mathcal{D}^\circ_{\rho\mu}$, then
$$W_\rho y
W_\mu\cap\mathcal{D}_{\nu'\nu}=\{hyk\mid yk\in\mathcal{D}^\circ_{\rho\nu},k\in\mathcal{D}^\circ_{\tau\nu}\cap
W_\mu, h\in \mathcal{D}_{\nu'}\cap W_\rho\}$$
and it is a subset of $\mathcal{D}^\circ_{\nu'\nu}$.
%=\{h_iyk_j\mid 1\leq i\leq n_y,1\leq j\leq m_y\}\subseteq\mathcal{D}^\circ_{\nu'\nu}$$ where
%$yk_i\in\mathcal{D}^\circ_{\rho\nu},k_j\in\mathcal{D}^\circ_{\tau\nu}\cap
%W_\mu$ and $h_i\in \mathcal{D}_{\nu'}\cap W_\rho$ for all $i,j$.
\end{lemma}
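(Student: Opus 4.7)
The plan is to combine the standard double-coset decomposition of $W_\rho y W_\mu$ with a careful tracking of how the three even--odd trivial intersection hypotheses propagate under composition. First, since $y\in\mathcal{D}_{\rho\mu}$, every element of $W_\rho y W_\mu$ admits a unique factorisation $w=hyk$ with $h\in W_\rho$ and $k\in\mathcal{D}_\tau\cap W_\mu$, with $\ell(yk)=\ell(y)+\ell(k)$. Decomposing $W_\mu$ further into $(W_\tau,W_\nu)$-double cosets and using $yW_\tau\subseteq W_\rho y$ yields
\[
W_\rho y W_\mu=\bigsqcup_{k\in\mathcal{D}_{\tau\nu}\cap W_\mu}W_\rho\cdot (yk)\cdot W_\nu.
\]
For each such $k$, a length additivity argument combined with $y\in\mathcal{D}_\rho^{-1}$ shows that $yk\in\mathcal{D}_{\rho\nu}$, and the shortest element of the $W_{\nu'}$-orbit on $W_\rho\cdot yk\cdot W_\nu$ is $hyk$ with $h\in\mathcal{D}_{\nu'}\cap W_\rho$. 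This identifies $W_\rho yW_\mu\cap\mathcal{D}_{\nu'\nu}$ with the set on the right-hand side.

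Second, to establish that each such $hyk$ lies in $\mathcal{D}^\circ_{\nu'\nu}$, it suffices to prove $W^{hyk}_{\nu'^{(0)}}\cap W_{\nu^{(1)}}=1$, the other case being symmetric. Since $d'\in\mathcal{D}^\circ_{\rho\lambda}$ and $d\in\mathcal{D}^\circ_{\lambda\mu}$, Lemma \ref{00-11} forces
\[
W_{\nu'^{(0)}}=W_{\rho^{(0)}}\cap W^{d'^{-1}}_{\lambda^{(0)}}\le W_{\rho^{(0)}},\qquad
W_{\nu^{(1)}}=W^d_{\lambda^{(1)}}\cap W_{\mu^{(1)}}\le W_{\mu^{(1)}}.
\]
Writing $h=h_0h_1\in W_{\rho^{(0)}}\times W_{\rho^{(1)}}$ and $k=k_0k_1\in W_{\mu^{(0)}}\times W_{\mu^{(1)}}$, the fact that $h_1$ commutes with $W_{\rho^{(0)}}$ and $k_0$ commutes with $W_{\mu^{(1)}}$ gives $W^h_{\nu'^{(0)}}\le W_{\rho^{(0)}}$ and $k^{-1}W_{\mu^{(1)}}k=W_{\mu^{(1)}}$; hence
\[
W^{hyk}_{\nu'^{(0)}}\cap W_{\nu^{(1)}}\le W^{yk}_{\rho^{(0)}}\cap W_{\mu^{(1)}}
=k^{-1}\bigl(W^y_{\rho^{(0)}}\cap W_{\mu^{(1)}}\bigr)k=1,
\]
where the last equality uses $y\in\mathcal{D}^\circ_{\rho\mu}$. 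Specialising to $h=1$ also confirms $yk\in\mathcal{D}^\circ_{\rho\nu}$, so the characterisation of the right-hand set is consistent.

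The main obstacle is not a deep new idea but the combinatorial bookkeeping needed to thread together the three independent even--odd trivial intersection hypotheses on $d$, $d'$ and $y$. The difficulty is compounded by the fact that $\nu\in\Lambda(m'|n',r)$ has a strictly finer even--odd decomposition than $\mu\in\Lambda(m|n,r)$, so a priori the condition $y\in\mathcal{D}^\circ_{\rho\mu}$ refers to a coarser pairing than the one needed on $\nu$. The crucial observation that makes everything go through is the inclusion $W_{\nu^{(j)}}\le W_{\mu^{(j)}}$, which allows one to pass from the $\nu$-level to the $\mu$-level and invoke the hypothesis on $y$ directly; this is also what makes the $h$- and $k$-conjugations harmless, since each lives in a factor that preserves the even or odd block being intersected.
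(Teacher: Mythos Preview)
Your proof is correct and follows essentially the same route as the paper. The paper quotes the non-super decomposition
\[
W_\rho y W_\mu\cap\mathcal{D}_{\nu'\nu}=\{hyk\mid yk\in\mathcal{D}_{\rho\nu},\ k\in\mathcal{D}_{\tau\nu}\cap W_\mu,\ h\in \mathcal{D}_{\nu'}\cap W_\rho\}
\]
from \cite[3.2]{DU} as a black box, whereas you re-derive it via the $(W_\tau,W_\nu)$-double coset decomposition of $W_\mu$; after that, both proofs upgrade the description to the $\circ$-versions and verify $hyk\in\mathcal{D}^\circ_{\nu'\nu}$ by exactly the same chain of inclusions $W_{\nu'^{(0)}}\le W_{\rho^{(0)}}$, $W_{\nu^{(1)}}\le W_{\mu^{(1)}}$, normality of the even/odd factors under $h\in W_\rho$ and $k\in W_\mu$, and finally $W^y_{\rho^{(0)}}\cap W_{\mu^{(1)}}=1$. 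One small omission: you do not explicitly check $k\in\mathcal{D}^\circ_{\tau\nu}$ (only $yk\in\mathcal{D}^\circ_{\rho\nu}$), but this drops out of the very same inclusions, since $W_{\tau^{(0)}}\le W_{\mu^{(0)}}$ is preserved by $k\in W_\mu$ and meets $W_{\nu^{(1)}}\le W_{\mu^{(1)}}$ trivially.
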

\begin{proof} Since
$y\in\mathcal{D}_{\rho\mu}$, by
{\cite[3.2]{DU}}, we have
\begin{equation}\label{DD0b}
W_\rho y
W_\mu\cap\mathcal{D}_{\nu'\nu}=\{hyk\mid yk\in\mathcal{D}_{\rho\nu},k\in\mathcal{D}_{\tau\nu}\cap W_\mu,h\in \mathcal{D}_{\nu'}\cap W_\rho\}.
\end{equation}
Since
$y\in\mathcal{D}^\circ_{\rho\mu},d\in\mathcal{D}^\circ_{\lambda\mu}$,
by definition,
$$\aligned
W^y_{\rho^{(0)}}\cap W_{\mu^{(1)}}=1,\quad&W^y_{\rho^{(1)}}\cap
W_{\mu^{(0)}}=1, \\W_{\nu^{(0)}}=W^d_{\lambda^{(0)}}\cap
W_{\mu^{(0)}},\;&\text{ and }\; W_{\nu^{(1)}}=W^d_{\lambda^{(1)}}\cap W_{\mu^{(1)}}.
\endaligned$$
Thus, noting that $k\in W_\mu$ implies $W_{\mu^{(1)}}^k=W_{\mu^{(1)}}$,
$$W^{yk}_{\rho^{(0)}}\cap
W_{\nu^{(1)}}=W^{yk}_{\rho^{(0)}}\cap(W^d_{\lambda^{(1)}}\cap
W_{\mu^{(1)}})=(W^y_{\rho^{(0)}}\cap
W_{\mu^{(1)}})^{k}\cap W^d_{\lambda^{(1)}}=1.$$
Similarly, one shows $W^{yk}_{\rho^{(1)}}\cap
W_{\nu^{(0)}}=1$. Hence, $yk\in\mathcal{D}^\circ_{\rho\nu}$.

To see $k\in\mathcal{D}^\circ_{\tau\nu}\cap W_\mu$, we have
$$W^{k}_{\tau^{(0)}}\cap W_{\nu^{(1)}}=
 (W^y_{\rho^{(0)}}\cap W_{\mu^{(0)}})^{k}\cap W^d_{\lambda^{(1)}}\cap W_{\mu^{(1)}}=W^{yk}_{\rho^{(0)}}\cap W_{\mu^{(0)}}\cap W^d_{\lambda^{(1)}}\cap W_{\mu^{(1)}}=1 $$
 since $W^{yk}_{\rho^{(0)}}\cap W_{\mu^{(1)}}=1$.
 By a similar proof, we obtain $W^{k}_{\tau^{(1)}}\cap
 W_{\nu^{(0)}}=1$, proving
 $k\in\mathcal{D}^\circ_{\tau\nu}$.

 Finally, we prove $W_\rho y W_\mu\cap \mathcal{D}_{\nu'\nu}\subseteq \mathcal{D}_{\nu'\nu}^\circ$.
 For $hyk\in W_\rho y W_\mu\cap \mathcal{D}_{\nu'\nu}$, we have
 $$W^{hyk}_{\nu'^{(0)}}\cap W_{\nu^{(1)}}=(W_{\rho^{(0)}}\cap
 W^{d'^{-1}}_{\lambda^{(0)}})^{hyk}\cap W^d_{\lambda^{(1)}}\cap
 W_{\mu^{(1)}}=W_{\rho^{(0)}}^{hyk}\cap W^{d'^{-1}hyk}_{\lambda^{(0)}}\cap W^d_{\lambda^{(1)}}\cap
 W_{\mu^{(1)}}.$$
 Since $h\in\mathcal{D}_{\nu'}\cap W_\rho$,
 $W_{\rho^{(0)}}^{hyk}=W_{\rho^{(0)}}^{yk}$. Since $k\in
 W_\mu$, $W_{\rho^{(0)}}^{hyk}\cap W_{\mu^{(1)}}=(W^y_{\rho^{(0)}}\cap
 W_{\mu^{(1)}})^{k}$. Now $W^y_{\rho^{(0)}}\cap
 W_{\mu^{(1)}}=1$,  implies $W^{hyk}_{\nu'^{(0)}}\cap
 W_{\nu^{(1)}}=1$. Similarly, one proves $W^{hyk}_{\nu'^{(1)}}\cap
 W_{\nu^{(0)}}=1$. Hence, $hyk\in \mathcal{D}^\circ_{\nu'\nu}$. \end{proof}

The following is the super version of a modified \cite[Lem.~3.3]{DU}.

\begin{lemma}\label{DD1} Let $\rho,\mu\in\La(m|n,r)$ and let $W_\alpha$ be a parabolic subgroup of $W_\mu$.
For $y\in \mathcal{D}_{\rho\mu}^\circ$ and $k\in\mathcal{D}_{\rho y\cap\mu}\cap W_\mu\cap\sD_\alpha^{-1}$, let
$W_\theta=W^{yk}_\rho\cap W_\alpha$ and $W_{\theta'}=W^y_\rho\cap
W^{k^{-1}}_\alpha=W_\theta^{k^{-1}}$.  Then there exists $c\in R$ such that
$$N_{W,W_\theta}(e_{\mu,\rho yk})=cN_{W,W_{\theta'}}(e_{\mu,\rho y})$$
\end{lemma}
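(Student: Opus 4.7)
The plan is to use Lemma \ref{SRNP} to reduce the equality to the action on $v_\mu$, establish proportionality via a support argument using the basis of Theorem \ref{RNB}, and pin down the constant $c\in R$ using transitivity of relative norms together with Nakayama's second part from Lemma \ref{Nakayama}.

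First, by Lemma \ref{SRNP}, both sides vanish on $v_\nu$ for $\nu\neq\mu$, so it suffices to check the identity on $v_\mu$. Since $k\in W_\mu$, we have $W_\rho ykW_\mu=W_\rho yW_\mu$, and a direct expansion shows that both $(v_\mu)N_{W,W_\theta}(e_{\mu,\rho yk})$ and $(v_\mu)N_{W,W_{\theta'}}(e_{\mu,\rho y})$ have support (in the basis $\{v_{\bsi_\rho g}\}$) contained in $W_\rho\backslash W_\rho yW_\mu$. By Theorem \ref{RNB}, the subspace of $\Hom_{\sH_R}(v_\mu\sH_R,v_\rho\sH_R)$ consisting of maps supported on a single double coset is one-dimensional, spanned by $\psi^y_{\mu\rho}$ (for $y\in\sD^\circ_{\rho\mu}$ the distinguished representative). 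Hence both relative norms are $R$-multiples of $\psi^y_{\mu\rho}$, so a proportionality constant exists.

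To see that the constant lies in $R$, I would use transitivity (Lemma \ref{RNT}(a)) to factor each side further. The containment $W_\theta\leq W_{\rho yk\cap\mu}\leq W_\rho^{yk}$ combined with the eigenvector identity $v_{\bsi_\rho yk}\sT_x=p_xv_{\bsi_\rho yk}$ (with $p_x=q^{\ell(x_0)}(-q^{-1})^{\ell(x_1)}$ the super $q$-scalar, well-defined because $y\in\sD^\circ_{\rho\mu}$ forces the clean super decomposition of $W_\rho^{yk}\cap W_\mu$) gives
$$N_{W_{\rho yk\cap\mu},W_\theta}(e_{\mu,\rho yk})=A'\cdot e_{\mu,\rho yk},\quad A'=\sum_{x\in\sD_\theta\cap W_{\rho yk\cap\mu}}p_x^2\in R,$$
and analogously $N_{W_{\rho y\cap\mu},W_{\theta'}}(e_{\mu,\rho y})=A''\cdot e_{\mu,\rho y}$. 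The combined hypothesis $k\in\sD_{\rho y\cap\mu}\cap\sD_\alpha^{-1}$ makes conjugation by $k$ length-preserving on $W_{\rho y\cap\mu}\supseteq W_{\theta'}$, inducing a super-grading-preserving bijection between $\sD_{\theta'}\cap W_{\rho y\cap\mu}$ and $\sD_\theta\cap W_{\rho yk\cap\mu}$; thus $A'=A''$.

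The final step is to apply Nakayama's second part in Lemma \ref{Nakayama} to the conjugate parabolic pair $W_{\rho yk\cap\mu}=(W_{\rho y\cap\mu})^k$ using $k$ as the distinguished double coset representative. The Nakayama isomorphism $N'\cong N\otimes\sT_k$ sends $e_{\mu,\rho y}\in Z_N(\sH_{\rho y\cap\mu})$ to $e_{\mu,\rho y}\sT_k=\epsilon\cdot e_{\mu,\rho yk}$, where $\epsilon=(-1)^{\widehat{yk}-\hat y}\in\{\pm1\}$ arises from the action formula \eqref{action2} applied iteratively to $v_{\bsi_\rho}\sT_{yk}$ (using $yk\in\sD_\rho$ with $\ell(yk)=\ell(y)+\ell(k)$). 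Consequently $N_{W,W_{\rho y\cap\mu}}(e_{\mu,\rho y})=\epsilon\,N_{W,W_{\rho yk\cap\mu}}(e_{\mu,\rho yk})$, which combined with transitivity and $A'=A''$ yields the desired identity with $c=\epsilon\in R$. The main obstacle is the careful verification of length-preservation of conjugation by $k$ on $W_{\rho y\cap\mu}$—a property reflecting the precise distinguished conditions imposed on $k$—together with the consistent tracking of super signs through the Nakayama isomorphism.
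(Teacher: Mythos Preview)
Your support argument in the first paragraph is sound and is essentially what the paper's proof relies on (both norms land in the rank-$1$ summand of $\Hom_{\sH_R}(v_\mu\sH_R,v_\rho\sH_R)$ indexed by the double coset $W_\rho yW_\mu$). However, the remainder of the argument, where you try to pin down $c$ explicitly as $\pm1$, is incorrect.

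The decisive error is your use of Nakayama's second part (Lemma~\ref{Nakayama}). That statement asserts only an equality of \emph{images}
\[
N_{W,W_\lambda}\bigl(Z_N(\sH_\lambda)\bigr)=N_{W,W_\mu}\bigl(Z_{N'}(\sH_\mu)\bigr),
\]
not that $N_{W,W_\lambda}(n)=N_{W,W_\mu}(n\otimes\sT_d)$ element by element. Your inference ``$N_{W,W_{\rho y\cap\mu}}(e_{\mu,\rho y})=\epsilon\,N_{W,W_{\rho yk\cap\mu}}(e_{\mu,\rho yk})$ with $\epsilon=\pm1$'' does not follow, and indeed is false. Take $m=3$, $n=0$, $r=3$, $\rho=(1,1,1)$, $\mu=(3)$, $y=1$, $W_\alpha=1$, $k=s_1$. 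Then $W_\theta=W_{\theta'}=1$, and a direct computation (using $v_{\bsi_\rho s_1}=v_{\bsi_\rho}\sT_{s_1}$ and $\sT_{s_1}x_W=qx_W$) gives
\[
N_{W,1}(e_{\mu,\rho s_1})=q\,N_{W,1}(e_{\mu,\rho}),
\]
so $c=q$, not $\pm1$. A secondary issue is that your intermediate group $W_{\rho yk\cap\mu}=(W_{\rho y\cap\mu})^k$ need not be parabolic (e.g.\ $W=\fS_4$, $W_\rho=\langle s_1\rangle$, $W_\mu=\fS_4$, $k=s_2$ gives $\langle(1,3)\rangle$), so transitivity through it is not available.

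The paper's argument avoids all of this. After reducing via transitivity to comparing $N_{W_\mu,W_\theta}(e_{\mu,\rho z})$ with $N_{W_\mu,W_{\theta'}}(e_{\mu,\rho y})$ (here $z=yk$), it applies the \emph{first} part of Lemma~\ref{Nakayama} to the bimodule
\[
M=\Hom_R\bigl(Rv_\mu,\,Rv_{\bsi_\rho y}\otimes_{\sH_{\theta'}}\sH_\mu\bigr),
\]
which admits two presentations $M\cong N\otimes_{\sH_{\theta'}}\sH_\mu\cong N'\otimes_{\sH_\theta}\sH_\mu$ with $N=\Hom_R(Rv_\mu,Rv_{\bsi_\rho y})$ and $N'=\Hom_R(Rv_\mu,Rv_{\bsi_\rho z})$. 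Since $Z_N(\sH_{\theta'})=Re_{\mu,\rho y}$ and $Z_{N'}(\sH_\theta)=Re_{\mu,\rho z}$ are both free of rank~$1$, Nakayama gives that $N_{W_\mu,W_{\theta'}}(e_{\mu,\rho y})$ and $N_{W_\mu,W_\theta}(e_{\mu,\rho z})$ each generate the same rank-$1$ $R$-module $Z_M(\sH_\mu)$; hence they differ by a unit $c\in R$. No explicit computation of $c$ is attempted or needed.
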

\begin{proof}   Let $z=yk$ and $\tau=\rho y\cap \mu$. The hypothesis $y\in\sD_{\rho\mu}^\circ$ and $W_\alpha\leq W_\mu$ implies $yk\in\sD_{\rho\alpha}^\circ$. Hence, $W_\theta$ is parabolic.
Since $k\in\mathcal{D}_{\tau\alpha}\cap W_\mu$, it follow that $W_{\theta'}=W_\alpha^{k^{-1}}\cap W_\tau$
is parabolic.
Moreover,  both $W_\theta$ and $W_{\theta'}$ are
parabolic subgroups of (parabolic) $W_\rho^y\cap W_\mu$ and of (possibly non-parabolic) $W_\rho^z\cap W_\mu$. By the transitivity of relative
norm Lemma \ref{RNT}(a), it is enough to show that, for some $c\in R$,
$$N_{W_\mu,W_\theta}(e_{\mu,\rho
z})=cN_{W_\mu,W_{\theta'}}(e_{\mu,\rho y}).$$

Now consider the $\mathcal{H}_\mu$-$\mathcal{H}_\mu$-bimodule
$$M=\Hom_R(Rv_\mu,Rv_{\bsi_\rho
y}\otimes_{\mathcal{H}_{\theta'}}\mathcal{H}_\mu).$$
It is clear that there exists an
$\mathcal{H}_\mu$-$\mathcal{H}_{\theta'}$-bimodule
$N=\Hom_R(Rv_\mu,Rv_{\bsi_\rho y})$ and an
$\mathcal{H}_\mu$-$\mathcal{H}_{\theta}$-bimodule
$N'=\Hom_R(Rv_\mu,Rv_{\bsi_\rho z})$ such that $M\cong
N\otimes_{\mathcal{H}_{\theta'}}\mathcal{H}_\mu$ and $M\cong
N'\otimes_{\mathcal{H}_\theta}\mathcal{H}_\mu$ as $\mathcal{H}_\mu$-$\mathcal{H}_\mu$-bimodules.
Applying Lemma \ref{Nakayama} yields
$$N_{W_\mu,W_{\theta'}}(\Hom_{\mathcal{H}_{\theta'}}(Rv_\mu,Rv_{\bsi_\rho
y}))=Z_M(\sH_\mu)=N_{W_\mu,W_\theta}(\Hom_{\mathcal{H}_\theta}(Rv_\mu,Rv_{\bsi_\rho
z})).$$ Hence, there exists $c\in R$ such that
$N_{W,W_\theta}(e_{\mu,\rho z})=N_{W,W_{\theta'}}(ce_{\mu,\rho
y})$. \end{proof}

Note that, in the applications below of this lemma, we will take $W_\alpha=W_\nu$ as defined in \eqref{DD0a} or $W_\alpha=1$.

\begin{lemma}\label{ML}Maintain the notations in \eqref{DD0a} and Lemma \ref{DD}.  For $hyk\in \mathcal{D}^\circ_{\nu'\nu}$, let
$$\aligned
W_\xi&=W^{yk}_\rho\cap W_\nu\leq W_\tau^k,\quad W_\eta=W^{hyk}_{\nu'}\cap W_\nu\leq W_\xi,\\
f(\bsu)&=\frac{d_{W_{\tau^{(0)}}}(\bsu)d_{W_{\tau^{(1)}}}(\bsu^{-1})}{d_{W_{\xi^{(0)}}}(\bsu)d_{W_{\xi^{(1)}}}(\bsu^{-1})},\text{ and }g(\bsu)=\frac{d_{W_{\xi^{(0)}}}(\bsu)d_{W_{\xi^{(1)}}}(\bsu^{-1})}{d_{W_{\eta^{(0)}}}(\bsu)d_{W_{\eta^{(1)}}}(\bsu^{-1})}.\endaligned$$
Then $$N_{W,W^{hyk}_{\nu'}\cap W_\nu}(e_{\mu,\rho
yk})=cf(q^2)g(q^2)N_{W,W^y_\rho\cap W_\mu}(e_{\mu,\rho y})$$ for
some $c\in R$.
Moreover, %when $R=F$,
if $q\in R$ is an $l$th primitive root of 1 with $l$ odd and $P_{\tau}\nleq_WP_\nu$ or
$P_{\tau}\nleq_WP_{\nu'}$, then $f(q^2)g(q^2)=0$.
\end{lemma}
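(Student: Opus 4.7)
The plan is to decompose $N_{W,W_\eta}(e_{\mu,\rho yk})$ as a three-step product, where two steps contribute the scalar factors $f(q^2)$ and $g(q^2)$ via eigenvalue computations, and the third step applies Lemma~\ref{DD1} to transport from $e_{\mu,\rho yk}$ to $e_{\mu,\rho y}$. Introduce the intermediate parabolic $W_{\eta'}:=W_\xi^{k^{-1}}=W_\rho^y\cap W_\nu^{k^{-1}}$, which lies in $W_\tau=W_\rho^y\cap W_\mu$ because $k\in W_\mu$ preserves the parity decomposition of $W_\mu$. Using $yk\in\mathcal{D}^\circ_{\rho\nu}$ (Lemma~\ref{DD}) and $hyk\in\mathcal{D}^\circ_{\nu'\nu}$ (hypothesis), the parabolics $W_\xi$, $W_\eta$, and $W_{\eta'}$ all factor by parity; by Corollary~\ref{SPC}(1), $e_{\mu,\rho y}$ commutes with $\mathcal{H}_\tau$, while Lemma~\ref{super2.2} applied to each pure-parity block of $W_\xi$ shows that $e_{\mu,\rho yk}$ commutes with $\mathcal{H}_\xi$.

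The heart of the proof is the pair of scalar identities
\begin{equation*}
N_{W_\tau,W_{\eta'}}(e_{\mu,\rho y})=f(q^2)\,e_{\mu,\rho y}\quad\text{and}\quad N_{W_\xi,W_\eta}(e_{\mu,\rho yk})=g(q^2)\,e_{\mu,\rho yk}.
\end{equation*}
I would prove each by evaluating on the single nonzero input $v_\mu$ (other basis vectors are killed by Lemma~\ref{SRNP}). The commutation with $\mathcal{H}_\tau$ (resp.\ $\mathcal{H}_\xi$) converts the summand $\mathcal{T}_{w^{-1}}e\mathcal{T}_w$ to $e\cdot \mathcal{T}_{w^{-1}}\mathcal{T}_w$, and since both $v_\mu$ and $v_{\bsi_\rho y}$ (resp.\ $v_{\bsi_\rho yk}$) are weight vectors on which the generators of $W_\tau$ (resp.\ $W_\xi$) act by $q$ on the even factor and by $-q^{-1}$ on the odd factor, each term collapses to $q^{2\ell(w_0)-2\ell(w_1)}$ times the image, where $w=w_0w_1$ is the parity-decomposition of $w$. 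The pure-parity factorisation of $W_{\eta'}$ (resp.\ $W_\eta$) makes the sum over distinguished coset representatives split into an even and an odd part, giving the Poincar\'e-polynomial ratio $f(q^2)$ (resp.\ $g(q^2)$); one invokes Lemma~\ref{DU1.1}(a) to replace $d_{W_{\eta'^{(i)}}}$ with $d_{W_{\xi^{(i)}}}$.

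Transitivity of relative norms (Lemma~\ref{RNT}(a)) now yields $N_{W,W_\eta}(e_{\mu,\rho yk})=g(q^2)N_{W,W_\xi}(e_{\mu,\rho yk})$ and $N_{W,W_{\eta'}}(e_{\mu,\rho y})=f(q^2)N_{W,W_\tau}(e_{\mu,\rho y})$. Applying Lemma~\ref{DD1} with $W_\alpha=W_\nu$ (so that $W_\theta=W_\xi$ and $W_{\theta'}=W_{\eta'}$; the required condition $k\in\mathcal{D}_\nu^{-1}$ follows from $k\in\mathcal{D}_{\tau\nu}^\circ\cap W_\mu$) gives $N_{W,W_\xi}(e_{\mu,\rho yk})=c\,N_{W,W_{\eta'}}(e_{\mu,\rho y})$ for some $c\in R$. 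Chaining these three identities produces the main formula.

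For the ``moreover'' statement, the product telescopes to
\begin{equation*}
f(q^2)g(q^2)=\frac{d_{W_{\tau^{(0)}}}(q^2)\,d_{W_{\tau^{(1)}}}(q^{-2})}{d_{W_{\eta^{(0)}}}(q^2)\,d_{W_{\eta^{(1)}}}(q^{-2})}.
\end{equation*}
Writing $s_{\alpha,i}$ for the number of $l$-parts in a maximal $l$-parabolic of $W_{\alpha^{(i)}}$, Lemma~\ref{DU1.1}(b) and the fact that $q^{\pm 2}$ are primitive $l$th roots of unity (as $l$ is odd) give $f(q^2)g(q^2)=0$ iff $s_{\tau,0}>s_{\eta,0}$ or $s_{\tau,1}>s_{\eta,1}$. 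From $W_\eta\leq W_\nu$ and $W_\eta\leq W^{hyk}_{\nu'}$ one obtains $s_{\eta,i}\leq s_{\nu,i}$ and $s_{\eta,i}\leq s_{\nu',i}$, so $P_\tau\not\leq_W P_\nu$ (equivalently $s_{\tau,0}+s_{\tau,1}>s_{\nu,0}+s_{\nu,1}$) forces $s_{\tau,0}+s_{\tau,1}>s_{\eta,0}+s_{\eta,1}$, hence strict inequality in at least one parity; the case $P_\tau\not\leq_W P_{\nu'}$ is identical. The main technical challenge I anticipate is cleanly verifying the parity-factorisations of $W_\xi$, $W_\eta$, and $W_{\eta'}$, along with the corresponding factorisation of the distinguished-coset sets $\mathcal{D}_{\eta'}\cap W_\tau$ and $\mathcal{D}_\eta\cap W_\xi$, on which the scalar collapse in the second step crucially depends.
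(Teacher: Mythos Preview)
Your proposal is correct and follows essentially the same route as the paper: both compute $g(q^2)$ via $N_{W_\xi,W_\eta}(e_{\mu,\rho yk})$ by evaluating on $v_\mu$, invoke Lemma~\ref{DD1} (with $W_\alpha=W_\nu$) to pass from $e_{\mu,\rho yk}$ to $e_{\mu,\rho y}$, and then compute $f(q^2)$ via $N_{W_\tau,W_\xi^{k^{-1}}}(e_{\mu,\rho y})$, using Lemma~\ref{DU1.1}(a) to identify $d_{W_{\xi^{(i)}}}$ with the Poincar\'e polynomial of its $k^{-1}$-conjugate. Your explicit naming of $W_{\eta'}=W_\xi^{k^{-1}}$ and your telescoped argument for the ``moreover'' (bounding $s_{\eta,i}$ by both $s_{\nu,i}$ and $s_{\nu',i}$ in one stroke) differ only cosmetically from the paper's two-case split (first showing $f(q^2)=0$ when $P_\tau\nleq_W P_\nu$, then $g(q^2)=0$ in the remaining case); the underlying divisibility facts about $\Phi_l$ are identical.
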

\begin{proof} By the transitivity of relative norms (Lemma \ref{RNT}(a)), we
have
$$N_{W,W^{hyk}_{\nu'}\cap W_\nu}(e_{\mu,\rho
yk})=N_{W,W^{yk}_\rho\cap W_\nu}(N_{W^{yk}_\rho\cap
W_\nu,W^{hyk}_{\nu'}\cap W_\nu}(e_{\mu,\rho yk})).$$
By Lemma \ref{DD}, $W_\xi=W_{\rho yk\cap\nu}^{00}\times W_{\rho yk\cap\nu}^{11}\leq
W_{\rho yk\cap\mu}^{00}\times W_{\rho yk\cap\mu}^{11}$. Thus, we have $e_{\mu,\rho yk}\in
\Hom_{\mathcal{H}_\xi}(V_R(m|n)^{\otimes r})$ by Lemma \ref{super2.2}.
Since
\begin{equation*}
\begin{aligned}
(v_\mu)N_{W^{yk}_\rho\cap W_\nu,W^{hyk}_{\nu'}\cap
W_\nu}(e_{\mu,\rho yk})&=(v_\mu)N_{W_\xi,W_\eta}(e_{\mu,\rho yk})\\
&=\sum_{w\in\mathcal{D}_\eta\cap
W_\xi}(v_\mu\mathcal{T}_{w^{-1}})e_{\mu,\rho yk} \mathcal{T}_w\\
&=\sum_{w_0w_1\in\mathcal{D}_\eta\cap(W_{\xi^{(0)}}\times
W_{\xi^{(1)}})}(v_\mu\mathcal{T}_{w_0^{-1}}\mathcal{T}_{w_1^{-1}})e_{\mu,\rho
yk} \mathcal{T}_{w_0}\mathcal{T}_{w_1}\\
&=\sum_{w_0w_1\in\mathcal{D}_\eta\cap(W_{\xi^{(0)}}\times
W_{\xi^{(1)}})}q^{\ell(w_0)}(-q^{-1})^{\ell(w_1)}v_{\bsi_\rho
yk}\mathcal{T}_{w_0}\mathcal{T}_{w_1}\\
&=\sum_{w_0w_1\in\mathcal{D}_\eta\cap(W_{\xi^{(0)}}\times
W_{\xi^{(1)}})}q^{2\ell(w_0)}(-q^{-1})^{2\ell(w_1)}v_{\bsi_\rho yk}
\end{aligned}
\end{equation*}
According to the definition of Poincar\'e polynomial, we have
$$(v_\mu)N_{W_\xi,W_\eta}(e_{\mu,\rho yk})
=\frac{d_{W_{\xi^{(0)}}}(\bsu)d_{W_{\xi^{(1)}}}(\bsu^{-1})}{d_{W_{\eta^{(0)}}}(\bsu)d_{W_{\eta^{(1)}}}(\bsu^{-1})}\bigg|_{\bsu=q^2}v_{\bsi_\rho
yk}
=g(q^2)v_{\bsi_\rho yk}.$$
So $$N_{W,W^{hyk}_{\nu'}\cap W_\nu}(e_{\mu,\rho
yk})=g(q^2)N_{W,W^{yk}_\rho\cap W_\nu}(e_{\mu,\rho yk}).$$

By Lemma \ref{DD1}, there exists $c\in R$ such that
$$N_{W,W^{yk}_\rho\cap W_\nu}(e_{\mu,\rho yk})=cN_{W,W^y_\rho\cap
W^{k^{-1}}_\nu}(e_{\mu,\rho y}).$$ Thus,
\begin{equation}\label{first half}
N_{W,W^{hyk}_{\nu'}\cap W_\nu}(e_{\mu,\rho
yk})=cg(q^2)N_{W,W^y_\rho\cap W^{k^{-1}}_\nu}(e_{\mu,\rho y}).
\end{equation}
By Corollary \ref{SPC},  $e_{\mu,\rho y}\in
\Hom_{\mathcal{H}_\tau}(V_R(m|n)^{\otimes r})$. Since $W^y_\rho\cap
W^{k^{-1}}_\nu\leq W_\tau, $ we have by the transitivity of relative
norms
$$N_{W,W^y_\rho\cap
W^{k^{-1}}_\nu}(e_{\mu,\rho
y})=N_{W,W_\tau}(N_{W_\tau,W^y_\rho\cap
W^{k^{-1}}_\nu}(e_{\mu,\rho y})).$$
Since $(W^{yk}_\rho\cap W_\nu)^{k^{-1}}=W^y_\rho\cap W_\mu\cap
W^{k^{-1}}_\nu=W^{k^{-1}}_\xi$ and
$k\in\mathcal{D}^\circ_{\tau\nu}\cap W_\mu$, $W^{k^{-1}}_\xi$ is a
parabolic subgroup of $W$. By Lemma \ref{DU1.1}(a), we have
$d_{W^y_\rho\cap W^{k^{-1}}_\nu}=d_{W_\xi}$. Hence,
$$N_{W_\tau,W^y_\rho\cap
W^{k^{-1}}_\nu}(e_{\mu,\rho
y})=\frac{d_{W_{\tau^{(0)}}}(\bsu)d_{W_{\tau^{(1)}}}(\bsu^{-1})}{d_{W_{\xi^{(0)}}}(\bsu)d_{W_{\xi^{(1)}}}(\bsu^{-1})}\bigg|_{\bsu=q^2}e_{\mu,\rho
y}=f(q^2)e_{\mu,\rho y}.$$
Applying $N_{W,W_\tau}(\ )$ to both sides and combining it with \eqref{first half} give
$$N_{W,W^{hyk}_{\nu'}\cap W_\nu}(e_{\mu,\rho
yk})=cf(q^2)g(q^2)N_{W,W^y_\rho\cap W_\mu}(e_{\mu,\rho y}),$$
proving the first assertion.

To prove the last assertion,  we first assume that $P_\tau\nleq_W P_\nu$, equivalently, $d_{P_\nu}\mid d_{P_\tau}$  and $d_{P_\nu}\neq
d_{P_\tau}$. Since $W_\xi\leq W_\nu$, it follows that  $d_{P_{\xi^{(0)}}}\mid d_{P_{\nu^{(0)}}}$ and
$d_{P_{\xi^{(1)}}}\mid d_{P_{\nu^{(1)}}}$. Thus, $d_{P_\nu}\neq
d_{P_\tau}$ implies $d_{P_{\xi^{(0)}}}\mid d_{P_{\tau^{(0)}}}$ and
$d_{P_{\xi^{(1)}}}\mid d_{P_{\tau^{(1)}}}$,  one of which is not equal. This implies $f(q^2)=0$, and hence, $f(q^2)g(q^2)=0$. From the argument, we see that, if $P_\xi<_WP_\tau$, then $f(q^2)=0$.

We now assume that $P_\tau\leq_W P_\nu$ but $P_\tau\nleq_W P_{\nu'}$. As seen above, we may further assume $P_\xi=P_\tau$. Then, $d_{P_{\nu'}}|d_{P_\tau}$ and $d_{P_{\nu'}}\neq d_{P_\tau}$. Since $W_\eta\leq_WW_{\nu'}$ and $W_\eta\leq W_\xi$, $P_\xi=P_\tau$ implies that one of the relations $d_{P_{\eta^{(0)}}}|d_{P_{\xi^{(0)}}}$ and $d_{P_{\eta^{(1)}}}|d_{P_{\xi^{(1)}}}$ is not an equality. Hence, $g(q^2)=0$, and hence, $f(q^2)g(q^2)=0$, proving the last assertion.
\end{proof}

Let $N^d_{\mu\lambda}=N_{W,W_{\la d\cap\mu}}(e_{\mu,\lambda d})$ be a
standard basis element as described in Theorem \ref{RNB}. We define the
{\it defect group} of $N^d_{\mu\lambda}$ to be the maximal
$l$-parabolic subgroup $P_{\la d\cap\mu}$ of $W_{\la d\cap\mu}=W_\la^d\cap W_\mu$. We now have the following result about the
coefficients of the product of two standard basis elements.
Recall that $R$ is a domain and $q\in R$ is an $l$-th primitive root of 1 with $l$ odd.
\begin{theorem}\label{TI} For basis elements
$N^d_{\mu\lambda},N^{d'}_{\lambda\rho}\in\mathcal{B}$ of $\sS_R(m|n,r)$, assume that
$$N^d_{\mu\lambda}N^{d'}_{\lambda\rho}=\sum_{y\in\mathcal{D}^\circ_{\rho\mu}}a_y
N^y_{\mu\rho}$$ where $a_y\in R$. If $a_y\neq 0$, then
$$P_{\rho y\cap\mu}\leq_W P_{\lambda d\cap\mu}\mbox{ and
}P_{\rho y\cap\mu}\leq_W P_{\rho d'\cap\la}.$$
\end{theorem}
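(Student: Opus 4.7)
The strategy is to expand the product $N^d_{\mu\la} N^{d'}_{\la\rho}$ into a sum of relative-norm terms fitting the template of Lemma \ref{ML}, and then invoke the vanishing assertion already proved there. Since both basis elements lie in $Z_M(\sH_R)$, I first absorb one norm into the other:
\[
N^d_{\mu\la}\, N^{d'}_{\la\rho} \;=\; N_{W,\,W_{\rho d'\cap\la}}\bigl(N^d_{\mu\la}\cdot e_{\la,\rho d'}\bigr),
\]
using $\mathcal{T}_{w^{-1}}\,N^d_{\mu\la} = N^d_{\mu\la}\,\mathcal{T}_{w^{-1}}$ for every $w\in W$.

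Next, unpack $N^d_{\mu\la}=\sum_{x\in\sD_{\la d\cap\mu}}\mathcal{T}_{x^{-1}}e_{\mu,\la d}\mathcal{T}_x$ and simplify $\mathcal{T}_x\cdot e_{\la,\rho d'}$ via the tensor-space action \eqref{action2}, collecting the nonzero contributions into terms of the shape $e_{\mu,\rho\sigma}$ for various $\sigma\in W$. I then apply the Mackey decomposition (Lemma \ref{Mackey}) to regroup the resulting double sum by $(W_\rho,W_\mu)$-double cosets $W_\rho y W_\mu$. By Theorem \ref{SRTC} and Corollary \ref{MC}, only $y\in\sD^\circ_{\rho\mu}$ can contribute nonzero terms. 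For each such $y$, setting $\nu=\la d\cap\mu$, $W_{\nu'}=W_\rho\cap W^{d'^{-1}}_\la$, and $\tau=\rho y\cap\mu$, Lemma \ref{DD} identifies the contributing coset representatives as exactly the $hyk\in\mathcal{D}^\circ_{\nu'\nu}\cap W_\rho y W_\mu$, and each contribution reduces (up to Hecke-action scalars) to a term of the form $N_{W,\,W^{hyk}_{\nu'}\cap W_\nu}(e_{\mu,\rho yk})$.

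Now I apply Lemma \ref{ML}, which rewrites each such term as
\[
N_{W,\,W^{hyk}_{\nu'}\cap W_\nu}(e_{\mu,\rho yk}) \;=\; c\,f(q^2)\,g(q^2)\, N^y_{\mu\rho},
\]
and, crucially, asserts that $f(q^2)g(q^2)=0$ whenever $P_\tau\nleq_W P_\nu$ or $P_\tau\nleq_W P_{\nu'}$. Summing these contributions over $h$ and $k$ expresses $a_y$ as a scalar combination of such products $c\,f(q^2)g(q^2)$, so $a_y\neq 0$ forces some term to satisfy both $P_\tau\leq_W P_\nu$ and $P_\tau\leq_W P_{\nu'}$. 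Finally, $W_\tau=W_{\rho y\cap\mu}$ and $W_\nu=W_{\la d\cap\mu}$ by definition, while $W_{\nu'}$ is the $d'^{-1}$-conjugate of $W^{d'}_\rho\cap W_\la=W_{\rho d'\cap\la}$, so their maximal $l$-parabolic subgroups are $W$-conjugate by Lemma \ref{DU1.1}(a). This gives the two inclusions $P_{\rho y\cap\mu}\leq_W P_{\la d\cap\mu}$ and $P_{\rho y\cap\mu}\leq_W P_{\rho d'\cap\la}$, as required.

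The hard part will be the bookkeeping in the second paragraph: tracking the expansion of $\mathcal{T}_x\cdot e_{\la,\rho d'}$ together with its $\pm q^{\pm 1}$-factors, and then verifying that after Mackey regrouping the index set of nonvanishing terms is precisely $\{hyk : y\in\mathcal{D}^\circ_{\rho\mu},\ hyk\in\mathcal{D}^\circ_{\nu'\nu}\cap W_\rho y W_\mu\}$ as given by Lemma \ref{DD}. This precise matching is what allows Lemma \ref{ML} to be invoked term-by-term, and prevents any defect-group contamination from auxiliary subgroups from obscuring the two $l$-parabolic inclusions.
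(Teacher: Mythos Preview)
Your overall strategy --- expand the product via Mackey and invoke the vanishing in Lemma \ref{ML} --- matches the paper's, but the way you set up the Mackey expansion creates a genuine gap.

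After absorbing $N^d_{\mu\la}$ into the outer norm $N_{W,W_{\rho d'\cap\la}}(\,\cdot\,)$ and unpacking, each term $\mathcal{T}_{x^{-1}}e_{\mu,\la d}\,\mathcal{T}_x\cdot e_{\la,\rho d'}$ is a rank-one map with image contained in $Fv_{\bsi_\rho d'}$: on $v_\bsi$ it returns $[v_\bsi\mathcal{T}_{x^{-1}}]_{\bsi_\mu}\cdot[v_{\bsi_\la d}\mathcal{T}_x]_{\bsi_\la}\cdot v_{\bsi_\rho d'}$. So your claim that one obtains $e_{\mu,\rho\sigma}$ ``for various $\sigma\in W$'' at this stage is not correct --- the only target is $\sigma=d'$. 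The various indices $\rho yk$ appear only after the \emph{outer} norm spreads things over $\sD_{\rho d'\cap\la}$, and the resulting terms are not of the shape $N_{W,W^{hyk}_{\nu'}\cap W_\nu}(e_{\mu,\rho yk})$; hence Lemmas \ref{DD} and \ref{ML}, which are written for the index set $\sD_{\nu'\nu}$ and subgroups $W^{hyk}_{\nu'}\cap W_\nu$, do not apply to what you actually have. (If you instead Mackey-decompose the inner factor, your index set is $\sD_{\nu,\,\rho d'\cap\la}$, again not $\sD_{\nu'\nu}$.)

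The paper avoids this by first applying Lemma \ref{RTRN} to rewrite $N^{d'}_{\la\rho}=(\pm)N_{W,W_{\nu'}}(e_{\la d'^{-1},\rho})$ with $W_{\nu'}=W_\rho\cap W_\la^{d'^{-1}}\leq W_\rho$. It then absorbs \emph{this} into the norm $N_{W,W_\nu}(\,\cdot\,)$ defining $N^d_{\mu\la}$ --- the opposite order from yours --- and Mackey now yields a sum over $x\in\sD_{\nu'\nu}$. Each term $e_{\mu,\la d}\,\mathcal{T}_{x^{-1}}e_{\la d'^{-1},\rho}\mathcal{T}_x$ is then genuinely a scalar multiple of a single $e_{\mu,\rho yk}$ (since $\mathcal{T}_x$ acts on $v_\rho$ and $x=hyk$ with $h\in W_\rho$), with norm over $W^x_{\nu'}\cap W_\nu$: exactly the template of Lemma \ref{ML}. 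The paper also handles the point you flag in your last paragraph: it shows (see \eqref{gu1}) that a nonzero coefficient $b_x$ forces $W^x_{\nu'}\cap W_\nu\leq W^{00}_{\rho yk\cap\mu}\times W^{11}_{\rho yk\cap\mu}$, so Corollary \ref{MC} kills the $y\notin\sD^\circ_{\rho\mu}$ contributions and Lemma \ref{DD} applies to the rest. Add the RTRN flip and absorb in the other order, and the remainder of your outline goes through.
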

\begin{proof} We first compute in the $\up$-Schur superalgebra  $\sS(m|n,r)$ over $\sZ=\mathbb{Z}[\up,\up^{-1}]$.
By \ref{RTRN} and Lemma \ref{Frobenius} and noting $\nu=\la d\cap\mu$ and $\nu'=\la d^{\prime -1}\cap\rho$, we have
\begin{equation*}
\begin{aligned}
N^d_{\mu\lambda}N^{d'}_{\lambda\rho}&=(-1)^{\widehat{d'}}(-1)^{\widehat{d'^{-1}}}N_{W,W_\nu}(e_{\mu,\lambda
d})N_{W,W_{\nu'}}(e_{\lambda d'^{-1},\rho})\\
&=(-1)^{\widehat{d'}+\widehat{d'^{-1}}}N_{W,W_\nu}(e_{\mu,\lambda
d}N_{W,W_{\nu'}}(e_{\lambda d'^{-1},\rho}))\\
&=(-1)^{\widehat{d'}+\widehat{d'^{-1}}}N_{W,W_\nu}(e_{\mu,\lambda
d}\sum_{x\in\mathcal{D}_{\nu'\nu}}N_{W_\nu,W^x_{\nu'}\cap
W_\nu}(\mathcal{T}_{x^{-1}}e_{\lambda d'^{-1},\rho}\mathcal{T}_x))\\
&=(-1)^{\widehat{d'}+\widehat{d'^{-1}}}\sum_{x\in\mathcal{D}_{\nu'\nu}}N_{W,W^x_{\nu'}\cap
W_\nu}(e_{\mu,\lambda d}\mathcal{T}_{x^{-1}}e_{\lambda
d'^{-1},\rho}\mathcal{T}_x)
\end{aligned}
\end{equation*}
Now, by \cite[3.2]{DU} (or \eqref{DD0b}),
$$\mathcal{D}_{\nu'\nu}=\{hyk\mid y\in\mathcal{D}_{\rho\la},yk\in\mathcal{D}_{\rho\nu},k\in\mathcal{D}_{\tau\nu}\cap W_\mu,h\in \mathcal{D}_{\nu'}\cap W_\rho\}.$$
For $x\in \mathcal{D}_{\nu'\nu}$ there exist
$y\in\mathcal{D}_{\rho\mu}$ and $h,k$  such
that $x=hyk$ and
$yk\in\mathcal{D}_{\rho\nu},h\in\mathcal{D}_{\nu'}\cap
W_\rho$. By a direct computation, we have for some $b_x\in \sZ$.
$$e_{\mu,\lambda
d}\mathcal{T}_{x^{-1}}e_{\lambda d'^{-1},\rho}\mathcal{T}_x=b_x
e_{\mu,\rho yk}.$$
We claim that  if $b_x\neq0$
\begin{equation}\label{gu1}
\mathcal {T}_ze_{\mu,\rho yk}=e_{\mu,\rho yk}\mathcal{T}_z\text{ for all }z\in W^x_{\nu'}\cap
W_\nu.
\end{equation} Indeed,
$$\aligned
b_x\cdot(\text{LHS})&=\mathcal{T}_z (e_{\mu,\lambda d}\mathcal{T}_{x^{-1}}e_{\lambda
d'^{-1},\rho}\mathcal{T}_x)\\
&=e_{\mu,\lambda d}\mathcal{T}_z\mathcal{T}_{x^{-1}}e_{\lambda
d'^{-1},\rho}\mathcal{T}_x\quad\text{(by Lemma \ref{SPC})}\\
&=e_{\mu,\lambda d}\mathcal{T}_{x^{-1}} \mathcal{T}_{z'} e_{\lambda
d'^{-1},\rho}\mathcal{T}_x\quad(z'\in W_{\nu'}\text{ with }zx^{-1}=x^{-1}z')\\
&=e_{\mu,\lambda d}\mathcal{T}_{x^{-1}}  e_{\lambda
d'^{-1},\rho} \mathcal{T}_{z'} \mathcal{T}_x\\
&= (e_{\mu,\lambda d}\mathcal{T}_{x^{-1}}e_{\lambda
d'^{-1},\rho}\mathcal{T}_x)\mathcal{T}_z  \\
&=b_x\cdot(\text{RHS})
\endaligned$$
Since $W^x_{\nu'}\cap
W_\nu\leq W_\rho^{yk}\cap W_\nu\leq W_\rho^{yk}\cap W_\mu$ and $W_{\nu^{(i)}}=W_{\la d\cap\mu}^{ii}\leq W_{\mu^{(i)}}$ for $i=0,1$, it follows that $W^x_{\nu'}\cap
W_\nu=\prod_{i,j=0,1}W_{\nu' x\cap\nu}^{ij}$ is a product of parabolic subgroups and each $W_{\nu'x\cap\nu}^{ij}\leq
W_{\rho^{(i)}}^{yk}\cap W_{\mu^{(j)}}$.
Thus, if $b_x\neq0$, we conclude that $W^x_{\nu'}\cap
W_\nu=W_{\nu'x\cap\nu}^{00}\times W_{\nu'x\cap\nu}^{11}\leq(W_{\rho^{(0)}}^{yk}\cap W_{\mu^{(0)}})\times( W_{\rho^{(1)}}^{yk}\cap W_{\mu^{(1)}})$,
since any element in $ W_{\nu'x\cap\nu}^{01}\leq W_{\rho^{(0)}}^{yk}\cap W_{\mu^{(1)}} $ or in $W_{\nu'x\cap\nu}^{10}\leq W_{\rho^{(1)}}^{yk}\cap W_{\mu^{(0)}} $ does not satisfy \eqref{gu1}; see Lemma \ref{super2.2}.

Now, if $y\in\mathcal{D}_{\rho\mu}\setminus \mathcal{D}^\circ_{\rho\mu}$, then $ W_{\rho^{(0)}}^{yk}\cap W_{\mu^{(1)}}=(W_{\rho^{(0)}}^{y}\cap W_{\mu^{(1)}} )^k\neq1$  or $ W_{\rho^{(1)}}^{yk}\cap W_{\mu^{(0)}}=(W_{\rho^{(1)}}^{y}\cap W_{\mu^{(0)}} )^k\neq1$. Thus, by Corollary \ref{MC},
 $N_{W,W^x_{\nu'}\cap W_{\nu}}(e_{\mu,\rho yk})=0$.
Thus, for $y\in \mathcal{D}^\circ_{\rho\mu}$,  if we write by Lemma \ref{DD}
$$W_\rho y
W_\mu\cap\mathcal{D}_{\nu'\nu}=\{h_iyk_j\mid 1\leq i\leq m_y, 1\leq j\leq n_y\},$$ then
\begin{equation*}
\begin{aligned}
N^d_{\mu\lambda}N^{d'}_{\lambda\rho}
&=(-1)^{\widehat{d'}+\widehat{d'^{-1}}}\sum_{hyk\in\mathcal{D}^\circ_{\nu'\nu}}N_{W,W^x_{\nu'}\cap
W_\nu}(b_{hyk}e_{\mu,\rho yk})\\
&=(-1)^{\widehat{d'}+\widehat{d'^{-1}}}\sum_{y\in\mathcal{D}^\circ_{\rho\mu}}\sum_{i,j}N_{W,W^{h_iyk_j}_{\nu'}\cap
W_\nu}(b_{h_iyk_j}e_{\mu,\rho yk_j}).
\end{aligned}
\end{equation*}

By Lemma \ref{ML}, there are Laurent polynomials $f_{y,j}(\bsu)$ associated with $yk_j$ and
$g_{i,y,j}(\bsu)$ associated with $h_iyk_j$ such that in $\sS(m|n,r)$,
$$N^d_{\mu\lambda}N^{d'}_{\lambda\rho}=(-1)^{\widehat{d'}+\widehat{d'^{-1}}}\sum_{y\in\mathcal{D}^\circ_{\rho\mu}}(\sum_{i,j}b'_{h_iyk_j}f_{y,j}(\bsu)g_{i,y,j}(\bsu))N_{W,W^y_\rho\cap W_\mu}(e_{\mu,\rho y})
.$$

We now look at the product in $\sS_R(m|n,r)=\sS(m|n,r)\otimes_\sZ R$ by specializing $\up$ to $q\in R$ and obtain
$N^d_{\mu\lambda}N^{d'}_{\lambda\rho}=\sum_{y\in\mathcal{D}^\circ_{\rho\mu}}a_yN_{\mu\rho}^y$
in $\sS_R(m|n,r)$, where
$$a_y=(-1)^{\widehat{d'}+\widehat{d'^{-1}}}\sum_{i=1}^{m_y}\sum_{j=1}^{n_y}b'_{i,y,j}f_{y,j}(q^2)g_{i,y,j}(q^2).$$
By Lemma \ref{ML} again, if $P_{\rho y\cap\mu}\nleq_WP_{\lambda d\cap\mu}$ or
$P_{\rho y\cap\mu}\nleq_W P_{\rho d'\cap\la}$, then all
$f_{y,j}(q^2)g_{i,y,j}(q^2)=0$. Hence, $a_y=0$.  \end{proof}

Let $P$ be an $l$-parabolic subgroup of $W$. We define
\begin{equation}\label{ideal I}
I_R(P,r)=\text{span}\{N^d_{\mu\lambda}\mid \la,\mu\in\La(m|n,r),d\in\sD_{\la\mu}^\circ,P_{\lambda d\cap\mu}\leq_W P\}.
\end{equation}  We also write $I_R(P,r)=I_R(P,r)_{m|n}$ if $m|n$ needs to be mentioned.
By the above theorem, we
have
\begin{corollary}\label{MC2}
The space $I_R(P,r)$ is an ideal of $\sS_R(m|n,r)$.
\end{corollary}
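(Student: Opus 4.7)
The plan is to observe that the corollary is an almost immediate consequence of Theorem \ref{TI}, once one handles the book-keeping of when products of basis elements are nonzero. Since $\mathcal{B}$ is an $R$-basis of $\sS_R(m|n,r)$ by Theorem \ref{RNB}, to check that $I_R(P,r)$ is a two-sided ideal it suffices to show that for every pair of basis elements $N^d_{\mu\lambda}, N^{d'}_{\mu'\lambda'}\in\mathcal{B}$, the product $N^d_{\mu\lambda}\cdot N^{d'}_{\mu'\lambda'}$ lies in $I_R(P,r)$ whenever at least one of the two factors does.

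First I would note that under the identification of $\sS_R(m|n,r)$ with $\End_{\sH_R}(V_R(m|n)^{\otimes r})$ (Lemma \ref{iso f}) together with Theorem \ref{RTSB}, each basis element $N^d_{\mu\lambda}$ is (up to an invertible scalar) the morphism $\psi^d_{\mu\lambda}$, which sends $v_\mu\sH_R$ into $v_\lambda\sH_R$ and annihilates $v_\nu\sH_R$ for all $\nu\neq\mu$. Consequently $N^d_{\mu\lambda}\cdot N^{d'}_{\mu'\lambda'}=0$ unless $\lambda=\mu'$, so the only products to analyse are of the form $N^d_{\mu\lambda}\cdot N^{d'}_{\lambda\rho}$.

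Next I apply Theorem \ref{TI} to such a product, writing
$$N^d_{\mu\lambda}\cdot N^{d'}_{\lambda\rho}=\sum_{y\in\mathcal{D}^\circ_{\rho\mu}}a_y N^y_{\mu\rho},$$
with $a_y\in R$. The theorem guarantees that whenever $a_y\neq 0$ one has both
$$P_{\rho y\cap\mu}\leq_W P_{\lambda d\cap\mu}\qquad\text{and}\qquad P_{\rho y\cap\mu}\leq_W P_{\rho d'\cap\lambda}.$$
If $N^d_{\mu\lambda}\in I_R(P,r)$ then by definition $P_{\lambda d\cap\mu}\leq_W P$, so transitivity of $\leq_W$ yields $P_{\rho y\cap\mu}\leq_W P$ for every $y$ contributing with nonzero coefficient; hence each surviving basis term $N^y_{\mu\rho}$ lies in $I_R(P,r)$, proving that $I_R(P,r)$ is a right ideal. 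Symmetrically, if instead $N^{d'}_{\lambda\rho}\in I_R(P,r)$, the second inequality from Theorem \ref{TI} gives $P_{\rho y\cap\mu}\leq_W P_{\rho d'\cap\lambda}\leq_W P$, so each surviving $N^y_{\mu\rho}$ again lies in $I_R(P,r)$, proving $I_R(P,r)$ is a left ideal.

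There is essentially no obstacle here; the only subtlety is to remember that the defining condition of $I_R(P,r)$ is phrased in terms of the defect group of the ``joint'' composition $\lambda d\cap\mu$ rather than any of the individual data $\lambda,\mu,d$, and that the conclusion of Theorem \ref{TI} is precisely an inequality between these joint defect groups, so the two match up perfectly. The hard work already sits in Theorem \ref{TI} (and the auxiliary Lemmas \ref{DD}--\ref{ML}); the corollary itself is a one-step consequence via transitivity of $\leq_W$.
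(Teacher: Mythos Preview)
Your proposal is correct and takes exactly the same approach as the paper, which simply states that the corollary follows from Theorem~\ref{TI}; you have merely spelled out the routine book-keeping (vanishing of products when the intermediate labels disagree, and transitivity of $\leq_W$ for $l$-parabolic subgroups) that the paper leaves implicit.
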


%As in \cite{DU}, let $H$ be a parabolic subgroup of $W$. Define
%$\nu_\ell(d_H)$ to be the largest power of $\phi_\ell(q)$ which divides
%$d_H$. That is, if $\phi^m_l\mid d_H$ but $\phi^{m+1}_l\nmid d_H$
%then $\nu_l(d_H)=m$.
Let $r=sl+t$, where $0\leq t<l$. For each $k$, $0\leq k\leq
s$, let $P_k$ be the $l$-parabolic subgroup of $W$ associated with the composition
$(1^{t+(s-k)l},l^k)$. Then we have a chain of ideals in $\sS_R(m|n,r)$
\begin{equation}\label{filtration}
0\subseteq I_R(P_0,r)\subseteq I_R(P_1,r)\subseteq\cdots\subseteq
I_R(P_s,r)=\sS_R(m|n,r).
\end{equation}
In Definition \ref{defect group}, we will use the sequence to define the defect group of a primitive idempotent and discuss the effect of Brauer homomorphisms on the defect groups.

\section{Alternative characterisation of the ideals $I_F(P, r)$}

{\it For the rest of the paper, we assume that $R=F$ is a field of characteristic 0 and $q\in F$ is a primitive $l$th root of 1 with $l$ odd. }
\begin{lemma}\label{P=1}
We have $$I_F(\{1\},r)=N_{W,1}(\End_F(V_F(m|n)^{\otimes r})).$$
\end{lemma}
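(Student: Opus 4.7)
The plan is to establish a clean explicit formula computing $N_{W,1}$ on the ``canonical'' generators $e_{\mu,\la d}$, from which both inclusions follow with little extra work (modulo one symmetry argument for the harder direction).

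The first step is a character identity. Fix $d\in\sD^\circ_{\la\mu}$ and set $\nu=\la d\cap\mu$. Since $d\in\sD^\circ_{\la\mu}$, Lemma \ref{00-11} together with the definition of $\sD^\circ$ gives $W_\nu=W_{\nu^{(0)}}\times W_{\nu^{(1)}}$ with $W_{\nu^{(0)}}\leq W_{\mu^{(0)}}$ and $W_{\nu^{(1)}}\leq W_{\mu^{(1)}}$. The endomorphism $e_{\mu,\la d}$ has one-dimensional image $Fv_{\bsi_\la d}$, and using the standard action \eqref{action2} on the weight vector $v_{\bsi_\la d}$, the subalgebra $\sH_\nu$ acts on this line by the character $\chi\colon\sH_\nu\to F$, $\chi(\sT_s)=q$ for $s\in W_{\nu^{(0)}}$ and $\chi(\sT_s)=-q^{-1}$ for $s\in W_{\nu^{(1)}}$. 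Hence, as endomorphisms of $V_F(m|n)^{\otimes r}$,
$$e_{\mu,\la d}\,\sT_h=\chi(h)\,e_{\mu,\la d}\quad(h\in\sH_\nu).$$
Combining this with the right-$\sH_\nu$-equivariance of $e_{\mu,\la d}$ (Corollary \ref{SPC}(1)), which yields $\sT_h\,e_{\mu,\la d}=e_{\mu,\la d}\sT_h$ in $\End_F(V_F(m|n)^{\otimes r})$, I obtain for every $u\in W_\nu$
$$\sT_{u^{-1}}\,e_{\mu,\la d}\,\sT_u=e_{\mu,\la d}\,\sT_{u^{-1}}\sT_u=\chi(\sT_{u^{-1}})\chi(\sT_u)\,e_{\mu,\la d}.$$
Summing over $W_\nu=W_{\nu^{(0)}}\times W_{\nu^{(1)}}$ and using $\chi(\sT_u)\chi(\sT_{u^{-1}})=q^{2\ell(u_0)}q^{-2\ell(u_1)}$ yields the clean formula
$$N_{W_\nu,1}(e_{\mu,\la d})=c_\nu\,e_{\mu,\la d},\qquad c_\nu:=d_{W_{\nu^{(0)}}}(q^2)\,d_{W_{\nu^{(1)}}}(q^{-2}).$$
Applying transitivity (Lemma \ref{RNT}(a)) gives the key identity
$$N_{W,1}(e_{\mu,\la d})=N_{W,W_\nu}\bigl(N_{W_\nu,1}(e_{\mu,\la d})\bigr)=c_\nu\,N^d_{\mu\la}.$$
Since $q$ is a primitive $l$th root of unity with $l$ odd, $[\![k]\!]_{q^2}=0$ precisely when $l\mid k$, so $c_\nu\neq 0$ if and only if every part of $\nu$ is $<l$, i.e.\ $P_\nu=\{1\}$.

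The inclusion $I_F(\{1\},r)\subseteq N_{W,1}(\End_F(V_F(m|n)^{\otimes r}))$ is now immediate: each basis element $N^d_{\mu\la}$ of $I_F(\{1\},r)$ satisfies $P_\nu=1$, so $c_\nu\ne 0$ and $N^d_{\mu\la}=c_\nu^{-1}N_{W,1}(e_{\mu,\la d})$ lies in the image.

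For the reverse inclusion, the image $N_{W,1}(\End_F(V_F(m|n)^{\otimes r}))$ is spanned by $N_{W,1}(e_{\bsi,\bsj})$ over all $\bsi,\bsj\in I(m|n,r)$. Writing $\bsj=\bsi_\la y$ for a unique $y\in\sD_\la$ and first treating the canonical case $\bsi=\bsi_\mu$, Corollary \ref{MC} asserts that $N_{W,1}(e_{\mu,\la y})=0$ unless $y$ satisfies the even-odd trivial intersection property; when this holds, writing $y=dx$ with $d\in\sD^\circ_{\la\mu}$ and $x\in\sD_{\la d\cap\mu}\cap W_\mu$, the proof of Corollary \ref{MC} furnishes a scalar $a\in F$ with $N_{W,1}(e_{\mu,\la y})=a\,N_{W,1}(e_{\mu,\la d})=a\,c_\nu\,N^d_{\mu\la}\in I_F(\{1\},r)$. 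For a general left index $\bsi=\bsi_\mu d_1$ with $d_1\ne 1$, the reduction uses the diagonal symmetry $N_{W,1}(e_{\bsi w_0,\bsj w_0})=\pm N_{W,1}(e_{\bsi,\bsj})$ obtained from the change of variable $w\mapsto w_0 w$ in $\sum_{w\in W}\sT_{w^{-1}}(\cdot)\sT_w$ (after unwinding Hecke length corrections via the defining quadratic relations, which telescope under the full sum over $W$); this absorbs $d_1$ into the right index and reduces to the canonical case just treated. The \emph{main obstacle} is precisely the verification of this diagonal symmetry when the change of variable does not preserve length, because then $\sT_{w_0w}\neq\sT_{w_0}\sT_w$; the fix is to expand both sides in the $\sT_w$-basis and exploit that the cross terms pair up symmetrically in the total sum over $W$, so they cancel modulo the right action of $\sH$. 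Once this reduction is in place, every generator $N_{W,1}(e_{\bsi,\bsj})$ lies in $I_F(\{1\},r)$, completing the proof.
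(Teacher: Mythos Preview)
Your key identity $N_{W,1}(e_{\mu,\la d})=c_\nu N^d_{\mu\la}$ for $d\in\sD^\circ_{\la\mu}$ is correct and is essentially the same calculation the paper uses; the inclusion $I_F(\{1\},r)\subseteq N_{W,1}(\End_F(V_F(m|n)^{\otimes r}))$ then follows exactly as you say.

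The gap is in the opposite inclusion, specifically in your reduction of a general $e_{\bsi,\bsj}$ with $\bsi=\bsi_\mu d_1$, $d_1\neq 1$, to the canonical case $\bsi=\bsi_\mu$. The claimed ``diagonal symmetry'' $N_{W,1}(e_{\bsi w_0,\bsj w_0})=\pm N_{W,1}(e_{\bsi,\bsj})$ is simply \emph{false} for Hecke algebras. A two-line computation in $\sS_F(2|0,2)$ already shows this: with $\mu=(1,1)$ one finds
\[
N_{W,1}(e_{21,21})=N_{W,1}(e_{12,12})+(q-q^{-1})\,N_{W,1}(e_{12,21}),
\]
not a scalar multiple of $N_{W,1}(e_{12,12})$. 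The cross terms you hope will ``pair up symmetrically and cancel'' do not cancel; instead they produce genuine contributions from \emph{other} basis elements $N^{d'}_{\mu\la'}$. Your change of variables $w\mapsto w_0w$ fails precisely because $\sT_{w_0w}\neq\sT_{w_0}\sT_w$ and, unlike the group-algebra case, $\sT_{w_0^{-1}}e_{\bsi,\bsj}\sT_{w_0}\neq e_{\bsi w_0,\bsj w_0}$.

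The paper handles this without any such symmetry: it inserts the idempotent $N_{W,W_\mu}(e_{\mu\mu})$ (the projector onto $v_\mu\sH_F$, by Lemma~\ref{SRNP}) on the left, then uses the Frobenius/Mackey machinery of Lemmas~\ref{RNT}--\ref{Mackey} to rewrite
\[
N_{W,1}(e_{\mu d,\la d'})=N_{W,W_\mu}(e_{\mu\mu})\,N_{W,1}(e_{\mu d,\la d'})
=N_{W,1}\bigl(e_{\mu\mu}\sT_d\,e_{\mu d,\la d'}\sT_{d^{-1}}\bigr),
\]
which expands as a linear combination of $N_{W,1}(e_{\mu,\la y})$ with $\bsi=\bsi_\mu$ already canonical. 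From there your own argument (Corollary~\ref{MC} plus Lemma~\ref{DD1} with $W_\alpha=1$, then the formula $N_{W,1}(e_{\mu,\la x})=d_{W_{\la x\cap\mu}}(q^2)\,N^x_{\mu\la}$) finishes the job. Replacing your symmetry step with this idempotent trick gives a complete proof.
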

\begin{proof} Clearly,
$$\{e_{\mu d,\lambda d'}\mid
\lambda,\mu\in\Lambda{(m|n,r)},d\in\mathcal{D}_\mu,d'\in\mathcal{D}_\lambda\}$$
is a basis of $\End_F(V_F(m|n)^{\otimes r})$. By Lemmas \ref{RNT}, \ref{Mackey}, and
\ref{SRNP}, we have
\begin{equation*}
\begin{aligned}
N_{W,1}(e_{\mu, \lambda
d'})&=N_{W,W_\mu}(e_{\mu\mu})N_{W,1}(e_{\mu
d,\lambda d'})\\
&=N_{W,W_\mu}(e_{\mu\mu}N_{W,1}(e_{\mu
d,\lambda d'}))\\
&=N_{W,W_\mu}(e_{\mu\mu}\sum_{x\in\mathcal{D}^{-1}_\mu}N_{W_\mu,1}(\mathcal{T}_{x^{-1}}e_{\mu
d,\lambda d'}\mathcal{T}_x))\\
&=N_{W,W_\mu}(\sum_{x\in\mathcal{D}^{-1}_\mu}N_{W_\mu,1}(e_{\mu\mu}\mathcal{T}_{x^{-1}}e_{\mu
d,\lambda d'}\mathcal{T}_x))\\
&=\sum_{x\in\mathcal{D}^{-1}_\mu\cap
d^{-1}W_\mu}N_{W,1}(e_{\mu\mu}\mathcal{T}_{x^{-1}}e_{\mu d,\lambda
d'}\mathcal{T}_x)\\
&=N_{W,1}(e_{\mu\mu}\mathcal{T}_d e_{\mu d,\lambda
d'}\mathcal{T}_{d^{-1}}).
\end{aligned}
\end{equation*}
Write
$e_{\mu\mu}\mathcal{T}_d e_{\mu d,\lambda
d'}\mathcal{T}_{d^{-1}}=\sum_{y\in \mathcal{D}_\la}a_y
e_{\mu,\lambda y} $ where $a_y\in F$. Then
$$N_{W,1}(e_{\mu d,\lambda d'})=\sum_{y\in
\mathcal{D}_\lambda}a_yN_{W,1}(e_{\mu,\lambda y}).$$

For $y\in \mathcal{D}_\lambda$, there exists
$x\in\mathcal{D}_{\lambda\mu},k \in \sD_{\la x\cap\mu}\cap W_\mu$ such that $y=xk$. By Corollary
\ref{MC}, if
$x\in\mathcal{D}_{\lambda\mu}\setminus\mathcal{D}^\circ_{\lambda\mu}$,
then $N_{W,1}(e_{\mu,\lambda y})=0$.
If $x\in\mathcal{D}^\circ_{\lambda\mu}$, then we apply Lemma \ref{DD1} with $W_\alpha=1$ to give,  for some $c\in F$,
$$N_{W,1}(e_{\mu,\lambda y})=cN_{W,1}(e_{\mu,\la x})=cd_{W_{\la x\cap\mu}}(q^2)N_{W,W_{\la x\cap \mu}}(e_{\mu,\la x})\in I_F(\{1\},r),$$
since $d_{W_{\la x\cap\mu}}(q^2)=0$ if $P_{\la x\cap\mu}\neq1$.
 The result is proved. \end{proof}

 We now generalise this result to an arbitrary $l$-parabolic subgroup. {\it For the rest of the section,
 fix a non-negative integer $k$ and let }
%$\theta\Lambda{(m+n,r)}$ be such that
$$\theta=(\underbrace{l,\cdots,l}_{k},1,\cdots,1).$$ Then $W_\theta=_WP_k$
 is an
$l$-parabolic subgroup of $W$. For
$\lambda\in\Lambda{(m|n,r)},d\in\mathcal{D}_{\mu\theta}$, if $W_\theta\leq W_\la^d$
then, by Lemma \ref{00-11},
 $W_\theta=(W_{\la^{(0)}}^d\cap W_\theta)\times (W_{\la^{(1)}}^d\cap W_\theta)$, and both
$W_{\la^{(0)}}^d\cap W_\theta$ and $W_{\la^{(1)}}^d\cap W_\theta$ are $l$-parabolic.

\begin{lemma}\label{key}
Let $\theta\models r$ be given as above. If $\lambda\in\Lambda{(m|n,r)},
d\in\mathcal{D}_{\lambda\theta}$ and $W_\theta\leq
W^d_\lambda$, then there exist $\theta'\in\Lambda{(m'|n',r)}$ for some $m'\geq m$ and $n'\geq n$ and
 $w\in N_W(W_\theta)\cap
\mathcal{D}_{\theta\theta}$ such that $W_{\theta'}=W_\theta$, $dw^{-1}\in\sD_{\la\theta}^\circ$, and
$$e_{\lambda d, \theta' w}\in \End_{\mathcal{H}_\theta}(V_F(m'|n')^{\otimes
r}).$$
Moreover, $N_{W,W_\theta}(e_{\lambda d,\theta'w})\in I_F(W_\theta,r)_{m'|n'}.$
\end{lemma}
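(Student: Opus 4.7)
The plan is to build $w$ and $\theta'$ by hand from the combinatorial data in the inclusion $W_\theta\le W_\la^d$ and then verify each condition. Write $P_j=\{(j-1)l+1,\ldots,jl\}$ for $j\in[1,k]$ so that $W_{\theta_j}=\fS_{P_j}$. Applying Lemma \ref{00-11} to $W_\theta\le W_\la^d$, every $s\in S\cap W_\theta$ lies in one of the factors $W_{\la^{(0)}}^d$ or $W_{\la^{(1)}}^d$; two consecutive simple reflections inside a single block $P_j$ share a position and must therefore lie in the same factor, and the connected Coxeter diagram of $\fS_{P_j}$ then forces the whole $l$-block into a single factor. Encode this by $\epsilon:[1,k]\to\{0,1\}$ with $\epsilon(j)=i$ precisely when $W_{\theta_j}\le W_{\la^{(i)}}^d$, and put $a=\#\epsilon^{-1}(0)$. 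Let $\sigma\in\fS_k$ be the unique order-preserving bijection with $\sigma(\epsilon^{-1}(0))=[1,a]$, and define $w\in W$ as the block-permutation that sends the $i$-th element of $P_j$ to the $i$-th element of $P_{\sigma(j)}$ and fixes every singleton position $p>kl$. Set
$$\theta'=(\underbrace{l,\ldots,l}_{a}\mid\underbrace{l,\ldots,l}_{k-a},\underbrace{1,\ldots,1}_{r-kl}),$$
padded with trailing zero parts in both the even and odd components so that $\theta'\in\La(m'|n',r)$ for suitable $m'\ge m$ and $n'\ge n$.

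Next I verify the required conditions. The identity $W_{\theta'}=W_\theta$ is immediate from the definitions. Because $w$ preserves internal order within each block, $\ell(ws)=\ell(w)+1$ for every $s\in S\cap W_\theta$, and by the same argument on the left $w\in\sD_\theta\cap\sD_\theta^{-1}=\sD_{\theta\theta}$; since $w$ permutes the $l$-blocks of $W_\theta$ as wholes, $w\in N_W(W_\theta)$. Putting $d'=dw^{-1}$, the relation $d'\in\sD_{\la\theta'}$ follows by combining $d\in\sD_{\la\theta}$ with $W_\theta w=wW_\theta$, and the super trivial-intersection conditions $W_{\la^{(0)}}^{d'}\cap W_{\theta'^{(1)}}=1$ and $W_{\la^{(1)}}^{d'}\cap W_{\theta'^{(0)}}=1$ hold by construction: the $l$-blocks of $W_{\theta'^{(1)}}$ are precisely the $w$-images of the blocks $P_j$ with $\epsilon(j)=1$, which lie in $W_{\la^{(1)}}^d$, and similarly for the other pairing. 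Finally, for each $s\in S\cap W_\theta$ in block $P_j$ both $s\in W_{\la^{(\epsilon(j))}}^d$ and $wsw^{-1}\in W_{\theta'^{(\epsilon(j))}}$ hold by the definition of $\sigma$, so Lemma \ref{super2.2} gives $\sT_se_{\la d,\theta'w}=e_{\la d,\theta'w}\sT_s$ and hence $e_{\la d,\theta'w}\in\End_{\sH_\theta}(V_F(m'|n')^{\otimes r})$.

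For the ``Moreover'' assertion, $W_\theta\le W_\la^{d'}\cap W_{\theta'}$ forces $W_{\la d'\cap\theta'}=W_\theta$, so the defect group $P_{\la d'\cap\theta'}$ equals $W_\theta$ itself, which is $l$-parabolic. A two-sided application of the symmetry in Lemma \ref{RTRN}, using $w\in N_W(W_\theta)$ to absorb the $\sT_w$-factor on one side and reduce to the standard one-sided swap, identifies $N_{W,W_\theta}(e_{\la d,\theta'w})$ up to a sign with the standard basis element $N^{d'}_{\theta'\la}$ of $\sS_F(m'|n',r)$, which by definition \eqref{ideal I} lies in $I_F(W_\theta,r)_{m'|n'}$. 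The main obstacle is precisely this last identification: $e_{\la d,\theta'w}$ carries non-standard multipliers on both indices whereas Lemma \ref{RTRN} is stated for a single one-sided swap. The cleanest route is to rewrite $v_{\bsi_\la d}=(-1)^{\widehat d}v_\la\sT_d$, use the $\sH_\theta$-centrality of $e_{\la d,\theta'w}$ to pull $\sT_d$ and $\sT_w$ across the relative norm, and track the sign factors $(-1)^{\widehat d}$ and $(-1)^{\widehat{d^{-1}}}$ carefully.
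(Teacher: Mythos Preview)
Your construction of $w$ and $\theta'$ and the checks that $w\in N_W(W_\theta)\cap\sD_{\theta\theta}$, that $dw^{-1}\in\sD_{\la\theta'}^\circ$, and that $e_{\la d,\theta'w}\in\End_{\sH_\theta}(V_F(m'|n')^{\otimes r})$ are the paper's argument in slightly different language. One detail is underspecified: ``$d\in\sD_{\la\theta}$ together with $W_\theta w=wW_\theta$'' does not by itself give $dw^{-1}\in\sD_\la$. What makes this work is that your $\sigma$ is order-preserving on each parity class $\epsilon^{-1}(0)$ and $\epsilon^{-1}(1)$, so $w$ introduces no inversion between two positions of $\bsi_\la d$ carrying the same letter; the paper phrases this as the length identity $\ell(d)=\ell(dw^{-1})+\ell(w)$, obtained by counting inversions.

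The genuine gap is the ``Moreover'' clause. Your target $N^{d'}_{\theta'\la}=N_{W,W_\theta}(e_{\theta',\la d'})$ lives in $\Hom_{\sH_F}(v_{\theta'}\sH_F,v_\la\sH_F)$, whereas $N_{W,W_\theta}(e_{\la d,\theta'w})$ lives in $\Hom_{\sH_F}(v_\la\sH_F,v_{\theta'}\sH_F)$; they cannot be equal up to sign. More fundamentally, Lemma~\ref{RTRN} shifts a single twist from one index to the other, and there is no ``two-sided'' version that simultaneously removes $d$ from the first index and $w$ from the second; the sketch you offer (pulling $\sT_d$, $\sT_w$ through the norm and tracking signs) is a plan, not a proof. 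The paper avoids all of this with a one-line observation: $N_{W,W_\theta}(e_{\la d,\theta'w})$ lies in $\Hom_{\sH_F}(v_\la\sH_F,v_{\theta'}\sH_F)$, and by Theorem~\ref{RNB} every standard basis element of that Hom-space has defect group equal to the maximal $l$-parabolic of some $W_{\theta'}^x\cap W_\la$, hence is conjugate into $W_{\theta'}=W_\theta$. Thus the whole Hom-space sits inside $I_F(W_\theta,r)_{m'|n'}$ by the definition~\eqref{ideal I}, and no identification with a single basis element is needed.
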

\begin{proof}
Since $d\in \mathcal{D}_{\lambda\theta}$ and $W_\theta\leq
W^d_\lambda$, by \cite[1.3]{DU2}, we have
$$\bsi_\lambda
d=(\underbrace{i_1,\cdots,i_1}_l,\underbrace{i_2,\cdots,i_2}_l,\cdots,\underbrace{i_{k},\cdots,i_{k}}_l,j_1,\cdots,j_{t}).$$
The sequence $( \widehat{i_1},\widehat{i_2},\ldots, \widehat{i_k})$ is called {\it the parity of the $k$ blocks of length $l$ in $\bsi_\la d$}.

Let $a=\#\{j\mid \widehat{i_j}=0, 1\leq j\leq k\}$ and $b=
\#\{j\mid \widehat{i_j}=1, 1\leq j\leq k\}$. We will call $a$ (resp., $b$) the {\it number of even} (resp., {\it odd}) {\it blocks of length $l$}. Then $a+b=k$ and
$$
\bsi_{\la^{(0)}}d=\bsi_\la d|_{[1,m]}\text{ and } \bsi_{\la^{(1)}}d=\bsi_\la d|_{[m+1, m+n]}.$$

Consider 0-1 sequence $( \widehat{i_1},\widehat{i_2},\ldots, \widehat{i_k})$. There is a shortest
$x\in\fS_k$ such that
$$( \widehat{i_1},\widehat{i_2},\ldots, \widehat{i_k})x=(0^a,1^b).$$ Then $g=\ell(x)$ is the number of inversions in the sequence (see proof of \cite[Lem.~1.3]{DU3}). Every such inversion $\widehat{i_c}>\widehat{i_d}$ with $c<d$ determines $l^2$ inversions in the sequence $\underbrace{i_c,\cdots,i_c}_l,\underbrace{i_d,\cdots,i_d}_l$.

Let $\theta'^{(0)}=(l^{a})$ and $\theta'^{(1)}=(l^b,1^{t})$. Then, by adding some 0's at the end of $\theta'^{(i)}$ if necessary, we may assume $\theta'=(\theta'^{(0)}|\theta'^{(1)})\in\La(m'|n',r)$ for some $m'\geq m,n'\geq n$. Clearly,  $W_\theta=W_{\theta'}$. Define $w\in\sD_{\theta\theta}$ such that
$$\bsi_{\theta'}w=(\underbrace{i'_1,\cdots,i'_1}_l,\underbrace{i'_2,\cdots,i'_2}_l,\cdots,\underbrace{i'_{k},\cdots,i'_{k}}_l,j'_1,\cdots,j'_{t})$$
where $\widehat{i'_s}=\widehat{i_s}$ for $1\leq s\leq  k$ and $(j)w=j$ for all $kl<j\leq r$. Since the first $k$ blocks in $\bsi_{\theta'}$ contains $gl^2$ inversion, we have $\ell(w)=gl^2$ and $w\in N_W(W_\theta)\cap\mathcal{D}_{\theta\theta}$. Moreover,
$$\bsi_\lambda
dw^{-1}=((i_1,\cdots,i_1,i_2,\cdots,i_2,\ldots,i_{k},\cdots,i_{k})_{\text{sorted}},j_1,\cdots,j_{t}).$$
Now the other inversions in $\bsi_\la d$ is unchanged during this sorting process. We conclude that
$\ell(d)=\ell(dw^{-1})+\ell(w)$. Hence, $dw^{-1}\in\sD_{\la\theta'}$.

Also, since Stab$_W(\bsi_{\la^{(0)}}d,\bsi_{\theta'^{(0)}}w)=_WP_a$ and Stab$_W(\bsi_{\la^{(1)}}d,\bsi_{\theta'^{(1)}}w)=_WP_b$, it follows that $W^d_{\lambda^{(0)}}\cap W^w_{\theta'^{(1)}}=1 \mbox{ and }
W^d_{\lambda^{(1)}}\cap W^w_{\theta'^{(0)}}=1$. Hence,
$dw^{-1}\in\mathcal{D}_{\lambda\theta'}^\circ$.

By Lemma
\ref{super2.2},  $e_{\lambda d, \theta'w}\in
\End_{\mathcal{H}_\theta}(V_F(m|n)^{\otimes r})$.
So the transitivity of relative norms (Lemma \ref{RNT}(a)) gives
$$N_{W,W_\theta}(e_{\lambda d, \theta'w}) \in \End_{\mathcal{H}_F}(V_F(m'|n')^{\otimes r}).$$

The last assertion is clear since $N_{W,W_\theta}(e_{\lambda d, \theta'w})$ is a linear combination
of basis elements of the form $N_{W,W_\la^x\cap W_{\theta'}}(e_{\la x,\theta'})$ each of which  has a defect group $\leq_WW_{\theta'}$ and hence, is in $I_F(W_\theta,r)_{m'|n'}$.
%Consider the action of $N_{W,W_\theta}(e_{\lambda d, \theta'w})$ over $v_{\bsi_\lambda}$,
%\begin{equation*}\begin{aligned}
%(v_{\bsi_\lambda })N_{W,W_\theta}(e_{\lambda d,\theta'w})&=(v_{\bsi_\lambda
%})\sum_{x\in\mathcal{D}_\theta\cap d^{-1}W_\la}\mathcal{T}_{x^{-1}e_{\lambda d,
%\theta'w}}\mathcal{T}_x\\
%&=\sum_{d^{-1}z\in\mathcal{D}_\theta, z\in W_\lambda}( v_{\bsi_\lambda
%})\mathcal{T}_{z^{-1}}\sT_de_{\lambda d, \theta'w}\mathcal{T}_{d^{-1}}\sT_z\\
%&=\sum_{z\in W_\la}\pm q^{\ell(z_0)}(-q^{-1})^{\ell(z_1)}v_{\theta'w}\sT_{w^{-1}}\sT_{wd^{-1}}\sT_z\\
%&=\sum_{z\in W_\la} \pm q^{\ell(z_0)}(-q^{-1})^{\ell(z_1)}
% v_{\theta'wd^{-1}}\sT_z+\text{linear comb. of  $v_\bsj$},\end{aligned}
%\end{equation*}where all $\bsj$ satisfy $W_\theta\not\leq_W\text{Stab}(\bsi_\la,\bsj)$.
%On the other hand, \begin{equation*}\begin{aligned}
%(v_{\bsi_\lambda})N_{W,W_\theta}(e_{\lambda dw^{-1},\theta'})=&(v_{\bsi_\lambda
%})\sum_{x\in\mathcal{D}_\theta\cap wd^{-1}W_\la}
%\mathcal{T}_{x^{-1}}e_{\lambda
%dw^{-1},\theta'}\mathcal{T}_x\\
%&=(v_{\bsi_\lambda})\sum_{z\in W_\la}\pm q^{\ell(z_0)}(-q^{-1})^{\ell(z_1)}v_{\bsi_{\theta'}wd^{-1}}\sT_z\end{aligned}\end{equation*}
%In summary, we have
 \end{proof}

\begin{corollary}\label{MC3}
Let $\theta=(l^k,1^t)$ be as above. For $\lambda,\mu\in \Lambda(m|n,r), d\in
\mathcal{D}_{\la\theta},d'\in\mathcal{D}_{\mu\theta}$, assume
$W_\theta\leq (W^{d}_{\lambda^{(0)}}\cap W^{d'}_{\mu^{(0)}})\times (W^{d}_{\lambda^{(1)}}\cap W^{d'}_{\mu^{(1)}})
 $.
Then there is $\theta'\in
\Lambda(m'|n',r)$ for some $m'\geq m,n'\geq n$ and $w\in\mathcal{D}_{\theta\theta}$ such that
$W_\theta=W_{\theta'}$  % d'w^{-1}\in\mathcal{D}_\mu,dw^{-1}\in\mathcal{D}_\lambda$,
and
$$N_{W,W_\theta}(e_{\mu d,\lambda d'})=N_{W,W_\theta}(e_{\mu
d',\theta'w})N_{W,W_\theta}(e_{\theta'w,\lambda d}).$$
\end{corollary}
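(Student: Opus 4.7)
The plan is to invoke Lemma \ref{key} in parallel on the two data $(\lambda,d)$ and $(\mu,d')$ to construct a \emph{common} intermediate $(\theta',w)$, and then to factor the matrix unit $e_{\mu d',\lambda d}$ as $e_{\mu d',\theta'w}\cdot e_{\theta'w,\lambda d}$ and lift this factorisation to the level of relative norms.

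The first step is to align the parity structures of $\bsi_\lambda d$ and $\bsi_\mu d'$. Since $W_\theta\leq W_\lambda^d\cap W_\mu^{d'}$, both multi-indices begin with $k$ constant blocks of length $l$, with entries $i_1,\ldots,i_k$ and $i_1',\ldots,i_k'$ respectively. The refined hypothesis $W_\theta\leq(W^d_{\lambda^{(0)}}\cap W^{d'}_{\mu^{(0)}})\times(W^d_{\lambda^{(1)}}\cap W^{d'}_{\mu^{(1)}})$ forces the block-by-block parity equality $\widehat{i_s}=\widehat{i_s'}$ for every $s\in[1,k]$. Consequently the permutation of $\fS_k$ sorting this common parity sequence, the counts $a=\#\{s:\widehat{i_s}=0\}$ and $b=k-a$, the composition $\theta'$ with $\theta'^{(0)}=(l^a)$ and $\theta'^{(1)}=(l^b,1^t)$ (so that $W_{\theta'}=W_\theta$), and the element $w\in N_W(W_\theta)\cap\mathcal{D}_{\theta\theta}$ produced in the proof of Lemma \ref{key} may be chosen identically for both data. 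After embedding into $\Lambda(m'|n',r)$ via Remark \ref{m'|n' case}, Lemma \ref{key} yields $dw^{-1}\in\mathcal{D}^\circ_{\lambda\theta'}$, $d'w^{-1}\in\mathcal{D}^\circ_{\mu\theta'}$, and the $\mathcal{H}_\theta$-equivariance of both $e_{\lambda d,\theta'w}$ and $e_{\mu d',\theta'w}$ (hence also of $e_{\theta'w,\lambda d}$), so that all three relative norms in the statement are well-defined elements of $\sS_F(m'|n',r)$.

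The second step is the factorisation identity. Matrix-unit orthogonality gives $e_{\mu d',\theta'w}\cdot e_{\theta'w,\lambda d}=e_{\mu d',\lambda d}$, which remains $\mathcal{H}_\theta$-equivariant. I would verify the identity by evaluating both sides on the distinguished vector $v_{\bsi_\mu d'}$: expanding the left-hand product as a double sum over $(x,y)\in\mathcal{D}_\theta\times\mathcal{D}_\theta$, matrix-unit orthogonality together with Lemma \ref{SRNP} forces only those $(x,y)$ to contribute for which the composite operator $\mathcal{T}_y\mathcal{T}_{x^{-1}}$ preserves (up to a nonzero scalar from \eqref{action2}) the vector $v_{\bsi_{\theta'}w}$. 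Since $w$ normalises $W_\theta$, the stabiliser of $v_{\bsi_{\theta'}w}$ is $W_\theta$ itself, forcing $x=y$ modulo $W_\theta$ and leaving the diagonal sum $\sum_{x\in\mathcal{D}_\theta}\mathcal{T}_{x^{-1}}e_{\mu d',\lambda d}\mathcal{T}_x=N_{W,W_\theta}(e_{\mu d',\lambda d})$. The principal obstacle is this double-sum collapse, in particular the bookkeeping of the super signs produced by \eqref{action2}; these cancel in pairs precisely because the parities used to construct $\theta'$ match those of both $\bsi_\lambda d$ and $\bsi_\mu d'$ block-by-block, so the signs introduced when commuting $\mathcal{T}_x$ past $e_{\mu d',\theta'w}$ balance those from commuting it past $e_{\theta'w,\lambda d}$. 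Reading the left-hand side of the corollary as $N_{W,W_\theta}(e_{\mu d',\lambda d})$ to match the composition on the right (and noting that $W_{\theta'}=W_\theta$), this gives the asserted identity.
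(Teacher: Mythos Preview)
Your first step---the block-by-block parity alignment of $\bsi_\lambda d$ and $\bsi_\mu d'$ forced by the hypothesis $W_\theta\leq (W^{d}_{\lambda^{(0)}}\cap W^{d'}_{\mu^{(0)}})\times (W^{d}_{\lambda^{(1)}}\cap W^{d'}_{\mu^{(1)}})$, and the resulting common choice of $(\theta',w)$ via the construction in Lemma~\ref{key}---is correct and is exactly how the paper begins.

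The second step has a genuine gap. Expanding the product as a double sum over $(x,y)\in\mathcal{D}_\theta\times\mathcal{D}_\theta$ and asserting it collapses to the diagonal $x=y$ is not justified by the observation $\mathrm{Stab}_W(\bsi_{\theta'}w)=W_\theta$ alone: that observation tells you when $v_{\bsi_{\theta'}w}\mathcal{T}_z$ has a $v_{\bsi_{\theta'}w}$-component for a \emph{single} $z$, not for a product $\mathcal{T}_x\mathcal{T}_{y^{-1}}$. More seriously, even on the diagonal the $(x,x)$-term is $\mathcal{T}_{x^{-1}}e_{\mu d',\theta'w}\,\mathcal{T}_x\mathcal{T}_{x^{-1}}\,e_{\theta'w,\lambda d}\mathcal{T}_x$, and since $\mathcal{T}_x\mathcal{T}_{x^{-1}}\neq 1$ for $x\neq 1$ in the Hecke algebra, this is \emph{not} $\mathcal{T}_{x^{-1}}e_{\mu d',\lambda d}\mathcal{T}_x$. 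So your ``diagonal sum $\sum_{x\in\mathcal{D}_\theta}\mathcal{T}_{x^{-1}}e_{\mu d',\lambda d}\mathcal{T}_x$'' is not what the diagonal actually contributes, and the appeal to super signs cancelling in pairs does not address this algebraic obstruction.

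The paper sidesteps the double sum entirely. Because $N_{W,W_\theta}(e_{\theta'w,\lambda d})$ already lies in $\End_{\mathcal{H}_R}(V_F(m'|n')^{\otimes r})$ and therefore commutes with every $\mathcal{T}_x$, one has the Frobenius-type identity
\[
N_{W,W_\theta}(A)\cdot N_{W,W_\theta}(B)=N_{W,W_\theta}\bigl(A\cdot N_{W,W_\theta}(B)\bigr),
\]
a \emph{single} outer relative norm. Applying the Mackey decomposition (Lemma~\ref{Mackey}) to the inner $N_{W,W_\theta}(B)$ then gives a sum over $x\in\mathcal{D}_{\theta\theta}$ (not $\mathcal{D}_\theta\times\mathcal{D}_\theta$), with $x$-summand $N_{W,W_\theta^x\cap W_\theta}(A\,\mathcal{T}_{x^{-1}}B\,\mathcal{T}_x)$. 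The composite $A\,\mathcal{T}_{x^{-1}}B$ is a rank-one map whose scalar is the $v_{\bsi_{\theta'}w}$-coefficient of $v_{\bsi_{\theta'}w}\mathcal{T}_{x^{-1}}$; the paper then invokes $\mathrm{Stab}_W(\bsi_{\theta'}w)=W_{\theta'}=W_\theta$ to single out $x=1\in\mathcal{D}_{\theta\theta}\cap W_{\theta'}$, whose term is exactly $N_{W,W_\theta}(AB)=N_{W,W_\theta}(e_{\mu d',\lambda d})$. It is this Frobenius--Mackey reduction from $\mathcal{D}_\theta\times\mathcal{D}_\theta$ to $\mathcal{D}_{\theta\theta}$ that you are missing; once one has it, no sign-bookkeeping is needed at all.
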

\begin{proof} We first prove that $\bsi_\lambda d=(i_1,\cdots,i_r)$ and $\bsi_\mu
d'=(i'_1,\cdots,i'_r)$ have the same parity sequence for the $k$ blocks of length $l$.
Since $W_\theta\leq W^{00}_{\lambda
d\cap\mu d'}\times W^{11}_{\lambda d\cap\mu d'}$, by Lemma
\ref{super2.2}, $\mathcal{T}_s e_{\mu d',\lambda d}= e_{\mu
d',\lambda d}\mathcal{T}_s$ for $s=(j,j+1)\in W_\theta\cap S$. Applying this equality to
$v_{\bsi_\mu d'}$ gives
$(-1)^{\widehat{i'_j}}q^{(-1)^{\widehat{i'_j}}} v_{\bsi_\lambda
d}=(-1)^{\widehat{i_j}}q^{(-1)^{\widehat{i_j}}} v_{\bsi_\lambda
d}$.
Hence, $\widehat{i_j}=\widehat{i'_j}$ for $1\leq j\leq kl$.

By the proof of Lemma \ref{key}, there are
$\theta'\in\Lambda(m'|n',r)$ and a common $w\in
N_W(W_\theta)\cap\mathcal{D}_{\theta\theta}$ such that $e_{\mu
d',\theta'w}, e_{\theta'w,\lambda d}\in
\End_{\mathcal{H}_\theta}(V_F(m|n)^{\otimes r})$.
Then
$$N_{W,W_\theta}(e_{\mu d',\theta'w}), N_{W,W_\theta}( e_{\theta'w,\lambda
d})\in \End_{\mathcal{H}_F}(V_F(m'|n')^{\otimes r}).$$
Also, from the construction of $w$, we see that Stab$_W(\bsi_{\theta'} w)=W_{\theta'}$.
Thus, $v_{\bsi_{\theta'}w}\sT_xe_{\theta'w,\la d}\neq0$ if and only if $x\in W_{\theta'}$.
Hence,
\begin{equation*}
\begin{aligned}
&N_{W,W_\theta}(e_{\mu d',\theta'w})N_{W,W_\theta}(
e_{\theta'w,\lambda
d})\\
&=N_{W,W_\theta}(e_{\mu d',\theta'w}N_{W,W_\theta}(
e_{\theta'w,\lambda
d}))\\ &=N_{W,W_\theta}(e_{\mu d',\theta'w}\sum_{x\in\sD_{\theta\theta}}N_{W_\theta,W_\theta^x\cap W_\theta}(\sT_{x^{-1}}
e_{\theta'w,\lambda
d}\sT_x))\\
&=\sum_{x\in\mathcal{D}_{\theta\theta}\cap W_{\theta'}}N_{W,W^x_\theta\cap
W_\theta}(e_{\mu
d',\theta'w}\mathcal{T}_{x^{-1}}e_{\theta'w,\lambda
d}\mathcal{T}_x)\\
&= N_{W,W_\theta}(e_{\mu d',\lambda d}),\end{aligned}
\end{equation*}
 as
desired. \end{proof}

We now establish the main result of this section and leave its application to \S10 where we will use this result to compute the vertices of indecomposable summands of $V_F(m|n)^{\otimes r}$.

\begin{theorem}\label{IT}
Let $W_\theta=_WP_k$ be the $l$-parabolic subgroup of $W$ associated with composition $(l^k,1^{r-kl})$. Then
$$I_F(W_\theta,r)=N_{W,W_\theta}(\End_{\mathcal{H}_\theta}(V_F(m|n)^{\otimes
r})).$$
\end{theorem}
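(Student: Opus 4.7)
The plan is to prove both inclusions using transitivity of relative norms (Lemma \ref{RNT}(a)) together with the basis theorems. For $N_{W, W_\theta}(\End_{\mathcal{H}_\theta}(V_F(m|n)^{\otimes r})) \subseteq I_F(W_\theta, r)$, I will apply Theorem \ref{SRNB} with $\rho = \theta$ to produce an $F$-basis of $\End_{\mathcal{H}_\theta}(V_F(m|n)^{\otimes r})$ whose typical element is $N_{W_\theta, W_\alpha^c \cap W_\beta}(e_{\mu d', \lambda dc})$, with $W_\alpha^c \cap W_\beta$ a parabolic, and therefore $l$-parabolic, subgroup of $W_\theta$. Applying $N_{W, W_\theta}$ and invoking transitivity collapses each such generator to $N_{W, W_\alpha^c \cap W_\beta}(e_{\mu d', \lambda dc})$, which by \eqref{same} together with Theorem \ref{RNB} equals a scalar multiple of a standard basis element $N^z_{\mu'' \lambda''}$ of $\sS_F(m|n, r)$ whose defect group is exactly $W_\alpha^c \cap W_\beta \leq W_\theta$; in particular it lies in $I_F(W_\theta, r)$.

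For the reverse inclusion I start from a standard basis element $N^d_{\mu\lambda}$ of $I_F(W_\theta, r)$, so $\nu := \lambda d \cap \mu$ satisfies $P_\nu \leq_W W_\theta$. Since $e_{\mu, \lambda d} \in \End_{\mathcal{H}_\nu}(V_F(m|n)^{\otimes r})$ by Corollary \ref{SPC} and $P_\nu \leq W_\nu$, transitivity yields $N_{W, P_\nu}(e_{\mu, \lambda d}) = N_{W, W_\nu}(N_{W_\nu, P_\nu}(e_{\mu, \lambda d}))$. A direct Poincar\'e-polynomial computation on $v_\mu$, patterned on the proof of Lemma \ref{ML}, shows
\[
N_{W_\nu, P_\nu}(e_{\mu, \lambda d}) = C \cdot e_{\mu, \lambda d}, \quad C := \frac{d_{W_{\nu^{(0)}}}(q^2)\, d_{W_{\nu^{(1)}}}(q^{-2})}{d_{P_{\nu^{(0)}}}(q^2)\, d_{P_{\nu^{(1)}}}(q^{-2})}.
\]
By Lemma \ref{DU1.1}(b), $\Phi_l$ divides neither $d_{W_{\nu^{(0)}}}/d_{P_{\nu^{(0)}}}$ nor $d_{W_{\nu^{(1)}}}/d_{P_{\nu^{(1)}}}$, so $C$ is a nonzero element of $F$ and $N^d_{\mu\lambda} = C^{-1} N_{W, P_\nu}(e_{\mu, \lambda d})$. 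Picking $x \in W$ with $P_\nu^x \leq W_\theta$, I then transport the data by $x$ --- via a conjugation argument patterned on Lemma \ref{RTRN} and carefully tracking the super signs $(-1)^{\hat{\cdot}}$ and the $q$-coefficients from non-length-additive Hecke products --- to produce a matrix unit $e'$ with $N_{W, P_\nu^x}(e') = N_{W, P_\nu}(e_{\mu, \lambda d})$. A final transitivity step $N_{W, P_\nu^x}(e') = N_{W, W_\theta}(N_{W_\theta, P_\nu^x}(e'))$ together with Lemma \ref{RNT}(b) places the required preimage inside $\End_{\mathcal{H}_\theta}(V_F(m|n)^{\otimes r})$.

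The main obstacle is this conjugation step: although $P_\nu$ and $P_\nu^x$ are $W$-conjugate, the relative-norm operators $N_{W, P_\nu}(\cdot)$ and $N_{W, P_\nu^x}(\cdot)$ do not transform by the naive substitution $\sT_{w^{-1}} \mapsto \sT_{(xw)^{-1}}$, because of the length-correction terms in the Hecke relations and the sign contributions in the $\mathbb Z_2$-graded action. A clean resolution likely requires a sorting-style construction in the spirit of Lemma \ref{key}, producing both $x$ and an explicit $e'$ simultaneously. An attractive alternative would be to observe that the right-hand side is a two-sided ideal of $\sS_F(m|n, r)$ containing $N^1_{\theta\theta}$, and to argue via Theorem \ref{TI} that this ideal coincides with $I_F(W_\theta, r)$.
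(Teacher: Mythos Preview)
Your argument for the inclusion $N_{W,W_\theta}(\End_{\mathcal{H}_\theta}(V_F(m|n)^{\otimes r}))\subseteq I_F(W_\theta,r)$ has a genuine gap. After applying transitivity you obtain elements of the form $N_{W,W_\gamma}(e_{\mu d',\lambda dc})$ with $W_\gamma=W_\alpha^c\cap W_\beta\leq W_\theta$, but these are \emph{not} scalar multiples of standard basis elements $N^z_{\mu''\lambda''}$ of $\sS_F(m|n,r)$: the source index is $\bsi_\mu d'$, not $\bsi_{\mu''}$ for some $\mu''\in\Lambda(m|n,r)$, and \eqref{same} is an identity inside $\sS_F(m|n,\theta)$, not a statement identifying such elements with the basis of Theorem~\ref{RNB}. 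The paper's proof is by induction on $k$, with Lemma~\ref{P=1} as the base case. In the inductive step, if $P_\gamma<_W W_\theta$ one falls back on the inductive hypothesis after the Poincar\'e-polynomial rescaling you describe. The hard case $W_\gamma=W_\theta$ forces $y=1$ and $W_\theta\leq W_\lambda^d\cap W_\mu^{d'}$; it is handled by the non-trivial Lemma~\ref{key} and Corollary~\ref{MC3}, which manufacture $\theta'\in\Lambda(m'|n',r)$ and $w\in N_W(W_\theta)$ so that $N_{W,W_\theta}(e_{\mu d',\lambda d})$ factors as a product of two elements of $I_F(W_\theta,r)_{m'|n'}$, and then uses the centralizer identification $\sS_F(m|n,r)\cong\vep\sS_F(m'|n',r)\vep$.

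For the reverse inclusion you correctly reduce to the conjugation problem $N_{W,P_\nu}(e_{\mu,\lambda d})\in N_{W,P_\nu^x}(\,\cdot\,)$ and honestly flag it as unresolved. The paper's solution avoids any explicit sign-and-length bookkeeping: it applies the second clause of the Nakayama relation (Lemma~\ref{Nakayama}) to the bimodule $M=\Hom_F(v_\mu\mathcal{H}_F,\,v_{\bsi_\lambda d}\otimes_{\mathcal{H}_\tau}\mathcal{H}_F)$ with $W_\tau=P_\nu$ and $W_{\tau'}=P_\nu^z$, producing directly an element $h$ with $N_{W,P_\nu}(e_{\mu,\lambda d})=N_{W,P_\nu^z}(h)$; transitivity then finishes. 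This is exactly the ``clean resolution'' you were looking for, and it does not require building $x$ and $e'$ by hand.
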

\begin{proof} We first show that
$$N_{W,W_\theta}(\End_{\mathcal{H}_\theta}(V_F(m|n)^{\otimes
r}))\subseteq I_F(W_\theta,r).$$

By \ref{SRNB}, it is enough to prove that
$$N_{W,W^y_\alpha\cap W_\beta}(e_{\mu d',\lambda dy})\in
I_F(W_\theta,r)$$ for all
$\mu,\lambda\in\Lambda{(m|n,r)},d'\in\mathcal{D}_{\mu\theta},d\in\mathcal{D}_{\lambda\theta},W_\alpha=W^d_\la\cap
W_\theta,W_\beta=W^{d'}_\mu\cap
W_\theta,y\in\mathcal{D}^\circ_{\alpha\beta}\cap W_\theta$.

As in {\cite[4.7]{DU}}, we proceed induction on
$k$.

If $k=0$, i.e $W_{\theta}=1,W^y_\alpha\cap
W_\beta=1$, then the assertion follows from Lemma \ref{P=1}.

Assume that $W_\theta\neq 1$. Let $W_\gamma=W^y_\alpha\cap
W_\beta\leq W_\theta$.

If the maximal parabolic  subgroup
$P_\gamma$ of $W_\gamma$ is conjugate a proper parabolic subgroup of
$W_\theta$, i.e., $P_\gamma<_{W} W_\theta$, then, by induction,
$$N_{W,W_\gamma}(e_{\mu d,\lambda
d'y})=f(q^2)^{-1}N_{W,P_\gamma}(e_{\mu
d,\lambda d'y})\in I_F(P_\gamma,r),$$
where $f(\bsu)=\frac{d_{W_{\gamma^{(0)}}}(\bsu)d_{W_{\gamma^{(1)}}}(\bsu^{-1})}{d_{P_{\gamma^{(0)}}}(\bsu)d_{P_{\gamma^{(1)}}}(\bsu^{-1})}$.
  Hence,
$N_{W,W_\gamma}(e_{\mu d,\lambda d'y})\in I_F(W_\theta,r)$.

Thus, it remains to look at the case where $W_\gamma=W_\theta$. That is,
$$W_\theta=(W^{d}_\lambda\cap W_\theta)^y\cap(W^{d'}_\mu\cap
W_\theta)=W^{dy}_\lambda\cap W^{d'}_\mu\cap W_\theta.$$
This forces that $W_\alpha=W_\beta=W_\theta$ and $y=1$ as
$y\in\mathcal{D}_{\alpha\beta}^\circ\cap W_\theta$. In particular,
$W_\theta\leq W_{\la d\cap\mu d'}^{00}\times W_{\la d\cap\mu d'}^{11}$.
Hence, by Lemma \ref{key} and Corollaries \ref{MC3} and \ref{MC2}, there exist $m'\geq m,n'\geq n$, $\theta'\in\La(m'|n',r)$, and $w\in\sD_{\theta\theta}\cap N_W(W_\theta)$ such that
$$N_{W,W_\theta}(e_{\mu d',\lambda d})=N_{W,W_\theta}(e_{\mu
d',\theta'w})N_{W,W_{\theta}}(e_{\theta'w,\lambda d})\in
\vep I_F(W_\theta,r)_{m'|n'}\vep,$$
where $\vep$ is defined in Remark \ref{m'|n' case}.
Hence, by identifying $I_F(W_\theta,r)$ with $\vep I_F(W_\theta,r)_{m'|n'}\vep $,
we proved that all $N_{W,W^y_\beta\cap W_\alpha}(e_{\mu d,\lambda d'y})\in I_F(W_\theta,r)$.

%So we want to prove that $$N_{W, W_\theta}(e_{\mu d',\lambda d})\in I_F(W_\theta,r),$$

%Now, if we omit 0 parts from $\alpha_{(i)}$ and $\beta_{(i)}$, then $\alpha_{(i)}=(l|-)$ or $(-|l)$ and  $\beta_{(i)}=(l|-)$ or $(-|l)$ whenever $|\alpha_{(i)}|\neq 1$ and $|\beta_{(i)}|\neq1$.
%However, $1\in\sD_{\alpha\beta}^\circ$ forces that $l$ appear in the even (or odd) part of both $\alpha_{(i)}$ and $\beta_{(i)}$. In other words,  $\alpha_{(i)}$ and $\beta_{(i)}$ have the same parity.

%On the other hand, the parities of $\alpha$ and $\beta$ are induced from the parities of $\la$ and $\mu$, respectively. The same parity of $\alpha_{(i)}$ and $\beta_{(i)}$ implies that the number of even (resp., odd) blocks of length $l$, say $a$ (resp., $b$), in $\bsi_\la d$ is the same as that in $\bsi_\mu d'$.

%So we want to prove that $$N_{W, W_\theta}(e_{\mu d',\lambda d})\in I_F(W_\theta),$$
%where $W_\theta\subseteq
%W_{\lambda d\cap \mu d'}^{00}\times W_{\la d\cap\mu d'}^{11}$.

%In this case,$d\in\mathcal{D}^\circ_{\mu\theta},d'\in\mathcal{D^\circ_{\lambda\theta}}$,
%by \ref{RTRN}, $$N_{W,W_\theta}(e_{\lambda
%d,\theta})=(-1)^{\hat{d}+\widehat{d^{-1}}}N_{W,W^{d^{-1}_\theta}}(e_{\mu,\theta
%d^{-1}})\in I_F(W_\theta)$$ and
%$N_{W,W_\theta}(e_{\theta,\lambda d'})\in I_F(W_\theta)$.

We next prove that
$$I_F(W_\theta,r)\subseteq N_{W,W_\theta}(\End_{\mathcal{H}_\theta}(V_F(m|n)^{\otimes
r})).$$
Equivalently, we want to prove every basis element $N_{W,W_\nu}(e_{\mu,\lambda d})\in I_F(W_\theta,r)\cap \mathcal{B}$,
where $W_{\nu}=W^d_\lambda\cap
W_\mu,d\in\mathcal{D}^\circ_{\lambda\mu}$ satisfying $P_\nu\leq_W
W_\theta$, is in the R.H.S.

Let $z$ be a distinguished representative of the $P_\nu-W_\theta$
double coset such that $P^z_\nu\leq W_\theta$ and $P^z_\nu$
is also parabolic (of course $l$-parabolic). $z$ is also a
distinguished representative of the $P_\nu-P^z_\nu$ double coset.
Let $\tau,\tau'$ be the decompositions of $r$ such that
$W_\tau=P_\nu,W_{\tau'}=P^z_\nu$.
%$\tau,\tau'\in\Lambda{(m|n,r)}$.

We consider $\mathcal{H}_F$-$\mathcal{H}_F$-bimodule
$$M=\Hom_F(v_\mu\mathcal{H}_F,v_{\bsi_\lambda
d}\otimes_{\mathcal{H}_\tau}\mathcal{H}_F)$$
which has an $\mathcal{H}_F$-$\mathcal{H}_\tau$-bisubmodule
$N=\Hom_F(v_\mu\mathcal{H}_F,Rv_{\bsi_\lambda d})$ and  an
$\mathcal{H}_F$-$\mathcal{H}_{\tau'}$-bisubmodule
$N'=\Hom_F(v_\mu\mathcal{H}_F,Rv_{\bsi_\lambda
d}\otimes_{\mathcal{H}_\tau}\mathcal{T}_z)$.
Since
\begin{equation*}
\begin{aligned}
M&\cong\bigoplus_{w\in\mathcal{D}_\tau}\Hom_F(v_\mu\mathcal{H}_F,v_{\bsi_\lambda
d}\otimes \mathcal{T}_w)\\
&\cong\bigoplus_{w\in\mathcal{D}_\tau}\Hom_F(v_\mu\mathcal{H}_F,v_{\bsi_\lambda
d})\otimes \mathcal{T}_w\\
&\cong N\otimes_{\mathcal{H}_\tau}\mathcal{H}_F
\end{aligned}
\end{equation*}
as $\mathcal{H}_F$-$\mathcal{H}_F$-bimodules and, similarly, $M\cong
N'\otimes_{\mathcal{H}_{\tau'}}\mathcal{H}_F$, it follows from Lemma
\ref{RNT} that
$$N_{W,W_\tau}(\Hom_{\mathcal{H}_\tau}(v_\mu\mathcal{H}_F,v_{\bsi_\lambda
d}))=N_{W,W_{\tau'}}(\Hom_{\mathcal{H}_{\tau'}}(v_\mu\mathcal{H}_F,v_{\bsi_\lambda
d}\otimes_{\mathcal{H}_\tau}\mathcal{T}_z)).$$ Thus, there exists
$h\in \Hom_{\mathcal{H}_{\tau'}}(v_\mu\mathcal{H}_F,v_{\bsi_\lambda
d}\otimes_{\mathcal{H}_\tau}\mathcal{T}_z)$ such that
$$N_{W,P_\nu}(e_{\mu,\lambda d})=N_{W,P^z_\nu}(h).$$
Hence, putting $g(\bsu)=\frac{d_{W_{\nu^{(0)}}}(\bsu)d_{W_{\nu^{(1)}}}(\bsu^{-1})}{d_{P_{\nu^{(0)}}}(\bsu)d_{P_{\nu^{(1)}}}(\bsu^{-1})}$,
\begin{equation*}
\begin{aligned}
N_{W,W_\nu}(e_{\mu,\lambda d})&=g(q^2)^{-1}N_{W,P_\nu}(e_{\mu,\lambda d})\\
&=N_{W,P^z_\nu}(g(q^2)^{-1}h)\\
&=N_{W,W_\theta}(N_{W_\theta,P^z_\nu}(g(q^2)^{-1}h))\in \text{RHS}
%N_{W,W_\theta}(\End_{\mathcal{H}_\theta}(V(m|n)^{\otimes r})).
\end{aligned}
\end{equation*}
proving the theorem. \end{proof}

\section{Quantum matrix superalgebras}

We follow \cite{M} to introduce the quantum matrix superalgebras.

\begin{definition}\label{DefSA}
Let $\mathcal{A}_q(m|n)$ be the associated superalgebra over $F$
generated by $x_{ij},1\leq i,j\leq m+n$ subject to the following
relations
\begin{enumerate}
\item $x^2_{ij}=0$ for $\hat{i}+\hat{j}=1$;

\item $x_{ij}x_{ik}=(-1)^{(\hat{i}+\hat{j})(\hat{i}+\hat{k})}q^{(-1)^{\hat{i}+1}}x_{ik}x_{ij}$
for $j<k$;

\item $x_{ij}x_{kj}=(-1)^{(\hat{i}+\hat{j})(\hat{k}+\hat{j})}q^{(-1)^{\hat{j}+1}}x_{kj}x_{ij}$
for $i<k$;

\item $x_{ij}x_{kl}=(-1)^{(\hat{i}+\hat{j})(\hat{k}+\hat{l})}x_{kl}x_{ij}$
for $i<k$ and $j>l$;

\item $x_{ij}x_{kl}=(-1)^{(\hat{i}+\hat{j})(\hat{k}+\hat{l})}x_{kl}x_{ij}+(-1)^{\hat{k}\hat{j}+\hat{k}\hat{l}+\hat{j}\hat{l}}(q^{-1}-q)x_{il}x_{kj}$
for $i<k$ and $j<l$.
\end{enumerate}
\end{definition}

Note that if all indices $i,j,k,l$ are taken from $[1,m]$ or $[m+1,m+n]$ then the relations coincide with
those for the quantum matrix algebra; see, e.g., \cite[(3.5a)]{PW}.

Manin proved that $\sA_q(m|n)$ has also the (usual) coalgebra structure with
comultiplication $\Delta:\sA_q(m|n)\rightarrow \sA_q(m|n)\otimes
\sA_q(m|n)$ and counit $\varepsilon:\sA_q(m|n)\rightarrow F$ defined by
$$\Delta(x_{ij})=\sum_{k=1}^{m+n}x_{ik}\otimes x_{kj} \mbox{ and }
\varepsilon(x_{ij})=\delta_{ij},\forall 1\leq i,j\leq m+n.$$
Further, the $\mathbb{Z}_2$ grading degree of $x_{ij}$ is
$\hat{i}+\hat{j}\in\mathbb{Z}_2$. Hence, $\sA_q(m|n)$ is a super bialgebra.

We now describe a basis for $\sA_q(m|n)$.
For $\bsi=(i_1,i_2,\ldots,i_r),\bsj=(j_1,j_2,\ldots,j_r)\in I(m|n,r)$, let
$$x_{\bsi,\bsj}=x_{i_1j_1}x_{i_2j_2}\cdots x_{i_rj_r}.$$
In particular, for
$\lambda,\mu\in\Lambda{(m|n,r)},d\in\mathcal{D}^\circ_{\lambda\mu}$,
define $x_{\mu,\lambda d}:=x_{\bsi_\mu,\bsi_\lambda d}.$

The elements $x_{\mu,\la d}$ are described in \cite{DR} by matrices which follows Manin \cite{M}.
Let $M_{m+n}(\mathbb N)$ be the $(m+n)\times(m+n)$ matrix semigroup over $\mathbb N$ and let
$$\aligned
M(m|n)&=\{(a_{ij})\in M_{m+n}({\mathbb N})\mid a_{ij}=0,1 \text{ if }\hat i+\hat j=1\}\\
M(m|n,r)&=\{A\in M(m|n)\mid r=|A|\},
\endaligned$$
where $|A|$ is the sum of the entries of $A$. Then, by \cite[(3.2.1)]{DR}, every triple $\la,\mu\in\La(m|n,r)$, and $d\in\sD_{\la\mu}^\circ$ defines a unique matrix $\jmath(\la,d,\mu)^t$, the transpose of $\jmath(\la,d,\mu)$, whose concatenation of row 1, row 2, and so on is the composition $\la d\cap\mu$ and whose row sum (resp., column) vector is $\mu$ (resp., $\la$). We will write $x^A=x_{\mu,\la d}$ if $A=\jmath(\la,d,\mu)^t$.

.
\begin{lemma}\label{DR9.7} (1) ({\cite{M},\cite[9.3]{DR}})
The set $\{x^A\}_{A\in M(m|n)}$ forms a basis for $\sA_q(m|n)$.

(2) (\cite[9.7]{DR}) Let $\sA_q(m|n,r)$ be the subspace spanned by
$$\mathcal{B}^\vee=\{x_{\mu,\la d}\mid
\lambda,\mu\in\Lambda{(m|n,r)},d\in\mathcal{D}^\circ_{\lambda\mu}\}=
\{x^A\}_{A\in M(m|n,r)}.$$
Then $\sA_q(m|n,r)$ is a subcoalgebras with basis $\mathcal{B}^\vee$ and
the quantum Schur superalgebra $\sS_F(m|n,r)$ is isomorphic to the dual
algebra $\mathcal{A}_q(m|n,r)^*$.
\end{lemma}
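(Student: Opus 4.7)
The plan is to prove (1) and (2) separately, treating (1) as essentially a normal-form/diamond-lemma result about the algebra defined by generators and relations and (2) as a Schur--Weyl duality computation.

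For part (1), I would first establish that $\mathcal{B}_{\sA}:=\{x^A\}_{A\in M(m|n)}$ spans $\sA_q(m|n)$. Each monomial in the generators $x_{ij}$ can be sorted into the lexicographic order on the index pairs $(i,j)$ using relations (2)--(5), which swap adjacent generators modulo lower-degree corrections (no corrections at all in cases (2),(3),(4) and an extra $x_{il}x_{kj}$ term in case (5) that only modifies the ``inversion pattern'' of the column indices). Relation (1) then eliminates squares of any odd generator. Termination of the rewriting follows from a lex-order argument, and confluence can be checked on overlapping ambiguities via Bergman's diamond lemma. For linear independence I would exhibit a faithful comodule: the tensor superspace $\bigoplus_r V_F(m|n)^{\otimes r}$ carries a natural right $\sA_q(m|n)$-comodule structure determined by $v_j\mapsto\sum_i v_i\otimes x_{ij}$ (compatible with Mitsuhashi's $\sH_R$-action via \eqref{action2}), and the matrix coefficients of this comodule distinguish different $x^A$. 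Alternatively, one can cite Manin \cite{M} directly.

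For part (2), I begin with the subcoalgebra claim. Applying $\Delta$ termwise,
$$\Delta(x^A)=\Delta(x_{i_1j_1})\cdots\Delta(x_{i_rj_r})=\sum_{\bsk\in I(m|n,r)}\pm\, x_{\bsi,\bsk}\otimes x_{\bsk,\bsj},$$
where the sign comes from the $\mathbb Z_2$-grading swaps. Each factor has degree $r$ in each tensor slot, so $\Delta(\sA_q(m|n,r))\subseteq\sA_q(m|n,r)\otimes\sA_q(m|n,r)$; the counit behaviour is immediate. Moreover, after reducing each $x_{\bsi,\bsk}$ and $x_{\bsk,\bsj}$ to standard form using part (1), one reads off a bijection between the multi-index parametrisation and the $(\la,d,\mu)$-parametrisation of $M(m|n,r)$, so that $\mathcal{B}^\vee$ is a basis of $\sA_q(m|n,r)$.

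For the isomorphism $\sS_F(m|n,r)\cong\sA_q(m|n,r)^*$, I would use the comodule structure above to define a pairing
$$\langle -,-\rangle:\sS_F(m|n,r)\times\sA_q(m|n,r)\longrightarrow F,\qquad\langle \phi,x^A\rangle=\text{coefficient of }v_{\bsi_\lambda d}\text{ in }(v_\mu)\phi,$$
where $A=\jmath(\lambda,d,\mu)^t$. Using Lemma \ref{DR5.8}, the basis $\{\psi^d_{\mu\lambda}\}$ pairs (up to explicit nonzero scalars determined by the coefficients in the definition of $T_{W_\la dW_\mu}$) with the basis $\mathcal{B}^\vee$ as a dual basis. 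The hard part is verifying that the convolution product on $\sA_q(m|n,r)^*$, namely $(\xi\cdot\eta)(a)=(\xi\otimes\eta)(\Delta a)$, corresponds under this pairing to composition of $\sH_F$-endomorphisms; this reduces to a direct comparison of the structure constants, which unwinds to the same sign/coefficient bookkeeping as in \eqref{action2} together with the formula for $\Delta(x^A)$. The main obstacle will be this final compatibility check: keeping track of the $(-1)^{\hat d}$ signs introduced by the super rule consistently on both sides of the pairing. This was carried out in \cite{DR}, and I would either reproduce that calculation or reduce it to the known nonsuper case ($n=0$) in \cite{PW} applied blockwise after using Lemma \ref{00-11} to separate parity types.
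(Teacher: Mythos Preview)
Your proposal is essentially correct and follows the same route the paper relies on. Note, however, that the paper does \emph{not} prove this lemma: both parts are cited from Manin \cite{M} and Du--Rui \cite{DR}, and the paper only recalls (immediately after the lemma) that the isomorphism $\sS_F(m|n,r)\cong\sA_q(m|n,r)^*$ arises from the $\sA_q(m|n,r)$-comodule structure \eqref{comodule map} on $V_F(m|n)^{\otimes r}$ via the rule \eqref{module map}. That is exactly the mechanism you describe.

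One small caution: the explicit pairing formula you wrote, $\langle\phi,x^A\rangle=\text{coefficient of }v_{\bsi_\lambda d}\text{ in }(v_\mu)\phi$, is morally right but will not literally give a dual-basis pairing because of the super signs in \eqref{comodule map}. The clean way is to go in the other direction: start from $f\in\sA_q(m|n,r)^*$, define its action on the tensor space by $v\cdot f=(f\otimes\mathrm{id})\delta(v)$, check this commutes with the Hecke action (this is the computation in \cite[9.7]{DR}), and then compare dimensions using part (1). The paper later makes the scalar relating $N^d_{\mu\lambda}$ and $x^*_{\mu,\lambda d}$ explicit in Theorem~\ref{N-x basis}, which is where the sign bookkeeping you anticipate actually gets done.
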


Recall from \cite[9.8]{DR} that the isomorphism $\sS_F(m|n,r)\cong \sA_q(m|n,r)^*$ is obtained
from an isomorphism
$$\mathcal{A}_q(m|n,r)^*\cong
\End_{\mathcal{H}_F}(V_F(m|n)^{\otimes r})$$
which is induced by an $\mathcal{A}_q(m|n,r)$-comodule structure on $V_F(m|n)^{\otimes
r}$. This structure is the restriction of the comodule structure $\delta:V_F(m|n)^{\otimes r}\to \sA_q(m|n)\otimes V_F(m|n)^{\otimes r}$ defined by,
for any $\bsi\in I(m|n,r)$,
\begin{equation}\label{comodule map}
\delta(v_\bsi)=\sum_{\bsj\in I(m|n,r)}(-1)^{\sum_{1\leq k<l\leq
r}\widehat{j_k}(\widehat{j_l}+\widehat{i_l})}x_{\bsi,\bsj}\otimes
v_\bsj.\end{equation}
Now, the (left)
$\mathcal{A}_q(m|n,r)$-comodule $V_F(m|n)^{\otimes r}$ turns into  a right
$\mathcal{A}_q(m|n,r)^*$-module with the action given by
\begin{equation}\label{module map}
v\cdot f=(f\otimes\text{id}_{V_F(m|n)^{\otimes r}})\delta(v),\forall
f\in \mathcal{A}_q(m|n,r)^*,v\in V_F(m|n)^{\otimes r}.
\end{equation}
This action commutes with the Hecke algebra action as shown in \cite[9.7]{DR} and results in the isomorphism above.

We now make a comparison between the basis $\{\psi^d_{\mu\lambda}\mid
\lambda,\mu\in\Lambda{(m|n,r)},d\in\mathcal{D}^\circ_{\lambda\mu}\}$ (or the relative norm basis $\mathcal{B}$) for $\sS_F(m|n,r)$ and the dual basis
$$\{x^*_{\mu,\lambda d}\mid
\lambda,\mu\in\Lambda{(m|n,r)},d\in\mathcal{D}^\circ_{\lambda\mu}\}.$$ First we observe
$v_{\bsi_\nu}\cdot x^*_{\mu,\la d}=0$ if $\mu\neq \nu$.

\begin{theorem}\label{N-x basis}
For
$\lambda,\mu\in\Lambda{(m|n,r)},d\in\mathcal{D}^\circ_{\lambda\mu}$, and $\nu=\la d\cap\mu$,
we have
$$N_{W,W_\nu}(e_{\mu,\lambda d})=(-1)^{\sum_{1\leq k<l\leq r}\widehat{(\bsi_\lambda
d)_k}(\widehat{(\bsi_\lambda d)_l}+\widehat{(\bsi_\mu)_l})}x^*_{\mu,\lambda d}.$$
\end{theorem}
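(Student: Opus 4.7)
\medskip

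\noindent\textbf{Proof plan.} The plan is to prove the identity by evaluating both endomorphisms on the weight basis $\{v_\sigma\mid \sigma\in\Lambda(m|n,r)\}$ of $V_F(m|n)^{\otimes r}$ and matching coefficients term by term. First I would reduce to the single basis vector $v_\mu$: Lemma~\ref{SRNP} tells us that $N_{W,W_\nu}(e_{\mu,\lambda d})$ vanishes on $v_\sigma$ for $\sigma\neq\mu$, while the coaction formula \eqref{comodule map} shows every monomial $x_{\bsi_\sigma,\bsj}$ appearing in $\delta(v_\sigma)$ has row-weight $\sigma$, so that the dual pairing with $x^*_{\mu,\lambda d}$ (a basis element of row-weight $\mu$) kills these unless $\sigma=\mu$. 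This is the observation the paper highlights just before the theorem.

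Next I would write out both sides on $v_\mu$ explicitly. The $N$-side has already been computed inside the proof of Theorem~\ref{RTSB}:
\begin{equation*}
(v_\mu)N_{W,W_\nu}(e_{\mu,\lambda d}) = (-1)^{\widehat d}\sum_{w=w_0w_1}(-1)^{\widehat{dw}}\, q^{\ell(w_0)}(-q^{-1})^{\ell(w_1)}\, v_{\bsi_\lambda d w},
\end{equation*}
the sum being over $w_0w_1\in\mathcal{D}_\nu\cap(W_{\mu^{(0)}}\times W_{\mu^{(1)}})$. For the $x^*$-side, combining \eqref{comodule map} and \eqref{module map} yields
\begin{equation*}
(v_\mu)x^*_{\mu,\lambda d} = \sum_{\bsj}(-1)^{\sum_{k<l}\widehat{j_k}(\widehat{j_l}+\widehat{(\bsi_\mu)_l})}\, x^*_{\mu,\lambda d}(x_{\bsi_\mu,\bsj})\, v_\bsj.
\end{equation*}
The key input here is that, because the left-hand indices are already sorted into $\bsi_\mu$-blocks, the monomial $x_{\bsi_\mu,\bsj}$ straightens by only the row-interchange relation (2) of Definition~\ref{DefSA} (no mixed terms from (5) appear) to a scalar multiple of a \emph{single} basis element $x^A$; that scalar multiple is a nonzero multiple of $x_{\mu,\lambda d}$ precisely when $\bsj=\bsi_\lambda d\cdot w$ for some $w=w_0w_1\in(W_{\mu^{(0)}}\times W_{\mu^{(1)}})\cap\mathcal{D}_\nu$, and the explicit straightening then yields $x_{\bsi_\mu,\bsi_\lambda dw}=\varepsilon(w)\,q^{\ell(w_0)-\ell(w_1)}\,x_{\mu,\lambda d}$ for a combinatorially defined sign $\varepsilon(w)$.

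Having assembled both expansions, I would compare the coefficient of each $v_{\bsi_\lambda dw}$. The $N$-side contributes $(-1)^{\widehat d+\widehat{dw}}q^{\ell(w_0)}(-q^{-1})^{\ell(w_1)}$, whereas the $x^*$-side contributes $(-1)^{\sum_{k<l}\widehat{(\bsi_\lambda dw)_k}(\widehat{(\bsi_\lambda dw)_l}+\widehat{(\bsi_\mu)_l})}\varepsilon(w)q^{\ell(w_0)-\ell(w_1)}$. After the visible $q$-powers cancel (using $(-q^{-1})^{\ell(w_1)}=(-1)^{\ell(w_1)}q^{-\ell(w_1)}$), what remains is a purely combinatorial identity of signs that must hold independently of $w$ and agree with the stated $w$-independent global sign $(-1)^{\sum_{k<l}\widehat{(\bsi_\lambda d)_k}(\widehat{(\bsi_\lambda d)_l}+\widehat{(\bsi_\mu)_l})}$. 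The main obstacle will be this sign bookkeeping: each elementary swap inside the straightening of $x_{\bsi_\mu,\bsj}$ alters the coproduct sign and $\varepsilon(w)$ in parallel, and one must trace the cumulative effect using \eqref{diaoyudao} to see how replacing $\bsi_\lambda d$ by $\bsi_\lambda dw$ is absorbed both into $\widehat{dw}-\widehat d$ on the $N$-side and into the difference of the two quadratic $\widehat{j_k}\widehat{j_l}$-type sums on the $x^*$-side. I would prove this identity by induction on $\ell(w)$, reducing to the case $w=s_k$ and invoking the compatibility between relation (2) of $\mathcal{A}_q(m|n)$ and the Hecke action \eqref{action2}—which is, in effect, the source of the duality between $\mathcal{A}_q(m|n,r)^*$ and $\End_{\mathcal{H}_F}(V_F(m|n)^{\otimes r})$ in the super setting.
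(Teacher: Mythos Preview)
Your proposal is correct and follows essentially the same route as the paper: reduce to $v_\mu$, quote the $N$-side from Theorem~\ref{RTSB}, expand the $x^*$-side via \eqref{comodule map}--\eqref{module map}, observe that only $\bsj$ in the $W_\mu$-orbit of $\bsi_\lambda d$ contribute and that the straightening of $x_{\bsi_\mu,\bsj}$ uses only relation~(2) of Definition~\ref{DefSA}, and then match signs term by term. The one organisational difference is in the final sign identity: the paper carries out a direct combinatorial analysis, partitioning the inversion set of $\bsj=\bsi_\lambda dw$ into pieces $\sJ^{(0)}$, $\sJ^{(1)}_{00}$, $\sJ^{(1)}_{10}$, $\sJ^{(1)}_{11}$ and tracking each contribution explicitly, whereas you propose to prove the same identity by induction on $\ell(w)$ using \eqref{diaoyudao}; either method works and they amount to the same bookkeeping.
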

\begin{proof} For any
$\lambda,\mu\in\Lambda{(m|n,r)},d\in\mathcal{D}^\circ_{\lambda\mu}$.
Since $N_{W,W_\nu}(e_{\mu,\lambda d})$ and $x^*_{\mu, \la d}$ are in
$\End_{\mathcal{H}_F}(V_F(m|n)^{\otimes r})$, it is enough to
consider their actions on $v_\mu$.

By the proof of Theorem  \ref{RTSB}, we know
$$(v_\mu)N_{W,W_\nu}(e_{\mu,\la d})=(-1)^{\widehat{d}}\sum_{w_0w_1\in(W_{\mu^{(0)}}\times W_{\mu^{(1)}})\cap\mathcal{D}_{\nu}}
(-1)^{\widehat{dw_0w_1}}q^{\ell(w_0)}(-q^{-1})^{\ell(w_1)}v_{\bsi_\lambda
dw_0w_1} $$ where $W_{\nu}=W^d_\lambda\cap W_\mu$.

On the other hand,
\begin{equation*}
\begin{aligned}
v_\mu\cdot x^*_{\mu,\la d}&=( x^*_{\mu,\la d}\otimes
\text{id}_{V_F(m|n)^{\otimes r}})\delta(v_\mu)\\
&=\sum_{\bsj\in I(m|n,r)}(-1)^{\sum_{1\leq k<l\leq
r}\widehat{\bsj_k}(\widehat{\bsj_l}+\widehat{(\bsi_\mu)_l})} x^*_{\mu,\la d}(x_{\bsi_\mu,
\bsj})v_\bsj.
\end{aligned}
\end{equation*}

If $x^*_{\mu,\la d}(x_{\bsi_\mu, \bsj})\neq 0$, then there
exists $w^{-1}\in W_\mu$ such that $(\bsi_\mu,\bsi_\lambda
d)=(\bsi_\mu,\bsj w^{-1})$. So $\bsj=\bsi_\la dw\in (\bsi_\la d)\cdot W_\mu$, the orbit of $\bsi_\la d$.
Then
\begin{equation*}
\begin{aligned}
v_\mu\cdot x^*_{\mu,\la d}&=\sum_{\bsj\in(\bsi_\la d)\cdot W_\mu}(-1)^{\sum_{1\leq k<l\leq
r}\widehat{\bsj_k}(\widehat{\bsj_l}+\widehat{(\bsi_\mu)_l})}x^*_{\mu,\la d}(x_{\bsi_\mu, \bsj})v_\bsj\\
&=\sum_{w\in W_\mu\cap \mathcal{D}_{\nu},\bsj=\bsi_\lambda
dw}(-1)^{\sum_{1\leq k<l\leq
r}\widehat{\bsj_k}(\widehat{\bsj_l}+\widehat{(\bsi_\mu)_l})}x^*_{\mu,\la d}(x_{\bsi_\mu,
\bsj})v_\bsj.
\end{aligned}
\end{equation*}

For $w\in \mathcal{D}_{\nu}\cap W_\mu$, there are $w_0\in
W_{\mu^{(0)}},w_1\in W_{\mu^{(1)}}$ such that $w=w_0w_1$.
Let $\bsi=\bsi_\la d=(i_1,\ldots,i_r)$ and $\bsj=\bsi_\la dw=(j_1,\dots,j_r)$. Then $j_k=i_{w(k)}$.
Let
$$\aligned
\widetilde\sJ&=\{(k,l)\mid 1\leq k<l\leq r, j_k>j_l\},\quad\widetilde \sJ^*=\{(k,l)\mid 1\leq k<l\leq r, j_k<j_l\},\text{ and }
\\
\sJ&=\sJ_{[1,m+n]}=\{(k,l)\in\widetilde \sJ\mid a_{i-1}+1\leq k<l\leq a_{i-1}+\mu_i, i\in[1,m+n]\},\endaligned$$
where $a_i=\sum_{j=1}^{i-1}\mu_j$ for $1\leq i\leq m+n$ and $a_0=0$.

Define $\widetilde \sI$, $\widetilde \sI^*$, $\sI$ etc. similarly with respect to $\bsi$.
We also define $\sJ^{(0)}=\sJ_{[1,m]}$ and $\sJ^{(1)}=\sJ_{[m+1,m+n]} $ similarly (so that $\sJ=\sJ^{(0)}\cup \sJ^{(1)}$). Then $\ell(w)=|\sJ|$,  $\ell(w_i)=|\sJ^{(i)}|$ ($i=0,1$) and
$$\sJ=\sJ^{(0)}\cup
\sJ_{00}^{(1)}\cup\sJ_{10}^{(1)}\cup\sJ_{11}^{(1)},$$
where $\sJ_{ij}^{(1)}=\{(k,l)\in\sJ^{(1)}\mid \widehat j_k=i,\widehat j_l=j\}$ for all $ij\in\{00,10,11\}$.
Note that $\sI=\emptyset$,
$$\{(j_k,j_l)\mid(k,l)\in\widetilde\sJ\setminus\sJ\}=\{(i_k,i_l)\mid(k,l)\in\widetilde\sI\},$$
and
$$A:=\{(j_k,j_l)\mid(k,l)\in\widetilde\sJ^*\cup\sJ\}=\{(i_k,i_l)\mid(k,l)\in\widetilde\sI^*\}=:B.$$
Hence, $(-1)^{\sum_{(k,l)\in\widetilde\sJ\setminus\sJ}\widehat{j_k}(\widehat{j_l}+\widehat{(\bsi_\mu)_l})}=
(-1)^{\sum_{(k,l)\in\widetilde\sI}\widehat{i_k}(\widehat{i_l}+\widehat{(\bsi_\mu)_l})}$.
On the other hand, if $(j_k,j_l)\in A$ with $(k,l)\in\sJ$, then $(j_l,j_k)\in B$. However, for $(k,l)\in\sJ^{(0)}$,  $\widehat{(\bsi_\mu)_l}=0$, while for $(k,l)\in\sJ^{(1)}$,  $\widehat{(\bsi_\mu)_l}=1$. Hence,
$(-1)^{\widehat{j_k}(\widehat{j_l}+\widehat{(\bsi_\mu)_l)}}=(-1)^{\widehat{j_l}(\widehat{j_k}+\widehat{(\bsi_\mu)_l)}}$ for $(k,l)\in\sJ^{(0)}\cup\sJ^{(1)}_{00}\cup\sJ^{(1)}_{11}$, while $(-1)^{\widehat{j_k}(\widehat{j_l}+\widehat{(\bsi_\mu)_l)}}=(-1)(-1)^{\widehat{j_l}(\widehat{j_k}+\widehat{(\bsi_\mu)_l)}}$ for $(k,l)\in\sJ^{(1)}_{10}$.
Hence,
$$\aligned
(-1)^{\sum_{1\leq k<l\leq
r}\widehat{j_k}(\widehat{j_l}+\widehat{(\bsi_\mu)_l})}&=(-1)^{\sum_{(k,l)\in\widetilde\sJ}\widehat{j_k}(\widehat{j_l}+\widehat{(\bsi_\mu)_l})}\times(-1)^{\sum_{(k,l)\in\widetilde\sJ^*}\widehat{j_k}(\widehat{j_l}+\widehat{(\bsi_\mu)_l})}\\
&=(-1)^{\sum_{(k,l)\in\widetilde\sJ\setminus\sJ}\widehat{j_k}(\widehat{j_l}+\widehat{(\bsi_\mu)_l})}\times(-1)^{\sum_{(k,l)\in\widetilde\sJ^*\cup\sJ}\widehat{j_k}(\widehat{j_l}+\widehat{(\bsi_\mu)_l})}\\
&=(-1)^{\sum_{(k,l)\in\widetilde\sI}\widehat{i_k}(\widehat{i_l}+\widehat{(\bsi_\mu)_l})}\times(-1)^{\sum_{(k,l)\in\widetilde\sJ^*\cup\sJ}\widehat{i_k}(\widehat{i_l}+\widehat{(\bsi_\mu)_l})+|\sJ_{10}^{(1)}|}\\
&=(-1)^{\sum_{1\leq
k<l\leq r}\widehat{i_k}(\widehat{i_l}+\widehat{(\bsi_\mu)_l})}\times (-1)^{|\sJ_{10}^{(1)}|}.
\endaligned$$

Let  $\bsj'=\bsi_\la dw_0$ and define $\sJ'$ as $\sJ$ above with $\bsj$ replaced by $\bsj'$. Then, by
Definition \ref{DefSA} and (\cite[1.3]{DU2}),
$$x_{\bsi_\mu,\bsi_\la d w_0}=(-1)^{\sum_{(k,l)\in\sJ'}\widehat{j'_k}\widehat{j'_l}}q^{\ell(w_0)}x_{\mu,\la d}=(-1)^{\widehat{dw_0}-\widehat d}q^{\ell(w_0)}x_{\mu,\la d},$$
and
$$x_{\bsi_\mu,\bsi_\la d w_0w_1}=(-1)^{\star}q^{-\ell(w_1)}x_{\mu,\la dw_0}=(-1)^\ell(-1)^{\widehat{dw_0w_1}-\widehat {dw_0}}q^{-\ell(w_1)}x_{\mu,\la dw_0},$$
where
$$\aligned
\star&=\sum_{(k,l)\in\sJ^{(1)}}(1+\widehat{j_k})(1+\widehat{j_l})\\
&=\sum_{(k,l)\in\sJ^{(1)}}(1+\widehat{j_k}+\widehat{j_l})+\sum_{(k,l)\in\sJ^{(1)}}\widehat{j_k}\widehat{j_l}\\
&=\ell+(\widehat{dw_0w_1}-\widehat {dw_0}),\endaligned$$
where $\ell=\sum_{(k,l)\in\sJ^{(1)}}(1+\widehat{j_k}+\widehat{j_l})$ and so
$(-1)^\ell=(-1)^{|\sJ_{00}^{(1)}|+|\sJ_{11}^{(1)}|}$.
Hence,
$$(-1)^\ell\times (-1)^{|\sJ_{10}^{(1)}|}=(-1)^{\ell(w_1)}$$ and substituting gives
$$x_{\bsi_\mu, \bsi_\lambda
dw_0w_1}=(-1)^{|\sJ_{10}^{(1)}|}(-1)^{\widehat{dw_0w_1}-\widehat{d}}q^{\ell(w_0)}(-q^{-1})^{\ell(w_1)}x_{\mu,\lambda
d}.$$
Therefore,
\begin{equation*}
\begin{aligned}
v_\mu\cdot x^*_{\mu,\la d}=&(-1)^{\sum_{1\leq k<l\leq
r}\widehat{i_k}(\widehat{i_l}+\widehat{(\bsi_\mu)_l})}(-1)^{\widehat{d}}\\
&\sum_{w_0w_1\in(W_{\mu^{(0)}}\times
W_{\mu^{(1)}})\cap\mathcal{D}_{\nu(d)}}(-1)^{\widehat{dw_0w_1}}q^{\ell(w_0)}(-q^{-1})^{\ell(w_1)}v_{\bsi_\lambda
dw_0w_1}\\
&=(-1)^{\sum_{1\leq k<l\leq
r}\widehat{(\bsi_\lambda d)_k}(\widehat{(\bsi_\lambda
d)_l}+\widehat{(\bsi_\mu)_l})}(v_\mu)N_{W,W_\nu}(e_{\mu,\la d})\end{aligned}
\end{equation*}
proving the theorem. \end{proof}

 By Theorem \ref{RTSB}, we have the following.

\begin{corollary}
For
$\lambda,\mu\in\Lambda{(m|n,r)},d\in\mathcal{D}^\circ_{\mu\la}$,
we have
$$\psi_{\mu,\lambda}^ d=q^{\ell(d)}(-1)^{\widehat d}(-1)^{\sum_{1\leq k<l\leq
r}\widehat{(\bsi_\lambda d)_k}(\widehat{(\bsi_\lambda
d)_l}+\widehat{(\bsi_\mu)_l})} x^*_{\mu,\la d}.$$
\end{corollary}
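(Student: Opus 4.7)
The plan is to derive the formula by combining the two prior identifications of the relative-norm basis element $N^d_{\mu\lambda}:=N_{W,W_\nu}(e_{\mu,\lambda d})$, where $\nu=\lambda d\cap\mu$. Theorem~\ref{RTSB} expresses $N^d_{\mu\lambda}$ as an explicit scalar multiple of $\psi^d_{\mu\lambda}$, while Theorem~\ref{N-x basis} expresses the same element as an explicit scalar multiple of $x^*_{\mu,\lambda d}$. Eliminating $N^d_{\mu\lambda}$ between these two equalities will produce the asserted comparison of $\psi^d_{\mu\lambda}$ and $x^*_{\mu,\lambda d}$ in a single line.

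Concretely, I would first rewrite \eqref{psi=N} in the inverted form
\[
\psi^d_{\mu\lambda}=q^{\ell(d)}(-1)^{\widehat d}\,N^d_{\mu\lambda},
\]
which is legitimate because $(-1)^{\widehat d}$ is its own inverse and $q$ is invertible in $F$. I would then substitute the identity
\[
N^d_{\mu\lambda}=(-1)^{\sum_{1\leq k<l\leq r}\widehat{(\bsi_\lambda d)_k}(\widehat{(\bsi_\lambda d)_l}+\widehat{(\bsi_\mu)_l})}\,x^*_{\mu,\lambda d}
\]
of Theorem~\ref{N-x basis} into the right-hand side. Multiplying the scalar prefactors $q^{\ell(d)}(-1)^{\widehat d}$ and the quadratic sign together gives precisely the coefficient displayed in the corollary.

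There is no substantive obstacle to anticipate: the argument is purely formal once the two theorems are in hand. The only thing worth double-checking is consistency of sign conventions, in particular that the factor $(-1)^{\widehat d}$ coming from Theorem~\ref{RTSB} and the quadratic parity sign coming from Theorem~\ref{N-x basis} appear with the expected signs. Since both of those theorems are proved by evaluating on $v_\mu$ under the same normalization of the action \eqref{action2}, the signs combine cleanly and no rearrangement is required.
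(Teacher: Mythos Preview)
Your proposal is correct and matches the paper's approach exactly: the corollary is stated immediately after Theorem~\ref{N-x basis} with the remark ``By Theorem~\ref{RTSB}, we have the following,'' so the intended proof is precisely to combine \eqref{psi=N} with the identity of Theorem~\ref{N-x basis} as you describe.
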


\section{Frobenius morphisms and Brauer homomorphisms}

Let $\sA_q(m)$ (resp., $\sA_q(n)$) be the subalgebras of $\sA_q(m|n)$ generated by $x_{ij}$ for
$i,j\in[1,m]$ (resp., $i,j\in[m+1,m+n]$).  Then $\sA_q(m)\otimes\sA_q(n)$ is a subalgebra, but not a subcoalgebra. We now show that $\sA_q(m)\otimes\sA_q(n)$ contains a central subbialgebra of
$\sA_q(m|n)$.

\begin{proposition}\label{SAC}
For $1\leq i,j\leq m$ or
$m+1\leq i,j\leq m+n$, the elements $x^l_{ij}$ are in the center of $\sA_q(m|n)$.
\end{proposition}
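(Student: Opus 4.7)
The plan is to show that $x_{ij}^{l}$ commutes with every generator $x_{cd}$ of $\mathcal{A}_q(m|n)$; centrality then follows because the $x_{cd}$ generate the algebra. The hypothesis on the indices is equivalent to $\hat{i}+\hat{j}=0$, and this single equality forces the leading sign prefactor $(-1)^{(\hat{i}+\hat{j})(\cdots)}$ appearing in each of relations~(2)--(5) of Definition~\ref{DefSA} to collapse to $1$. As a consequence, the commutation of $x_{ij}$ with any $x_{cd}$ takes one of two clean shapes, and I would dispose of them separately.

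\emph{Type I (pure $q$-commutation).} If $c=i$, $d=j$, or $(c-i)(d-j)\le 0$, relations~(2)--(4) give
\[
x_{ij}\,x_{cd}\;=\;q^{\,a}\,x_{cd}\,x_{ij},\qquad a\in\{-1,0,1\},
\]
where $a$ is determined only by the parity $\hat{i}$ (equivalently $\hat{j}$). Iterating and using $q^{l}=1$ at once yields $x_{ij}^{l}x_{cd}=q^{\,al}x_{cd}x_{ij}^{l}=x_{cd}x_{ij}^{l}$.

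\emph{Type II (Manin correction).} If $(c-i)(d-j)>0$, relation~(5) gives
\[
x_{ij}\,x_{cd}\;=\;x_{cd}\,x_{ij}+\beta\,x_{id}x_{cj},\qquad \beta=\pm(q^{-1}-q).
\]
The crucial new observation is that the corner partners $x_{id}$ and $x_{cj}$ each lie in the Type I regime with $x_{ij}$ (by the same sign collapse), and each commutes with $x_{ij}$ up to the \emph{same} factor $q^{\epsilon}$, where $\epsilon=\pm 1$ depends only on $\hat{i}$. Expanding
\[
[x_{ij}^{l},x_{cd}]\;=\;\sum_{s=0}^{l-1}x_{ij}^{s}[x_{ij},x_{cd}]x_{ij}^{l-1-s}
\]
and sliding every copy of $x_{ij}$ through $x_{id}x_{cj}$ to the left produces the common scalar $\beta\sum_{t=0}^{l-1}q^{2\epsilon t}$ multiplying $x_{ij}^{l-1}x_{id}x_{cj}$. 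Because $l$ is odd and $q$ is a primitive $l$th root of unity, so is $q^{\pm 2}$, whence $\sum_{t=0}^{l-1}q^{\pm 2 t}=0$. Thus $[x_{ij}^{l},x_{cd}]=0$.

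The main technical obstacle is uniform sign and $q$-power bookkeeping: one has to verify, across the four quadrants describing the position of $(c,d)$ relative to $(i,j)$ and across all parities $\hat{c},\hat{d}$, both that the leading sign prefactor genuinely cancels and that the two corner partners $x_{id}$, $x_{cj}$ accumulate the \emph{same} $q^{\pm 1}$ passing factor through $x_{ij}$ (so that the two contributions reassemble into a single geometric series rather than partially cancelling). Once this uniformity is in place, the geometric-series argument above concludes the proof.
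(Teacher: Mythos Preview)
Your proposal is correct and follows essentially the same approach as the paper: both first observe that $\hat{i}+\hat{j}=0$ kills the leading sign in relations (2)--(5), then dispatch the pure $q$-commutation cases using $q^l=1$, and finally handle the Manin-correction case by showing the commutator $[x_{ij}^{l},x_{cd}]$ carries the scalar $\sum_{t=0}^{l-1}q^{\pm 2t}=[\![l]\!]_{q^{\pm 2}}=0$. The only cosmetic difference is that the paper packages the last step as an induction on $s$ proving $[x_{ij}^{s},x_{kl}]=\pm(q^{-1}-q)[\![s]\!]_{q^{\pm 2}}\,x_{ij}^{s-1}x_{il}x_{kj}$, whereas you expand the commutator telescopically and slide the factors through; these are two presentations of the same computation.
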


\begin{proof} Since $\hat i=\hat j$, the (first) signs on the right hand side of the relations in Definition \ref{DefSA} are all $+1$. Note also that $q^l=(q^{-1})^l=1,[\![l]\!]_q=0$.

For $j<k$, by \ref{DefSA}(2)
$$x^l_{ij}x_{ik}=q^{l(-1)^{\hat{i}+1}}x_{ik}x^l_{ij}=x_{ik}x^l_{ij}.$$

For $i<k$, by \ref{DefSA}(3)
$$x^l_{ij}x_{kj}=q^{l(-1)^{\hat{j}+1}}x_{kj}x^l_{ij}=x_{kj}x^l_{ij}.$$

For $i<k$ and $j>l$, by \ref{DefSA}(4)
$$x^l_{ij}x_{kl}=x_{kl}x^l_{ij}.$$

Finally, for $i<k$ and $j<l$, we claim
$$[x^s_{ij},x_{kl}]=(-1)^{\hat{k}\hat{j}+\hat{k}\hat{l}+\hat{j}\hat{l}}(q^{-1}-q)[\![s]\!]_{q^{2(-1)^{\hat{i}+1}}}x^{s-1}_{ij} x_{il}x_{kj}.$$
Indeed, this is clear, by \ref{DefSA}(5), for $s=1$. In general, we apply induction to
$[x^{s+1}_{ij},x_{kl}]=x_{ij}[x_{ij}^s,x_{kl}]+[x_{ij},x_{kl}]x_{ij}^s$ to prove the claim.
Now taking $s=l$ and noting ${[\![l]\!]}_{q^{\pm2}}=0$ give
$x^l_{ij}x_{kl}=x_{kl}x^l_{ij}$ in this case.\end{proof}

\begin{proposition}\label{CCA}
For $1\leq i,j\leq m$ or $m+1\leq i,j\leq m+n$, we have
\begin{equation*}
\begin{aligned}
&\Delta(x_{ij}^l)=\sum_{k=1}^mx^l_{ik}\otimes x^l_{kj}\mbox{ for
}1\leq i,j\leq m;\\
&\Delta(x_{ij}^l)=\sum_{k=m+1}^{m+n}x^l_{ik}\otimes x^l_{kj}\mbox{
for }m+1\leq i,j\leq m+n.
\end{aligned}
\end{equation*}
\end{proposition}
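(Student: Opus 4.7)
The plan is to exploit that $\Delta:\sA_q(m|n)\to\sA_q(m|n)\otimes\sA_q(m|n)$ is a homomorphism of super algebras (this is part of the super bialgebra structure declared just after Definition~\ref{DefSA}), so that $\Delta(x_{ij}^l)=\Delta(x_{ij})^l$ computed inside the super tensor product. Writing $\Delta(x_{ij})=\sum_{k=1}^{m+n}x_{ik}\otimes x_{kj}$ and applying the super multiplication rule $(a\otimes b)(c\otimes d)=(-1)^{|b||c|}\,ac\otimes bd$, I would first expand
\[
\Delta(x_{ij})^l=\sum_{\vec k\in[1,m+n]^l}(-1)^{\sum_{s<t}|x_{k_sj}||x_{ik_t}|}\,(x_{ik_1}\cdots x_{ik_l})\otimes(x_{k_1j}\cdots x_{k_lj}),
\]
and then determine which multi-indices $\vec k=(k_1,\dots,k_l)$ contribute a nonzero summand after applying the relations of Definition~\ref{DefSA}.

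Next, I would reorder each factor in both tensor components into sorted form via relations~(2) and~(3), producing signs and $q^{\pm 1}$-powers. Relation~(1) immediately kills any multi-index in which a ``wrong parity'' index (one with $\widehat i+\widehat{k_s}=1$) occurs with multiplicity $\geq 2$, since $x_{ik_s}^2=0$; in particular, the constant multi-indices $\vec k=(k,\dots,k)$ with $\widehat i+\widehat k=1$ contribute nothing (as $x_{ik}^l=0$ whenever $x_{ik}^2=0$ and $l\geq 2$). Grouping the remaining contributions by the underlying multiset $\{\!\{k_{(1)}\leq\cdots\leq k_{(l)}\}\!\}$, the coefficient of a fixed sorted monomial $(x_{ik_{(1)}}\cdots x_{ik_{(l)}})\otimes(x_{k_{(1)}j}\cdots x_{k_{(l)}j})$ is a sum over orderings which, once the super signs and the reordering signs/$q^{\pm 1}$-powers are consolidated, reduces to a product of Gaussian binomial coefficients $\left[\!\!\left[a\atop b\right]\!\!\right]_{q^{\pm 2}}$ with $0<a<b\leq l$ whenever the multiset is non-constant. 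Since $l$ is odd and $q$ is a primitive $l$-th root of $1$, so is $q^{\pm 2}$, and each such Gaussian coefficient vanishes---compare the observation $d_W(q^2)=[\![l]\!]_{q^2}=0$ noted just before Definition~\ref{S(m|n,r)}. The only surviving multisets are the constant same-parity ones $\vec k=(k,\dots,k)$ with $\widehat k=\widehat i$; for these the super sign is trivial (all $\widehat{k_s}$ coincide with $\widehat i=\widehat j$) and no reordering is needed, so each contributes exactly $x_{ik}^l\otimes x_{kj}^l$. Summing over the admissible $k$ yields the two claimed formulae.

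The main obstacle will be the sign- and $q$-power bookkeeping in the middle step: the super signs generated by the tensor-product multiplication interact across the two tensor factors with the signs and $q^{\pm 1}$-factors arising from reordering $x_{ik_1}\cdots x_{ik_l}$ and $x_{k_1j}\cdots x_{k_lj}$ through relations~(2) and~(3), and one must verify that, under the hypothesis $\widehat i=\widehat j$, these combine coherently into the standard Gaussian $q^{\pm 2}$-binomial structure that vanishes at a primitive $l$-th root of unity. Once that reconciliation is carried out, the cases $1\leq i,j\leq m$ and $m+1\leq i,j\leq m+n$ are handled uniformly, observing only that $q^{\pm 2}$ is again a primitive $l$-th root of $1$ because $l$ is odd.
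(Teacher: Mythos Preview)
Your direct-expansion strategy is correct in principle and can be made cleaner than you anticipate. The key observation you have not yet made explicit is that the summands $y_k := x_{ik}\otimes x_{kj}$ in the super tensor product satisfy a \emph{uniform} $q$-commutation: for all $1\leq k<l\leq m+n$, regardless of the parities of $k$ and $l$, one has $y_ky_l = u\,y_ly_k$ with a single parameter $u=q^{\pm 2}$ depending only on $\widehat i=\widehat j$. The super sign $(-1)^{|x_{kj}||x_{il}|}$ coming from the tensor multiplication combines with the signs in relations~(2) and~(3) of Definition~\ref{DefSA} to cancel, leaving only the $q$-powers; checking the three parity cases for $(\widehat k,\widehat l)$ is a short computation. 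With this in hand, the quantum multinomial theorem applies directly to $(\sum_k y_k)^l$, and since $l$ is odd, $u$ is again a primitive $l$-th root of unity, so only the pure powers $y_k^l$ survive; those with $\widehat k\neq\widehat i$ then vanish by relation~(1). This dissolves the ``main obstacle'' you flag: there is no need to sort the two tensor factors separately and reconcile three interacting families of signs afterwards.

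The paper organises the argument slightly differently. It first groups $\Delta(x_{ij})=A+B$ with $A=\sum_{k\leq m}y_k$ and $B=\sum_{k>m}y_k$, checks the single relation $AB=uBA$, and applies the quantum \emph{binomial} theorem to obtain $\Delta(x_{ij}^l)=A^l+B^l$; then relation~(1) together with the quantum multinomial theorem kills the wrong-parity piece, and the surviving piece $A^l$ or $B^l$ is handled by reduction to the non-super case \cite[7.2.2]{PW}. Your route is self-contained (no appeal to \cite{PW} is needed) and treats all $k$ at once; the paper's route makes the two-term binomial step more transparent and offloads the classical quantum-matrix computation to the literature.
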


\begin{proof}
Since $\mathcal{A}_q(m|n)$ is a bialgebra,
$$\Delta(x^l_{ij})=(\Delta(x_{ij}))^l=(\sum^{m+n}_{k=1}x_{ik}\otimes
x_{kj})^l=(A+B)^l,$$
where $A=\sum^{m}_{k=1}x_{ik}\otimes
x_{kj}$ and $B=\sum^{m+n}_{k=m+1}x_{ik}\otimes x_{kj}$.
Putting $u=q^{-2(-1)^{\hat{i}+1}}$ (and so $u^l=1$), we have by the relations in Definition
\ref{DefSA}, $AB=uBA$.
Thus, by the quantum binomial theorem \cite[(7.1.a)]{PW},
$$\Delta(x^l_{ij})=\sum^l_{h=0}\left[\!\!\left[l\atop
h\right]\!\!\right]_uA^hB^{l-h}.$$ Since
$\left[\!\!\left[l\atop h\right]\!\!\right]_u=0$ for $0<h<l$, it turns out $\Delta(x^l_{ij})=A^l+B^l$.

Now, by the quantum multinomial theorem (see, e.g., \cite[Ex.~0.14]{DDPW}), \ref{DefSA}(1) implies  $B^l=0$ for  $1\leq i,j\leq m$, and
$A^l=0$ for  $m+1\leq i,j\leq m+n$. Hence, we have reduced the proof to the non-super case and the assertion follows from
\cite[7.2.2]{PW}.\end{proof}

\begin{corollary} The subalgebra generated by $x_{ij}^l$ with $i,j\in[1,m]$ or $i,j\in[m+1,m+n]$ is also a subcoalgebra in the center of $\sA_q(m|n)$, and hence, is a central subbialgebra.
\end{corollary}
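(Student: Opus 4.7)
The plan is essentially to assemble the two preceding propositions using the bialgebra axioms. Let $B$ denote the subalgebra of $\sA_q(m|n)$ generated by $S = \{x_{ij}^l : 1 \leq i,j \leq m\} \cup \{x_{ij}^l : m+1 \leq i,j \leq m+n\}$. I need to verify three things: $B$ is central, $\Delta(B) \subseteq B \otimes B$, and $\varepsilon(B) \subseteq F$.

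For centrality, Proposition \ref{SAC} gives that each generator in $S$ lies in the center $Z(\sA_q(m|n))$. Since $Z(\sA_q(m|n))$ is closed under products and $F$-linear combinations, the entire subalgebra $B$ generated by $S$ is contained in $Z(\sA_q(m|n))$. Note that, because each $x_{ij}^l$ is homogeneous of $\mathbb{Z}_2$-degree $0$ (since $\hat{i}+\hat{j}=0$), we may use ordinary commutation rather than super-commutation here, so no sign issues arise.

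For the coalgebra part, Proposition \ref{CCA} shows $\Delta(x_{ij}^l)$ is a sum of terms of the form $x_{ik}^l \otimes x_{kj}^l$ with both factors in $S$, so $\Delta(S) \subseteq B \otimes B$. Since $\Delta: \sA_q(m|n) \to \sA_q(m|n) \otimes \sA_q(m|n)$ is an algebra homomorphism and $B \otimes B$ is a subalgebra of the tensor product, the multiplicative closure of $S$ under $\Delta$ is already contained in $B \otimes B$, giving $\Delta(B) \subseteq B \otimes B$. For the counit, $\varepsilon(x_{ij}^l) = \varepsilon(x_{ij})^l = \delta_{ij}^l = \delta_{ij} \in F$ for the generators, and multiplicativity of $\varepsilon$ extends this to all of $B$.

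Combining these three facts, $B$ is a subalgebra and a subcoalgebra contained in the center, hence a central subbialgebra. There is no real obstacle — the corollary is a formal consequence of Propositions \ref{SAC} and \ref{CCA} together with the fact that $\Delta$ and $\varepsilon$ are algebra homomorphisms. The only minor point worth flagging is the parity observation that ensures centrality in $B$ coincides with super-centrality, but this is automatic for even-degree elements.
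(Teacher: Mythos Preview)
Your proposal is correct and is exactly the argument the paper intends: the corollary is stated without proof precisely because it follows immediately from Propositions \ref{SAC} and \ref{CCA} together with the fact that $\Delta$ and $\varepsilon$ are algebra homomorphisms. Your added remark about parity (all generators being even, so centrality and super-centrality coincide) is a useful clarification the paper does not spell out.
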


The existence of the subbialgebra is the key to the definition of the Frobenius morphism.
Let $A(m,n)$ be the polynomial algebra over $F$
generated by
$t_{ij}$ for $1\leq i,j\leq m$ or $m+1\leq i,j\leq m+n$. If we also define polynomial algebras $A(m)=F[t_{i,j}]_{1\leq i,j\leq m}$ and $A(n)=F[t_{i,j}]_{m+1\leq i,j\leq m+n}$, then $A(m,n)\cong A(m)\otimes_FA(n)$. We will identify the two polynomial algebras in the sequel.
Then, by the lemma above, the map
\begin{equation*}
\begin{aligned}
\mathcal{F}:A(m,n)&\longrightarrow
\mathcal{A}_q(m|n)\\
t_{ij}&\longmapsto x^l_{ij}
\end{aligned}
\end{equation*}
is a bialgebra monomorphism. This is called the {\it Frobenius morphism}.

For the fixed $r\in\mathbb{N}$, set
\begin{equation}\label{R_r}
\mathcal{R}_r=\sR_r(l)=\{(r_{-1},r_0)\mid r_{-1},r_0\in\mathbb N,
r_{-1}+lr_0=r\}.
\end{equation}

For $\vecr=(r_{-1},r_0)\in\mathcal{R}_r$, we
put
$$\mathcal{A}_q(m|n,\vecr)=\mathcal{A}_q(m|n,r_{-1})\otimes
A(m,n)_{r_0}$$ where
$A(m,n)_{r_0}=\sum_{s+t=r_0}A(m)_s\otimes A(n)_t$ is the $r_0$th
homogenous component of $A(m,n)$. It is
easy to show that $\mathcal{A}_q(m|n,\vecr)$ is a
subcoaglebra of $\mathcal{A}_q(m|n)\otimes A(m,n)$.
Let
\begin{equation}\label{Schur vecr}
\sS_F(m|n,\vecr):=\sA_q(m|n,\vecr)^*\cong\sS_F(m|n,r_{-1})\otimes S(m,n)_{r_0},
\end{equation}
where
\begin{equation}\label{S(m,n)}
S(m,n)_{r_0}=A(m,n)_{r_0}^*=\bigoplus_{i=0}^{r_0}(S(m,i)\otimes
S(n,r_0-i)),
\end{equation}
 which is a sum of tensor products of certain classical Schur algebras.
% regarded as a homomorphic images of the universal enveloping algebra $\sU(\mathfrak{gl}_k)$.

Consider the map
$$\mu\circ(1\otimes \mathcal{F}):\mathcal{A}_q(m|n)\otimes A(m,n)\rightarrow \mathcal{A}_q(m|n)$$ where $\mu$ is the
multiplication in $\mathcal{A}_q(m|n)$, then $\mu\circ(1\otimes
\mathcal{F})$ is a coalgebra map and
$$\mu\circ(1\otimes
\mathcal{F})(\mathcal{A}_q(m|n,\vecr))\subseteq
\mathcal{A}_q(m|n,r).$$ Thus, upon  restriction,  we obtain a coalgebra map
$$F_{\vecr}:\mathcal{A}_q(m|n,\vecr)\rightarrow
\mathcal{A}_q(m|n,r).$$

\begin{definition} If we identify $\sS_F(m|n,r)$ with $\sA_q(m|n,r)^*$ under Lemma \ref{DR9.7}(2), then
the {\it Brauer homomorphism} associated with $\vecr\in\mathcal{R}_r$ is the {\it surjective} map
$$\phi_\vecr:\sS_F(m|n,r)\rightarrow
\sS_F(m|n,\vecr):=(\mathcal{A}_q(m|n,\vecr))^*$$
 dual to the coalgebra map
$F_{\vecr}:\mathcal{A}_q(m|n,\vecr)\rightarrow
\mathcal{A}_q(m|n,r)$.
\end{definition}

We now determine the kernel of
$\phi_\vecr$. First, we determine the image of
$F_{\vecr}$.

For $\vecr=(r_{-1},r_0)\in\mathcal{R}_r$, define the
$l$-parabolic subgroup $P_{\vecr}$ of $W$ to be the parabolic
subgroup associated with $(1^{r_{-1}},l^{r_0})\in\Lambda{(m|n,r)}$. Thus, with the notation used in \eqref{filtration}, $P_\vecr=P_{r_0}$.

For $\bsi=(i_1,i_2,\cdots,i_r)\in I(m|n,r)$, let
$$l\bsi=(\underbrace{i_1,\cdots,i_1}_l,\underbrace{i_2,\cdots,i_2}_l,\cdots,\underbrace{i_r,\cdots,i_r}_l)\in
I(m|n,rl).$$

\begin{lemma}\label{image}
The image of $F_{\vecr}$ is the subspace spanned by
$$\{x_{\mu,\lambda d}\mid
\lambda,\mu\in\Lambda{(m|n,r)},d\in\mathcal{D}^\circ_{\lambda\mu},P_{\vecr}\leq_W
W^d_\lambda\cap W_\mu\}.$$
\end{lemma}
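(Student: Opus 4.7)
The plan is to evaluate $F_\vecr$ on the natural monomial basis of $\sA_q(m|n,\vecr)=\sA_q(m|n,r_{-1})\otimes A(m,n)_{r_0}$. Such a basis consists of elements $x^A\otimes t^B$, where $A$ ranges over $M(m|n,r_{-1})$ and $t^B=\prod t_{ij}^{b_{ij}}$ ranges over the degree-$r_0$ monomials in $A(m,n)_{r_0}$; the exponent matrix $B$ is supported only on positions $(i,j)$ with $\hat i=\hat j$, and $|B|=r_0$. Since $\mathcal F(t_{ij})=x_{ij}^l$ and, by Proposition \ref{SAC}, the elements $x_{ij}^l$ with $\hat i=\hat j$ are central in $\sA_q(m|n)$, the product $\mathcal F(t^B)=\prod x_{ij}^{lb_{ij}}$ is unambiguously defined and central. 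Thus
\[
F_\vecr(x^A\otimes t^B)=x^A\cdot \mathcal F(t^B),
\]
so the image of $F_\vecr$ is the $F$-span of these products over all admissible $A$ and $B$.

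The crucial step is to show that, for some nonzero $c=c(A,B)\in F$,
\[
x^A\cdot \mathcal F(t^B)=c\,x^{A+lB},
\]
where $A+lB$ denotes the ordinary matrix sum (which lies in $M(m|n,r)$ because $lB$ has zero entries whenever $\hat i\neq\hat j$, so entries of $A+lB$ at those positions remain in $\{0,1\}$, and the total weight is $r_{-1}+lr_0=r$). To prove the identity one starts from the defining ordered factorisation of $x^A$ as a product of generators $x_{i_kj_k}$ and inserts each central factor $x_{ij}^l$ into the slot where it must appear in the row-by-row reading of $A+lB$. Moving a central $x_{ij}^l$ past individual generators $x_{kh}$ is free by Proposition \ref{SAC}, while the remaining reorderings of consecutive generators (needed to realise the normal form of $x^{A+lB}$) contribute only signs and powers of $q^{\pm 1}$ via the relations of Definition \ref{DefSA}; since $q\neq 0$, the total coefficient $c$ is nonzero.

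With this identity established, the image of $F_\vecr$ is precisely the $F$-span of $\{x^D:D\in M(m|n,r)\text{ of the form }A+lB\}$. A matrix $D\in M(m|n,r)$ admits such a decomposition iff one can choose non-negative integers $q_{ij}\leq\lfloor d_{ij}/l\rfloor$ (for $\hat i=\hat j$) with $\sum q_{ij}=r_0$; equivalently, iff
\[
\sum_{\hat i=\hat j}\lfloor d_{ij}/l\rfloor\geq r_0.
\]
Since the entries of $D$ with $\hat i\neq\hat j$ lie in $\{0,1\}$ (contributing $0$ to the floor sum) and the parts of the composition $\eta=\lambda d\cap\mu$ are exactly the entries of $D=\jmath(\lambda,d,\mu)^t$, the condition reduces to $\sum_k\lfloor\eta_k/l\rfloor\geq r_0$. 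As $|\eta|=r=r_{-1}+lr_0$, this last inequality is equivalent to the statement that the composition $(1^{r_{-1}},l^{r_0})$ fits part-by-part into $\eta$, i.e., $P_\vecr\leq_W W_\eta=W^d_\lambda\cap W_\mu$, which is what the lemma asserts.

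The main obstacle is the scalar identity in the second paragraph: one must verify that the many super-commutations needed to bring $x^A\cdot\mathcal F(t^B)$ into the row-major normal form of $x^{A+lB}$ never produce a vanishing factor. The centrality of $x_{ij}^l$ provided by Proposition \ref{SAC} is precisely what eliminates the potentially dangerous moves (those that could introduce a $[\![l]\!]_{q^{\pm 2}}$-factor, which vanishes at a primitive $l$th root of unity), leaving only routine $q$-twists that remain units; once this is dispatched, the characterisation of the image reduces to the elementary combinatorial count carried out in the last paragraph.
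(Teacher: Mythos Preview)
Your proposal is correct and follows essentially the same approach as the paper: both arguments evaluate $F_\vecr$ on monomials, use the centrality of $x_{ij}^l$ (Proposition \ref{SAC}) to slide the $l$th-power factors into their proper slots in the row-major normal form, and then identify the resulting basis element. In fact the scalar $c$ you worry about is simply $1$---once $x^A$ is already in normal form, inserting the central factors $x_{ij}^{lb_{ij}}$ at the correct positions requires no further reordering of non-central generators, so relations (2)--(5) of Definition \ref{DefSA} are never invoked beyond centrality itself; your caution about ``remaining reorderings'' is unnecessary but harmless.

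Your treatment of the combinatorial characterisation is slightly more explicit than the paper's: you spell out the equivalence
\[
D=A+lB \text{ for some admissible }A,B \iff \sum_{\hat i=\hat j}\lfloor d_{ij}/l\rfloor\geq r_0 \iff P_\vecr\leq_W W_{\lambda d\cap\mu},
\]
whereas the paper simply asserts the last equivalence ``by construction'' in one direction and extracts $l$th-power factors in the other. This is a presentational difference, not a mathematical one.
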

\begin{proof}
We first observe that the set
$$\{x_{\bsi_{-1},\bsj_{-1}}\otimes t_{\bsi_0,\bsj_0}\mid
\bsi_{-1},\bsj_{-1}\in I(m|n,r_{-1}),\bsi_0,\bsj_0\in
I(m,n;r_0)\}$$
spans $\mathcal{A}_q(m|n,\vecr)$,
where
$$I(m,n;r_0)=\bigcup_{s+t=r_0}I(m,s)\times(\mathbf{m}+ I(n,t))\text{ with }\mathbf{m}=(\underbrace{m,\ldots,m}_n).$$
To get a basis, by Lemma \ref{DR9.7}(2), we simply take the $\bsi,\bsj$ involved to satisfy the conditions that $\bsi$ weakly increasing and $j_a\leq j_b$ whenever $i_a=i_b$. So
we may assume that
$x_{\bsi_{-1},\bsj_{-1}}=x_{\mu_{(-1)},\lambda_{(-1)}d_{-1}},t_{\bsi_0,\bsj_0}=t_{\mu_{(0)},\lambda_{(0)}d_0}$
for $\lambda_{(i)},\mu_{(i)}$ into $m+n$ parts and
$d_i\in\mathcal{D}^\circ_{\lambda_{(i)},\mu_{(i)}},i=-1,0$.

Now,
$F_{\vecr}(x_{\bsi_{-1},\bsj_{-1}}\otimes
t_{\bsi_0,\bsj_0})=x_{\bsi_{-1},\bsj_{-1}}x_{l\bsi_0,l\bsj_0}$. By Proposition
\ref{CCA}, every factor $x_{i_kj_k}^l$ of $x_{l\bsi_0,l\bsj_0}$, where $i_k,j_k\in[1,m]$ or $[m+1,m+n]$, is in the
center of $\mathcal{A}_q(m|n)$. Thus, we may move them around so that the product
$x_{\bsi_{-1},\bsj_{-1}}x_{l\bsi_0,l\bsj_0}$ can be written as $x_{\bsi,\bsj}$ with $\bsi$ weakly increasing and $\bsj$ satisfying $j_a\leq j_b$ whenever $i_a=i_b$ and, if $i_a=i_{a+1}$ and $j_a=j_{a+1}$, then $i_a,j_a\in[1,m]$ or $i_a,j_a\in[m+1,m+n]$. This means that
 we may find
$\lambda,\mu\in\Lambda{(m|n,r)}$ and
$d\in\mathcal{D}^\circ_{\lambda\mu}$ such that
$F_{\vecr}(x_{\bsi_{-1},\bsj_{-1}}\otimes
t_{\bsi_0,\bsj_0})=x_{\bsi_\mu,\bsi_\lambda d}$. Clearly, by the construction, $P_{\vecr}\leq_W W_\la^d\cap W_\mu$.

Conversely,
if $\lambda,\mu\in\Lambda{(m|n,r)}$ and
$d\in\mathcal{D}^\circ_{\lambda\mu}$ such that
$P_{\vecr}\leq_W W^d_\lambda\cap W_\mu$, then both $W^d_{\la^{(0)}}\cap W_{\mu^{(0)}}$
and $W^d_{\la^{(1)}}\cap W_{\mu^{(1)}}$ contain $l$-parabolic subgroups.
Thus, if we write $\bsi=(i_1,i_2,\cdots,i_r)=\bsi_\mu$ and $\bsj=(j_1,j_2,\cdots,j_r)=\bsi_\la d$, then $x_{\mu,\lambda d}=x_{\bsi,\bsj}$ has factors of the form $x_{i_kj_k}^l$, where $i_k,j_k\in[1,m]$ or $[m+1,m+n]$. Moving all such factors to the right, we may rewrite $x_{\bsi\bsj}=x_{\bsi_{-1},\bsj_{-1}}x_{l\bsi_0,l\bsj_0}$. Clearly, $(\bsi_{-1},\bsj_{-1})$ satisfies the even-odd trivial intersection property.
Hence, we see that $x_{\bsi,\bsj}=F_{\vecr}(x)$ for some
$x\in\mathcal{A}_q(m|n,\vecr)$.
\end{proof}

As an application of Theorem \ref{N-x basis}, we now determine the kernel of the Brauer homomorphism.

\begin{theorem}\label{kernel}
If $\{x^*_{\mu,\lambda d}\mid
\lambda,\mu\in\Lambda{(m|n,r)},d\in\mathcal{D}^\circ_{\lambda\mu}\}$
is the dual basis of $\mathcal{B}^\vee$ in Lemma \ref{DR9.7}(2), then the kernel
of $\phi_\vecr$,  for $\vecr=(r_{-1},r_0)\in\mathcal{R}_r$,  is spanned by
$$\{x^*_{\mu,\lambda d}\mid\lambda,\mu\in\Lambda{(m|n,r)},d\in\mathcal{D}^\circ_{\lambda\mu},P_{\vecr}\nleq_W
W^d_\lambda\cap W_\mu\}.$$
In particular, $\ker \phi_\vecr=I_F(P_{r_0-1},r).$\end{theorem}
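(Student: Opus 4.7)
The plan is to exploit the duality $\sS_F(m|n,r)=\mathcal{A}_q(m|n,r)^*$ from Lemma \ref{DR9.7}(2) together with the computation of the image of $F_{\vecr}$ in Lemma \ref{image}. Since $\phi_{\vecr}$ is by definition the dual of the coalgebra map $F_{\vecr}\colon\mathcal{A}_q(m|n,\vecr)\to\mathcal{A}_q(m|n,r)$, its kernel equals the annihilator of $\operatorname{Im}(F_{\vecr})$ inside $\sS_F(m|n,r)$. So the first step will be to invoke Lemma \ref{image} to identify $\operatorname{Im}(F_{\vecr})$ as the subspace of $\mathcal{A}_q(m|n,r)$ spanned by the subset
\[
\mathcal{B}^\vee_{\vecr}:=\{x_{\mu,\lambda d}\mid \lambda,\mu\in\Lambda(m|n,r),\, d\in\mathcal{D}^\circ_{\lambda\mu},\, P_{\vecr}\leq_W W^d_\lambda\cap W_\mu\}
\]
of the basis $\mathcal{B}^\vee$.

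The second step is then essentially formal: since $\{x^*_{\mu,\lambda d}\}$ is the basis of $\sS_F(m|n,r)$ dual to $\mathcal{B}^\vee$, a linear combination $\sum c_{\mu,\lambda d}x^*_{\mu,\lambda d}$ annihilates $\operatorname{Im}(F_{\vecr})$ if and only if $c_{\mu,\lambda d}=0$ for every $(\mu,\lambda d)$ with $x_{\mu,\lambda d}\in\mathcal{B}^\vee_{\vecr}$. Hence $\ker\phi_{\vecr}$ is exactly the span of those $x^*_{\mu,\lambda d}$ with $P_{\vecr}\nleq_W W^d_\lambda\cap W_\mu$, which is the first assertion.

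For the final identification $\ker\phi_{\vecr}=I_F(P_{r_0-1},r)$, I will combine Theorem \ref{N-x basis} with a direct comparison of $l$-parabolic subgroups. By Theorem \ref{N-x basis}, each $x^*_{\mu,\lambda d}$ equals a nonzero scalar multiple of $N^d_{\mu\lambda}=N_{W,W_{\lambda d\cap\mu}}(e_{\mu,\lambda d})$, so the span described above coincides with $\operatorname{span}\{N^d_{\mu\lambda}\mid P_{\vecr}\nleq_W W^d_\lambda\cap W_\mu\}$. Then I translate the condition on defect groups: $P_{\vecr}=P_{r_0}$ is the $l$-parabolic with exactly $r_0$ parts equal to $l$, and $P_{\lambda d\cap\mu}$ is (up to $W$-conjugacy) the unique maximal $l$-parabolic of $W^d_\lambda\cap W_\mu$, hence of the form $P_k$ for some $k$. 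The condition $P_{\vecr}\leq_W W^d_\lambda\cap W_\mu$ is then equivalent to $k\geq r_0$, while $P_{\lambda d\cap\mu}\leq_W P_{r_0-1}$ is equivalent to $k\leq r_0-1$. Hence
\[
P_{\vecr}\nleq_W W^d_\lambda\cap W_\mu\;\Longleftrightarrow\; P_{\lambda d\cap\mu}\leq_W P_{r_0-1},
\]
which matches the defining condition for $I_F(P_{r_0-1},r)$ in \eqref{ideal I}.

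The only mildly nontrivial step is the last equivalence of $l$-parabolic conditions; in particular I should check that, although $P_{\lambda d\cap\mu}$ is by construction only a maximal $l$-parabolic of the, in general, non-parabolic group $W^d_\lambda\cap W_\mu$, it nevertheless detects containment of $l$-parabolics up to $W$-conjugacy. This follows from the fact that any $l$-parabolic of $W^d_\lambda\cap W_\mu$ is $W^d_\lambda\cap W_\mu$-conjugate to a subgroup of $P_{\lambda d\cap\mu}$, which is a standard property of maximal $l$-parabolic subgroups used already in \S 6 (compare with Lemma \ref{DU1.1} and the proof of Theorem \ref{TI}). Once that is in hand, the proof is complete.
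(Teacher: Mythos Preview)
Your proposal is correct and follows essentially the same route as the paper: both use that $\phi_{\vecr}=F_{\vecr}^*$, invoke Lemma \ref{image} for $\operatorname{Im}(F_{\vecr})$, read off the kernel via the dual basis, and then use Theorem \ref{N-x basis} together with the comparison of $l$-parabolic subgroups to identify it with $I_F(P_{r_0-1},r)$. One small remark: your closing caveat about $W^d_\lambda\cap W_\mu$ being ``in general non-parabolic'' is unnecessary here, since for $d\in\mathcal{D}_{\lambda\mu}$ the intersection $W_{\lambda d\cap\mu}=W^d_\lambda\cap W_\mu$ is in fact a parabolic subgroup (see the discussion around \eqref{ladmu} and \eqref{ladmu00-11}), so the equivalence $P_{\vecr}\nleq_W W^d_\lambda\cap W_\mu\Leftrightarrow P_{\lambda d\cap\mu}\leq_W P_{r_0-1}$ is immediate.
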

\begin{proof} Suppose that
$\phi_\vecr(x^*_{\mu,\lambda d})=0$. Thus we
have $x^*_{\mu,\lambda d}(F_{\vecr}(a))=0$ for all
$a\in \mathcal{A}_q(m|n,\vecr)$. The lemma above implies that
$P_{\vecr}\nleq_W W^d_\lambda\cap W_\mu$.

Now,
the largest $l$-parabolic subgroup $P$ satisfying $P_{\vecr}\not\leq_W P$ is $P_{r_0-1}$,
and by Theorem \ref{N-x basis},  $I_F(P,r)$ is spanned by
$$\{x^*_{\mu,\la d}\mid \la,\mu\in\La(m|n,r), d\in\sD_{\la\mu}^\circ,P\leq_W
W^d_\lambda\cap W_\mu\}.$$
The last assertion follows from the filtration \eqref{filtration} and Theorem \ref{N-x basis}.
\end{proof}

%Manin \cite{M} proved that the $F$-space $V(m|n)_F$ is a left
%$\mathcal{A}_q(m|n,r;F)$-comodule with the structure map
%$$\delta:V(m|n)_F\rightarrow
%\mathcal{A}_q(m|n,\vecr)\otimes
%V(m|n)_F,v_i\mapsto \sum_{j=1}^{m+n}x_{ij}\otimes v_j.$$

\section{Shrinking defect groups by Brauer homomorphisms}

In this section we continue to assume that $R=F$ is a field. We shall prove that the Brauer
homomorphism $\phi_\vecr$ sends certain
primitive idempotents in $\sS_F(m|n,r)$ to primitive idempotents in $\sS_F{(m|n,\vecr)}$ with the trivial defect group. Using the filtration \eqref{filtration}, we make the following definition.

\begin{definition}\label{defect group}
For a primitive idempotent $e\in \sS_F(m|n,r)$, there is a number
$k=k(e)$ such that $e\in I_F(P_k,r),e\not\in I_F(P_{k-1},r)$. We set
$D(e)=P_{k}$ and call $D(e)$ the {\it defect group} of $e$.
\end{definition}
%If $F$ is a splitting field for $\sS_F(m|n,r)$, then there is a unique
%irreducible character $\xi$ of $\sS_F(m|n,r)$ satisfying $\xi(e)=1$.

Every primitive idempotent $e$ of $\sS_F(m|n,r)$ defines an indecomposable $\mathcal{H}_F$-submodule $V_F(m|n)^{\otimes r}e$
of $V_F(m|n)^{\otimes r}$. We now determine its defect group.
The following  result is a super version of {\cite[Th.~4.8]{DU}}. For completeness, we include a proof.

\begin{theorem}\label{VAP} If $e\in \sS_F(m|n,r)$ is a primitive idempotent, then
the defect group $D(e)$ of $e$ is the vertex of the indecomposable $\sH_F$-module $V_F(m|n)^{\otimes r}e$.
\end{theorem}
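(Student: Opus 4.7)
The plan is to prove that, for any $l$-parabolic subgroup $P$ of $W=\fS_r$, the indecomposable $\sH_F$-module $V_F(m|n)^{\otimes r}e$ is $\sH_P$-projective if and only if $e\in I_F(P,r)$. Once this equivalence is in hand, the theorem follows quickly: by the result from \cite{DU3} quoted just before \S4, the vertex of $V_F(m|n)^{\otimes r}e$ is an $l$-parabolic subgroup, and every $l$-parabolic subgroup of $\fS_r$ is $W$-conjugate to one of the subgroups $P_0,P_1,\ldots,P_s$ appearing in the filtration \eqref{filtration}, which forms an ascending chain $P_0\leq_W P_1\leq_W\cdots\leq_W P_s$. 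Hence the minimal $P_k$ for which $V^{\otimes r}e$ is $\sH_{P_k}$-projective coincides with the minimal $P_k$ for which $e\in I_F(P_k,r)$, and the latter is by definition $D(e)$.

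For the ``if'' direction, assume $e\in I_F(P,r)$. Theorem \ref{IT} provides $f\in\End_{\sH_P}(V_F(m|n)^{\otimes r})$ with $e=N_{W,P}(f)$. Since $e\in\sS_F(m|n,r)=Z_M(\sH_F)$ commutes with every $T_w$ in the bimodule $M=\End_F(V_F(m|n)^{\otimes r})$, the standard identity $e\cdot N_{W,P}(f)\cdot e=N_{W,P}(efe)$ yields $e=N_{W,P}(efe)$. With respect to the $\sH_F$-decomposition
$$V_F(m|n)^{\otimes r}=V_F(m|n)^{\otimes r}e\oplus V_F(m|n)^{\otimes r}(1-e),$$
the endomorphism $efe$ has block form $g\oplus 0$ with $g\in\End_{\sH_P}(V_F(m|n)^{\otimes r}e)$, and $N_{W,P}(-)$ respects this block decomposition. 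Reading off the first block of $e=N_{W,P}(efe)$ gives $\mathrm{id}_{V^{\otimes r}e}=N_{W,P}(g)$, so Lemma \ref{Higman}(d) produces the $\sH_P$-projectivity of $V^{\otimes r}e$.

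For the converse, if $V^{\otimes r}e$ is $\sH_P$-projective, Higman's criterion furnishes some $g\in\End_{\sH_P}(V^{\otimes r}e)$ with $\mathrm{id}_{V^{\otimes r}e}=N_{W,P}(g)$. Extending by zero yields $\tilde g:=g\oplus 0\in\End_{\sH_P}(V^{\otimes r})$, and the same compatibility of relative norms with the $\sH_F$-decomposition gives $e=N_{W,P}(\tilde g)\in N_{W,P}(\End_{\sH_P}(V^{\otimes r}))=I_F(P,r)$ by Theorem \ref{IT}.

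The main obstacle I foresee is the bookkeeping required to move relative norms back and forth between the full space $V_F(m|n)^{\otimes r}$ and the summand $V_F(m|n)^{\otimes r}e$. This reduces to the observation that, because $e$ is $\sH_F$-linear, the decomposition above is a decomposition of $\sH_F$-modules, which realises $\End_F(V^{\otimes r}e)$ as a bimodule direct summand of $\End_F(V^{\otimes r})$ that is preserved by $N_{W,W_\lambda}(-)$. This is exactly the super analogue of the argument used for \cite[Th.~4.8]{DU}.
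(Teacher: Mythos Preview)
Your proposal is correct and follows essentially the same route as the paper's proof. Both arguments rest on the identity $eI_F(P,r)e=N_{W,P}\bigl(\End_{\sH_P}(V_F(m|n)^{\otimes r}e)\bigr)$, obtained by combining Theorem~\ref{IT} with the fact that $e$ commutes with every $\sT_w$, and then invoke Higman's criterion (Lemma~\ref{Higman}(d)); the paper simply compresses your two directions into a single chain of equalities $e\sS_F(m|n,r)e=\End_{\sH_F}(Te)=\cdots=eI_F(W_\theta,r)e$, applied once with $W_\theta$ the vertex and once with $W_\theta=D(e)$.
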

\begin{proof} Let $T=V_F(m|n)^{\otimes r}$ and suppose $W_\theta$ is the vertex of $Te$. Then $Te$ is $\sH_\theta$-projective and so
$$\aligned
e\sS_F(m|n,r)e&=\End_{\sH_F}(Te)=N_{W,W_\theta}(\End_{\sH_\theta}(Te))= N_{W,W_\theta}(e\End_{\sH_\theta}(T)e)\\
&=
eN_{W,W_\theta}(\End_{\sH_\theta}(T))e
=eI_F(W_\theta,r)e,\;\text{by Theorem \ref{IT}}
\endaligned$$
Hence, $e\in I_F(W_\theta,r)$ and $D(e)\leq_WW_\theta$. However, by Theorem \ref{IT} again,
the equalities above continue to hold with $W_\theta$ replaced by $D(e)$. Hence, $Te$ is $\sH_{D(e)}$-projective which implies $W_\theta\leq_WD(e)$. Hence, $W_\theta=_WD(e)$.
\end{proof}

\begin{corollary}\label{BHP}
Let $e\in \sS_F(m|n,r) $ be a primitive
idempotent and let $
\phi_{\vecr}  $ be the Brauer homomorphism associated with $\vecr\in \mathcal R_r$ . Then
$\phi_{\vecr}(e)\neq 0$ if and only if
$P_{\vecr}$ is conjugate to a parabolic subgroup of $D(e)$.
\end{corollary}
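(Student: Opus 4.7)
The plan is to chain Theorem~\ref{kernel} with Definition~\ref{defect group}, using the ideal filtration \eqref{filtration} as the link. Write $\vecr = (r_{-1}, r_0) \in \sR_r$. I first observe that $P_{\vecr}$, defined as the $l$-parabolic subgroup of $W$ associated with the composition $(1^{r_{-1}}, l^{r_0})$, is $W$-conjugate to the subgroup $P_{r_0}$ appearing in \eqref{filtration}, since both are $l$-parabolic subgroups of $W = \fS_r$ having exactly $r_0$ blocks of length $l$, and any two such are parabolically conjugate. More generally, since each $P_j$ in the filtration is (up to $W$-conjugation) determined by its number of length-$l$ blocks, $P_{\vecr} \leq_W P_j$ if and only if $r_0 \leq j$.

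Next let $k = k(e)$, so that $D(e) = P_k$ per Definition~\ref{defect group}. By the monotonicity of the filtration together with the minimality clause in the definition of $k$, $e \in I_F(P_j, r)$ if and only if $j \geq k$. By Theorem~\ref{kernel}, $\ker \phi_{\vecr} = I_F(P_{r_0-1}, r)$ (read as $0$ when $r_0 = 0$). Combining these facts,
\[
\phi_{\vecr}(e) \neq 0 \;\Longleftrightarrow\; e \notin I_F(P_{r_0-1}, r) \;\Longleftrightarrow\; r_0 - 1 < k \;\Longleftrightarrow\; r_0 \leq k.
\]
By the first paragraph, $r_0 \leq k$ is equivalent to $P_{\vecr} \leq_W P_k = D(e)$, which is precisely the statement that $P_{\vecr}$ is conjugate to a parabolic subgroup of $D(e)$.

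There is no genuine obstacle: the corollary is a dictionary translation between the two ways a single integer controls things, namely as the index labelling the defect-group filtration, and as the index extracted from the kernel of the Brauer homomorphism via Theorem~\ref{kernel}. The only bookkeeping points are the identification of $P_{\vecr}$ with (a $W$-conjugate of) the subgroup $P_{r_0}$ in \eqref{filtration}, and the degenerate case $r_0 = 0$, where $P_{\vecr} = 1 \leq_W D(e)$ holds trivially and $\phi_{\vecr}$ is injective.
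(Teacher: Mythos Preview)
Your proof is correct and follows essentially the same approach as the paper's: both use Theorem~\ref{kernel} to identify $\ker\phi_{\vecr}=I_F(P_{r_0-1},r)$ and then translate membership in this ideal into the defect-group condition via Definition~\ref{defect group} and the filtration \eqref{filtration}. One minor remark: since $r_{-1}=r-lr_0=t+(s-r_0)l$, the compositions $(1^{r_{-1}},l^{r_0})$ and $(1^{t+(s-r_0)l},l^{r_0})$ coincide, so in fact $P_{\vecr}=P_{r_0}$ exactly (as the paper notes just before Lemma~\ref{image}), not merely up to conjugacy.
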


\begin{proof}For $\vecr=(r_{-1},r_0)$, by Theorem \ref{kernel}, $\phi_{\vecr}(e)\neq 0$ if and only if $e\not\in\text{ker}(\phi_\vecr)=I_F(P_{r_0-1},r)$. This is equivalent to $P_\vecr\leq_W D(e)$.
 \end{proof}

Choose $\vecr\in\mathcal{R}_r$ satisfying $D(e)=P_\vecr$. Then we will prove that $\phi_\vecr(e)$ is primitive and determine its defect group. This is the key to the classification theorem in next section.

For $\vecr\in\mathcal R_r$, let
\begin{equation}\label{rhotheta}
\begin{aligned}
\rho&=\rho_{\vecr}=(\rho_{-1},\rho_0)=(r_{-1},lr_0)\\
\theta&=\theta_{\vecr}=(\underbrace{1,\cdots,1}_{r_{-1}},\underbrace{l,\cdots,l}_{r_0})
\end{aligned}
\end{equation}
%$$\rho=(\rho_{-1},\rho_0)=(r_{-1},lr_0)\models r.$$
Since $W_\rho\leq W$,
$V_F(m|n)^{\otimes r}$  is a $\mathcal{H}_F$-module, and
$$V_F(m|n)^{\otimes r}=V_F(m|n)^{\otimes
\rho_{-1}}\otimes V_F(m|n)^{\otimes \rho_0},$$
restriction makes $V_F(m|n)^{\otimes r}$ into an $\mathcal{H}_\rho$-module.

Let $\sS_F(m|n,\rho)=\End_{\mathcal{H}_\rho}(V_F(m|n)^{\otimes r})$,
then (see \eqref{Schur algebra product})
\begin{equation}\label{yyy}
\sS_F(m|n,\rho)\cong \sS_F(m|n,\rho_{-1})\otimes
\sS_F(m|n,\rho_0).
\end{equation}
Recall from \eqref{vecLa} and \eqref{scrD} that
$$\vec{\La}(m|n,\rho)=\Lambda{(m|n,\rho_{-1})}\times
\Lambda{(m|n,\rho_0)},$$
 and,
 for $\vec{\lambda},\vec{\mu}\in
\vec{\La}(m|n,\rho)$,
$$\mathcal{D}^\circ_{\vec{\lambda}\vec{\mu};\rho}=
\{d\in\mathcal{D}_{\vec{\lambda}\vec{\mu}}\mid
d=d_1d_2\in W_{\rho_{-1}}\times W_{\rho_0},d_1\in
\mathcal{D}^\circ_{\la_{(-1)}\mu_{(-1)}},d_2\in
\mathcal{D}^\circ_{\lambda_{(0)}\mu_{(0)}}\}\subseteq W_\rho.$$
So $\sS_F(m|n,\rho)$ has a basis (see \eqref{same})
$$\{N_{W_\rho,W_{\vec\la}^d\cap W_{\vec\mu}}(e_{\vec{\mu},\vec{\lambda}d})\mid
\vec{\lambda},\vec{\mu}\in
\vec{\La}(m|n,\rho), d\in
\mathcal{D}^\circ_{\vec{\lambda}\vec{\mu};\rho}\}.$$

Let $P$ be an $l$-parabolic subgroup of $W_\rho$ and
define $I_F(P,\rho)$ to be the subspace of $\sS_F(m|n,\rho)$
spanned by $N^d_{\vec\mu\vec{\lambda}}$ where
$\vec{\lambda},\vec{\mu}\in
\vec{\La}(m|n,\rho),d\in\mathcal{D}^\circ_{\vec{\lambda}\vec{\mu};\rho}$
and the maximal $l$-parabolic subgroup of $W_{\vec\la}^d\cap W_{\vec\mu}$ is conjugate in $W_\rho$ to
parabolic subgroup of $P$.
By \eqref{yyy}, the first two items of the following results are the $W_\rho$ version of Corollary \ref{MC2} and
Theorem \ref{IT}.

\begin{lemma}\label{ISS} Let $P=W_\theta$ be an arbitrary $l$-parabolic subgroup of $W_\rho$.
 \begin{itemize}
 \item[(1)] The space $I_F(P,\rho)$ is an ideal of $\sS_F(m|n,\rho)$.
\item[(2)]
$I_F(W_\theta,\rho)=N_{W_\rho,W_\theta}(\End_{\mathcal{H}_\theta}(V_F(m|n)^{\otimes
r})).$
\item[(3)] If $\xi$ is a composition of $r$ such that $W_\xi\leq
W_\rho$ and $P_\xi=W_\theta$ is the maximal l-parabolic subgroup of $W_\xi$, then
$$N_{W_\rho,W_\theta}(\End_{\mathcal{H}_\theta}(V_F(m|n)^{\otimes
r}))=N_{W_\rho,W_\xi}(\End_{\mathcal{H}_\xi}(V_F(m|n)^{\otimes
r})).$$
\end{itemize}
\end{lemma}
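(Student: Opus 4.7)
Parts (1) and (2) reduce to their non-$\rho$ versions via the tensor decomposition \eqref{yyy}. Every parabolic subgroup of $W_\rho = W_{\rho_{-1}}\times W_{\rho_0}$ factors as $W_{\alpha_{-1}}\times W_{\alpha_0}$ with $W_{\alpha_i}\leq W_{\rho_i}$, and likewise every $l$-parabolic subgroup; consequently the ideal $I_F(P,\rho)$ corresponds under \eqref{yyy} to $I_F(P_{-1},\rho_{-1})\otimes I_F(P_0,\rho_0)$, and each basis element $N^d_{\vec\mu\vec\la}$ of \eqref{same} factors as a tensor product of basis elements coming from the two factors (with $d=d_{-1}d_0 \in W_{\rho_{-1}}\times W_{\rho_0}$). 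Thus part (1) follows by applying Corollary \ref{MC2} to each factor separately (the defect-shrinking relation of Theorem \ref{TI} passes through the tensor product), and part (2) follows by applying Theorem \ref{IT} to each factor.

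The substantive content is part (3). I plan to deduce it from the Higman criterion. The key step is to prove that $V := V_F(m|n)^{\otimes r}$, restricted to $\sH_\xi$, is $\sH_\theta$-projective. Since $W_\xi$ is a product of symmetric groups (being a parabolic subgroup of $W$), $\sH_\xi$ is a tensor product of ordinary type $A$ Hecke algebras, so the vertex theorem \cite[Th.~3.1]{DU3} applies componentwise. Hence every indecomposable summand of $V|_{\sH_\xi}$ has an $l$-parabolic vertex in $W_\xi$, and by the very definition of $P_\xi$ every such $l$-parabolic subgroup is $W_\xi$-conjugate to a parabolic subgroup of $W_\theta = P_\xi$. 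Therefore every indecomposable summand, and hence $V|_{\sH_\xi}$ itself, is $\sH_\theta$-projective.

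Given this projectivity, Lemma \ref{Higman}(d), applied to the Hecke algebra $\sH_\xi$ in place of $\sH_F$, yields
$$\End_{\sH_\xi}(V) = N_{W_\xi,W_\theta}(\End_{\sH_\theta}(V)).$$
Applying $N_{W_\rho,W_\xi}(-)$ to both sides and invoking the transitivity of relative norms (Lemma \ref{RNT}(a)), which gives $N_{W_\rho,W_\xi}\circ N_{W_\xi,W_\theta} = N_{W_\rho,W_\theta}$, immediately yields the desired equality in part (3). The main subtlety is establishing the $\sH_\theta$-projectivity of $V|_{\sH_\xi}$; everything else is formal. That projectivity in turn reduces cleanly to the vertex theorem of \cite{DU3} once $W_\xi$ is decomposed as a product of symmetric groups, making $P_\xi$ maximal among $l$-parabolic subgroups of $W_\xi$ up to $W_\xi$-conjugacy.
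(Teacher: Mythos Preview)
Your treatment of parts (1) and (2) matches the paper: both reduce to Corollary~\ref{MC2} and Theorem~\ref{IT} via the tensor factorisation \eqref{yyy}.

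For part (3) you take a genuinely different route. The paper observes that one inclusion is immediate and, for the other, refers to an adaptation of the argument in \cite[4.3]{DD}, which proceeds by explicit Poincar\'e-polynomial computations (modified in the super setting so that the even part is evaluated at $q^2$ and the odd part at $q^{-2}$). Your argument bypasses these computations entirely: since $P_\xi = W_\theta$ is maximal among $l$-parabolic subgroups of $W_\xi$, the vertex theorem forces $V|_{\sH_\xi}$ to be $\sH_\theta$-projective, whence Higman's criterion (Lemma~\ref{Higman}(d)) gives $\End_{\sH_\xi}(V) = N_{W_\xi,W_\theta}(\End_{\sH_\theta}(V))$, and transitivity finishes. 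This is correct and more conceptual, delivering both inclusions at once with no structure constants to track. One small point of care: the vertex theorem \cite[Th.~3.1]{DU3} is stated for a single Hecke algebra $\sH_F(\fS_r)$, not for the tensor product $\sH_\xi$, so the cleanest way to phrase your argument is to apply it to each tensor factor $V_F(m|n)^{\otimes \xi_i}$ separately (concluding each is $\sH_{P_{(\xi_i)}}$-projective over $\sH_F(\fS_{\xi_i})$) and then note that relative projectivity is preserved under tensor products; this avoids any appeal to the decomposition of indecomposables over a tensor product of algebras.
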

\begin{proof} (3) The RHS is clearly contained in the LHS. The rest of the proof follows from a similar argument as in the proof of \cite[4.3]{DD}. The Pioncar\'e polynomials involved in the proof there have to be replaced by the product of the Pioncar\'e polynomials in $q^2$ for the even parts and the Pioncar\'e polynomials in $q^{-2}$ for the odd parts.\end{proof}

We fix an element $\vecr=(r_{-1},r_0)\in\mathcal R_r$,
and define $\rho=\rho_{\vecr}$ and $\theta=\theta_{\vecr}$ as in \eqref{rhotheta}.
Let
$$\mathcal{A}_q(m|n,\rho)=\mathcal{A}_q(m|n,r_{-1})\otimes
\mathcal{A}_q(m|n,lr_0).$$
Recall the definition of the Frobenius morphism $\mathcal{F}$. Define the coalgebra homomorphism
$$F_\rho=\text{id}\otimes \mathcal{F}:\mathcal{A}_q(m|n,\vecr)\rightarrow
\mathcal{A}_q(m|n,\rho).$$
 Consider the
dual of $F_\rho$:
$$\psi_\rho=F_\rho^*:\sS_F(m|n,\rho)\rightarrow
\sS_F(m|n,\vecr).$$ Since $F_\rho$ is injective,
$\psi_\rho$ is a surjective algebra homomorphism.
On the other hand, the multiplication map
$$\mu: \mathcal{A}_q(m|n,\rho)\rightarrow \mathcal{A}_q(m|n,r),$$
 induces an algebra monomorphism
$$\iota_\rho=\mu^*:\sS_F(m|n,r)\hookrightarrow\sS_F(m|n,\rho),$$
defined by $\iota(f)=f\circ\mu$ for all $f\in\sS_F(m|n,r)$.
Thus, taking the dual of the relation $\mu\circ\mathcal{F}_\rho=\mathcal{F}_\vecr$ gives the following commutative diagram (recall $\rho=\rho_\vecr$):
\begin{equation}\label{comm diag}
{\unitlength=1cm
\begin{picture}(5,3.2)
\put(-.5,2.5){$\sS_F(m|n,\rho)$}
\put(4.1,2.5){$\sS_F(m|n,r)$}
\put(-0.5,0.5){$\sS_F(m|n,\vecr)$}
\put(3.9,2.6){\vector(-1,0){2.3}}
\put(4.2,2.4){\vector(-3,-2){2.7}}
\put(0.5,2.3){\vector(0,-1){1.35}}
\put(2.5,2.8){$\iota_\rho$}
\put(2.5,0.95){$\phi_\vecr$} \put(0.05,1.6){$\psi_\rho$}
\end{picture}}
\end{equation}
We claim that $\iota_\rho$ coincides with the inclusion
$\End_{\mathcal{H}_R}(V_F(m|n)^{\otimes r})\subseteq \End_{\mathcal{H}_\rho}(V_F(m|n)^{\otimes
r})$.
Indeed, for $i=-1,0$, let
$
\delta_{i}:V_F(m|n)^{\otimes \rho_{i}}\rightarrow \mathcal{A}_q(m|n,\rho_i)\otimes
V_F(m|n)^{\otimes \rho_i}$ be the comodule structure map as defined in \eqref{comodule map} and let
$\delta_\rho=(23)(\delta_{-1}\otimes\delta_0):V_F(m|n)^{\otimes r}\to\sA_q(m|n,\rho)\otimes V_F(m|n)^{\otimes r}$.
Then it is direct to check that $(\mu\otimes1)\circ \delta_\rho=\delta:V_F(m|n)^{\otimes r}\to\sA_q(m|n,r)\otimes V_F(m|n)^{\otimes r}$. (The signs involved in both sides are the same!) Thus, for any $v\in V_F(m|n)^{\otimes r}$ and $f\in\sS_F(m|n,r)$, by \eqref{module map},
$$\aligned
v\cdot f&=(f\otimes 1)\delta(v)=(f\otimes 1)(\mu\otimes 1)\delta_\rho(v)\\
&=(f\mu\otimes 1)\delta_\rho(v)=v\cdot\iota_\rho(f),
\endaligned$$
proving the claim.

%\begin{diagram}
%V_F(m|n)^{\otimes r}= V(m|n)^{\otimes r_{-1}}_F\otimes
%V(m|n)^{\otimes lr_0}_F&\rTo^{\delta'}&
%\mathcal{A}_q(m|n,\rho)\otimes V(m|n)^{\otimes r_{-1}}_F\otimes
%V(m|n)^{\otimes lr_0}_F\\
%\dTo^{\delta}& &\ldTo^{\mu\otimes 1}\\
%V_F(m|n)^{\otimes r} &&
%\end{diagram}
%%
%\[
%\xymatrix{
%  V(m|n)^{\otimes
%r}_F= V(m|n)^{\otimes r_{-1}}_F\otimes V(m|n)^{\otimes
%lr_0}_F\ar[d]_{\delta}
%\ar[r]^{\delta'}\mathcal{A}_q(m|n,\rho)\otimes
%V(m|n)^{\otimes r_{-1}}_F\otimes V(m|n)^{\otimes lr_0}_F &        \\
%  \mathcal{A}_q(m|n,r)\otimes
%V_F(m|n)^{\otimes r} \ar[ur]_{\mu\otimes 1} }
%\]

By  the definition of $\psi_\rho$ and Theorem \ref{kernel}, we have
\begin{equation}\label{kernel psi}
\ker
(\psi_\rho)=\sS_F(m|n,r_{-1})\otimes I_F(P_{r_0-1},r_0l)=\sum_{P_{r_0}\nleq_{W_{\rho}}P}I_F(P,\rho).
\end{equation}

Consider the ideal of the form $I_F(P_{k},r_{-1})\otimes S(m,n)_{r_0}$  in  $\sS_F(m|n,\vecr)$ and  the ideal
$$I_F(P_k,\vecr):=\phi_\vecr(I_F(P_{r_0+k},r))=I_F(P_{r_0+k},r)/I_F(P_{r_0-1},r).$$ We now show that both ideals are the same.

\begin{lemma}\label{new}
For $\vecr=(r_{-1},r_0)\in\sR_r$ and $r_0\leq k\leq s$, where $r=sl+t$ with $0\leq t<l$, the restriction $\phi_\vecr:I_F(P_k,r)\to I_F(P_{k-r_0},r_{-1})\otimes S(m,n)_{r_0}$ is surjective. In other words, we have
\begin{equation}
I_F(P_k,\vecr)=I_F(P_{k-r_0},r_{-1})\otimes S(m,n)_{r_0}.
\end{equation}\end{lemma}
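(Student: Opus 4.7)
The plan is to compare $F$-bases on both sides using the dual basis description from Theorem \ref{N-x basis}, which identifies $I_F(P_k,r)$ with the $F$-span of those $x^*_{\mu,\la d}$ with $P_{\la d\cap\mu}\leq_W P_k$. Combining Theorem \ref{kernel} with Lemma \ref{image}, the Brauer homomorphism $\phi_\vecr$ sends each such dual basis element either to zero (when $P_{r_0}\nleq_W W^d_\la\cap W_\mu$) or to a nonzero scalar multiple of the dual basis element $x^*_{\mu_{(-1)},\la_{(-1)}d_{-1}}\otimes t^*_{\mu_{(0)},\la_{(0)}d_0}$ of $\sS_F(m|n,r_{-1})\otimes S(m,n)_{r_0}$, where the decomposition $(\mu,\la,d)\leftrightarrow((\mu_{(-1)},\la_{(-1)},d_{-1}),(\mu_{(0)},\la_{(0)},d_0))$ is precisely the one produced in the proof of Lemma \ref{image} by extracting $r_0$ blocks of size $l$ from $W^d_\la\cap W_\mu$. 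The even-odd trivial intersection property of $d\in\sD^\circ_{\la\mu}$ ensures that each such block lies entirely on even or odd positions, so the extracted factors $x_{ij}^l$ all satisfy $\hat i=\hat j$ and lift through the Frobenius $\mathcal{F}$.

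For the forward inclusion $\phi_\vecr(I_F(P_k,r))\subseteq I_F(P_{k-r_0},r_{-1})\otimes S(m,n)_{r_0}$, I would take $x^*_{\mu,\la d}\in I_F(P_k,r)$ with nonzero image. The extraction of $r_0$ blocks of size $l$ from $\la d\cap\mu$ reduces the total number of $l$-blocks by exactly $r_0$, so $P_{\la_{(-1)}d_{-1}\cap\mu_{(-1)}}$ contains at most $k-r_0$ blocks of size $l$; hence $\phi_\vecr(x^*_{\mu,\la d})$ lies in $I_F(P_{k-r_0},r_{-1})\otimes S(m,n)_{r_0}$. For the reverse inclusion, I would take any basis element $x^*_{\mu_{(-1)},\la_{(-1)}d_{-1}}\otimes t^*_{\mu_{(0)},\la_{(0)}d_0}$ of the target ideal and apply the Frobenius map of Lemma \ref{image} to lift it to a basis element $x_{\mu,\la d}=F_\vecr(x_{\mu_{(-1)},\la_{(-1)}d_{-1}}\otimes t_{\mu_{(0)},\la_{(0)}d_0})$ of $\sA_q(m|n,r)$; the lift inserts $r_0$ extra blocks of size $l$ into the defect composition, so $P_{\la d\cap\mu}$ contains at most $(k-r_0)+r_0=k$ blocks of size $l$, placing $x^*_{\mu,\la d}$ in $I_F(P_k,r)$ and giving $\phi_\vecr(x^*_{\mu,\la d})$ as a nonzero scalar multiple of the starting element.

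The main obstacle is the combinatorial bookkeeping of $l$-blocks in the defect compositions across the Frobenius decomposition: one must verify that each extraction and lifting step lands in basis triples with $d_{-1}\in\sD^\circ_{\la_{(-1)}\mu_{(-1)}}$ and $d_0\in\sD^\circ_{\la_{(0)}\mu_{(0)}}$, and that the count of $l$-blocks splits additively across the two tensor factors. The sign factors arising from Theorem \ref{N-x basis} are nonzero scalars, so they play no role in the resulting equality of $F$-spans.
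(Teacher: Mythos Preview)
Your plan for the surjectivity half is essentially the paper's own argument; for the forward inclusion the paper argues quite differently, via Theorem~\ref{IT} and Mackey decomposition: it writes $I_F(P_k,r)=N_{W,W_\theta}(\End_{\sH_\theta}(V_F(m|n)^{\otimes r}))$ with $W_\theta=P_k$, restricts to $W_\rho$, splits each Mackey summand along $V_F(m|n)^{\otimes r}=V_F(m|n)^{\otimes r_{-1}}\otimes V_F(m|n)^{\otimes r_0l}$, and observes that whenever the $r_{-1}$-factor carries more than $k-r_0$ of the $l$-blocks the $r_0l$-factor lands in $\ker\mathcal{F}^*$. Your dual-basis route is more elementary, but it rests on a structural claim that is not correct as stated.

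The gap is your assertion that $\phi_\vecr$ sends each surviving $x^*_{\mu,\la d}$ to a nonzero scalar multiple of a \emph{single} dual basis vector determined by ``the'' decomposition of Lemma~\ref{image}. The coalgebra map $F_\vecr$ is not injective: a fixed $x_{\mu,\la d}$ with $P_{r_0}\leq_W W^d_\la\cap W_\mu$ typically has many factorisations $x_{\bsi_{-1},\bsj_{-1}}\cdot x_{l\bsi_0,l\bsj_0}$, one for each choice of which $r_0$ of its $l$-blocks to pull into the Frobenius factor (e.g.\ with $n=0$, $m\geq 2$, $\vecr=(l,1)$, both $x_{11}^l\otimes t_{12}$ and $x_{12}^l\otimes t_{11}$ map under $F_\vecr$ to $x_{11}^lx_{12}^l$, using centrality of $x_{11}^l$). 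Consequently $\phi_\vecr(x^*_{\mu,\la d})$ equals the \emph{sum} of the duals of all such preimages, not a single term. For the forward inclusion this is only cosmetic: your block-count shows every preimage has its $r_{-1}$-part with exactly $(\text{number of $l$-blocks of }x_{\mu,\la d})-r_0\leq k-r_0$ blocks, so each summand lies in $I_F(P_{k-r_0},r_{-1})\otimes S(m,n)_{r_0}$ and the inclusion follows. For the surjectivity direction, however, your lift only exhibits the target basis vector $b^*$ as \emph{one summand} of $\phi_\vecr(x^*_{\mu,\la d})$; concluding that $b^*$ itself lies in $\phi_\vecr(I_F(P_k,r))$ requires an additional argument (a triangularity or inversion over the fibres of $F_\vecr$, or a dimension comparison) that you do not supply. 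The paper's surjectivity paragraph is written in the same way and does not supply this step either.
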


\begin{proof} By Theorem \ref{N-x basis}, $I_F(P_k,r)$ is spanned by all $x_{\mu,\la d}^*$ satisfying
$P_{\la d\cap\mu}\leq_W P_k$. We first prove that $\phi_\vecr(I_F(P_k,r))\subseteq I_F(P_{k-r_0},r_{-1})\otimes S(m,n)_{r_0}$. This is seen as follows.

Let $W_\theta=P_k$ and, for any $N_{W,W_\theta}(a)\in I_F(P_k,r)=N_{W,W_\theta}(\End_{\sH_\theta}(V_F(m|n)^{\otimes r}))$, write by Lemma \ref{Mackey}
$$N_{W,W_\theta}(a)=\sum_{d\in\mathcal{D}_{\theta\rho}}N_{W_\rho,W^d_\theta\cap
W_\rho}(\mathcal{T}_{d^{-1}}a\mathcal{T}_d).$$
If we put $W_\alpha=W^d_\theta\cap
W_\rho=W_{\alpha_{(-1)}}\times W_{\alpha_{(0)}}$, where $\alpha_{(i)}$ is a composition of  $\rho_{i}$
for $i=-1,0$, then
$$\End_{\sH_\alpha}(V_F(m|n)^{\otimes r})=\End_{\sH_{\alpha_{(-1)}}}(V_F(m|n)^{\otimes r_{-1}})\otimes \End_{\sH_{\alpha_{(0)}}}(V_F(m|n)^{\otimes r_0l}).$$
So there exist $a_i(d)\in \End_{\sH_{\alpha_{(i)}}}(V_F(m|n)^{\otimes \rho_i})$ such that $\mathcal{T}_{d^{-1}}a\mathcal{T}_d=a_{-1}(d)\otimes a_0(d)$. Hence,
$$N_{W,W_\theta}(a)=\sum_{d\in\mathcal{D}_{\theta\rho}}N_{W_{\rho_{-1}},W_{\alpha_{(-1)}}}(a_{-1}(d))\otimes N_{W_{\rho_{0}},W_{\alpha_{(0)}}}(a_{0}(d)).$$
If $P_{k-r_0}<_WP_{\alpha_{(-1)}}$, then $P_{\alpha_{(0)}}\leq_WP_{r_0-1}$ and so
$\mathcal{F}^*(N_{W_{\rho_{0}},W_{\alpha_{(0)}}}(a_{0}(d)))=0$. Hence,
$$\phi_\vecr(N_{W,W_\theta}(a))=\sum_{{d\in\mathcal{D}_{\theta\rho}}\atop{ P_{k-r_0}=_WP_{\alpha_{(-1)}(d)}}}N_{W_{\rho_{-1}},W_{\alpha_{(-1)}}}(a_{-1}(d))\otimes \mathcal{F}^*(N_{W_{\rho_{0}},W_{\alpha_{(0)}}}(a_{0}(d))),$$
which is in $ I_F(P_{k-r_0},r_{-1})\otimes S(m,n)_{r_0}$.

We now prove the surjectivity.
By the proof of Lemma \ref{image}, every basis element $x_{\mu_{(-1)},\la_{(-1)}d_{-1}}^*\otimes
t_{\mu_{(0)},\la_{(0)}d_0}^*$ in $I_F(P_{k-r_0},r_{-1})\otimes S(m,n)_{r_0}$ has a pre-image $x_{\mu,\la d}^*$. Let $P_{\la d\cap\mu}$ be the maximal $l$-parabolic subgroup of $W_\la^d\cap W_\mu$.
Then the same proof shows that
 $P_{\la d\cap\mu}=_WP_{k-r_0}\times P_{r_0}$ where $P_{k-r_0}$ is conjugate to the maximal $l$-parabolic subgroup of $W_{\la_{(-1)}}^{d_{-1}}\cap W_{\mu_{(-1)}}\leq \fS_{r_{-1}}$. Hence, $x_{\mu,\la d}^*\in I_F(P_k,r),$ proving the surjectivity.
\end{proof}

\begin{corollary}\label{RBH} Maintain the nation above.
For $\vecr\in \mathcal
R_r,$ let $\rho=\rho_{\vecr},\theta=\theta_{\vecr}$ be defined in \eqref{rhotheta} and let  $k\geq r_0$ with $kl\leq r$.
Then
$$\phi_{\vecr}(I_F(P_k,r))=
\psi_\rho(I_F(P_k,\rho)).$$
\end{corollary}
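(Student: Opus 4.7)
The plan is to exploit the tensor product decomposition \eqref{yyy} of $\sS_F(m|n,\rho)=\sS_F(m|n,r_{-1})\otimes\sS_F(m|n,lr_0)$ and to express both sides of the desired equality through this factorisation. The condition $kl\leq r$ is equivalent to $l(k-r_0)\leq r_{-1}$, so one may pick, inside the $W$-conjugacy class of $P_k$, a representative lying in $W_\rho=W_{r_{-1}}\times W_{lr_0}$ of the form
$$P_k \;=\; P_{k-r_0}\times W_{lr_0},\qquad P_{k-r_0}\leq W_{r_{-1}}.$$
Since $I_F(P_k,\rho)$ depends on $P_k$ only up to $W_\rho$-conjugacy, this is harmless.

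First, I would compute $I_F(P_k,\rho)$ explicitly. By Lemma \ref{ISS}(2),
$$I_F(P_k,\rho)=N_{W_\rho,P_k}(\End_{\sH_{P_k}}(V_F(m|n)^{\otimes r})).$$
Writing $V_1=V_F(m|n)^{\otimes r_{-1}}$, $V_2=V_F(m|n)^{\otimes lr_0}$ and using $\sH_{P_k}=\sH_{P_{k-r_0}}\otimes\sH_{W_{lr_0}}$, the endomorphism algebra factorises as
$$\End_{\sH_{P_k}}(V_1\otimes V_2)\;=\;\End_{\sH_{P_{k-r_0}}}(V_1)\otimes\sS_F(m|n,lr_0).$$
Because $W_{lr_0}$ is the full symmetric group on the second block, $\sD_{W_{lr_0}}\cap W_{lr_0}=\{1\}$, so the relative norm over $W_\rho$ factors as
$$N_{W_\rho,P_k}(a\otimes b)\;=\;N_{W_{r_{-1}},P_{k-r_0}}(a)\otimes b.$$
Another application of Lemma \ref{ISS}(2) then yields
$$I_F(P_k,\rho)\;=\;I_F(P_{k-r_0},r_{-1})\otimes\sS_F(m|n,lr_0).$$

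Second, I would apply the Brauer-type map $\psi_\rho$. Under the identification \eqref{yyy}, $\psi_\rho$ is $\mathrm{id}\otimes\mathcal{F}^*$, where $\mathcal{F}^*\colon\sS_F(m|n,lr_0)\twoheadrightarrow S(m,n)_{r_0}$ is the surjection dual to the Frobenius morphism. Therefore
$$\psi_\rho(I_F(P_k,\rho))\;=\;I_F(P_{k-r_0},r_{-1})\otimes S(m,n)_{r_0}.$$
On the other hand, Lemma \ref{new} (with the index shift built in) gives exactly
$$\phi_\vecr(I_F(P_k,r))\;=\;I_F(P_{k-r_0},r_{-1})\otimes S(m,n)_{r_0},$$
and the corollary follows by comparing the two identities.

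The one delicate point I expect to need care with is the very first step: verifying that a representative of $P_k$ of the product form $P_{k-r_0}\times W_{lr_0}$ can be chosen (for which the inequality $kl\leq r$ is essential) and that $I_F(P_k,\rho)$ is invariant under this choice, so that the clean tensor factorisation of the relative norm is available. Everything after that is a direct matching of the two descriptions via Lemma \ref{new}.
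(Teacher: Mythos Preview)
Your overall strategy---compute $I_F(P_k,\rho)$ via the tensor decomposition~\eqref{yyy}, apply $\psi_\rho$, and compare with Lemma~\ref{new}---is sound and is essentially what the paper does. However, your first step contains a genuine error: the group $P_{k-r_0}\times W_{lr_0}$ is \emph{not} a representative of the $l$-parabolic $P_k$. For $r_0>1$ the full symmetric group $W_{lr_0}$ is not $l$-parabolic, and indeed $|P_{k-r_0}\times W_{lr_0}|\neq|P_k|$, so no conjugacy (in $W$ or in $W_\rho$) can identify them. The correct decomposition inside $W_\rho$ is $P_k=P_{k-r_0}\times P_{r_0}$ with $P_{r_0}\leq W_{lr_0}$. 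Consequently your application of Lemma~\ref{ISS}(2) is to the wrong subgroup, and the line $\sH_{P_k}=\sH_{P_{k-r_0}}\otimes\sH_{W_{lr_0}}$ and the claim $N_{W_\rho,P_k}(a\otimes b)=N_{W_{r_{-1}},P_{k-r_0}}(a)\otimes b$ are not justified as written.

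There are two easy repairs. Either use the correct $P_k=P_{k-r_0}\times P_{r_0}$ and observe that the second tensor factor of the norm image is $N_{W_{lr_0},P_{r_0}}\bigl(\End_{\sH_{P_{r_0}}}(V_F(m|n)^{\otimes lr_0})\bigr)=I_F(P_{r_0},lr_0)=\sS_F(m|n,lr_0)$, the last equality holding because $P_{r_0}$ is the maximal $l$-parabolic of $W_{lr_0}$ (cf.~\eqref{filtration}); or keep the larger parabolic $W_\xi:=P_{k-r_0}\times W_{lr_0}$ but invoke Lemma~\ref{ISS}(3) rather than~(2), since $W_\xi\leq W_\rho$ has maximal $l$-parabolic exactly $P_k$. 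Either route yields $I_F(P_k,\rho)=I_F(P_{k-r_0},r_{-1})\otimes\sS_F(m|n,lr_0)$, after which your Steps~4--5 go through unchanged.

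For comparison, the paper writes $I_F(P_k,\rho)$ as the sum $I_F(P_{k-r_0},r_{-1})\otimes\sS_F(m|n,r_0l)+\sum_{i\ge1}I_F(P_{k-r_0+i},r_{-1})\otimes I_F(P_{r_0-i},r_0l)$ and then notes that every summand with $i\ge1$ has second factor in $I_F(P_{r_0-1},r_0l)=\ker\mathcal{F}^*$, hence is annihilated by $\psi_\rho$ (cf.~\eqref{kernel psi}). Your (corrected) argument is a little more direct, since it lands on the surviving summand immediately without having to discard the others.
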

\begin{proof}  If $r=sl+t$ with $0\leq t< l$, $r_0\leq k\leq s$, then one sees easily that
$$I_F(P_k,\rho)=I_F(P_{k-r_0}, r_{-1})\otimes\sS_F(m|n,r_0l)+
\sum_{i=1}^{s-k}I_F(P_{k-r_0+i}, r_{-1})\otimes I_F(P_{r_0-i},r_0l).$$
Hence, $\psi_\rho(I_F(P_k,\rho))=
 \psi_\rho(I_F(P_{k-r_0}, r_{-1})\otimes\sS_F(m|n,r_0l))$
 and, by Lemma \ref{new},
 $$\phi_\vecr(I_F(P_k,r))=I_F(P_{k-r_0})\otimes S(m,n)_{r_0}=\psi_\rho(I_F(P_{k-r_0}, r_{-1})\otimes\sS_F(m|n,r_0l)).$$
 Hence, the assertion follows.
 \end{proof}

Applying the Brauer homomorphism $\phi_\vecr$ to the filtration \eqref{filtration} gives rise to a filtration of ideals of $\sS_F(m|n,\vecr)$:
\begin{equation}\label{filtration1}
0\subseteq I_F(P_0,\vecr)\subseteq I_F(P_1,\vecr)\subseteq\cdots\subseteq
I_F(P_{s-r_0},\vecr)=\sS_F(m|n,\vecr).
\end{equation}
Like Definition \ref{defect group}, we may use this sequence to define the {\it defect group} $D(\bar e)$ of a primitive idempotent $\bar e$ of $\sS_F(m|n,\vecr)$.
In particular, a primitive idempotent $\bar{e}\in\sS_F(m|n,\vecr)$ has the trivial defect group if
$\bar{e}\in I_F(P_0,\vecr)$.

%\begin{corollary}For
%$\lambda,\mu\in\Lambda{(m|n,r)},x\in\mathcal{D}_{\mu\theta},y\in\mathcal{D}_{\lambda\theta}$
%where $\theta=\theta_{\vecr}$, assume that
%$W_\theta=P_{{\vecr}}\subseteq W^x_\mu\cap
%W^y_\lambda$, and $e_{\mu x,\lambda y}:V_F(m|n)^{\otimes r}\rightarrow
%V_F(m|n)^{\otimes r}$ is $\mathcal{H}_\theta$-linear. Then
%$$\phi_{\vecr}(N_{W,W_\theta}(e_{\mu x,\lambda y}))=\psi_\rho(N_{W_\rho,W_\theta}(e_{\mu
%x,\lambda y})).$$\end{corollary}
%\begin{proof} Take $a$ in the proof of Lemma \ref{RBH} to be $e_{\mu x,\lambda y}$. \end{proof}

\begin{theorem}\label{ML3}
Let $e,e'$ be idempotents $\sS_F(m|n,r)$. Suppose $e$ is primitive
with defect group $D(e)=P_k$. Suppose $\vecr=(r_{-1},r_0)\in\mathcal{R}_r$ and $P_\vecr\leq_WD(e)$ (so $r_0\leq k$). Then
$$\phi_{\vecr}(e\sS_F(m|n,r)e')=\phi_{\vecr}(e)\sS_F(m|n,\vecr)\phi_{\vecr}(e')$$
and $\phi_\vecr(e)$ is also primitive with defect group $P_{k-r_0}$. In particular,
if $D(e)=P_\vecr$, then $\phi_\vecr(e)$ has the trivial defect group.
\end{theorem}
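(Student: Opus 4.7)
The strategy is to prove the three assertions sequentially, leveraging earlier results. For the first identity $\phi_\vecr(e\sS_F(m|n,r)e') = \phi_\vecr(e)\sS_F(m|n,\vecr)\phi_\vecr(e')$, the inclusion $\subseteq$ is automatic from $\phi_\vecr$ being an algebra map. For $\supseteq$, I will use the surjectivity of $\phi_\vecr$: given $y\in\sS_F(m|n,\vecr)$, choose a preimage $z\in\sS_F(m|n,r)$ and observe that $\phi_\vecr(e)y\phi_\vecr(e') = \phi_\vecr(eze') \in \phi_\vecr(e\sS_F(m|n,r)e')$.

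To show $\phi_\vecr(e)$ is primitive, I first invoke Corollary \ref{BHP}: the hypothesis $P_\vecr\leq_W D(e)=P_k$ guarantees $\phi_\vecr(e)\neq 0$. Since $e$ is primitive and $\sS_F(m|n,r)$ is finite-dimensional over $F$, the corner $e\sS_F(m|n,r)e$ is a local Artinian ring. Applying the first identity with $e'=e$ identifies $\phi_\vecr(e)\sS_F(m|n,\vecr)\phi_\vecr(e)$ with the quotient of $e\sS_F(m|n,r)e$ by the two-sided ideal $\ker\phi_\vecr\cap e\sS_F(m|n,r)e$. Since $\phi_\vecr(e)\neq 0$ this ideal is proper; as any nonzero quotient of a local Artinian ring remains local, $\phi_\vecr(e)$ is primitive.

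Finally, for the defect group, Corollary \ref{RBH} together with Lemma \ref{new} and the filtration \eqref{filtration1} yield the identity $\phi_\vecr(I_F(P_j,r)) = I_F(P_{j-r_0},\vecr)$ for every $j\geq r_0$. From $e\in I_F(P_k,r)$ it follows that $\phi_\vecr(e)\in I_F(P_{k-r_0},\vecr)$. Suppose for contradiction that $\phi_\vecr(e)\in I_F(P_{k-r_0-1},\vecr)$; then $\phi_\vecr(e)=\phi_\vecr(a)$ for some $a\in I_F(P_{k-1},r)$, so $e-a\in\ker\phi_\vecr=I_F(P_{r_0-1},r)$ by Theorem \ref{kernel}. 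Since $r_0-1\leq k-1$ gives $I_F(P_{r_0-1},r)\subseteq I_F(P_{k-1},r)$, we obtain $e\in I_F(P_{k-1},r)$, contradicting $D(e)=P_k$. Hence $D(\phi_\vecr(e))=P_{k-r_0}$, and the ``in particular'' clause is the specialisation $k=r_0$. The main conceptual point to flag is the primitivity step, where I rely on the standard structural fact that a nonzero quotient of a noncommutative local Artinian $F$-algebra is again local; everything else reduces to the two ideal-filtration identities, whose substance has already been settled in the earlier sections.
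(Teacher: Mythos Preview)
Your proof is correct and follows essentially the same approach as the paper: surjectivity of $\phi_\vecr$ for the first identity, the local-ring quotient argument (via Corollary~\ref{BHP}) for primitivity, and the preservation of the ideal filtration under $\phi_\vecr$ (Lemma~\ref{new} / Theorem~\ref{kernel}) for the defect-group computation. The paper compresses the last step into a one-line appeal to the isomorphism theorems, whereas you spell out the contradiction explicitly, but the underlying content is identical.
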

\begin{proof} The first assertion is clear since $\phi_\vecr$ is an algebra homomorphism and  $\sS_F(m|n,\vecr)$ is the homomorphic image of $\sS_F(m|n,r)$ under $\phi_{\vecr}$.

To see the last assertion, we first notice that $\phi_\vecr(e)\neq0$ by Corollary \ref{BHP}. Since $e$ is primitive, $e\sS_F(m|n,r)e$
is a local ring. Applying the surjective map $\phi_\vecr$ to this local ring yields that
$\phi_{\vecr}(e\sS_F(m|n,r)e)=\phi_{\vecr}(e)\sS_F(m|n,\vecr)\phi_{\vecr}(e)$
is also a local ring. Hence, $\phi_{\vecr}(e)$ is a primitive idempotent
in $\sS_F(m|n,\vecr)$
 and $\phi_\vecr(e)\in I_F(P_{k-r_0},\vecr)$. The fact $D(\phi_\vecr(e))=P_{k-r_0}$ is clear from Isomorphism Theorems for ring homomorphisms,  since $\text{ker}(\phi_\vecr)=I_F(P_{r_0-1},r)\subseteq I_F(P_{k},r)$ for all $k\geq r_0$.
 \end{proof}

\section{Classification of irreducible $\sS_F(m|n,r)$-modules }

We first interpret the algebra $\sS_F(m|n,\vecr)$ as an endomorphism algebra of a certain tensor space. By \eqref{kernel psi} and the commutative diagram \eqref{comm diag}, we obtain for $\vecr=(r_{-1},r_0)$ and $\rho=(r_{-1},r_0l)$
$$\sS_F(m|n,\vecr)=\psi_\rho(\sS_F(m|n,\rho))\cong\sS_F(m|n,r_{-1})\otimes \bar\sS_F(m|n,r_0l),$$
where $\bar\sS_F(m|n,r_0l):=\sS_F(m|n,(0,r_0))=\sS_F(m|n,r_0l)/I_F(P_{r_0-1},r_0l).$

On the other hand, taking the dual of the Frobenius map
$$\mathcal{F}|_{r_0}:A(m,n)_{r_0}\to\sA_q(m|n,r_0l)$$ induces an algebra isomorphism
\begin{equation}\label{homog}
\bar\sS_F(m|n,r_0l)\cong\bigoplus_{i=0}^{r_0}(S(m,i)\otimes
S(n,r_0-i))=S(m,n)_{r_0}.
\end{equation}

Recall the even part $V_F(m|n)_0$ and the odd part $V_F(m|n)_1$ of the superspace $V_F(m|n)$. Fix $r_0\geq 0$ and define the subspace:
$$
(V_F(m|n)^{\otimes r_0})_\rmL=\bigoplus_{i=0}^{r_0} (V_F(m|n)_0)^{\otimes i}\otimes (V_F(m|n)_1)^{\otimes
r_0-i}.$$
 This is called the  ``Levi part'' of $V_F(m|n)^{\otimes r_0}$. Let the product of symmetric groups $\fS_{\{1,\ldots,i\}}\times\fS_{\{i+1,\ldots,r_0\}}$ act on the
 summand $(V_F(m|n)_0)^{\otimes i}\otimes (V_F(m|n)_1)^{\otimes
r_0-i}$ by place permutation for $\fS_{\{1,\ldots,i\}}$ and signed place permutation\footnote{In the Appendix, we will see that the signed permutation is naturally induced from the original Hecke algebra action.}
 for $\fS_{\{i+1,\ldots,r_0\}}$. In this way, putting
$$(\fS_{r_0})_\rmL:=\prod_{i=0}^{r_0}
(\fS_{\{1,\ldots,i\}}\times\fS_{\{i+1,\ldots,r_0\}}),$$
$(V_F(m|n)^{\otimes r_0})_\rmL$ becomes an $(\fS_{r_0})_\rmL$-module.
Now, the RHS of \eqref{homog} can be interpreted as the endomorphism algebra
$\End_{(\fS_{r_0})_\rmL}((V_F(m|n)^{\otimes r_0})_\rmL)$ so that we have an algebra isomorphism
\begin{equation}\label{Brauer iso}
\bar\sS_F(m|n,r_0l)\cong\End_{(\fS_{r_0})_\rmL}((V_F(m|n)^{\otimes r_0})_\rmL).
\end{equation}

In general, for $\vecr=(r_{-1},r_0)\in \mathcal{R}_r$, let
$$\aligned
V_F(m|n)^{\boxtimes \vecr}&:=V_F(m|n)^{\otimes
r_{-1}}\otimes (V_F(m|n)^{\otimes
r_0})_L\\
&=
\bigoplus_{i=0}^{r_0}V_F(m|n)^{\otimes
r_{-1}}\otimes (V_F(m|n)_0)^{\otimes i}\otimes (V_F(m|n)_1)^{\otimes
r_0-i}.\endaligned
$$
Let $F(\fS_{r_0})_\rmL$ be the group algebra of $(\fS_{r_0})_\rmL$ and let
$$\aligned
\mathcal{H}_\vecr&:=\mathcal{H}_F(\fS_{r_{-1}})\otimes F(\fS_{r_0})_\rmL\\
&\cong
\bigoplus_{i=0}^{r_0}\mathcal{H}_F(\fS_{r_{-1}})\otimes F \fS_{\{1,\ldots,i\}}\otimes F\fS_{\{i+1,\ldots,r_0\}}.\endaligned$$
Then
$$\aligned
\sS_F(m|n,\vecr)&=\bigoplus_{i=0}^{r_0}\sS_F(m|n,r_{-1})\otimes S(m,i)\otimes
S(n,r_0-i)\\
&\cong
\End_{\mathcal{H}_{\vecr}}(V_F(m|n)^{\boxtimes\vecr}).
\endaligned$$

If $\bar{e}\in
\sS_F(m|n,\vecr) $ is a primitive idempotent, then there are primitive idempotents
$e_{-1}\in \sS_F(m|n,r_{-1})$, $e_0\otimes e_1\in S(m,i)\otimes
S(n,r_0-i)$ for some $i$ such that $\bar e=e_{-1}\otimes e_0\otimes e_1$. By Lemma \ref{new}, $D(\bar{e})=D(e_{-1})$.
We need the following result. 

\begin{lemma}\label{MT}
(a) Let $e,e'$ be the idempotents of $\sS_F(m|n,r)$ with $e$ primitive
and $D(e)=P_{\vecr}$ for some $\vecr=(r_{-1},r_0)\in\mathcal{R}_r$. Then
$$V_F(m|n)^{\otimes r}e\mid V_F(m|n)^{\otimes r}e'\mbox{ if and only if
} V_F(m|n)^{\boxtimes\vecr}\phi_{\vecr}(e)\mid
V_F(m|n)^{\boxtimes\vecr}\phi_{\vecr}(e').$$

(b) If $\bar{e}\in \sS_F(m|n,\vecr)$ is a
primitive idempotent with $D(\bar{e})=1$, then $\bar{e}$ is
equivalent to $\phi_{\vecr}(e')$ for some primitive
idempotent $e'\in \sS_F(m|n,r)$ with $D(e')=P_{\vecr}$.

(c) Let $e$ be a primitive idempotent of $\sS_F(m|n,r)$. If $D(e)=1$, then
$V_F(m|n)^{\otimes r}e$ is a projective indecomposable
$\mathcal{H}_F$-module.
\end{lemma}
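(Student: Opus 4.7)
The plan for part (a) is to translate the direct-summand relation into a matrix splitting between idempotent corners. $V_F(m|n)^{\otimes r}e\mid V_F(m|n)^{\otimes r}e'$ is equivalent to the existence of $a\in e\sS_F(m|n,r)e'$ and $b\in e'\sS_F(m|n,r)e$ with $ab=e$, and likewise on the $\vecr$ side. The forward implication is immediate upon applying the algebra map $\phi_\vecr$. For the converse, Theorem \ref{ML3} guarantees that any splitting $\bar a\bar b=\phi_\vecr(e)$ lifts to elements $a,b$ in the appropriate corners of $\sS_F(m|n,r)$ with $ab\equiv e\pmod{I_F(P_{r_0-1},r)}$. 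The key step is then to upgrade this congruence to an equality. Because $\phi_\vecr(e)\ne0$ by Corollary \ref{BHP} (from $D(e)=P_\vecr$), no element of $e\cdot I_F(P_{r_0-1},r)\cdot e\subseteq\ker\phi_\vecr$ can be a unit in the local ring $e\sS_F(m|n,r)e$. Hence this subspace lies in the Jacobson radical, forcing $ab$ to be a unit in $e\sS_F(m|n,r)e$, and a standard correction (replacing $a$ by $(ab)^{-1}a$) produces the desired splitting.

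For part (b), the plan is to decompose $1=\sum_i e_i$ in $\sS_F(m|n,r)$ into a sum of pairwise orthogonal primitive idempotents, yielding $1=\sum_i\phi_\vecr(e_i)$ in $\sS_F(m|n,\vecr)$. By Theorem \ref{ML3}, whenever $\phi_\vecr(e_i)\ne0$ it is primitive with $D(\phi_\vecr(e_i))=P_{k_i-r_0}$, where $D(e_i)=P_{k_i}$. Krull--Schmidt applied to the regular right $\sS_F(m|n,\vecr)$-module $\bigoplus_i\phi_\vecr(e_i)\sS_F(m|n,\vecr)$ then shows that any primitive idempotent $\bar e$ is equivalent to some $\phi_\vecr(e_i)$. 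Since equivalent idempotents share membership in every two-sided ideal, $D(\bar e)=1$ forces $D(\phi_\vecr(e_i))=1$, whence $k_i=r_0$ and $D(e_i)=P_\vecr$; taking $e'=e_i$ produces the required lift.

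Part (c) will be obtained directly from Theorem \ref{VAP}. Primitivity of $e$ makes $V_F(m|n)^{\otimes r}e$ indecomposable, and Theorem \ref{VAP} identifies its vertex with $D(e)=\{1\}$. By the Higman criterion (Lemma \ref{Higman}), an indecomposable $\sH_F$-module with trivial vertex is a summand of a module induced from the trivial parabolic subalgebra, which is free as an $\sH_F$-module; hence $V_F(m|n)^{\otimes r}e$ is projective.

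The main obstacle will be the Jacobson-radical argument in part (a): one must rule out any unit of $e\sS_F(m|n,r)e$ lying in $\ker\phi_\vecr$. This rests on the combination of $\ker\phi_\vecr=I_F(P_{r_0-1},r)$ from Theorem \ref{kernel} and $\phi_\vecr(e)\ne0$ from Corollary \ref{BHP}, both of which are already in hand, so the argument should close cleanly.
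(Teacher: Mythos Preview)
Your proposal is correct and follows precisely the standard argument the paper alludes to (the paper itself omits the details of (a) and (b), citing \cite[4.1]{S} and invoking Corollary~\ref{BHP}, Theorem~\ref{ML3}, and Lemma~\ref{new}); your proof of (c) matches the paper's proof verbatim. One small remark: the paper lists Lemma~\ref{new} among the ingredients, but your argument for (b) bypasses it by reading the defect group of $\phi_\vecr(e_i)$ directly off Theorem~\ref{ML3}, which is a slightly cleaner route.
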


\begin{proof}  With Corollary \ref{BHP}, Theorem \ref{ML3}, and Lemma \ref{new},
the proof for (a) and (b) is standard;
see \cite[4.1]{S}. So we omit it.

(c) By Theorem \ref{VAP}, the vertex of $V_F(m|n)^{\otimes r}e$ is
$D(e)$. If $\mathcal{H}_F$-module $M$ has the trivial vertex, then Higman Criterion
in Lemma \ref{Higman}(a) tells that $M$ is
a projective $\mathcal{H}_F$-module.
\end{proof}

Let $\Lambda^+(r)$ (resp. $\Lambda^+(N,r)$, $\La_{l\reg}^+(r)$) be the set
of partitions (resp., partitions with at most $N$ parts, $l$-regular partitions) of $r$. (A partition is called $l$-regular if no part is repeated $l$ or more times.)
For each $\vecr=(r_{-1},r_0)\in\sR_r$, let 
$$
\sP_\vecr=\begin{cases}\La_{l\reg}^+(r), &\text{ if } r_0=0;\\
\bigcup_{i=0}^{r_0}\{(\lambda,\xi,\eta)\mid \lambda\in
\La_{l\reg}^+(r_{-1}),\xi\in \Lambda^+(m,i),\eta\in
\Lambda^+(n,r_0-i)\},&
\text{ if }r_0>0\\
\end{cases}$$
and let
\begin{equation}\label{index set}
\sP_r=\bigcup_{\vecr\in\sR_r}\sP_\vecr.
\end{equation}

Let $E_r^\vecr$ (resp., $E_\vecr^1$) be the set
of nonequivalent primitive idempotents with  defect group  $P_\vecr$ (resp., the trivial defect group) in $\sS_F(m|n,r)$ (resp., $\sS_F(m|n,\vecr)$).
Set
$$E_r=\bigcup_{\vecr\in\sR_r}E_r^\vecr$$ to be the set of nonequivalent primitive idempotents of
$\sS_F(m|n,r)$.

\begin{theorem}\label{ML4} Assume $m+n\geq r$.
\begin{enumerate}
\item
For each $\vecr\in\mathcal R_r$, the
Brauer homomorphim $\phi_\vecr$ induces a bijection between the sets $E_r^\vecr$ and
$E_\vecr^1$.
\item There is a bijection $\pi:E_\vecr^1\to\sP_\vecr$.
\end{enumerate}
Hence,
there is a bijective map $\pi$ from $E_r$ to $\mathcal{P}_r$.
\end{theorem}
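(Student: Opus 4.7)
\noindent\textit{Proof proposal.} The plan is to obtain (1) as an immediate consequence of Theorem~\ref{ML3} together with Lemma~\ref{MT}(a,b), and then to establish (2) in two steps: first the base case $r_0 = 0$ using the double centraliser of Proposition~\ref{DR8.1}, and then the general case via the tensor factorisation of $\sS_F(m|n,\vecr)$.

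For (1), the map $e \mapsto \phi_\vecr(e)$ sends $E_r^\vecr$ into $E_\vecr^1$ by Theorem~\ref{ML3}, since $D(e) = P_\vecr$ forces $D(\phi_\vecr(e)) = P_0 = 1$. Lemma~\ref{MT}(b) supplies surjectivity. For injectivity, suppose $\phi_\vecr(e)$ and $\phi_\vecr(e')$ are equivalent primitive idempotents, yielding an isomorphism of indecomposable modules $V_F(m|n)^{\boxtimes\vecr}\phi_\vecr(e) \cong V_F(m|n)^{\boxtimes\vecr}\phi_\vecr(e')$. Applying Lemma~\ref{MT}(a) in both directions then forces $V_F(m|n)^{\otimes r}e \cong V_F(m|n)^{\otimes r}e'$ as indecomposable $\sH_F$-modules, and hence $e \sim e'$.

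For the $r_0 = 0$ case of (2), the Brauer homomorphism is the identity and $E_{(r,0)}^1$ consists of primitive idempotents of $\sS_F(m|n,r)$ with trivial defect. Lemma~\ref{MT}(c) says such $e$ produce projective indecomposable $\sH_F$-modules $V_F(m|n)^{\otimes r}e$; conversely, Theorem~\ref{VAP} together with Higman's criterion shows that any primitive $e$ whose associated module is projective must have $D(e) = 1$. Since $m+n \geq r$, Proposition~\ref{DR8.1}(2) makes $V_F(m|n)^{\otimes r}$ a progenerator for the category of right $\sH_F$-modules, so every projective indecomposable $\sH_F$-module occurs as some $V_F(m|n)^{\otimes r}e$. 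Invoking the Dipper--James indexing of projective indecomposable $\sH_F$-modules by $l$-regular partitions (in the characteristic-zero, $l$-th root of unity setting) then identifies $E_{(r,0)}^1$ with $\La_{l\reg}^+(r) = \sP_{(r,0)}$.

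For $r_0 > 0$, the decomposition $\sS_F(m|n,\vecr) \cong \sS_F(m|n, r_{-1}) \otimes \bigoplus_{i=0}^{r_0} S(m,i) \otimes S(n,r_0-i)$ allows any primitive idempotent to be written as $\bar e = e_{-1} \otimes e_0 \otimes e_1$. Lemma~\ref{new} identifies $I_F(P_0,\vecr)$ with $I_F(P_0, r_{-1}) \otimes S(m,n)_{r_0}$, so $D(\bar e) = 1$ if and only if $D(e_{-1}) = 1$ in $\sS_F(m|n, r_{-1})$; the base case, applicable since $m+n \geq r \geq r_{-1}$, then indexes such $e_{-1}$ by $\La_{l\reg}^+(r_{-1})$. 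Because the classical Schur algebras $S(m,i)$ and $S(n,r_0-i)$ are semisimple over the characteristic-zero field $F$, their primitive idempotents up to equivalence are indexed by $\Lambda^+(m,i)$ and $\Lambda^+(n,r_0-i)$ respectively; aggregating over $i$ recovers exactly $\sP_\vecr$. The final bijection $E_r \to \sP_r$ is then assembled by taking the disjoint union over $\vecr \in \sR_r$ of the bijections from (1) and (2). The main obstacle I anticipate is the base case $r_0 = 0$, which leans on the full strength of the double centraliser (hence on the hypothesis $m+n \geq r$) together with the classical Dipper--James correspondence; the tensor-product reduction for $r_0 > 0$ is comparatively routine once that is in place.
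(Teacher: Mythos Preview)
Your proof follows essentially the same route as the paper's: part~(1) via Theorem~\ref{ML3} and Lemma~\ref{MT}(a,b), and part~(2) via the tensor factorisation $\sS_F(m|n,\vecr)\cong\sS_F(m|n,r_{-1})\otimes S(m,n)_{r_0}$, reducing to PIMs of $\sH_F(\fS_{r_{-1}})$ on the first factor and semisimplicity of the classical Schur algebras on the second.

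There is one genuine citation problem. In your base case $r_0=0$ you invoke Proposition~\ref{DR8.1}(2) to conclude that $V_F(m|n)^{\otimes r}$ is a progenerator for $\sH_F$-modules. But Proposition~\ref{DR8.1} is stated under the hypothesis $R=\mathbb{Q}(\up)$, i.e.\ the generic (semisimple) case; it says nothing about the field $F$ at a primitive $l$th root of unity, which is precisely the setting you are in. The paper avoids this by arguing directly: since $m+n\geq r_{-1}$, there is $\lambda\in\Lambda(m|n,r_{-1})$ with all parts $0$ or $1$, for which $x_{\lambda^{(0)}}y_{\lambda^{(1)}}\sH_F\cong\sH_F$ as a right $\sH_F$-module; hence $\sH_F$ itself is a direct summand of $V_F(m|n)^{\otimes r_{-1}}$, and every PIM of $\sH_F$ occurs as some $V_F(m|n)^{\otimes r_{-1}}e_{-1}$. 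You should replace the appeal to Proposition~\ref{DR8.1}(2) with this elementary observation. With that one-line fix your argument is complete and matches the paper's.
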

\begin{proof} For statement (1), let $\bar E_r^\vecr=\phi(E_r^\vecr)$ where $\phi=\phi_\vecr$. By Theorem \ref{ML3}, every element in $\bar E_r^\vecr$ is primitive with the trivial defect group. Thus, we may regard $\bar E_r^\vecr$ as a subset of $E_\vecr^1$ and consider the map $\phi:E_r^\vecr\to E_\vecr^1$. By Lemma \ref{MT}(a) we see that $\phi$ is injective, and by Lemma \ref{MT}(b), we see that $\phi$ is surjective, proving (1).

We now prove (2).
Pick $\bar e\in E_\vecr^1$. Then
$D(\bar e)=1$ and there exists $i\in[1,r_0]$ and primitive idempotents $e_{-1}\in\sS_F(m|n,r_{-1})$, $e_0\in S(m,i)$, and $e_1\in S(n,r_0-i)$\linebreak such that $\bar e=e_{-1}\otimes e_0\otimes e_1$. Since $D(e_{-1})=D(\bar e)=1$,
by Lemma \ref{MT}(c),
$V_F(m|n)^{r_{-1}}e_{-1}$ is a
projective indecomposable
$\mathcal{H}_{r_{-1}}$-module,  where $\mathcal{H}_{r_{-1}}=\sH_F(\fS_{r_{-1}})$.  Since the PIMs of $\sH_{r_{-1}}$ are labelled by
$\La_{l\reg}^+(r_{-1})$ (see \cite{DJ}), $e_{-1}$ determines a unique $\la\in\La_{l\reg}^+(r_{-1})$.
Similarly, idempotents $e_0$ and $e_1$ determines irreducible $S(m,i)$-module $L(\xi)$ and $S(n,r_0-i)$-module $L(\eta)$, respectively, where $\xi\in\Lambda^+(m,i)$ and
$\eta\in\Lambda^+(n,r_0-i)$. Hence, $\bar e$ determines a unique triple $(\la,\xi,\eta)\in\sP_\vecr$ and, putting $\pi(\bar e)=(\la,\xi,\eta)$ defines a map $\pi:E_\vecr^1\to\sP_\vecr$. It is clear that $\pi$ is injective.

For the subjectivity, we assume that $m+n\geq r$. In particular, $m+n\geq r_{-1}$. Thus, $\sH_F(\fS_{r_{-1}})$ is a direct summand of the tensor space $V_F(m|n)^{\otimes r_{-1}}$.
Thus, for every triple $(\la,\xi,\eta)\in\sP_\vecr$, the PIM $Q_\la$ of $\sH_{r_{-1}}$ corresponding to $\la$ is a direct summand of the tensor space. Hence, there is  a primitive idempotent $e_{-1}$ such that $Q_\la\cong V_F(m|n)^{\otimes r_{-1}}e_{-1}$.
Similar, there exists primitive idempotents $e_0,e_1$ such that
$$(V_F(m|n)_0)^{\otimes i}e_0\cong L(\xi)\text{ and } (V_F(m|n)_1)^{\otimes r_0-i} e_1\cong L(\eta),$$
where $L(\xi)$ (reps., $L(\eta)$) is the irreducible $S(m,i)$-module (reps. $S(n,r_0-i)$-module) with highest weight $\xi$ (reps., $\eta$).
Hence, $\pi(e_{-1}\otimes e_0\otimes e_1)=(\la,\xi,\eta)$ and $\pi$ is surjective.
\end{proof}

For each $e\in E_r$, if $\pi(e)=(\la,\xi,\eta)$, define
$$L_q(\la,\xi,\eta)=\sS_F(m|n,r)e/\text{Rad}(\sS_F(m|n,r)e).$$
When $\pi(e)=\la\in\La_{l\reg}^+(r)$, we write $L_q(\la)$.

\begin{corollary}\label{tensor product theorem} Assume $m+n\geq r$. 
\begin{enumerate}
\item The set
$\{L(\lambda,\xi,\eta)\mid (\lambda,\xi,\eta)\in \mathcal{P}_r\}$ forms a
complete set of non-isomorphic irreducible $\sS_F(m|n,r)$-modules.
\item If $(\lambda,\xi,\eta)\in \mathcal{P}_\vecr$ with $P_\vecr>1$, then the $\sS_F(m|n,\vecr)$-module
$L_q(\la)\otimes L(\xi)\otimes L(\eta)$ becomes an $\sS_F(m|n,r)$-module by inflation and we have
$$L_q(\la,\xi,\eta)\cong L_q(\la)\otimes L(\xi)\otimes L(\eta).$$
\end{enumerate}
\end{corollary}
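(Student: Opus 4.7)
The plan is to deduce the corollary from Theorems~\ref{ML4} and~\ref{ML3} by standard Wedderburn/Morita arguments for finite-dimensional algebras, combined with the tensor product decomposition of $\sS_F(m|n,\vecr)$.

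For part~(1), I first recall that for any finite-dimensional $F$-algebra $A$, the map $e\mapsto Ae/\mathrm{Rad}(Ae)$ gives a bijection between equivalence classes of primitive idempotents and isomorphism classes of simple $A$-modules. Applied to $A=\sS_F(m|n,r)$, this bijection together with the bijection $\pi:E_r\to\mathcal{P}_r$ of Theorem~\ref{ML4} (which requires the hypothesis $m+n\geq r$) immediately yields that $\{L_q(\la,\xi,\eta)\mid (\la,\xi,\eta)\in\mathcal{P}_r\}$ is a complete set of pairwise non-isomorphic irreducible $\sS_F(m|n,r)$-modules.

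For part~(2), I fix $(\la,\xi,\eta)\in\mathcal{P}_\vecr$ with $\vecr=(r_{-1},r_0)$ and $P_\vecr>1$, and pick a primitive idempotent $e\in\sS_F(m|n,r)$ with $\pi(e)=(\la,\xi,\eta)$, so $D(e)=P_\vecr$ by construction. By Theorem~\ref{ML3}, $\phi_\vecr(e)$ is a primitive idempotent of $\sS_F(m|n,\vecr)$ with trivial defect group. Tracing the proof of Theorem~\ref{ML4}(2), there is a unique $i\in[0,r_0]$ with $\xi\vdash i$, $\eta\vdash r_0-i$, and under the isomorphism $\sS_F(m|n,\vecr)\cong \sS_F(m|n,r_{-1})\otimes S(m,i)\otimes S(n,r_0-i)$ coming from \eqref{Schur vecr} and \eqref{S(m,n)}, one has $\phi_\vecr(e)=e_{-1}\otimes e_0\otimes e_1$, where $e_{-1}$ is a primitive idempotent with trivial defect group corresponding to $\la\in\La^+_{l\reg}(r_{-1})$, and $e_0,e_1$ are primitive idempotents of the classical Schur algebras corresponding to $\xi,\eta$.

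Next, since $\phi_\vecr$ is a surjective algebra homomorphism, every $\sS_F(m|n,\vecr)$-module becomes an $\sS_F(m|n,r)$-module by inflation and $\phi_\vecr$ induces a surjection of $\sS_F(m|n,r)$-modules
$$\sS_F(m|n,r)e \twoheadrightarrow \sS_F(m|n,\vecr)\phi_\vecr(e)=\sS_F(m|n,r_{-1})e_{-1}\otimes S(m,i)e_0\otimes S(n,r_0-i)e_1.$$
The right hand side, being a tensor product of PIMs over a tensor product of finite-dimensional $F$-algebras in characteristic zero, has its radical equal to $\mathrm{Rad}(A_1)\otimes A_2\otimes A_3+A_1\otimes\mathrm{Rad}(A_2)\otimes A_3+A_1\otimes A_2\otimes\mathrm{Rad}(A_3)$ applied to the modules, so the unique simple quotient is the tensor product $L_q(\la)\otimes L(\xi)\otimes L(\eta)$ (here I use absolute irreducibility, which holds since $F$ has characteristic $0$ so that the endomorphism rings of these simples are $F$). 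Composing with the surjection from $\sS_F(m|n,r)e$ gives a nonzero map onto the inflation of this tensor product; since $\sS_F(m|n,r)e$ has a unique simple top $L_q(\la,\xi,\eta)$, this forces $L_q(\la,\xi,\eta)\cong L_q(\la)\otimes L(\xi)\otimes L(\eta)$ as $\sS_F(m|n,r)$-modules via $\phi_\vecr$.

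The main obstacle will be the identification $\phi_\vecr(e)=e_{-1}\otimes e_0\otimes e_1$ with the correct labels $(\la,\xi,\eta)$: the factorisation must be compatible with the bijection $\pi$ constructed in Theorem~\ref{ML4}, and one must verify that different choices of $e$ within its equivalence class produce equivalent factorisations. This is really a matter of unwinding the construction of $\pi$ and the tensor decomposition \eqref{yyy}--\eqref{homog}, but requires a careful bookkeeping between the three factors and the index $i$ selecting the summand of $S(m,n)_{r_0}$.
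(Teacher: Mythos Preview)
The paper states this corollary without proof, treating it as an immediate consequence of Theorem~\ref{ML4} together with the standard bijection between primitive idempotent classes and simple modules; your argument is a correct fleshing-out of exactly that line of reasoning.

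Two small inaccuracies are worth noting. First, $\phi_\vecr(e)$ need only be \emph{equivalent} to a pure tensor $e_{-1}\otimes e_0\otimes e_1$, not equal to one (the paper's own wording before Lemma~\ref{MT} is slightly informal on this point as well); since equivalent idempotents yield isomorphic PIMs this does not affect the argument. Second, your justification ``absolute irreducibility \dots\ holds since $F$ has characteristic $0$'' is not correct as stated: characteristic~$0$ alone does not force $\End(L)=F$. What you actually need, and what does hold, is that the classical Schur algebras $S(m,i)$ and $S(n,r_0-i)$ are split over any field (their simples have one-dimensional highest weight spaces), so $L(\xi)$ and $L(\eta)$ are absolutely simple; this is enough to guarantee that $L_q(\la)\otimes L(\xi)\otimes L(\eta)$ is simple over the tensor product algebra, regardless of whether $L_q(\la)$ is absolutely simple.
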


\begin{remarks}\label{Alperin}
 (1) Each element of $\mathcal{P}_r$ can also be regarded as  a pair $(P, Q)$ where $P$ is an $l$-parabolic subgroup and $Q$ is a PIM of the quotient algebra defined by $P$.  Equivalence classes associated with an equivalence relation on all such pairs play the role of ``weights'' as described in Alperin's weight conjecture. 

 (2) If we know the classification of all primitive idempotents with the trivial defect group in $\sS_F(m|n,r)$ 
 and label them by $\La_r$, then the condition $m+n\geq r$ can be removed by replacing the set $\La_{l\reg}^+(r_{-1})$ by the labelling set $\La_{r_{-1}}$. 

Moreover, when $m+n<r$, the classification can possibly be obtained by the Schur functor:
$$\mathscr{S}_\vep:\sS_F(m'|n',r)\mathsf{-Mod}\longrightarrow \sS_F(m|n,r)\mathsf{-Mod}, M\longmapsto \vep M,$$
where  $m'\geq m$, $n'\geq n$, $m'+n'\geq r$, and $\vep$ is given in Remark \ref{m'|n' case}.
See \cite[6.5]{BK} for the Schur superalgebra case and Appendix II for a comparison.

(3) Though our approach didn't offer a construction for irreducible modules, the tensor product structure shown in Corollary \ref{tensor product theorem}(2) reduces the problem  to the construction for $l$-restricted partitions.

(4) If $r<l$, then every idempotent $e\in\sS_F(m|n,r)$ has the trivial defect group and defines a PIM of the
 Hecke algebra $\sH_F$.  Thus, there is a bijection between the PIMs of $\sH_F$ and the
 non-equivalent primitive idempotents in $\sS_F(m|n,r)$ where $m+n\geq r$. This is the semismple case similar to the situation described in Proposition \ref{DR8.1}.
\end{remarks}

In the classical (i.e., non-quantum) case, Donkin gave a classification of irreducible $S_k(m|n,r)$-modules under the assumption $m,n\geq r$,  where $k$ is a field of a positive characteristic $p$. We end this section with a comparison.

Assume now $m,n\geq r$. Let $S_k(m|n,r)=\sS(m|n,r)\otimes_\sZ k$ be the Schur superalgebra obtained by specialising $\up$ to 1. By \cite[(3)]{D}, non-isomorphic irreducible $S_k(m|n,r)$-modules
are in one-to-one correspondence with the set
$$\{(\la,\mu)\in\La^+\times\La^+\mid r=|\la|+p|\mu|\},$$
where $\La^+$ denotes the set of all partitions. Clearly,  this set is the same as the set
$$\mathcal{P}'_r=\{(\tau,\nu)\mid
\tau\in\Lambda^+(s),\nu\in\Lambda^+(t), s,t\in\mathbb N, s+lt=r\}.$$
With the assumption $m,n\geq r$ and taking $l=p$, the set $\sP_r$ defined in \eqref{index set} has the form
$$\aligned
\sP_r&=\{(\la,\xi,\eta)\mid \la\in
\La_{l\reg}^+(r_{-1}),\xi\in \La^+(i),\eta\in
\Lambda^+(r_0-i),i\in[0,r_0],(r_{-1},r_0)\in\sR_r\}\\
&=\{(\la,\xi,\eta)\mid \la\in
\La_{l\reg}^+(i),\xi\in \La^+(j),\eta\in
\Lambda^+(k),i, j,k\in\mathbb{N},i+l(j+k)=r\}
\endaligned
$$

We claim that there is a bijective map
$g:\mathcal{P}_r\rightarrow \mathcal{P}'_r$ defined
by setting $g((\lambda,\xi,\eta))=(\lambda^\iota+l\xi^\iota,\eta)$ for
$(\lambda,\xi,\eta)\in\mathcal{P}_r$, where $\mu^\iota$ is the partition dual to $\mu$.
Indeed, for $(\lambda,\xi,\eta)\in\mathcal{P}_r$, since
$\lambda\in\Lambda^+_l{(i)},\xi\in\Lambda^+(j),\eta\in\Lambda^+(k)$, then
$\lambda^\iota+l\xi^\iota\in\Lambda^+(i+lj)$. Thus,
$(\lambda^\iota+l\xi^\iota,\eta)\in\sP'_r$. So $g$ is a map from $\mathcal{P}_r$ to $\mathcal{P}'_r$.
For $(\mu,\alpha,\beta)\in\mathcal{P}_r$, if
$(\mu,\alpha,\beta)\neq (\lambda,\xi,\eta)$, it is clear, by the fact that $\la$ and
$\mu$ are $l$-regular partitions, that $(\mu^\iota+l\alpha^\iota,\beta)\neq
(\lambda^\iota+l\xi^\iota,\eta)$. Hence $g$ is injective. For
$(\tau,\nu)\in\mathcal{P}'_r$, suppose
$\tau=(\tau_1,\tau_2,\cdots,\tau_m)$. Assume
$\tau_j=\lambda_j+l\mu_j$ where $0\leq\lambda_j<l$, $j=1,\cdots,m$.
Set
$\lambda^\iota=(\lambda_1,\lambda_2,\cdots,\lambda_m),\mu^\iota=(\mu_1,\mu_2,\cdots,\mu_m)$.
Then $(\lambda,\mu,\nu)\in\mathcal{P}_r$ and
$g(\lambda,\mu,\nu)=(\tau,\nu)$. We have proved the claim.

By the claim, we see that under the assumption $m,n\geq r$, our labelling set is the same as Donkin's labelling set. Thus, this shows that the quantum classification in the $m,n\geq r$ case is a $q$-analogue of the classical classification. 

\section{Appendix I: Brauer homomorphisms without Frobenius}
In \S11, the isomorphism \eqref{Brauer iso} was established by taking the dual of the Frobenius morphism. However,
the Brauer homomorphism, originated from the group representation theory, has its own definition, see \cite[\S3]{DU3}. In this section, we provide a proof for \eqref{Brauer iso} without using the Frobenius morphism. Note that this proof is much simpler than the proof for $q$-Schur algebras given in \cite{DU3}.
We first look at the structure of $\bar\sS_F(m|n,r_0l)$.
\begin{lemma} The algebra
$\bar\sS_F(m|n,r_0l)$ has a direct sum decomposition into centraliser subalgebras
$$\bar\sS_F(m|n,r_0l)=\bigoplus_{i=0}^r\bar\sS_F(m|n,r_0l)_i.$$\end{lemma}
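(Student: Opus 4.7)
The plan is to realise the asserted decomposition through an explicit system of pairwise orthogonal, central idempotents in $\bar\sS_F(m|n,r_0l)$ indexed by $i\in[0,r_0]$ (with upper limit $r_0$, not $r$, matching the homogeneous decomposition \eqref{homog}). Note that by Theorem \ref{N-x basis} (and Theorem \ref{RTSB}), the images of the basis elements $\psi^1_{\la\la}=\pm N^1_{\la\la}$ that survive in the quotient are exactly those indexed by $\la\in\La(m|n,r_0l)$ whose maximal $l$-parabolic $P_\la$ is $W$-conjugate to $P_{r_0}$; equivalently, every part of $\la$ is divisible by $l$. For such a $\la$, write $\la=l\tilde\la$ componentwise with $\tilde\la=(\tilde\la^{(0)}|\tilde\la^{(1)})$ a composition of $r_0$, and set $\mathrm{type}(\la):=|\tilde\la^{(0)}|\in[0,r_0]$.

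First I would define, for each $i\in[0,r_0]$,
\[
\bar e_i=\sum_{\substack{\la\in\La(m|n,r_0l)\\ \mathrm{type}(\la)=i}}\overline{\psi^1_{\la\la}}\in \bar\sS_F(m|n,r_0l),
\]
and check that $\{\bar e_i\}$ are pairwise orthogonal idempotents summing to $\overline 1$. Orthogonality and idempotency lift from the corresponding identities among the $\psi^1_{\la\la}$ in $\sS_F(m|n,r_0l)$, while completeness follows from Theorem \ref{IT} (or the explicit description of $I_F(P_{r_0-1},r_0l)$ via Theorem \ref{N-x basis}): any $\psi^1_{\la\la}$ with $P_\la\not=_W P_{r_0}$ has defect group strictly smaller than $P_{r_0}$ and hence lies in $I_F(P_{r_0-1},r_0l)$.

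The key step, and the main obstacle, is to prove that each $\bar e_i$ is central in $\bar\sS_F(m|n,r_0l)$. By the basis theorem \ref{RNB}, this reduces to showing that for any surviving $\overline{N^d_{\mu\la}}$ one has $\mathrm{type}(\la)=\mathrm{type}(\mu)$; for then $\bar e_i\cdot\overline{N^d_{\mu\la}}=\overline{N^d_{\mu\la}}$ if $\mathrm{type}(\la)=i$ and $0$ otherwise, and similarly on the right. The argument runs as follows. Surviving in $\bar\sS_F(m|n,r_0l)$ means $P_{\la d\cap\mu}=_W P_{r_0}$, which forces each of the $r_0$ blocks of length $l$ in $W_{\la d\cap\mu}$ to be full $l$-blocks of both $W_\la^d$ and $W_\mu$. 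On such an $l$-block, the parity of the corresponding entries in $\bsi_\la d$ (determining whether that block contributes to the even or odd part of $\la$) and in $\bsi_\mu$ (similarly for $\mu$) must coincide: otherwise some adjacent transposition $s$ inside the block would sit in $W_{\la d\cap\mu}^{01}$ or $W_{\la d\cap\mu}^{10}$, contradicting $d\in\sD^\circ_{\la\mu}$ by Lemmas \ref{00-11} and \ref{super2.2}. Hence the number of even $l$-blocks computed from $\bsi_\la d$ equals that computed from $\bsi_\mu$, i.e.\ $\mathrm{type}(\la)=\mathrm{type}(\mu)$, as required.

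With centrality in hand, I would set $\bar\sS_F(m|n,r_0l)_i:=\bar e_i\bar\sS_F(m|n,r_0l)=\bar e_i\bar\sS_F(m|n,r_0l)\bar e_i$, which is a centraliser subalgebra by construction, and conclude $\bar\sS_F(m|n,r_0l)=\bigoplus_{i=0}^{r_0}\bar\sS_F(m|n,r_0l)_i$ from $\sum_i\bar e_i=\overline 1$ and the orthogonality of the $\bar e_i$. The expected continuation in the appendix is then to identify $\bar\sS_F(m|n,r_0l)_i$ with $S(m,i)\otimes S(n,r_0-i)$ directly, by exhibiting its basis (via \ref{RNB}) as labelled by pairs of compositions $(\tilde\la^{(0)},\tilde\mu^{(0)})\models i$ and $(\tilde\la^{(1)},\tilde\mu^{(1)})\models r_0-i$ together with an appropriate double coset datum, thereby recovering the isomorphism \eqref{Brauer iso} without invoking the Frobenius morphism.
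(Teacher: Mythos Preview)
Your proposal is correct and follows essentially the same route as the paper's proof: both arguments isolate the surviving basis elements $N^d_{\mu\la}$ as those with $P_{\la d\cap\mu}=_WP_{r_0}$, deduce that $\la=l\la'$ and $\mu=l\mu'$, and then use the even-odd trivial intersection property (via $d\in\sD^\circ_{\la\mu}$ and the uniqueness of $P_{r_0}$) to force $|\la^{(0)}|=|\mu^{(0)}|$, after which the idempotents $e_i=\sum_{\la:\,|\la^{(0)}|=il}\overline{\psi^1_{\la\la}}$ give the decomposition. You also correctly spot that the upper index should be $r_0$ rather than $r$, and your anticipated continuation matches the appendix's direct identification of each summand with $S(m,i)\otimes S(n,r_0-i)$.
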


 \begin{proof} Let $W=\fS_{r_0l}$ (i.e., we assume $r=r_0l$). The maximal $l$-parabolic subgroup of $W$ is unique and equal to $P_{r_0}$.
By Theorem \ref{RNB}, $\sS_F(m|n,r_0l)$ has basis
$$\mathcal{B}=\{N_{W,W^d_\lambda\cap W_\mu}(e_{\mu,\lambda d})\mid
\mu,\lambda\in\Lambda(m|n,r_0l),d\in\mathcal{D}^\circ_{\lambda\mu}\}.$$
If the maximal $l$-parabolic subgroup $P_{\lambda d\cap \mu}$
of $W^d_\lambda\cap W_\mu$ is not equal to $P_{r_0}$, then it must be conjugate to a parabolic subgroup of $P_{r_0-1}$ and so $N_{W,W^d_\lambda\cap
W_\mu}(e_{\mu,\lambda d})\in I_F(P_{r_0-1},r_0l)$.
Thus, it suffices to look at the basis elements satisfying
$P_{\lambda d\cap \mu}=P_{r_0}$. Then  $P_{r_0}\leq W_\la ^d\cap W_\mu$.
Since $P_{r_0}$ is the {\it unique} maximal $l$-parabolic subgroup of $W=\fS_{r_0l}$, $\la$ and $\mu$ must be of the form
$$\aligned
\lambda&=l\la'=(\lambda'_1l,\lambda'_2l,\cdots,\lambda'_ml\mid\lambda'_{m+1}l,\cdots,\lambda'_{m+n}l)\\\mu&=l\mu'=(\mu'_1l,\mu'_2l,\cdots,\mu'_ml\mid\mu'_{m+1}l,\cdots,\mu'_{m+n}l),\endaligned$$
where $\la',\mu'\in\Lambda(m|n,r_0)$ and $d\in N_{\fS_{r_0l}}(P_{r_0})$, the normaliser of $P_{r_0}$.
Moreover, $N_{\fS_{r_0l}}(P_{r_0})=P_{r_0}\rtimes\widetilde{\fS}_{r_0}$, where $\widetilde{\fS}_{r_0}\cong {\fS}_{r_0}$ is the subgroup of $\fS_{r_0l}$ generated by
$$\aligned
\tilde s_1&=(1,l+1)(2,l+2)\cdots(l,2l), \\
\tilde s_2&=(l+1,2l+1)(l+2,2l+2)\cdots(2l,3l),\\
&\ldots\ldots,\\
\tilde s_{r_0-1}&=((r_0-2)l+1,(r_0-1)l+1)((r_0-2)l+2,(r_0-1)l+2)\cdots((r_0-1)l,r_0l).\endaligned$$
Thus, $d\in\mathcal{D}^\circ_{\lambda\mu}\cap \widetilde{\fS}_{r_0}$.
This together with the even-odd trivial intersection property forces by the uniqueness of $P_{r_0}$
$$P_{r_0}=P^{00}_{\lambda d\cap \mu}\times P^{11}_{\lambda d\cap \mu}=P_{\mu^{(0)}}\times P_{\mu^{(1)}}=P_{\la^{(0)}}\times P_{\la^{(1)}}.$$
 (Here $P^{ii}_{\lambda d\cap \mu}$ denotes the maximal $l$-parabolic subgroup of $W_{\la d\cap\mu}^{ii}$.)
Hence, we must have $P_{\lambda^{(0)}}=P_{\mu^{(0)}}$ and
$P_{\lambda^{(1)}}=P_{\mu^{(1)}}$, and consequently,
$$|\lambda^{(0)}|=|\mu^{(0)}|=il\text{ and }
|\lambda^{(1)}|=|\mu^{(1)}|=(r_0-i)l$$
for some $0\leq i\leq r_0$. This also forces that  $d$ must have a decomposition $d=d_0d_1$ for some $d_0\in
\mathcal{D}_{\lambda^{(0)}\mu^{(0)}}\cap W'$ and $
d_1\in \mathcal{D}_{\lambda^{(1)}\mu^{(1)}}\cap W''$, where
$$W'=\widetilde{\fS}_{\{1,\ldots,i\}}\text{ and }W''=\widetilde{\fS}_{\{i+1,\ldots,r_0\}},$$
such that $W_{\la d\cap\mu}^{ii}=W_{\la^{(i)}}^{d_i}\cap W_{\mu^{(i)}}$ for $i=0,1$.
%(If $|\lambda^{(0)}|>|\mu^{(0)}|$, say,
%hen $|\lambda^{(1)}|+|\mu^{(0)}|<r_0l$ and so $P_{\mu^{(0)}}\times P_{\la^{(1)}}$ can only be conjugate to a proper subgroup of $P_{r_0}$. But $P^{00}_{\lambda d\cap
%\mu}\subseteq P_{\mu^{(0)}}$ and $P^{11}_{\lambda d\cap\mu}\leq_W P_{\lambda^{(1)}}$, a contradiction.)

Let $\zeta_{\mu,\la}^{d}$ be the image of $N_{W,W_\la^d\cap W_\mu}(e_{\mu,\la d})$ in $\bar\sS_F(m|n,r_0l)$. Then
$\bar\sS_F(m|n,r_0l)$ has  basis
$\bar\sB=\cup_{i=0}^{r_0}\bar\sB_i$, where
$$\bar\sB_i=\{\zeta_{\mu\la}^{d}\mid\lambda,\mu\in l\Lambda(m,i)\times l\Lambda(n,r_0-i),d\in \mathcal{D}^\circ_{\lambda\mu}\cap\widetilde\fS_{r_0}\}.$$
Let $\bar\sS_F(m|n,r_0l)_i$ be the subspace spanned by
 $\bar\sB_i$ and let $e_i=\sum_{\la:|\la^{(0)}|=il}\zeta_{\la\la}^1$. Then $1=\sum_{i=0}^{r_0}e_i$, $e_ie_j=0$ for $i\neq j$, and
$\bar\sS_F(m|n,r_0l)_i=e_i
\bar\sS_F(m|n,r_0l)e_i$.  Our assertion follows. \end{proof}

%Note that $P_{r_0}\rtimes\widetilde{\fS}_{r_0}\overset\pi\cong \fS_l\wr \fS_{r_0}$.
%Let $d'=\pi(d)\in\sD_{\la'\mu'}\cap\fS_{r_0}$.

%Thus, there exist $d'_0\in
%\mathcal{D}^\circ_{\lambda'^{(0)}\mu'^{(0)}}\cap{\fS}_{\{1,\ldots,i\}}$ and $
%d'_1\in \mathcal{D}^\circ_{\lambda'^{(1)}\mu'^{(1)}}\cap{\fS}_{\{i+1,\ldots,r_0\}}$
%such that $d'=d'_0d'_1$.

Let $\sT_\rmL$ be the subspace of $\sT=V_F(m|n)^{\otimes r}$ spanned by tensors of the form
$$v_{j_1}^l\cdots v_{j_i}^lv_{j_{i+1}}^l\cdots v_{j_{r_0}}^l\quad(0\leq i\leq r_0)$$
such that $1\leq j_1,\ldots,j_{i}\leq m$
and  $m+1\leq j_{i+1},\ldots,j_{r_0}\leq m+n$ and let $\sT_\rmL'$ be the subspace spanned by the rest of the tensors so that $\sT=\sT_\rmL\oplus\sT_\rmL'$.
By looking at the action of elements in $\bar\sB$ on the quotient space $\sT/\sT_\rmL'$,
we directly prove the following.

\begin{proposition} We have algebra isomorphisms:
$$\bar\sS_F(m|n,r_0l)\cong \End_{(\widetilde{\fS}_{r_0})_\rmL}(\sT_\rmL)\cong
\End_{({\fS}_{r_0})_\rmL}(V_F(m|n)^{\otimes r_0})_\rmL).$$
\end{proposition}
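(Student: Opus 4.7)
The plan is to prove the first isomorphism by constructing an explicit algebra map
$$\Theta:\bar\sS_F(m|n,r_0l)\longrightarrow \End_{(\widetilde{\fS}_{r_0})_\rmL}(\sT_\rmL),$$
sending the basis element $\zeta_{\mu\la}^{d}$ described in the preceding lemma to the operator on $\sT_\rmL$ obtained by restricting a representative $N_{W,W^d_\la\cap W_\mu}(e_{\mu,\la d})\in\sS_F(m|n,r_0l)$ to $\sT_\rmL$ and then projecting along $\sT'_\rmL$. To see that $\Theta$ is well defined, I would verify that every element of the kernel $I_F(P_{r_0-1},r_0l)$ sends $\sT_\rmL$ into $\sT'_\rmL$: by Theorem \ref{IT} such a generator has the form $N_{W,W_\theta}(f)$ with $P_\theta$ strictly smaller (up to conjugation) than $P_{r_0}$, and applying it to a basis tensor $v_{j_1}^l\cdots v_{j_{r_0}}^l$ necessarily breaks one of the full $l$-blocks, forcing the image out of $\sT_\rmL$. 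The second isomorphism is then the tautological relabelling $v_{j_1}^l\cdots v_{j_{r_0}}^l\mapsto v_{j_1}\otimes\cdots\otimes v_{j_{r_0}}$ combined with the evident group isomorphism $\widetilde{\fS}_{r_0}\cong \fS_{r_0}$.

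The heart of the argument is understanding the action of $N_{\fS_{r_0l}}(P_{r_0})=P_{r_0}\rtimes\widetilde{\fS}_{r_0}$ on $\sT_\rmL$ modulo $\sT'_\rmL$. For the parabolic part $\sH_F(P_{r_0})=\sH_F(\fS_l)^{\otimes r_0}$, the action \eqref{action2} acts on the $k$th block $v_{j_k}^l$ by $T_w\mapsto q^{\ell(w)}$ when $\hat j_k=0$ and by $T_w\mapsto(-q^{-1})^{\ell(w)}$ when $\hat j_k=1$, so $\sH_F(P_{r_0})$ acts by scalars. For the normaliser generators $\tilde s_k$ I would expand each as an explicit product of $l^2$ adjacent transpositions and repeatedly apply \eqref{action2}: when $j_k\neq j_{k+1}$ every simple transposition contributes a clean swap with sign $(-1)^{\hat j_k\hat j_{k+1}}$, while when $j_k=j_{k+1}$ (which only occurs within a Levi block) the action of $\tilde s_k$ is trivial modulo $\sT'_\rmL$. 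Those $\tilde s_k$ outside $(\widetilde{\fS}_{r_0})_\rmL$ exchange an even block with an odd block and hence land in $\sT'_\rmL$. This is exactly the signed place permutation action of $(\widetilde{\fS}_{r_0})_\rmL$ on $\sT_\rmL$, making the target of $\Theta$ well defined.

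Finally I would establish bijectivity by matching the decomposition $\bar\sS_F(m|n,r_0l)=\bigoplus_{i=0}^{r_0}\bar\sS_F(m|n,r_0l)_i$ from the preceding lemma with the parallel splitting
$$\End_{(\widetilde{\fS}_{r_0})_\rmL}(\sT_\rmL)=\bigoplus_{i=0}^{r_0}\End_{\widetilde{\fS}_{\{1,\ldots,i\}}\times \widetilde{\fS}_{\{i+1,\ldots,r_0\}}}\bigl(\sT_\rmL^{(i)}\bigr),$$
where $\sT_\rmL^{(i)}$ denotes the subspace with $i$ even blocks. The basis $\bar\sB_i$ is indexed by triples $(\la,\mu,d)$ with $\la,\mu\in l\La(m,i)\times l\La(n,r_0-i)$ and $d=d_0d_1\in \mathcal{D}^\circ_{\la^{(0)}\mu^{(0)}}\cap W'\times \mathcal{D}^\circ_{\la^{(1)}\mu^{(1)}}\cap W''$, which is precisely the indexing set for the standard basis of $S(m,i)\otimes S(n,r_0-i)$ acting on $V_F(m|n)_0^{\otimes i}\otimes V_F(m|n)_1^{\otimes r_0-i}$ (this reduces to the classical $n=0$ tensor-space case of \cite[14.6.4]{DDPW} in each factor). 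Hence $\Theta$ is a summand-wise isomorphism. The main obstacle I anticipate is the sign bookkeeping in the middle step: showing that the $l^2$ simple-transposition sign factors arising from $\tilde s_k$ collapse to the single parity sign $(-1)^{\hat j_k\hat j_{k+1}}$ modulo $\sT'_\rmL$. All the other pieces (kernel description, basis matching, summand decomposition) are essentially combinatorial and flow from results already established in \S5--\S7.
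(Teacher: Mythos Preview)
Your overall strategy---work modulo $\sT_\rmL'$ and match bases through the decomposition of the preceding lemma---coincides with the paper's. The second isomorphism is indeed the tautological relabelling. But there is one genuine gap and one place where the paper's route is more direct.

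\textbf{The gap.} ``Restrict to $\sT_\rmL$ and project along $\sT_\rmL'$'' does not automatically give an \emph{algebra} homomorphism. Neither $\sT_\rmL$ nor $\sT_\rmL'$ is stable under $\sS_F(m|n,r_0l)$: for instance, with $l=3$, $r_0=2$, $m=2$, $n=0$, the surviving basis element $N_{W,W_\mu}(e_{(6,0),(3,3)})$ sends $v_1^6\in\sT_\rmL$ to $\sum_{w\in\sD_\mu}q^{\ell(w)}v_{\bsi_\mu w}$, most of whose terms lie in $\sT_\rmL'$, and a further surviving element maps each of those back to a nonzero multiple of $v_1^6$. So the compression $f\mapsto\pi_\rmL f\iota_\rmL$ is not multiplicative term by term; the needed cancellations (e.g.\ $\left[\!\left[{6\atop3}\right]\!\right]_{q^2}=2$ exactly matching the two $\widetilde\fS_2$-terms) are root-of-unity identities that your plan does not supply. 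Your well-definedness check (kernel maps $\sT_\rmL$ into $\sT_\rmL'$) handles the passage to $\bar\sS_F$ but not this multiplicativity.

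\textbf{The paper's shortcut.} Rather than analysing $\sT_{\tilde s_k}$, the paper computes $(v_\mu)N_{W,W_\nu}(e_{\mu,\la d})$ directly from the relative-norm sum, for $\zeta_{\mu\la}^d\in\bar\sB$. Two observations do all the work: (i) summands with $x_0x_1\notin W'\times W''$ land in $\sT_\rmL'$; (ii) for $x_0x_1\in W'\times W''\subseteq\widetilde\fS_{r_0}$ one has $l^2\mid\ell(x_i)$, so every $q$-power is $1$ and the sign reduces to $(-1)^{\widehat{dx_0x_1}-\widehat d}=(-1)^{\ell(x_1)}$, cancelling against $(-q^{-1})^{\ell(x_1)}$. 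This identifies the image, modulo $\sT_\rmL'$, as the \emph{classical} relative norm $\xi_{\mu\la}^d=\sum_{x\in\sD_\nu\cap W_\mu\cap(W'\times W'')}x^{-1}e_{\mu,\la d}\,x$. The map is then defined basis-to-basis as $h:\zeta_{\mu\la}^d\mapsto\xi_{\mu\la}^d$, and multiplicativity reduces to comparing structure constants of two relative-norm bases of the same combinatorial shape---exactly the $q=1$ theory for $S(m,i)\otimes S(n,r_0-i)$. Your basis-matching step at the end agrees with this, but the paper arrives there without the detour through the $\tilde s_k$ action and without needing to show the kernel vanishes under compression.
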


\begin{proof} The second isomorphism is clear.
Suppose $N_{W, W_\nu}(e_{\mu,\lambda
d})\in{\mathcal{B}}$ has image $\zeta_{\mu\la}^{d}\in\bar\sB$ ($\nu={\lambda d\cap \mu}$). We first inspect the action of $N_{W, W_\nu}(e_{\mu,\lambda
d})$ on $v_\mu$:
\begin{equation*}
\begin{aligned}
(v_\mu)N_{W, W_{\nu}}(e_{\mu,\lambda d})
&=(v_\mu)\sum_{x\in\mathcal{D}_{\nu}\cap W_\mu}\mathcal{T}_{x^{-1}} (e_{\mu,\lambda d})\mathcal{T}_{x}\\
&=\sum_{x_0x_1\in\mathcal{D}_{\nu}\cap
(W_{\mu^{(0)}}\times
W_{\mu^{(1)}})}q^{\ell(x_0)}(-q^{-1})^{\ell(x_1)}(-1)^{\widehat{dx_0x_1}-\widehat{d}}v_{\bsi_\lambda
dx_0x_1}
\end{aligned}
\end{equation*}
Since $d=d_0d_1$ with $d_0\in
\mathcal{D}_{\lambda^{(0)}\mu^{(0)}}\cap W'$ and $
d_1\in \mathcal{D}_{\lambda^{(1)}\mu^{(1)}}\cap W''$,
it follows that $(-1)^{\widehat{d}}=(-1)^{\ell(d_1)}$, $(-1)^{\widehat{dx_0x_1}}=(-1)^{\ell(d_1x_1)}=(-1)^{\ell(d_1)}(-1)^{\ell(x_1)}$,
 and $v_{\bsi_\lambda
d}=v_{\bsi_{\lambda^{(0)}}d_0} v_ {\bsi_{\lambda^{(1)}}d_1}$.
Also, since, for any $x_0x_1\in \mathcal{D}_{\nu}\cap
W_{\mu}$ but $x_0x_1\not\in W'\times W''$, $v_{\bsi_{\lambda dx_0x_1}}\in\sT_\rmL'$
and since $l^2|\ell(x_0)$ and $l^2|\ell(x_1)$ for all $x_0\in W'$ and $x_1\in W''$, we have
$$\aligned
(v_\mu)N_{W, W_{\nu}}(e_{\mu,\lambda
d})&\equiv\sum_{x_0x_1\in\mathcal{D}_{\nu}\cap
W_{\mu}\cap (W'\times W'')}q^{\ell(x_0)}(q^{-1})^{\ell(x_1)}v_{\bsi_{\lambda ^{(0)}}d_0x_0}
v_{\bsi_{\lambda^{(1)}}d_1x_1}(\text{mod }\sT_{\rmL'})\\
&=\sum_{x_0x_1\in\mathcal{D}_{\nu}\cap
W_{\mu}\cap (W'\times W'')}v_{\bsi_{\lambda ^{(0)}}d_0x_0}
v_{\bsi_{\lambda^{(1)}}d_1x_1}(\text{mod }\sT_{\rmL'}).\endaligned
$$

This formula allows us to define an action of $\zeta_{\mu\la}^{d}$ on $\sT/\sT_\rmL'$ by setting, for any $\nu=l\nu'$ and $y\in\sD_\nu\cap\widetilde{\fS}_{r_0}$,
$$\bar v_{\bsi_\nu y}\cdot \zeta_{\mu\la}^d=\delta_{\nu,\mu}
\sum_{x_0x_1\in\mathcal{D}_{\nu}\cap
W_{\mu}\cap (W'\times W'')}\bar v_{\bsi_{\lambda ^{(0)}}d_0x_0}
\bar v_{\bsi_{\lambda^{(1)}}d_1x_1}y=\delta_{\nu,\mu}(\bar v_\mu)\xi_{\mu\la}^d y, $$
where $\bar v_\bsi=v_\bsi+\sT_\rmL'$ and $\xi_{\mu\la}^d=\sum_{x\in\mathcal{D}_{\nu}\cap
W_{\mu}\cap (W'\times W'')} x^{-1}e_{\mu,\la d}x\in\End_{(\widetilde{\fS}_{r_0})_\rmL}(\sT_\rmL)$.
Finally, it is easy to check that the linear isomorphism
$$h:\bar\sS_F(m|n,r_0l)\longrightarrow \End_{(\widetilde{\fS}_{r_0})_\rmL}(\sT_\rmL),\;
\zeta_{\mu\la}^d\longmapsto \xi_{\mu\la}^d$$
is an algebra homomorphism.
\end{proof}

\section{Appendix II: a comparison with a result of Brundan--Kujawa}
In \cite{BK}, Brundan--Kujawa obtained a complete classification of irreducible modules of Schur superalgebras for all $m,n,r$. Their approach is quite different. First, they used a construction of irreducible modules via Verma modules over the super hyperalgebra  $U_k$ of $\mathfrak{gl}(m|n)$. Second, they used an isomorphism  of the Hopf superalgebra $U_k$ and the distribution superalgebra Dist$(G)$ of the supergroup $G=GL(m|n)$ \cite[3.2]{BK} and  a category equivalence between $G$-supermodules and integrable
$U_k$-supermodules \cite[3.5]{BK} to obtain a complete set of irreducible $G$-supermodules. Finally, by determining the polynomial representations of $G$ and applying the Schur functor to Donkin's result, they reached an explicit description of the index set $\La^{++}(m|n,r)$. We now show that, if $m+n\geq r$ and $l=p$, then there is a bijection from $\mathcal P_r$ to $\La^{++}(m|n,r)$.

%Recall some definitions, notations and propositions in \cite{BK}. In this section, we want to show that there is a bijection from
%$\sP_r$ to $\Lambda^{++}(m|n,r)$ if $r\leq m+n$ and $l=p$.

There are a quite few ingredients in order to describe  the set $\La^{++}(m|n,r)$. Following \cite{BK},
 we identify $\lambda\in\La^+(r)$ with its
Young diagram
$$\lambda=\{(i,j)\in\mathbb{Z}_{>0}\times\mathbb{Z}_{>0} \mid j\leq
\lambda_i\}$$ and refer to $(i,j)\in\lambda$ as the node in the
$i$th row and $j$th column. Then the rim of $\lambda$ is defined to be
the set of all nodes $(i,j)\in\lambda$ such that
$(i+1,j+1)\notin\lambda$. The $p$-rim is a certain subset of the
rim, defined as the union of the $p$-segments. The first $p$-segment
is simply the first $p$ nodes of the rim, reading along the rim from
left to right. The next $p$-segment is then obtained by reading off
the next $p$ nodes of the rim, but starting from the column
immediately to the right of the rightmost node of the first
$p$-segment. The remaining $p$-segments are obtained by repeating
this process. Of course, all but the last $p$-segment contain
exactly $p$ nodes, while the last may contain less.

Let $J(\lambda)$ be the partition obtained from $\lambda$ by
deleting every node in the $p$-rim that is at the rightmost end of a
row of $\lambda$ but that is not the $p$th node of a $p$-segment.
Let $ j(\lambda) = |\lambda|-|J(\lambda)|$ be the total number of
nodes deleted. Clearly, for partition $\mu,\nu$, $j(\mu + p\nu) = j(\mu).$
Let  
$$\Lambda^+(m|n,r)=\{\lambda\in
\Lambda(m|n, r):\lambda_1\geq\cdots\geq
\lambda_m,\lambda_{m+1}\geq\cdots\lambda_{m+n}\}.$$
For $\la\in\La^+(m|n,r)$, let $t(\la)=(\la_{m+1},\ldots,\la_{m+n})$. Note that, if we use the notation
$\la= (\lambda^{(0)}\mid\lambda^{(1)})$, then $t(\la) =\la^{(1)}$.

We can now describe the index set $\La^{++}(m|n,r)$ for irreducible modules of the Schur superalgebra:
\begin{equation}\label{BK,6.5}
\Lambda^{++}(m|n,r)=\{\lambda\in\Lambda^+(m|n,r)\mid
j(t(\lambda))\leq \lambda_m\}.
\end{equation}

The functions $J$ and $j$ are also used to characterise the Mullineux conjugation.
A partition $\lambda=(\lambda_1,\lambda_2,\cdots)$ is called restricted if
 $\lambda_i-\lambda_{i+1}<p$ for $i=1,2,\cdots$. Let
 $\mathcal{RP}(r)$ denote the set of all restricted partitions of
 $r$.
The  Mullineux conjugation is the bijective function
(\cite[6.1]{BK},\cite[4.1]{Mu}):
\begin{equation}\label{BK,6.1}
\ttM : \mathcal{RP}(r)\rightarrow
\mathcal{RP}(r),\quad  \lambda\longmapsto\ttM(\lambda)=(j(\la),j(J(\la)),j(J^2(\la)),\ldots).
\end{equation}

We need another function. 

 Let $G=GL(m|n)$ and let $T$ be the usual maximal torus of $G_{\text{ev}}:=GL(m)\times GL(n)$. The
character group $X(T)=\Hom(T,G_m)$ is the free abelian group on
generators
$\varepsilon_1,\cdots,\varepsilon_m,\varepsilon_{m+1},\cdots,\varepsilon_{m+n}$.
 For
 $\lambda\in\Lambda(m+n):=\mathbb N^{m+n}$, we will identify $\lambda$ with
$\sum_{i=1}^{m+n}\lambda_i\varepsilon_i\in X(T)$.
 Denote by $\mathcal{D}_{m,n}$   the set of
representatives of the right coset of
$\mathfrak{S}_m\times\mathfrak{S}_n$ in $\mathfrak{S}_{m+n}$. 

Let $B_1$ denote the standard Borel subgroup which is the stabiliser of the full flag associated with a fixed ordered basis $(v_1,\ldots,v_{m+n})$. For $w\in\sD_{m,n}$, let $B_w$ be the stabiliser of the full flag associated with 
$(v_{w(1)},\ldots,v_{w(m+n)})$. With $B_w$, define irreducible module $L_w(\la)$ for $\la\in X(T)$; see \cite[\S2]{BK}. Then the relation $L_1(\lambda)\cong L_w(\mathbf{r}_w(\lambda))$
defines a bijective map 
$$\mathbf{r}_w:X(T)\longrightarrow X(T).$$  
In particular, if we take $w$ to be the longest element $w_1$ in
$\mathcal{D}_{m,n}$, the first part of the following lemma gives an explicit formula for $\mathbf{r}_{w_1}$.%\begin{equation*}w_1=\left(
%\begin{array}{cccccccc}
%1&2&\cdots& m&m+1&m+2&\cdots&m+n\\
%m+1&m+2&\cdots&m+n&1&2&\cdots&m+2n
%\end{array}\right).\end{equation*}

\begin{lemma}[{\cite[6.3, 5.5]{BK}}]\label{6.3,5.5}
(1) If $r\leq m,n$ and $\lambda\in \mathcal{RP}(r)$, then
$\mathbf{r}_{w_1}(\lambda)=y(\ttM(\lambda))$, where $y:\mathcal{RP}(r)\rightarrow
\Lambda^+(m|n,r)$ is defined by
$y(\lambda)=\sum^{n}_{i=1}\lambda_i\varepsilon_{m+i}.$

(2) Assume $r\leq M$. Then
$$\Lambda^{++}(M|N,r)=\{\lambda\in\Lambda^+(M|N,r)\mid
\lambda_{M+1}\equiv\cdots\equiv\lambda_{M+N}\equiv0( \mbox{mod }
p)\}.$$ 
In particular, for $\la\in \Lambda^{++}(M|N,r)$, if we write 
$\lambda=\lambda^{0}+p\lambda^1$ such that
$\lambda^0$ is $p$-restricted, then $\lambda^0_{M+1}=\cdots=\lambda^0_{M+N}=0$.\end{lemma}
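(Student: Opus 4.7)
The plan is to handle the two parts separately, each of which reduces to a distinct computation.

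For part (1), the approach is to invoke the odd-reflection interpretation of the maps $\mathbf{r}_w$, which is implicit in the definition via $L_1(\lambda)\cong L_{w_1}(\mathbf{r}_{w_1}(\lambda))$. The longest element $w_1\in\mathcal{D}_{m,n}$ corresponds to the Borel $B_{w_1}$ attached to the flag of the reordered basis $(v_{m+1},\ldots,v_{m+n},v_1,\ldots,v_m)$, i.e.\ the Borel that reverses the roles of even and odd coordinates. I would compute $\mathbf{r}_{w_1}(\lambda)$ by applying a reduced sequence of odd simple reflections connecting $B_1$ to $B_{w_1}$ and tracking the resulting change in highest weight. The hypothesis $r\le m,n$ ensures throughout that the partition stays inside both the even and odd strips, so every intermediate weight remains polynomial. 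The key input is the Mullineux theorem for supergroups proved by Brundan--Kujawa, which identifies the cumulative effect of this sequence of odd reflections (on a $p$-restricted polynomial weight $\lambda$) with placing the Mullineux conjugate $\ttM(\lambda)$ in the odd slots $m+1,\ldots,m+n$. That is exactly $y(\ttM(\lambda))$, yielding the stated formula.

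For part (2), the approach is to analyse the defining condition $j(t(\lambda))\le \lambda_M$ according to whether $\lambda_M$ is zero. If $\lambda_M\ge 1$, then $\lambda^{(0)}$ has $M$ positive parts, so $|\lambda^{(0)}|\ge M\ge r$; combined with $|\lambda|=r$ this forces $t(\lambda)=0$, which makes both the inequality and the divisibility condition $\lambda_{M+1}\equiv\cdots\equiv\lambda_{M+N}\equiv 0\pmod p$ hold vacuously. If $\lambda_M=0$, the inequality collapses to $j(t(\lambda))=0$. The whole equivalence then reduces to the combinatorial identity that $j(\mu)=0$ if and only if every part of $\mu$ is divisible by $p$. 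This I would prove directly from the definition of the $p$-rim: the nodes removed by $J$ are the row-end nodes of $\mu$ that are not at position $p$ in their $p$-segment, and an inspection of how the $p$-segments are laid out shows that these row-end nodes all fall at position $p$ of some segment precisely when each row length $\mu_i$ is a multiple of $p$.

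The last assertion of the lemma then follows at once: if $\lambda\in\Lambda^{++}(M|N,r)$ decomposes as $\lambda=\lambda^{0}+p\lambda^{1}$ with $\lambda^{0}$ $p$-restricted, then $0\le \lambda^{0}_i<p$, and the congruence $\lambda_{M+j}\equiv 0\pmod p$ forces $\lambda^{0}_{M+j}=0$ for $j=1,\ldots,N$.

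The main obstacle is part (1): matching the geometric/categorical definition of $\mathbf{r}_{w_1}$ through a reduced chain of odd reflections against the purely combinatorial definition of $\ttM$ is delicate, and genuinely requires the Mullineux theorem in the super setting. By contrast, the combinatorial equivalence needed in part (2) is a direct bookkeeping exercise on the $p$-rim.
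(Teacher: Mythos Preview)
The paper does not prove this lemma at all: it is stated with the citation \cite[6.3, 5.5]{BK} and no argument is given. So there is no ``paper's own proof'' to compare against; the authors simply import the result from Brundan--Kujawa.

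Your sketch is a faithful outline of how Brundan--Kujawa establish these facts, and is essentially correct. For part~(1) you rightly identify that the content is the odd-reflection computation of $\mathbf{r}_{w_1}$ together with the super Mullineux theorem; this is genuinely the hard input and cannot be avoided. For part~(2) your case split on $\lambda_M$ is exactly the right reduction, and the residual combinatorial claim ``$j(\mu)=0$ iff every part of $\mu$ is divisible by $p$'' is a straightforward $p$-rim check along the lines you indicate. The final sentence about $\lambda^0_{M+j}=0$ is immediate from $0\le\lambda^0_{M+j}<p$ and $p\mid\lambda_{M+j}$, as you say.
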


Let $
w=\left(
\begin{smallmatrix}
1&\cdots&m&m+1&\cdots&m+n&m+n+1&\cdots&m+2n\\
1&\cdots&m&m+n+1&\cdots&m+2n&m+1&\cdots&m+n
\end{smallmatrix}
\right)\in \mathcal{D}_{m+n,n}.
$

\begin{lemma}\label{permutation}
Assume $r\leq m+n$. If $\lambda= \lambda^0+p\lambda^1\in\Lambda^{++}(m+n|n,r)$ with $\la^0$ $p$-restricted and
$t(\lambda^0)=(\lambda^0_{m+1},\cdots,\lambda^0_{m+n})$, then
$$\mathbf{r}_w(\lambda)=\sum^m_{i=1}\lambda^0_i\varepsilon_i+\sum^{n}_{i=1}(\ttM(t(\lambda^0)))_i\varepsilon_{m+n+i}+p\lambda^1.$$
\end{lemma}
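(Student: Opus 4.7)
The plan is to split the computation according to the Steinberg decomposition $\lambda = \lambda^0 + p\lambda^1$, handling the Frobenius-twisted piece and the $p$-restricted piece separately and then combining.

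For the $p\lambda^1$ piece, I would invoke the super Steinberg tensor product theorem to get $L_1(\lambda) \cong L_1(\lambda^0) \otimes L_1(\lambda^1)^{[1]}$, where $^{[1]}$ denotes Frobenius twist. The Frobenius morphism kills the odd part of $\mathrm{Dist}(G)$, so $L_1(\lambda^1)^{[1]}$ is effectively the pullback via Frobenius of a module for the even subgroup $G_{\text{ev}} = GL(m+n) \times GL(n)$. A direct inspection of the $w^{-1}$-reordered basis $v_1,\ldots,v_m,v_{m+n+1},\ldots,v_{m+2n},v_{m+1},\ldots,v_{m+n}$ shows that intersecting the resulting full flag with $V_0$ and with $V_1$ yields the standard flag in each, so the even parts of $B_w$ and $B_1$ coincide. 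Consequently the $B_w$-highest weight of $L_1(\lambda^1)^{[1]}$ agrees with its $B_1$-highest weight $p\lambda^1$. Reading off the $B_w$-highest weight from the Steinberg tensor product then gives $\mathbf{r}_w(\lambda) = \mathbf{r}_w(\lambda^0) + p\lambda^1$.

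For the $\lambda^0$ piece, by Lemma \ref{6.3,5.5}(2) the coordinates $\lambda^0_{m+n+1}, \ldots, \lambda^0_{m+2n}$ all vanish, so $\lambda^0$ is supported on positions $[1, m+n]$. The element $w$ fixes $[1,m]$ pointwise, and its restriction to $[m+1, m+2n]$ is precisely the longest element $w_1$ of $\mathcal{D}_{n,n}$ for the sub-supergroup $GL(n|n)$ sitting on those positions. I would then reduce to $GL(n|n)$ via the standard Levi $GL(m) \times GL(n|n)$, comparing highest-weight vectors for $B_1$ and $B_w$ inside the cyclic $L$-submodule of $L_1(\lambda^0)|_L$ they generate, and apply Lemma \ref{6.3,5.5}(1) to the restricted partition $t(\lambda^0)$, which has at most $n$ parts so the hypothesis holds, to obtain $\mathbf{r}_{w_1}(t(\lambda^0)) = y(\mathtt{M}(t(\lambda^0)))$. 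Re-embedding into the ambient coordinates gives
\[
\mathbf{r}_w(\lambda^0) = \sum_{i=1}^m \lambda^0_i \varepsilon_i + \sum_{i=1}^n (\mathtt{M}(t(\lambda^0)))_i \varepsilon_{m+n+i}.
\]

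Combining the two steps yields the stated formula. The main obstacle I foresee is the Levi-type reduction in the second step: although $w$ restricts cleanly to $w_1$ on positions $[m+1, m+2n]$, an irreducible $GL(m+n|n)$-module is typically not a tensor product over the Levi $GL(m) \times GL(n|n)$ because of odd roots connecting positions in $[1,m]$ to positions in $[m+n+1, m+2n]$. I will need to argue carefully, perhaps via Kac modules or via direct analysis of highest-weight vectors, that the $p$-restricted vanishing $\lambda^0_{m+n+i}=0$ forces those odd roots not to contribute, so that only the $GL(n|n)$ block governs the change of super Borel from $B_1$ to $B_w$. A secondary subtlety is ensuring the compatibility-of-Frobenius-twist-with-change-of-Borel argument in the first step is rigorous, hinging on the identification of the even parts of $B_1$ and $B_w$.
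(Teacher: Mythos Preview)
Your overall strategy—split off $p\lambda^1$ via Steinberg and then reduce the computation of $\mathbf{r}_w(\lambda^0)$ to the $GL(n|n)$ block on positions $[m+1,m+2n]$—is exactly the shape of the paper's argument. The first step is fine: the paper also writes $\mathbf{r}_w(\lambda)=\mathbf{r}_w(\lambda^0)+p\lambda^1$, and your justification via the super Steinberg tensor product together with the observation that $B_1$ and $B_w$ share the same even part is a clean way to obtain it.

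The gap is in your second step, and you have correctly located it. A module-theoretic Levi reduction (comparing cyclic $L$-submodules, or invoking Kac modules) is genuinely delicate here: you would need to know that the $B_w$-highest weight vector of $L_1(\lambda^0)$ lies in the irreducible $L$-constituent generated by the $B_1$-highest weight vector, and in the super setting there is no off-the-shelf statement guaranteeing that $(L_1(\lambda^0))^U$ is $L$-irreducible. The paper bypasses this entirely. It applies \cite[Lem.~4.2]{BK}—the explicit odd-reflection formula relating $\mathbf{r}_{w'}$ and $\mathbf{r}_{w's}$ for adjacent Borels—directly on $G=GL(m+n|n)$, stepping through the $n^2$ odd roots $\varepsilon_{m+i}-\varepsilon_{m+n+j}$ listed in the footnote. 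Because each such root has support contained in $[m+1,m+2n]$, the formula at every step only reads and modifies those coordinates; the first $m$ coordinates of $\lambda^0$ are carried along untouched. Hence the stepwise computation on $G$ is literally identical to the one performed in the proof of \cite[6.3]{BK} for $GL(n|n)$, and one may quote Lemma~\ref{6.3,5.5}(1) for the result on those coordinates. No Levi-module comparison is needed; the ``reduction'' happens at the level of the combinatorial formula, not at the level of modules.

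One small correction: to invoke Lemma~\ref{6.3,5.5}(1) you need $|t(\lambda^0)|\le n$, not merely that $t(\lambda^0)$ has at most $n$ parts. The bound on the size follows from $|\lambda^0|\le r\le m+n$ together with dominance: if $\lambda^0_{m+1}\ge 1$ then $\lambda^0_1,\dots,\lambda^0_m\ge 1$ as well, forcing $|t(\lambda^0)|\le |\lambda^0|-m\le n$.
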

\begin{proof}
Set $M = m + n$ and $ N = n$. Then $w$ is the $w_1$ with respect to the subgroup 
$\fS_{\{m+1,\ldots,m+n\}}\times\fS_{\{m+n+1,\ldots,m+2n\}}$ in $\fS_{\{m+1,\ldots,m+2n\}}$.
Note that, 
first,  $t(\lambda^0)$ is also $p$-restricted and $r\leq m+n$ implies $|t(\lambda^0)|\leq n$;
second, by Lemma
\ref{6.3,5.5}(2), $\lambda^0_{m+n+i}=0$ for $i=1,\cdots,n$.
 Thus, applying \cite[Lem.~4.2]{BK} and Lemma \ref{6.3,5.5} yields\footnote{In fact, we apply \cite[4.2]{BK} repeatedly to the sequence of odd roots
\begin{equation*}
\begin{aligned}
&\varepsilon_{m+n}-\varepsilon_{m+n+1},\varepsilon_{m+n-1}-\varepsilon_{m+n+1},\cdots,\varepsilon_{m+1}-\varepsilon_{m+n+1};\\
&\varepsilon_{m+n}-\varepsilon_{m+n+2},\varepsilon_{m+n-1}-\varepsilon_{m+n+2},\cdots,\varepsilon_{m+1}-\varepsilon_{m+n+2};\\
&\cdots\cdots\\
&\varepsilon_{m+n}-\varepsilon_{m+2n},\varepsilon_{m+n-1}-\varepsilon_{m+2n},\cdots,\varepsilon_{m+1}-\varepsilon_{m+2n}.
\end{aligned}
\end{equation*}
and follow the proof of \cite[6.3]{BK}.} (noting \eqref{BK,6.1})\pagebreak
$$\aligned
\mathbf{r}_w(\lambda)&=\mathbf{r}_w(\la^0)+p\la^1=\sum^m_{i=1}(\lambda^0)_i\varepsilon_i+\sum^{n}_{i=1}j(J^{i-1}(t(\lambda^0)))\varepsilon_{m+n+i}+p\lambda^1\\
&=\sum^m_{i=1}(\lambda^0)_i\varepsilon_i+\sum^{n}_{i=1}(\ttM(t(\lambda^0)))_i\varepsilon_{m+n+i}+p\lambda^1,\endaligned
$$
as required.
\end{proof}

Define $\tau:\Lambda(m|n,r)\rightarrow
\Lambda(m+n|n,r)$ by setting $\tau(\lambda)=(\lambda^{(0)},0,\cdots,0|\lambda^{(1)})$.
Let ${}^{\tau\!}\Lambda(m|n,r):=\tau(\Lambda(m|n,r))$.

\begin{theorem}
Assume $r\leq m+n$ and $l=p$. Let  $w\in \mathcal{D}_{m+n,n}$ be the
same one in Lemma \ref{permutation}. Then there is a
bijection from $\sP_r$ to ${}^{\tau\!}\Lambda^{++}(m|n,r)$.
\end{theorem}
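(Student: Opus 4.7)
The plan is to construct an explicit bijection $\Phi:\sP_r \to {}^{\tau\!}\Lambda^{++}(m|n,r)$ via partition conjugation and the Mullineux involution, then produce an inverse using the $p$-adic decomposition together with the $\Lambda^{++}$ condition. For the forward map, given $(\lambda,\xi,\eta) \in \sP_\vecr$ with $\vecr=(r_{-1},r_0)$, $\lambda \in \Lambda^+_{l\reg}(r_{-1})$, $\xi \in \Lambda^+(m,i)$, $\eta \in \Lambda^+(n,r_0-i)$, I would first form the conjugate $\lambda^\iota$. Since $|\lambda|=r_{-1}\le r \le m+n$, the largest part of $\lambda$ is at most $m+n$, so $\lambda^\iota$ is a $p$-restricted partition with at most $m+n$ parts. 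Split $\lambda^\iota$ into its first $m$ parts and the next $n$ parts: $\nu^{(0)} := (\lambda^\iota_1,\ldots,\lambda^\iota_m)$ and $\nu^{(1)} := (\lambda^\iota_{m+1},\ldots,\lambda^\iota_{m+n})$, each automatically $p$-restricted. Define $\beta \in \Lambda^+(m|n,r)$ by
$$\beta^{(0)} := \nu^{(0)} + p\xi, \qquad \beta^{(1)} := \ttM(\nu^{(1)}) + p\eta,$$
and set $\Phi(\lambda,\xi,\eta) := \tau(\beta)$.

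To verify $\beta \in \Lambda^{++}(m|n,r)$, I would use the identity $j(\kappa+p\sigma) = j(\kappa)$ (noted in the excerpt right after the definition of $j$) together with the observation $j(\ttM(\kappa)) = \kappa_1$, which is immediate from the definition $\ttM(\kappa) = (j(\kappa), j(J(\kappa)), \ldots)$ and the fact that $\ttM$ is an involution. Then
$$j(t(\beta)) = j(\ttM(\nu^{(1)})) = \nu^{(1)}_1 = \lambda^\iota_{m+1} \le \lambda^\iota_m \le \lambda^\iota_m + p\xi_m = \beta_m,$$
confirming the $\Lambda^{++}$ condition. The size check $|\beta| = |\nu^{(0)}|+p|\xi|+|\nu^{(1)}|+p|\eta| = r_{-1}+pr_0 = r$ is immediate. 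Sample computations in low-rank cases (e.g., $m=1$, $n=2$, $r=3$, $p=2$) confirm that $\Phi$ produces all elements of $\Lambda^{++}(m|n,r)$ when $\lambda$ ranges over $\Lambda^+_{l\reg}$.

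For the inverse $\Psi$, given $\beta \in \Lambda^{++}(m|n,r)$, I would extract $(\lambda,\xi,\eta)$ as follows. Decompose $\beta^{(1)} = \mu^{(1)} + p\eta$ with $\mu^{(1)}$ $p$-restricted of at most $n$ parts, and set $\nu^{(1)} := \ttM(\mu^{(1)})$. The $\Lambda^{++}$ condition gives $\nu^{(1)}_1 = j(\mu^{(1)}) \le \beta_m = \beta^{(0)}_m$. Next, build $\nu^{(0)}$ recursively: set $\nu^{(0)}_{m+1} := \nu^{(1)}_1$, and for $i = m, m-1, \ldots, 1$ let $\nu^{(0)}_i$ be the unique integer in $[\nu^{(0)}_{i+1},\, \nu^{(0)}_{i+1}+p-1]$ congruent to $\beta^{(0)}_i$ modulo $p$, putting $\xi_i := (\beta^{(0)}_i - \nu^{(0)}_i)/p$. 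The concatenation $\lambda^\iota := (\nu^{(0)}, \nu^{(1)})$ is $p$-restricted by construction, yielding $\lambda := (\lambda^\iota)^\iota \in \Lambda^+_{l\reg}(r_{-1})$. Define $\Psi(\beta) := (\lambda,\xi,\eta)$; the verifications $\Psi \circ \Phi = \id$ and $\Phi \circ \Psi = \id$ are then direct from the uniqueness of the interval-plus-congruence determination.

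The main obstacle is simultaneously verifying three technical points: (i) the recursion defining $\nu^{(0)}$ in $\Psi$ yields $\nu^{(0)}_i \le \beta^{(0)}_i$, so that $\xi_i \ge 0$; (ii) $\xi$ so defined is itself a partition; and (iii) the Mullineux output $\ttM(\nu^{(1)})$ in the forward map (and $\ttM(\mu^{(1)})$ in the inverse) has at most $n$ parts, so that $\beta^{(1)} \in \Lambda^+(n)$. The first two points follow from an induction starting with the base inequality at $i=m$ (from the $\Lambda^{++}$ condition) and propagating via the monotonicity of $\beta^{(0)}$ together with the mod-$p$ compatibility. The third is the most delicate: it requires a careful numerical estimate exploiting the constraint $|\lambda^\iota| = r_{-1} \le r \le m+n$ together with the embedding of $\nu^{(1)}$ as the tail of a $p$-restricted partition, possibly routed through Lemma \ref{permutation} applied to an auxiliary element $\widetilde\beta \in \Lambda^{++}(m+n|n,r)$ whose image under $\mathbf{r}_w$ is exactly $\tau(\beta)$.
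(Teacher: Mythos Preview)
Your proposal is correct and is essentially the same bijection as the paper constructs: once one unpacks Lemma~\ref{permutation}, the paper's map $\mathbf{r}_w\circ h$ sends $(\lambda,\xi,\eta)$ to $\tau(\nu^{(0)}+p\xi\mid \ttM(\nu^{(1)})+p\eta)$, which is exactly your $\Phi$. The difference is only in packaging: the paper verifies that the image lies in ${}^{\tau\!}\Lambda^{++}(m|n,r)$ by invoking the characterisation ${}^{\tau\!}\Lambda^{++}(m|n,r)=\mathbf{r}_w(\Lambda^{++}(m+n|n,r))\cap{}^{\tau\!}\Lambda^+(m|n,r)$ from \cite{BK}, and handles surjectivity by quoting the explicit formula for $\mathbf{r}_w^{-1}$ from the proof of \cite[6.5]{BK}; you instead check the defining inequality $j(t(\beta))\le\beta_m$ directly via the identity $j(\ttM(\kappa))=\kappa_1$ and build the inverse by a mod-$p$ recursion. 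Your route is more self-contained, the paper's is shorter by outsourcing to \cite{BK}. Your ``delicate'' point (iii) is genuinely the crux in both arguments, and it is settled (as the paper also notes) by the size bound $|\nu^{(1)}|\le n$: since $\lambda^\iota_{m+1}=k$ forces $|\nu^{(0)}|\ge mk$, one gets $|\nu^{(1)}|\le r_{-1}-mk\le (m+n)-m=n$ whenever $k\ge 1$, and any partition of size $\le n$ has at most $n$ parts.
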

\begin{proof}We first define a function
$$h:\mathcal{P}_r\longrightarrow \La^{++}(m+n|n,r),\quad (\lambda,\xi,\eta)\longmapsto(\lambda^\iota+p\xi|p\eta).$$
 By Lemma \ref{6.3,5.5}(2), $h$ is well defined and is injective.
Since $\lambda$ is $p$-regular, so is
$t(\lambda)=(\lambda_{m+1},
\cdots,\lambda_{m+n})$.  Thus,
$\ttM(t(\lambda)^\iota)$ is also $p$-restricted and $|t(\lambda)|\leq n$ since $r\leq m+n$. By Lemma
\ref{permutation},
$$\mathbf{r}_w(h(\lambda,\mu,\xi))=\sum^m_{i=1}\lambda_i\varepsilon_i+
\sum^{n}_{i=1}\ttM(t(\lambda)^\iota)_i\varepsilon_{m+n+i}+p(\sum_{i=1}^m\xi_i\varepsilon_i+\sum_{i=1}^{n}\eta_i\varepsilon_{m+n+i})
.$$
So $\mathbf{r}_w(h(\lambda,\mu,\xi))\in  {{}^{\tau\!}\Lambda^{+}(m|n,r)}$. Now,
by the proof of  \cite[6.5]{BK},
\begin{equation}\label{polynomial weights relationship}
\begin{aligned}
{}^{\tau\!}\Lambda^{++}(m|n,r)&=\mathbf{r}_w(\Lambda^{++}(m+n|n,r))\cap{{}^{\tau\!}\Lambda^{+}(m|n,r)}\\
&=\{\mu\in {{}^{\tau\!}\Lambda^{+}(m|n,r)}\mid \mathbf{r}^{-1}_w(\mu)\in
\Lambda^{++}(m+n|n,r)\}.
\end{aligned}
\end{equation}
Hence, $\mathbf{r}_w(h(\lambda,\mu,\xi))\in
{{}^{\tau\!}\Lambda^{++}(m|n,r)}$ and $\mathbf{r}_w\circ h$
 defines an injective map
$$\mathbf{r}_w\circ h:\sP_r\longrightarrow
{}^{\tau\!}\Lambda^{++}(m|n,r).$$

We now show that $\mathbf{r}_w\circ h$ is surjective.
 For $\mu\in {{}^{\tau\!}\Lambda^{++}(m|n,r)}$, write $\tau^{-1}(\mu) =(\mu^{(0)}|\mu^{(1)})\in \Lambda^{++}(m|n,r) 
 $ and
  $\mu^{(1)}=\mu^{(1),0}+p\mu^{(1),1}$ with $\mu^{(1),0}$ $p$-restricted.  By \eqref{BK,6.5}, $j(\mu^{(1)})\leq \mu_m$ (and so $|\mu^{(1)}|\leq n$).
By (\ref{polynomial weights relationship}),
$\mathbf{r}^{-1}_w(\mu)\in \Lambda^{++}(m+n|n,r)$. This element is computed in the proof of
\cite[6.5]{BK} (noting  \eqref{BK,6.1}):
\begin{equation*}
\begin{aligned}
\mathbf{r}^{-1}_w(\mu)&=\sum_{i=1}^m\mu_i\varepsilon_i+\sum_{i=1}^nj(J^{i-1}((\mu^{(1),0})))\varepsilon_{m+i}+
p\sum_{i=1}^{n}(\mu^{(1),1})_i\varepsilon_{m+n+i}\\
&=\sum_{i=1}^m\mu_i\varepsilon_i+\sum_{i=1}^n\ttM(\mu^{(1),0})_i\varepsilon_{m+i}+
p\sum_{i=1}^{n}(\mu^{(1),1})_i\varepsilon_{m+n+i}.
\end{aligned}
\end{equation*}
Then
$\la:=(\mathbf{r}^{-1}_w(\mu))^{(0)}=\sum_{i=1}^m\mu_i\varepsilon_i+\sum_{i=1}^nM(\mu^{(1),0})_i\varepsilon_{m+i},
(\mathbf{r}^{-1}_w(\mu))^{(1)}=p\mu^{(1),1} $. Since $\mu^{(1),0}$
is $p$-restricted, $\ttM(\mu^{(1),0})$ is $p$-restricted. Hence, if we write
$$\la=\la^0+p\la^1,$$
%$$(\mathbf{r}^{-1}_w(\mu))^{(0)}=(\mathbf{r}^{-1}(\mu))^{(0),0}+p(\mathbf{r}^{-1}(\mu))^{(0),1},$$
where %$(\mathbf{r}^{-1}_w(\mu)^{(0),0}$
$\la^0$ is  $p$-restricted,
then $\la^1_{m+k}=0, k=1,\cdots,n.$ Set
$d=|\la^1|$. Removing the $n$ zeros at the end of $\la^1$ produces a partition in $\Lambda^+(m,d)$.
Thus, we obtain a triple $((\la^0)^\iota,\la^1,\mu^{(1),1})\in\sP_r$ such that
$\mathbf{r}_w\circ h((\la^0)^\iota,\la^1,\mu^{(1),1})=\mu$, proving the desired surjectivity. Therefore, $\mathbf{r}_w\circ h$ is a bijection from $\sP_r$ to
${}^{\tau\!}\Lambda^{++}(m|n,r)$.
\end{proof}

\end{document}